\documentclass[reqno]{amsart}
\usepackage{color}
\usepackage{amsmath,amssymb,amsfonts,amsthm,bm}
\usepackage{mathrsfs}
\usepackage{graphicx}
\usepackage{enumerate}
\usepackage[numbers, sort&compress]{natbib}
\usepackage[shortlabels]{enumitem}
  \usepackage{newtxtext}
  \usepackage[varvw]{newtxmath}
 \usepackage{textcomp}
\usepackage[reversemp,paperwidth=155mm,paperheight=235mm,top={22mm},headheight={5.5pt},headsep={6.0mm},text={121mm,195mm},marginparsep=5mm,marginparwidth=12mm, bindingoffset=6mm, footskip=10mm]{geometry}

 \frenchspacing
\linespread{1.05}
\parskip=3pt

 \newtheorem{theorem}{Theorem}[section]
\newtheorem{lemma}[theorem]{Lemma}
\newtheorem{corollary}[theorem]{Corollary}
\newtheorem{proposition}[theorem]{Proposition}
\newtheorem{definition}[theorem]{Definition}
 \theoremstyle{definition}
\newtheorem{remark}[theorem]{Remark}

\numberwithin{equation}{section}

\newcommand{\eps}{\varepsilon}
\newcommand{\norm}[1]{\Vert#1\Vert}
\newcommand{\abs}[1]{\left\vert#1\right\vert}

\newcommand{\inner}[1]{\left(#1\right)}

\newcommand{\normm}[1]{{ \vert\kern-0.25ex \vert\kern-0.25ex \vert #1
		\vert\kern-0.25ex \vert\kern-0.25ex \vert}}
\newcommand{\divv}{\mbox{div\,}}

 \makeatletter

%
%

 \newbox \abstractbox
\renewenvironment{abstract}{\global\setbox\abstractbox=\vbox\bgroup
 \hsize=\textwidth
  \vskip 1.2cm
  \noindent\unskip \textbf{Abstract.}
 }
 {
 \egroup}

\@namedef{subjclassname@2020}{%
	\textup{2020} Mathematics Subject Classification}

\def\@settitle{%
  \bgroup
  \centering
  \vglue1cm
  \fontsize{12}{15}\fontseries{b}\selectfont
  \@title
  \vskip 20pt plus 6pt minus 8pt
  \egroup
}

\def\@setauthors{%
  \begingroup
  \trivlist
  \centering 
 \normalsize\@topsep30\p@\relax
  \advance\@topsep by -\baselineskip
  \item\relax
  \andify\authors
 {\rmfamily\authors}%
  \endtrivlist
  \endgroup
}

\def\@setaddresses{\par
  \nobreak \begingroup
\normalsize
  \def\author##1{\nobreak\addvspace\bigskipamount}%
  \def\\{\unskip, \ignorespaces}%
  \interlinepenalty\@M
  \def\address##1##2{\begingroup
    \par\addvspace\bigskipamount\noindent
    \@ifnotempty{##1}{(\ignorespaces##1\unskip) }%
    {\ignorespaces##2}\par\endgroup}%
  \def\curraddr##1##2{\begingroup
    \@ifnotempty{##2}{\nobreak\indent{\itshape Current address}%
      \@ifnotempty{##1}{, \ignorespaces##1\unskip}\/:\space
      ##2\par}\endgroup}%
  \def\email##1##2{\begingroup
    \@ifnotempty{##2}{\nobreak\noindent{\itshape E-mail address}%
      \@ifnotempty{##1}{, \ignorespaces##1\unskip}\/:
       ##2\par}\endgroup}%
   \def\urladdr##1##2{\begingroup
    \@ifnotempty{##2}{\nobreak\indent{\itshape URL}%
      \@ifnotempty{##1}{, \ignorespaces##1\unskip}\/:\space
      \ttfamily##2\par}\endgroup}%
  \addresses
  \endgroup
}

  \renewcommand\section{\@startsection{section}{1} %
  \z@{.5\linespacing\@plus.7\linespacing}{.5\linespacing}
 {\normalfont\large\bfseries\boldmath}}

  \def\subsection{\@startsection{subsection}{2}%
  \z@{.5\linespacing\@plus.7\linespacing}{.2\linespacing}%
  {\raggedright\normalfont\bfseries\boldmath}}

\def\subsubsection{\@startsection{subsubsection}{3}%
  \z@{.5\linespacing\@plus.7\linespacing}{-.5em}%
  {\normalfont\bfseries}}

\makeatother

 \begin{document}

\title[Global Gevrey solution of the  3D anisotropic Navier-Stokes system]{Global Gevrey solution of 3D anisotropic Navier-Stokes system in a strip domain}

\author[W.-X. Li, Z. Xu  and P. Zhang]{Wei-Xi Li, Zhan Xu and Ping Zhang}

\address[W.-X. Li]{School of Mathematics and Statistics \& Hubei Key Laboratory of Computational Science,  Wuhan University, Wuhan 430072, China }
\email{wei-xi.li@whu.edu.cn}

\address[Z. Xu]{School of Mathematics and Statistics,  Wuhan University, Wuhan 430072, China\newline
and \newline
Department of Applied Mathematics, The Hong Kong Polytechnic
University, Kowloon, Hong Kong, SAR, China.}
\email{xuzhan@whu.edu.cn}

\address[P. Zhang]{State Key Laboratory of Mathematical Sciences, Academy of Mathematics  \&  Systems Science,   Chinese Academy of Sciences,  Beijing 100190,China \newline
and \newline
 School of Mathematical Sciences, University of Chinese Academy of Sciences  Beijing 100049,    China}
\email{zp@amss.ac.cn}

\keywords{Anisotropic Navier-Stokes, Strip domain, Instantaneous smoothing effect, Enhanced Gevrey regularity}

 \subjclass[2020]{35B65,35Q30,35K65,35A20}

\begin{abstract}
We investigate the three-dimensional (3D) incompressible anisotropic Navier-Stokes system with dissipation only in the horizontal variables, posed in a strip domain. To overcome the difficulties arising from the boundary terms and the absence of vertical dissipation, we impose a Gevrey-class regularity condition in the vertical direction. For the remaining directions, we prove that the solution exhibits space-time analytic or Gevrey-class regularization. Furthermore, the solution is shown to possess an enhanced Gevrey regularity in the direction of strong diffusion, which is unconstrained by boundaries.

\end{abstract}

\maketitle


\section{Introduction and main results}\label{sec:main}

We consider the following three-dimensional anisotropic Navier-Stokes system in a strip domain:
\begin{equation}\label{ANS}
    \left\{
    \begin{aligned}
        &\partial_t u + (u \cdot \nabla) u - \Delta_{\mathrm{h}} u = -\nabla p, \quad (t,x) \in \mathbb{R}_+ \times [0,1] \times \mathbb{R}^2, \\
        &\operatorname{div} u = 0, \quad u|_{x_1 = 0,1} = 0, \quad u|_{t=0} = u_0,
    \end{aligned}
    \right.
\end{equation}
where $\Delta_{\mathrm{h}} \stackrel{\mathrm{def}}{=} \partial_{x_1}^2 + \partial_{x_2}^2$, and the unknowns $u$ and $p$ denote the fluid velocity and the scalar pressure, respectively. Systems of this type arise in modeling anisotropic geophysical fluids, which exhibit anisotropic diffusion scales. For further details, we refer readers to Pedlosky's book \cite[Chapter 4]{1987Geophysical}.

In comparison with the classical incompressible Navier-Stokes system:
\begin{equation}\label{NS}
    \left\{
    \begin{aligned}
        &\partial_t u + (u \cdot \nabla) u - \Delta u = -\nabla p, \quad (t,x) \in \mathbb{R}_+ \times \mathbb{R}^3, \\
        &\operatorname{div} u = 0, \quad u|_{t=0} = u_0,
    \end{aligned}
    \right.
\end{equation}
the absence of diffusion in the $x_3$-direction, coupled with the non-vanishing boundary terms, leads to the major difficulty in studying the regularizing effect in analytic or Gevrey-class spaces.

Due to the strong diffusion inherent in the heat equation, the associated analytic regularization effect and the radius of analyticity
 for the system \eqref{NS} have been extensively studied. Such parabolic regularization was established for the 3D Navier-Stokes equations \eqref{NS} on the whole space or torus by Foias and Temam \cite{MR1026858}, who proved space-time analyticity via $L^2$-energy estimates and Fourier techniques. Since then, this Fourier-based approach and subsequently developed, more modern analytic methods beyond $L^2$ have been applied to study analyticity for the Navier-Stokes equations and more general dissipative equations in various function spaces; see \cite{MR4601187,MR4216598,MR1607936,MR1938147,MR3369268, MR2836112,MR2960037, MR1748305,MR2145938}. One may consult \cite{LiZhangpreprint} and the references therein for the most recent progress in this direction.

In domains with boundaries, however, the Fourier approach is not applicable. Instead, the Taylor expansion method can be applied, which relies on quantitative estimates of all derivatives. Unlike the case of finite Sobolev regularity, the main difficulty in establishing analyticity arises from handling non-vanishing boundary terms in the quantitative estimates---even though interior analyticity for solutions of the Navier-Stokes equations is generally expected and was studied in \cite{MR247304,MR245989}. The global analyticity up to the boundary for the Navier-Stokes equations can be found in Komatsu's work \cite{MR575737}, where the analyticity, local in the time variable and global in the space variables, was established for $C^\infty$ solutions; see also Giga's work \cite{MR699168} for space-time analyticity of weak solutions based on a semigroup approach. Recently, Camliyurt, Kukavica, and Vicol \cite{MR3843293,MR4082242} proved space-time analyticity up to the boundary via a direct energy-type method.

It is of independent interest to investigate the radius of analyticity. For instance, it helps to understand
turbulence in fluid dynamics and provides insights into the study of geometric regularity for the Navier-Stokes equations (cf.~\cite{MR3001772, MR1871357,MR1383982,MR1467006,MR1331063,MR3497685,MR3369268, MR1073624, MR3215083}
 and the references therein). We recall that in the whole space, the radius $R^\sigma$ of a Gevrey-class function with index $\sigma\geq 1$ may be defined by
\begin{equation}\label{def:radius}
R^{\sigma}\stackrel{\mathrm{def}}{=}\sup\Big\{c> 0:\norm{e^{(c\abs{D_{x}})^{\frac{1}{\sigma}}}g}_{L^2(\mathbb R^3)}<+\infty\Big\},
\end{equation}
where $e^{(c\abs{D_{x}})^{\frac{1}{\sigma}}}$ is the Fourier multiplier with symbol $e^{(c\abs{\xi})^{\frac{1}{\sigma}}}.$
For a Gevrey function defined on a general domain $\Omega$, we adopt the Taylor expansion method and define the radius as
\begin{equation}\label{equradius}
R^\sigma \stackrel{\mathrm{def}}{=}\sup\Big\{c> 0: \sum_{\abs{\alpha}\geq 0}\frac{c^{\abs{\alpha}}}{(\abs{\alpha}!)^\sigma} \norm{\partial^\alpha g}_{L^2(\Omega)}^2<+\infty \Big\},
\end{equation}
which is equivalent to \eqref{def:radius} when $\Omega$ is the whole space or a torus. Using Fourier techniques, it has been shown that solutions to the classical Navier-Stokes system \eqref{NS} with homogeneous $\dot{H}^{\frac12}$-regular initial data satisfy
\begin{align*}
    \int_{\mathbb{R}^3}\abs{\xi}\left(\sup_{0\leq t\leq T}e^{\sqrt{t}\abs{\xi}}\abs{\widehat{u}(t,\xi)}\right)^2d\xi+\int^T_0\int_{\mathbb{R}^3}\abs{\xi}^3\left(e^{\sqrt{t}\abs{\xi}}\abs{\widehat{u}(t,\xi)}\right)^2d\xi dt<+\infty,
\end{align*}
see \cite{MR870865,MR2145938,MR1748305, MR2057722} and the survey book \cite{MR1938147}. This estimate reveals that the radius of analyticity of $u(t)$ is bounded from below by a constant multiple of $\sqrt{t}$, a result applicable to Fujita-Kato solutions of the Navier-Stokes equations \eqref{NS} constructed in \cite{MR166499}. Herbst and Skibsted \cite{MR2836112} later improved the instantaneous analyticity radius, showing that
\begin{align*}
    \lim_{t\to 0^+}\frac{R(t)}{\sqrt{t}}=+\infty.
\end{align*}
For subcritical data $u_0\in H^\gamma$ with $\gamma\in ]1/2,3/2[$, they obtained a logarithmically sharpened analyticity radius.

Another interesting direction concerns the long-time behavior of the analyticity radius. For solutions $u \in C([0,+\infty[; \dot{H}^{\frac12}(\mathbb{R}^3))$ to system \eqref{NS}, Chemin, Gallagher, and Zhang \cite{MR4216598} proved that the analyticity radius $R(t)$ satisfies
\begin{align*}
    \lim_{t\to +\infty}\frac{R(t)}{\sqrt{t}}=+\infty.
\end{align*}
For more refined analysis of the analyticity radius, we refer to the recent work of Li and Zhang \cite{LiZhangpreprint}.

Regarding analyticity radii in various settings, we mention the works \cite{MR3369268, MR2836112, MR1938147, MR4509058, MR1748305} for the whole space or torus; \cite{MR3843293, MR4082242, preprint-liyangzhang} for bounded domains; and \cite{MR2960037, MR1607936,MR4509058, MR1748305, MR4601187} for the $L^p$ framework or critical Besov spaces.

While the aforementioned results mainly address the classical Navier-Stokes system, few works treat the anisotropic Navier-Stokes system. In the case without vertical diffusion, even with strong horizontal diffusion, establishing horizontal analyticity for solutions at positive times remains challenging when the initial data have only finite vertical regularity. To overcome the lack of vertical diffusion, one may impose strong regularity assumptions in that variable. Recently, Liu and Zhang \cite{MR4816041} studied the anisotropic Navier-Stokes system in the whole space and established instantaneous analytic regularization in the horizontal variable under the assumption that the initial data are analytic in the vertical variable. Moreover, the horizontal analyticity radius $R_{\mathrm{h}}(t)$ satisfies
\begin{equation*}
    \lim_{t\to 0^+}\frac{R_{\mathrm{h}}(t)}{\sqrt{t}}=+\infty.
\end{equation*}

The aim of this paper is to study the anisotropic Navier-Stokes system in a domain with boundary. In contrast to the whole-space case or the classical Navier-Stokes system in domains without boundaries, the presence of boundary terms and derivative losses leads to major difficulties in treating system \eqref{ANS}. We improve upon the previous work \cite{MR4816041} by relaxing the analyticity requirement on the initial data and further consider an initial-boundary value problem rather than a Cauchy problem. Specifically, we show the analytic smoothing effect in the $x_2$ variable for initial data with Gevrey regularity in $x_3$ of index up to 2, and we prove that for general Gevrey regular initial data, the solution admits space-time Gevrey smoothing that is global in time and valid up to the boundary. Moreover, the solution exhibits better Gevrey regularity in the $x_2$ variable compared to the other variables.

\noindent\textbf{Notations.} Before stating the main results, we introduce the following notations, which will be used throughout the paper.
\begin{enumerate}[label={(\arabic*)}, leftmargin=*, widest=ii]
    \item We denote by $\norm{\cdot}_{L^2}$ and $(\cdot,\cdot)_{L^2}$ the norm and inner product in $L^2 = L^2([0,1] \times \mathbb{R}^2)$. For functions depending only on the variables $(x_1, x_2)$, we write $\norm{\cdot}_{L_{\mathrm{h}}^2}$ and $(\cdot,\cdot)_{L_{\mathrm{h}}^2}$; similarly, when only a specific variable $y$ is involved, we use $\norm{\cdot}_{L_y^2}$ and $(\cdot,\cdot)_{L_y^2}$. Analogous notations apply to $L^\infty$. We define the classical Lebesgue space $L^p_{\mathrm{h}} L^q_{\mathrm{v}} = L^p([0,1] \times \mathbb{R}_{x_2}; L^q(\mathbb{R}_{x_3}))$, and similarly for the Sobolev space $H^p_{\mathrm{h}} H^q_{\mathrm{v}}$.

    \item For any norm $\norm{\cdot}$ and a vector-valued function $\mathbf{A} = (A_1, A_2, \dots, A_k)$, we define
    \begin{equation*}
        \norm{\mathbf{A}} \stackrel{\mathrm{def}}{=} \Big( \sum_{1\leq j\leq k} \norm{A_j}^2 \Big)^{\frac12}.
    \end{equation*}

    \item For brevity, we write $\partial_i = \partial_{x_i}$ for $i = 1, 2, 3$, and $\partial^\alpha = \partial_1^{\alpha_1} \partial_2^{\alpha_2} \partial_3^{\alpha_3}$ for $\alpha = (\alpha_1, \alpha_2, \alpha_3) \in \mathbb{Z}_+^3$.
    Moreover, we denote the partial gradient by $\nabla_{\mathrm{h}} = (\partial_1, \partial_2)$.

    \item The symbols $\alpha, \beta, \gamma$ denote multi-indices in either $\mathbb{Z}_+^2$ or $\mathbb{Z}_+^3$, depending on context. For $\alpha = (\alpha_1, \dots, \alpha_d) \in \mathbb{Z}_+^d$ ($d = 2$ or $3$) with $\alpha_1 \geq 1$, we define
    \begin{equation*}
        \tilde{\alpha} = \alpha - (1, 0, \cdots, 0) = (\alpha_1 - 1, \alpha_2, \cdots, \alpha_d) \in \mathbb{Z}_+^d.
    \end{equation*}

    \item For positive quantities $a$ and $b$, the notation $a \approx b$ (respectively, $a \lesssim b$) means that there exists a positive constant $C \geq 1$, independent of any free parameters, such that $C^{-1}a \leq b \leq C a$ (respectively, $a \leq C b$).
\end{enumerate}

\begin{definition}\label{def:re1}
    Let $\rho > 0$, $N \in \mathbb{Z}_+$, and $\sigma \geq 1$. The (partial) Gevrey space $G_{\rho,\sigma,N}$ consists of all smooth vector-valued functions $\mathbf{A} = \mathbf{A}(t,x)$, where $x = (x_1, x_2, x_3)$, that are of Gevrey-class $\sigma$ in the $x_3$ variable and satisfy $\norm{\mathbf{A}}_{G_{\rho,\sigma,N}} < +\infty$, with
    \begin{equation*}
        \norm{\mathbf{A}}_{G_{\rho,\sigma,N}}^2 \stackrel{\mathrm{def}}{=} \sum_{m = 0}^{+\infty} L^2_{\rho,m} \norm{\partial_3^m \mathbf{A}}_{H^N}^2,
    \end{equation*}
    where here and below,
    \begin{equation}\label{def:L}
        L_{\rho,m} \stackrel{\mathrm{def}}{=} \frac{\rho^{m+1}(m+1)^{6+2\sigma}}{(m!)^\sigma}, \quad m \ge 0.
    \end{equation}
    Here $L_{\rho,m}$ depends on the Gevrey index $\sigma$, and $\rho$ may depend on time $t$; for simplicity of notation, we write $L_{\rho,m}$ and $\rho$ without explicit parameter dependence.
    In particular, when $\sigma = 1$, the space $G_{\rho,\sigma,N}$ consists of functions that are analytic in the $x_3$ variable.
\end{definition}
With the above notations and definitions, we now present the main results of this work, which address global well-posedness, instantaneous regularization, and refined estimates on the Gevrey radius.

\begin{theorem}[Global well-posedness]\label{thm:main1}
    Let $\sigma \ge 1$ and let the Gevrey space $G_{\rho,\sigma,N}$ be given by Definition~\ref{def:re1}. Assume the initial datum $u_0 \in G_{\rho_0,\sigma, 6}$ for some $\rho_0 > 0$ with $\operatorname{div} u_0 = 0$, compatible with the boundary condition in system~\eqref{ANS}.
    There exists a small constant $\varepsilon_0 > 0$ such that if
    \begin{equation}\label{assume:small}
        \norm{u_0}_{G_{\rho_0,\sigma,6}} \leq \varepsilon_0,
    \end{equation}
    then the anisotropic Navier-Stokes system~\eqref{ANS} admits a unique global solution $u \in L^{\infty}([0,+\infty[; G_{\rho,\sigma,3})$ for some $\rho$ with $\frac{\rho_0}{2} \leq \rho \leq \rho_0$, satisfying
    \begin{equation*}
        \forall t \ge 0, \quad \norm{u(t)}_{G_{\rho,\sigma,3}} \leq C_0 \varepsilon_0 e^{-\frac{t}{4}}
    \end{equation*}
    for some constant $C_0 > 0$.
\end{theorem}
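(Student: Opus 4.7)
The proof will rely on weighted energy estimates combined with a continuity/bootstrap argument, carried out on a suitable smooth approximation (e.g.~a Friedrichs or Galerkin scheme) and then passed to the limit. The central device is to let the Gevrey radius $\rho=\rho(t)$ shrink in time, producing an auxiliary dissipation in the Gevrey index $m$ that substitutes for the missing vertical diffusion. Throughout, the Poincar\'e inequality in $x_1$ (available because $u$ vanishes on $\{x_1=0,1\}$ and $x_1\in[0,1]$) promotes $\norm{\nabla_{\mathrm h}u}_{G_{\rho,\sigma,3}}^2$ into control of $\norm{u}_{G_{\rho,\sigma,3}}^2$ and ultimately produces the exponential decay rate $e^{-t/4}$.

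Fix $N=3$ and, for each $m\ge 0$, differentiate \eqref{ANS} $m$ times in $x_3$, take the $H^N$ inner product with $\partial_3^m u$, multiply by $L_{\rho,m}^2$ and sum. The Dirichlet condition commutes with $\partial_3^m$ and with any tangential (in $x_2,x_3$) derivative, so no boundary terms arise from integration by parts in $x_1$ at the $L^2$ level; the horizontal derivatives appearing through $H^N$ would be organized as tangential ones with normal derivatives recovered from the equation, in the standard fashion for slab problems. Eliminating the pressure via $\operatorname{div} u=0$ leads to the schematic identity
\begin{align*}
\frac12\frac{d}{dt}\norm{u}_{G_{\rho,\sigma,3}}^2 + \norm{\nabla_{\mathrm h}u}_{G_{\rho,\sigma,3}}^2 + \frac{\abs{\dot\rho}}{\rho}\sum_{m\ge 0}(m+1)L_{\rho,m}^2\norm{\partial_3^m u}_{H^3}^2 \lesssim \abs{\mathcal N},
\end{align*}
valid when $\dot\rho<0$, where $\mathcal N$ collects the trilinear contributions from $(u\cdot\nabla)u$.

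The heart of the argument lies in the estimation of $\mathcal N$. After the Leibniz expansion $\partial_3^m\big[(u\cdot\nabla)u\big]=\sum_k\binom{m}{k}(\partial_3^k u\cdot\nabla)\partial_3^{m-k}u$, the combinatorial ratio
\begin{align*}
\binom{m}{k}\frac{L_{\rho,m}}{L_{\rho,k}L_{\rho,m-k}}\;\lesssim\;\frac{1}{\rho}\binom{m}{k}^{1-\sigma}\frac{(m+1)^{6+2\sigma}}{(k+1)^{6+2\sigma}(m-k+1)^{6+2\sigma}}
\end{align*}
is summable in $k$ for $\sigma\ge 1$, precisely because of the polynomial weight $(m+1)^{6+2\sigma}$ built into $L_{\rho,m}$. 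Combined with anisotropic Sobolev/Agmon embeddings---assigning one horizontal derivative to the low-frequency factor so as to recover $\norm{\nabla_{\mathrm h}u}_{G_{\rho,\sigma,3}}$ on the right---this produces a bound of the form $\abs{\mathcal N}\lesssim\norm{u}_{G_{\rho,\sigma,3}}\norm{\nabla_{\mathrm h}u}_{G_{\rho,\sigma,3}}^2$ for the horizontal transport terms $u_{\mathrm h}\cdot\nabla_{\mathrm h}u$. The potentially dangerous piece $u_3\partial_3u$, which naively loses one $\partial_3$-derivative, is rescued by integrating by parts in $x_3$ (no boundary there) and invoking $\partial_3 u_3=-\operatorname{div}_{\mathrm h}u_{\mathrm h}$, thereby converting the missing vertical derivative into a horizontal one absorbable into $\norm{\nabla_{\mathrm h}u}_{G_{\rho,\sigma,3}}$.

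Prescribing $\rho$ by the ODE $\dot\rho(t)=-C_1\norm{u(t)}_{G_{\rho,\sigma,3}}\,\rho(t)$ with $\rho(0)=\rho_0$, and running a continuity argument with bootstrap hypothesis $\norm{u(t)}_{G_{\rho,\sigma,3}}\le 2C_0\varepsilon_0 e^{-t/4}$, the smallness condition \eqref{assume:small} guarantees $\rho(t)\in[\rho_0/2,\rho_0]$ for all time and closes the energy inequality, after which the bootstrap constant can be improved. The main obstacle I anticipate is the careful bookkeeping of derivatives in the Leibniz sum paired with the anisotropic embeddings on the strip, particularly when $k\approx m/2$ and neither factor carries full dissipation; a secondary difficulty is the pressure, which in the strip solves $-\Delta p=\partial_i\partial_j(u_iu_j)$ with Neumann-type boundary data coming from the equation on $\{x_1=0,1\}$, and must be estimated level-by-level in the Gevrey weight without spoiling the polynomial factor $(m+1)^{6+2\sigma}$.
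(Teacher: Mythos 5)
The proposal correctly identifies the shrinking-radius mechanism, the Leibniz/Young analysis of the Gevrey weights, and the treatment of $u_3\partial_3u$ via integration by parts in $x_3$ and $\partial_3u_3=-\operatorname{div}_{\mathrm h}u_{\mathrm h}$; these are all essentially what the paper does. However, there is a genuine gap at the heart of the argument. Your schematic energy identity asserts that the pressure and boundary contributions vanish or are negligible, but this is false once you take $\partial_1$-derivatives inside the $H^3$ norm. When $\alpha_1\ge1$ one gets nonvanishing boundary traces: $(\partial_3^m\partial^\alpha\nabla p,\partial_3^m\partial^\alpha u)_{L^2}$ produces a boundary integral of $\partial^{\alpha}p\cdot\partial^{\alpha}u_1$ which does not vanish since $\partial_1^{\alpha_1}u_1$ is not zero on $\{x_1=0,1\}$, and the same happens for the Laplacian. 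Your reference to "normal derivatives recovered from the equation, in the standard fashion for slab problems" points in the right direction but does not resolve the actual obstacle: each substitution $\partial_1^2u=\partial_tu+(u\cdot\nabla)u+\nabla p-\partial_2^2u$ on the boundary introduces a time derivative, so the Gevrey norm $G_{\rho,\sigma,3}$ (which involves only spatial derivatives) is not propagated by the equation and the estimate cannot close at this level.

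The paper's resolution requires two devices absent from your proposal. First, the auxiliary norm $|\cdot|_{X_\rho}$ is enlarged to include space--time derivatives $\partial_t^j\partial^\alpha$ with $j+|\alpha|\le3$; the boundary-reduction chain $\partial_1^3\partial_1u\to\partial_1^2\partial_tu\to\cdots\to\partial_t^3u$ then terminates with a quantity vanishing on the boundary (see the paper's Remark~2.4). Second, a small weight $r^{\alpha_1}$ is inserted in front of each $x_1$-derivative in the norm; without it, the boundary terms after Gagliardo--Nirenberg yield contributions of the form $C\,|\nabla_{\mathrm h}u|_{X_\rho}^2$ with an order-one constant $C$ that cannot be absorbed by the left-hand dissipation, whereas with the weight they appear as $Cr^{1/4}|\nabla_{\mathrm h}u|_{X_\rho}^2$ and can be made small. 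A related issue is that your closing remark calls the pressure a "secondary difficulty"; in the paper it is a primary one, requiring partial Fourier transform in $x_2$ (respectively $x_3$), the one-dimensional Gagliardo--Nirenberg inequality, and the Neumann boundary condition $\partial_1p|_{x_1=0,1}=-\partial_tu_1-\cdots$, precisely because $\nabla p$ is not orthogonal to $u$ once derivatives are applied. Your alternative choice of $\rho$ via the ODE $\dot\rho=-C_1\|u\|\rho$ is viable and could replace the paper's explicit $\rho(t)=\tfrac{\rho_0}{2}+\tfrac{\rho_0}{2}e^{-t/4}$, but this is a cosmetic difference compared to the missing boundary machinery.
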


\begin{theorem}[Regularization effect]\label{thm:smoothing}
    Under the assumptions of Theorem~\ref{thm:main1}, let $u \in L^\infty([0,+\infty[; G_{\rho,\sigma,3})$ be the solution to system~\eqref{ANS}. Then $u$ is of anisotropic Gevrey-class regularity up to the boundary at positive times, and there exist two constants $C_1, C_2 \geq 1$ such that for any $T \geq 1$, any $j \in \mathbb{Z}_+$, and any $\alpha = (\alpha_1, \alpha_2, \alpha_3) \in \mathbb{Z}_+^3$,
    \begin{equation}\label{result:2}
    \begin{split}
        \sup_{0 < t \leq T} e^{\frac{t}{4}} & t^{j + \frac{\alpha_1 + \alpha_2}{2}} \norm{\partial_t^j \partial^{\alpha} u}_{H^2} \\
        & \leq \varepsilon_0 C_1^{j + \alpha_2 + \alpha_3 + 1} \big(T^{\frac{1}{2}} C_2\big)^{\alpha_1} [(j + \alpha_1 + \alpha_3)!]^{\sigma} (\alpha_2!)^{\delta},
    \end{split}
    \end{equation}
    where here and throughout the paper, $\delta = \delta(\sigma)$ is defined by
    \begin{equation}\label{gamma1}
        \delta \stackrel{\mathrm{def}}{=} \max\Big\{1, \frac{\sigma + 1}{3}, \frac{2\sigma - 1}{4}\Big\} =
        \begin{cases}
            1, & \text{if } 1 \leq \sigma \leq 2, \\
            \frac{\sigma + 1}{3}, & \text{if } 2 < \sigma \leq \frac{7}{2}, \\
            \frac{2\sigma - 1}{4}, & \text{if } \sigma > \frac{7}{2}.
        \end{cases}
    \end{equation}
\end{theorem}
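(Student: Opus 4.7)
The plan is to prove \eqref{result:2} by induction on the order of derivatives, combined with time-weighted energy estimates on the differentiated equation. The weight $t^{j+(\alpha_1+\alpha_2)/2}$ is chosen precisely so that its time derivative can be absorbed by a small fraction of the horizontal dissipation $\norm{\nabla_{\mathrm{h}}\partial_t^j\partial^\alpha u}_{L^2}^2$, while the exponential factor $e^{t/4}$ is already provided by Theorem~\ref{thm:main1} through the horizontal Poincaré inequality on $[0,1]$. Since, from the equation, each $\partial_t$ can be traded for two horizontal spatial derivatives plus nonlinear corrections, it suffices to estimate the purely spatial derivatives $\partial^\alpha u$ and recover time derivatives by iterated substitution.

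\textbf{Tangential derivatives.} The directions $x_2$ and $x_3$ are tangential to the boundary $\{x_1=0,1\}$ and commute with the Dirichlet condition. I would first establish \eqref{result:2} in the case $\alpha_1=j=0$ by applying $\partial_2^{\alpha_2}\partial_3^{\alpha_3}$ to the equation and performing $H^2$-energy estimates on $\partial_2^{\alpha_2}\partial_3^{\alpha_3} u$; all boundary terms arising from integration by parts vanish. The dissipation provides the horizontal smoothing in $x_2$ that the weight $t^{\alpha_2/2}$ is designed to match. Weighting by the Gevrey factorials $1/[(\alpha_3!)^\sigma(\alpha_2!)^\delta]$ and summing over $\alpha_2,\alpha_3$ yields a closed differential inequality, analogous to the one used to prove Theorem~\ref{thm:main1} but carrying the additional time weight.

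\textbf{Normal derivatives.} Normal derivatives $\partial_1$ do not commute with the boundary condition and must be introduced indirectly by a secondary induction on $\alpha_1$. Two identities are key: the divergence-free condition $\partial_1 u_1=-\partial_2 u_2-\partial_3 u_3$ converts one normal derivative of $u_1$ into tangential ones; and the equation in the form $\partial_1^2 u=\partial_t u-\partial_2^2 u+(u\cdot\nabla)u+\nabla p$ trades two normal derivatives for one time derivative, two tangential derivatives, and nonlinear/pressure contributions of lower normal order. Iterating, each pair of normal derivatives costs one time derivative, whose weight $t^{1/2}$ is bounded on $[0,T]$ by $T^{1/2}$; this is the source of the factor $(T^{1/2}C_2)^{\alpha_1}$ in \eqref{result:2}.

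\textbf{Main obstacle.} The most delicate step is the nonlinear combinatorial analysis. Expanding $\partial_t^j\partial^\alpha\bigl((u\cdot\nabla)u\bigr)$ via Leibniz and applying the induction hypothesis to each factor produces sums of the form
$$
\sum_{\beta\le\alpha,\,l\le j}\binom{\alpha}{\beta}\binom{j}{l}\,[(l+\beta_1+\beta_3)!]^\sigma\,[(j-l+\alpha_1-\beta_1+\alpha_3-\beta_3)!]^\sigma\,(\beta_2!)^\delta\,((\alpha_2-\beta_2)!)^\delta,
$$
which must be dominated, up to geometric constants, by $[(j+\alpha_1+\alpha_3)!]^\sigma(\alpha_2!)^\delta$. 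The three quantities entering $\delta=\max\{1,(\sigma+1)/3,(2\sigma-1)/4\}$ correspond to the three sharpest constraints required to close this inequality: $\delta\ge 1$ ensures that the $\alpha_2$-splitting survives the binomial factor, while the larger bounds $(\sigma+1)/3$ and $(2\sigma-1)/4$ become binding, for large $\sigma$, through the coupling of $\alpha_2$ with the un-dissipated $\partial_3$ direction in $u_3\partial_3 u$ and with the pressure via the boundary, respectively. Preserving the separation between the factorials of $(j+\alpha_1+\alpha_3)$ and of $\alpha_2$, together with the $T^{1/2}$-rescaling in $x_1$, while closing the induction at every order, is the central technical challenge.
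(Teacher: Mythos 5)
Your overall plan captures the right skeleton: establish tangential smoothing first, then induct on $\alpha_1$ using $\partial_1^2 u=\partial_t u-\partial_2^2 u+(u\cdot\nabla)u+\nabla p$, with the factor $T^{\alpha_1/2}$ arising from bounding $t^{1/2}\leq T^{1/2}$ at each trade of two normal derivatives for one time derivative. That is exactly how Section~\ref{sec:x1} of the paper handles the $x_1$ direction. However, the tangential step in the paper is \emph{not} a derivative-by-derivative induction: Sections~\ref{sec:time}--\ref{sec:x2} work instead with weighted Gevrey norms $\abs{\cdot}_{\tilde X_{\rho,\lambda}}$, $\abs{\cdot}_{X_{\rho,\lambda}}$ (infinite sums over all tangential derivative orders), derive a single differential inequality of the form $\frac12\frac{d}{dt}\abs{u}_X^2+\abs{\nabla_{\mathrm h}u}_X^2-\frac{\rho'}{\rho}\abs{u}_Y^2\leq\cdots$, and close it via smallness of $r,\lambda,\varepsilon_0$ and a bootstrap. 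The time-dependent Gevrey radius $\rho(t)$ and its derivative $\rho'/\rho$ are what absorb the worst top-order terms; a pure induction without this mechanism would leave you with nothing to absorb the $(m+1)$-weighted term coming from differentiating $L_{\rho,m}^2$. So the paper's tangential step is genuinely different in structure from what you sketch, and the difference matters.

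There is also a factual error in your diagnosis of where the threshold $\delta$ comes from. You assign $(\sigma+1)/3$ to the convection term $u_3\partial_3 u$ and $(2\sigma-1)/4$ to the pressure/boundary interaction. In the paper, \emph{both} constraints arise from the same place: the middle-frequency commutator for $u_3\partial_3 u$, i.e.\ the term $J_{2,2}$ in the proof of Lemma~\ref{lem:J2}. Lemma~\ref{cr1} (needing $\delta\geq\frac{2\sigma-1}{4}$) handles the sub-case where all $\partial_2$-derivatives fall on $u_3$ ($\gamma_1=\beta_1$), while Lemma~\ref{cr2} (needing $\delta\geq\frac{\sigma+1}{3}$) handles $\gamma_1<\beta_1$; the pressure estimate in Lemma~\ref{lem:J34} imposes no new constraint on $\delta$. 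Finally, your target combinatorial inequality is stated in the already-separated form $[(j+\alpha_1+\alpha_3)!]^\sigma(\alpha_2!)^\delta$, which is harder to close directly. The paper closes the induction with the coupled weights $[(j+\gamma_1+\gamma_3)!]^{\sigma-\delta}[(j+|\gamma|)!]^{\delta}$ (see Proposition~\ref{prop:x1}), and only at the very end passes to the separated form of \eqref{result:2} via $(p+q)!\leq 2^{p+q}p!q!$; working throughout with the coupled form is what makes the factorial manipulations of Step~3 of that proof close.
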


In view of \eqref{equradius}, estimate \eqref{result:2} implies that the Gevrey radius of $u$ in time is bounded below by a constant multiple of $t$, while the Gevrey radius in $(x_1, x_2)$ is bounded below by $\sqrt{t}$. The following result refines the estimate for the Gevrey radius $R_{x_2}^\delta$ in the $x_2$-direction, defined by
\begin{equation}\label{+def:radius}
    R_{x_2}^{\delta}(t) \stackrel{\mathrm{def}}{=} \sup\Big\{c > 0: \sum_{k \geq 0} \frac{c^{k}}{(k!)^\delta} \norm{\partial_{2}^k u(t)}_{L^2} < +\infty \Big\}.
\end{equation}

\begin{theorem}[Refined radius estimate]\label{thm:radius}
    Under the assumptions of Theorem~\ref{thm:main1}, let $u \in L^\infty([0,+\infty[; G_{\rho,\sigma,3})$ be the solution to system~\eqref{ANS}. Suppose in addition that the initial datum $u_0$ satisfies
    \begin{equation*}
        \partial_1^2 u_0|_{x_1 = 0,1} = \partial_1^4 u_0|_{x_1 = 0,1} = 0.
    \end{equation*}
    Then there exist a small time $T_0 > 0$ and a constant $C_3 > 0$ such that
    \begin{equation}\label{result:radius2}
        \forall 0 < t \leq T_0, \quad R_{x_2}^{\delta}(t) \geq C_3 \sqrt{t |\ln t|},
    \end{equation}
    where $R_{x_2}^{\delta}(t)$ is defined in \eqref{+def:radius}, being the Gevrey radius of the solution $u$ in the $x_2$ variable.
\end{theorem}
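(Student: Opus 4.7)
The plan is to sharpen the $t^{-k/2}$-type bound on $\|\partial_{2}^{k}u(t)\|_{L^{2}}$ implicit in Theorem~\ref{thm:smoothing} to one of the form $(k!)^{\delta}(t|\ln t|)^{-k/2}$ on short times, by exploiting both the $H^{6}$-regularity of $u_{0}$ in $(x_{1},x_{2})$ and the added boundary compatibility conditions. From Theorem~\ref{thm:smoothing} alone one has $\|\partial_{2}^{k}u(t)\|_{L^{2}}\lesssim \varepsilon_{0}C_{1}^{k+1}(k!)^{\delta}t^{-k/2}$, and summing this against $c^{k}/(k!)^{\delta}$ in \eqref{+def:radius} yields only $R_{x_{2}}^{\delta}(t)\gtrsim \sqrt{t}/C_{1}$. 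The log improvement amounts to upgrading the universal constant $C_{1}^{k}$ to a $t$-dependent factor $(\ln(1/t))^{-k/2}$ for $t$ near $0$.

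The first step is to use the Duhamel formula
\[
u(t)=e^{tL}u_{0}+\int_{0}^{t}e^{(t-s)L}\mathbb{P}\,\operatorname{div}(u\otimes u)(s)\,ds,
\]
where $L=\mathbb{P}\Delta_{\mathrm{h}}$ is the horizontal Stokes operator on the strip with Dirichlet condition. The hypotheses $\partial_{1}^{2}u_{0}|_{x_{1}=0,1}=\partial_{1}^{4}u_{0}|_{x_{1}=0,1}=0$ are precisely what ensures $\partial_{t}^{j}u|_{t=0}$ vanishes at $x_{1}=0,1$ for $j=1,2$; this is the missing ingredient that legitimizes heat-type smoothing estimates in the $x_{2}$ direction up to the boundary. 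With that in hand one obtains, for the linear part, the bound
\[
\|\partial_{2}^{k}e^{tL}u_{0}\|_{L^{2}}\;\lesssim\;\Bigl(\tfrac{(k-6)_{+}}{2et}\Bigr)^{\!(k-6)_{+}/2}\|u_{0}\|_{H^{6}},
\]
which by Stirling produces a coefficient of order $[(k-6)_{+}!]^{1/2}(2et)^{-(k-6)_{+}/2}$. Against $c^{k}/(k!)^{\delta}$ this is summable for any $c>0$, so the linear part imposes no constraint on the radius.

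For the nonlinear Duhamel integral, $\partial_{2}^{k}$ is moved through $e^{(t-s)L}\mathbb{P}\operatorname{div}$, which gains a factor $(t-s)^{-1/2}$, and the quadratic term is expanded by Leibniz:
\[
\|\partial_{2}^{k}(u\otimes u)(s)\|_{L^{2}}\;\lesssim\;\sum_{j=0}^{k}\binom{k}{j}\|\partial_{2}^{j}u(s)\|_{H^{2}}\|\partial_{2}^{k-j}u(s)\|_{L^{2}}.
\]
Applying Theorem~\ref{thm:smoothing} to each factor and using the combinatorial bound $\sum_{j=0}^{k}\binom{k}{j}(j!)^{\delta}((k-j)!)^{\delta}\lesssim (k+1)(k!)^{\delta}$ yields $\|\partial_{2}^{k}(u\otimes u)(s)\|_{L^{2}}\lesssim \varepsilon_{0}^{2}C^{k+1}(k!)^{\delta}s^{-k/2}$. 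Splitting the time integral at $s=\tau(t)$: on $[\tau,t]$ this bound gives a contribution of order $(k!)^{\delta}\tau^{-k/2}\sqrt{t-\tau}$, while on $[0,\tau]$ the $H^{6}$-initial regularity combined with the linear smoothing produces a term controlled by $(k!)^{1/2}\tau^{-(k-6)/2}$. Optimizing $\tau(t)\sim t/|\ln t|$ balances the two pieces and delivers
\[
\|\partial_{2}^{k}u(t)\|_{L^{2}}\;\lesssim\;(k!)^{\delta}\Bigl(\frac{C}{\sqrt{t|\ln t|}}\Bigr)^{\!k},
\]
from which \eqref{result:radius2} follows by direct summation in \eqref{+def:radius}.

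The main obstacle I foresee is carrying out the Leibniz expansion in the nonlinear convolution without squandering the logarithmic gain: the combinatorial estimates must be sharp at every step, and each boundary integration by parts must be absorbed via the added compatibility hypothesis so that the heat-semigroup smoothing remains valid uniformly up to $x_{1}=0,1$. A secondary difficulty is the self-consistency of the split $\tau(t)\sim t/|\ln t|$, which will likely require a bootstrap that starts from the $\sqrt{t}$-radius furnished by Theorem~\ref{thm:smoothing} and iteratively upgrades to $\sqrt{t|\ln t|}$ on the short-time regime $0<t\le T_{0}$.
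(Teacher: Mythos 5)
Your high-level strategy — peel off a linear heat evolution that inherits the extra boundary compatibility of $u_0$ and treat the nonlinear remainder as small — is indeed the core idea of the paper's proof, which writes $u=u_L+v$ with $u_L$ solving the heat equation \eqref{eq:uL} (so that $\operatorname{div}u_L=0$ thanks to $\partial_1^2u_0|_{x_1=0,1}=\partial_1^4u_0|_{x_1=0,1}=0$) and $v$ solving \eqref{eq:v} with $v|_{t=0}=0$. But the technical route you propose — Duhamel formula with a one-shot split at $\tau(t)\sim t/|\ln t|$ — does not deliver the logarithmic gain, and I think there is a real gap in the optimization.

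The problem is in the $[\tau,t]$ piece. You bound it by $(k!)^\delta\tau^{-k/2}\sqrt{t-\tau}$, which corresponds to a Gevrey-$\delta$ radius of order $\sqrt{\tau}$. With $\tau=t/|\ln t|$ this is $\sqrt{t/|\ln t|}$, which is \emph{smaller} than $\sqrt{t}$, not larger. More precisely, comparing $(k!)^\delta\tau^{-k/2}\sqrt{t-\tau}$ with the target $(k!)^\delta(t|\ln t|)^{-k/2}$ forces $\tau\gtrsim t|\ln t|$, which is impossible since $\tau\le t$. So this split is counterproductive: it worsens the radius for the regime of $k$ that actually governs convergence of the series in \eqref{+def:radius}. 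The $[0,\tau]$ piece also relies on $\|u(s)\|_{H^6}$ being uniformly bounded near $s=0$, but the global solution constructed in Theorem~\ref{thm:main1} only lives in $G_{\rho,\sigma,3}\subset H^3$, so this regularity would itself need to be propagated before one can use it. Finally, a naive Duhamel argument must contend with the Stokes semigroup on $[0,1]\times\mathbb{R}^2$ with Dirichlet conditions (it is the Leray-projected Stokes semigroup, not the componentwise heat semigroup, that appears after projecting out the pressure in the $v$-equation), and those decay and smoothing bounds are not established in the paper; the paper deliberately avoids this by doing energy estimates.

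What the paper does instead: it introduces a time-dependent Gevrey weight $D_\mu^\beta$ with $\mu(t)=\frac{1}{2^{5+2\sigma}3\rho_0}\frac{\sqrt{\psi(T)}}{T}t$ (so that $\mu$ grows like $\sqrt{\psi(T)}$ rather than being a fixed small $\lambda$), together with the exponential damping $e^{-\psi(T)t/T}$ (see Definition~\ref{def:normx2+}). The large factor $\sqrt{\psi(T)}$ in $\mu$ is the source of the $\sqrt{|\ln T|}$ improvement in the radius; the damping weight is precisely what is needed to absorb, via its time derivative, the term $\frac{\psi(T)}{T}|u_L|^2$ coming from $\frac{d}{dt}(\mu(t)t^{1/2})$ (see \eqref{demu}--\eqref{ulde}). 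The bootstrap constraint $\tilde Cr^{-9}e^{3\psi(T)}\sqrt{T}=\frac14$ in \eqref{dep} then quantifies exactly how large $\psi(T)$ can be, giving $\psi(T)\approx|\ln T|$ and hence $R_{x_2}^\delta(T)\gtrsim \eta(T)\mu(T)\sqrt{T}\gtrsim\sqrt{T|\ln T|}$. There is no time-splitting; the log gain is a trade between a temporal exponential weight and the Gevrey weight in $x_2$, closed by a bootstrap on $v$ (Propositions~\ref{lem:uL} and \ref{prop:v}). If you want to pursue a Duhamel version of the argument, the analogue of this trade would be a Fourier-side (in $x_2$) optimization where the heat-semigroup smoothing exchanges a prefactor of order $e^{O(c^{2\delta/(2\delta-1)}(t-s)^{-1/(2\delta-1)})}$ for an arbitrary gain $c$ in the Gevrey radius, and the balance between that prefactor and the $\sqrt{T}$ smallness of the nonlinear integral reproduces the same $\psi(T)\sim|\ln T|$ constraint; a simple cutoff at $\tau\sim t/|\ln t|$ does not capture this.
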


\begin{remark}
    The constants $C_0$, $C_1$, and $C_2$ depend only on $\rho_0$, $\sigma$, and Sobolev embedding constants, and are independent of $\varepsilon_0$ and $T$ in \eqref{result:2}. In particular, if $\alpha_1 = 0$ in \eqref{result:2}, we obtain a Gevrey-class smoothing effect in the $(t, x_2)$ variables that is global in time and valid up to the boundary. As shown in Lemma~\ref{lem-pre} and Remark~\ref{remlocal} below, the nontrivial treatment of the pressure prevents a global-in-time radius estimate in $x_1$. However, for the anisotropic Navier-Stokes equations on a torus, one may expect a result similar to Theorem~\ref{thm:main1} with the improved Gevrey estimate:
    \begin{equation}\label{+result:2}
        \sup_{t \geq 0} t^{j + \frac{\alpha_1 + \alpha_2}{2}} \norm{\partial_t^j \partial^{\alpha} u}_{H^2}
        \leq \varepsilon_0 C^{j + \abs{\alpha} + 1} [(j + \alpha_3)!]^{\sigma} [(\alpha_1 + \alpha_2)!]^{\delta},
    \end{equation}
    which is global in time and implies Gevrey-class regularity in $x_1$ with index $\delta$.
\end{remark}

\begin{remark}
    The methods here also apply to the half-space $\mathbb{R}^3_+ = \{ (x_1, x_2, x_3): x_1 > 0 \}$ or the whole space. In these settings, using the enhanced dissipation property from \cite{MR4816041}, global well-posedness can be established based on polynomial time decay of solutions, and moreover the global solution satisfies estimate \eqref{+result:2}.
\end{remark}

\begin{remark}
    In view of \eqref{gamma1}, one may expect an analytic smoothing effect in $x_2$ even without analytic regularity in $x_3$. This, together with estimate \eqref{result:radius2}, generalizes the results in \cite{MR4816041}.
\end{remark}

The rest of this paper is organized as follows. We prove Theorem~\ref{thm:main1} in Section~\ref{sec:wellposdeness}. The proof of Theorem~\ref{thm:smoothing} is presented in Sections~\ref{sec:time}--\ref{sec:x1}. Section~\ref{sec:radius} is devoted to the proof of Theorem~\ref{thm:radius}. Finally, Appendices~\ref{sec:appendix} and~\ref{sec:algebra} provide the proofs of several straightforward inequalities and the algebra property of the space $\mathcal{H}^3$ defined in \eqref{def:mathcalH3}.

To simplify notation, throughout this paper we use the capital letter $C \geq 1$ to denote a generic positive constant that may vary from line to line. This constant depends only on the Sobolev embedding constants, the Gevrey index $\sigma$, and the radius $\rho_0$ of the initial datum $u_0$, and is independent of the parameters $r$ and $\lambda$, which will be specified later.

\section{Proof of Theorem \ref{thm:main1}: global existence and uniqueness}\label{sec:wellposdeness}

This section is devoted to proving Theorem \ref{thm:main1} which addresses  the global-in-time existence and uniqueness of system \eqref{ANS} in Gevrey setting.  To prove Theorem \ref{thm:main1}, it suffices to derive certain \textit{a priori} estimates for system \eqref{ANS}. The existence and uniqueness asserted in Theorem \ref{thm:main1} will then follow via a standard approximation argument. Therefore, for brevity, we restrict our presentation to the proof of the  \emph{a priori} estimate in Theorem \ref{prop:wellposedness} below, omitting the details of the approximation procedure.

\subsection{Auxiliary norms and statement of \textit{a priori} estimate}

We introduce a radius function $\rho = \rho(t)$ and two auxiliary norms, which will be used throughout the subsequent analysis. Let $\rho_0$ be as given in the hypothesis of Theorem \ref{thm:main1}, and define

\begin{equation}\label{def:rerho}
\rho(t)\stackrel{\rm def}{=}\frac{\rho_0}{2}+\frac{\rho_0}{2}e^{-\frac{t}{4}},
\end{equation}
which satisfies
the following fact:
\begin{equation}\label{boundrho}
  \forall\ t\ge0,\quad  \frac{\rho_0}{2}\leq \rho(t)\leq \rho_0.
\end{equation}

\begin{definition}\label{def:normx3}
Let $0 < r < 1$ be a given parameter to be determined later. We define two auxiliary norms $\abs{\cdot}_{X_{\rho}}$ and  $\abs{\cdot}_{Y_{\rho}}$  as follows:
 \begin{equation*}
     \left\{
     \begin{aligned}
&\abs{g}_{X_{\rho}}^2 \stackrel{\rm def}{=}\sum_{\stackrel{(j,   \alpha) \in\mathbb Z_+\times \mathbb{Z}_+^3}{j+\abs{ \alpha }\leq 3}}\ \sum_{m= 0}^{+\infty}r^{ \alpha_1}L^2_{\rho,m}\norm{\partial_3^m\partial_t^j\partial^{ \alpha }g}^2_{L^2},\\
&\abs{g}_{Y_{\rho}}^2\stackrel{\rm def}{=} \sum_{\stackrel{(j,   \alpha) \in\mathbb Z_+\times \mathbb{Z}_+^3}{j+\abs{ \alpha }\leq 3}}\ \sum_{m= 0}^{+\infty} r^{ \alpha_1}(m+1)L^2_{\rho,m}\norm{\partial_3^m\partial_t^j\partial^{ \alpha}g}^2_{L^2},
     \end{aligned}
     \right.
 \end{equation*}
 where $L_{\rho,m}$ is defined in \eqref{def:L}.
\end{definition}

\begin{remark}
From the definitions of   $\abs{g}_{X_{\rho}}$ and  $\abs{g}_{Y_{\rho}}$, it follows that
    \begin{equation}\label{xyone}
        \abs{g}_{X_{\rho}}\leq \abs{g}_{Y_{\rho}}.
    \end{equation}
  Furthermore, if  $u$ is a solution to the anisotropic Navier-Stokes system \eqref{ANS} with initial datum $u_0$, then a direct computation shows that
  \begin{equation}\label{inda}
  	\lim_{t\rightarrow 0} \abs{u(t)}_{X_{\rho}} \leq C_*\norm{u_0}_{G_{\rho_0,\sigma,6}}+C_* \norm{u_0}_{G_{\rho_0,\sigma,6}}^4
  \end{equation}
  for some constant $C_*\geq 1$ which depends   only on  the Sobolev embedding constants.
\end{remark}

\begin{remark}
    \label{algebra}
    The following algebra property holds for the pointwise product of functions:
    \begin{equation}\label{prpalge}
       \norm{  fg }_{\mathcal H^3}\leq C\norm{  f }_{\mathcal H^3}\norm{  g }_{\mathcal H^3},
    \end{equation}
   where   $C>0$ is a constant  depending   only on  the Sobolev embedding constants, and
    \begin{equation}\label{def:mathcalH3}
        \norm{f}_{\mathcal H^3}\stackrel{\rm def}{=}\sum_{\stackrel{(j,   \alpha) \in\mathbb Z_+\times \mathbb{Z}_+^3}{j+\abs{ \alpha }\leq 3}}\   \norm{ \partial_t^j\partial^{ \alpha }f}_{L^2}.
    \end{equation}
 The proof is straightforward; see Appendix \ref{sec:algebra} for details.
\end{remark}

\begin{remark}\label{remk}
Compared with the norm $\norm{\cdot}_{G_{\rho,\sigma,N}}$ in Definition \ref{def:re1},  time derivatives are involved in the norm $\abs{\cdot}_{X_{\rho}}$ defined above. This is because we will use space-time derivatives to handle boundary terms arising from integration by parts in $x_1$. For instance, when estimating the $L^2$-energy of $\partial_1^3 u$, we encounter the boundary term
\begin{equation*}
	\partial_1^3\partial_1 u|_{x_1=0,1}
\end{equation*}
which could lead to a loss of derivatives in  $x_1$.  However,  using the identity
 \begin{equation*}
 	\partial_1^3\partial_1 u|_{x_1=0,1}=\partial_1^2\partial_1^2 u|_{x_1=0,1}=\partial_1^2 \partial_tu|_{x_1=0,1}+\partial_1^2 \big ((u\cdot\nabla)u+\nabla p-\partial_2^2u\big)\big|_{x_1=0,1},
 \end{equation*}
  we can control this boundary term via an $L^2$-energy estimate for $\partial_t\partial_1^2u$. Similarly, the boundary term
 \begin{equation*}
	\partial_t\partial_1^2\partial_1 u|_{x_1=0,1}
\end{equation*}
 can be handled by estimating $\partial_t^2 \partial_1 u$, and ultimately reduced to an energy estimate for $\partial_t^3 u$, which vanishes on the boundary.   The small factor $r^{\alpha_1}$ in definitions of $\abs{\cdot}_{X_{\rho}}$ and $\abs{\cdot}_{Y_{\rho}}$  is introduced to absorb the highest-order derivatives induced by these boundary terms. Further details can be found in Lemmas \ref{lem:S3} and \ref{lem:S4} below.
\end{remark}

Using the above norms, the key \emph{a priori} estimate for proving Theorem \ref{thm:main1} is stated as follows.

\begin{theorem}[{\it A priori} estimate]\label{prop:wellposedness}
Let $\sigma\ge 1$ and  let $\rho$ be defined as in \eqref{def:rerho}. Then there exist a small constant $\eps_0>0$ and a small parameter $0<r<1$ such that
  if $u\in L^{\infty}([0,+\infty[, G_{\rho,\sigma,3})$ is a solution to system  \eqref{ANS}  and   the initial datum $u_0$ satisfies
 \begin{equation}\label{pri:initial1}
\norm{u_0}_{G_{\rho_0,\sigma,6}}\leq (2C_*)^{-1} \varepsilon_0
 \end{equation}
 with $C_*\geq 1$ the constant  given in \eqref{inda},  then
\begin{equation}\label{pri:ret1}
\forall\ t\ge 0,\quad e^{\frac{t}{2}}\abs{u(t)}_{X_{\rho}}^2+\int^t_0e^{\frac{s}{2}}\abs{\nabla_{\rm h}u(s)}_{X_{\rho}}^2ds\leq  \varepsilon_0^2.
\end{equation}
We recall  $r$ is the parameter in definitions of the norms $\abs{\cdot}_{X_{\rho}}$ and $\abs{\cdot}_{Y_{\rho}}$ (see Definition \ref{def:normx3}).
\end{theorem}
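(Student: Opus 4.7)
The plan is to perform a weighted higher-order energy estimate on system \eqref{ANS} and to close a differential inequality of the form
\begin{equation*}
\frac{d}{dt}\bigl(e^{t/2}\abs{u}_{X_\rho}^2\bigr)+e^{t/2}\abs{\nabla_{\mathrm h}u}_{X_\rho}^2\lesssim e^{t/2}\abs{u}_{X_\rho}\abs{\nabla_{\mathrm h}u}_{X_\rho}^2,
\end{equation*}
after which \eqref{pri:ret1} follows from \eqref{inda}--\eqref{pri:initial1} by a standard bootstrap. Concretely, for every $(m,j,\alpha)$ with $j+\abs{\alpha}\leq 3$, I would apply $\partial_3^m\partial_t^j\partial^\alpha$ to \eqref{ANS}, pair the result with $r^{\alpha_1}L_{\rho,m}^2\,\partial_3^m\partial_t^j\partial^\alpha u$ in $L^2$, and sum over the admissible indices.

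Differentiating $\abs{u}_{X_\rho}^2$ in time will produce three contributions. The time derivative of $L_{\rho,m}^2$ gives, through $\partial_t L_{\rho,m}^2=2(m+1)\rho^{-1}\rho'(t)L_{\rho,m}^2$ and the fact that $-\rho'(t)/\rho(t)$ is bounded below by a positive constant on $\mathbb{R}_+$ via \eqref{def:rerho}--\eqref{boundrho}, a good term comparable to $\abs{u}_{Y_\rho}^2$; this is what will compensate the loss of one $\partial_3$-derivative in the nonlinear estimate. The horizontal dissipation $-\Delta_{\mathrm h}u$ produces the desired $\abs{\nabla_{\mathrm h}u}_{X_\rho}^2$. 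Finally, the exponential weight $e^{t/2}$ contributes $\frac12 e^{t/2}\abs{u}_{X_\rho}^2$, which is absorbed into the dissipation via the Poincaré inequality on $[0,1]$: the derivatives $\partial_t^j\partial_2^{\alpha_2}\partial_3^{\alpha_3}u$ preserve the vanishing trace $u|_{x_1=0,1}=0$, while if $\alpha_1\geq 1$ the absorption is direct, since the relevant $L^2$-norm already appears inside $\abs{\nabla_{\mathrm h}u}_{X_\rho}$ with a factor $r^{\alpha_1}$.

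For the convective nonlinearity, I would distribute $\partial_3^m\partial_t^j\partial^\alpha$ by the Leibniz rule. The polynomial weight $(m+1)^{6+2\sigma}$ in \eqref{def:L} provides a summable convolution kernel for the $m$-sum, while the algebra property \eqref{prpalge} of $\mathcal H^3$ takes care of the $(j,\alpha)$-sum; using $\operatorname{div} u=0$ to move one horizontal derivative onto the test function then leaves a trilinear expression controlled by $\abs{u}_{X_\rho}\abs{\nabla_{\mathrm h}u}_{X_\rho}^2$, up to cross terms in which a single $\partial_3$ lands on a top-order factor and is absorbed, via Cauchy--Schwarz, by the good $\abs{u}_{Y_\rho}^2$-term above. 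The pressure is recovered from the elliptic problem $-\Delta p=\sum_{i,j}\partial_iu_j\,\partial_ju_i$ with Neumann-type trace read off the first line of \eqref{ANS} at $\{x_1=0,1\}$; a Gevrey-weighted elliptic estimate reduces its contribution to a quadratic expression of the same type as the nonlinearity.

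The delicate point is the boundary contribution. Integration by parts in $x_1$ at top order $\alpha_1=3$ produces the trace $\partial_1^4 u|_{x_1=0,1}$, which does not vanish. Following Remark~\ref{remk}, I would use the equation to replace $\partial_1^4 u$ on the boundary by $\partial_1^2\partial_t u$ plus explicit nonlinear and pressure corrections, then chain the analogous substitutions through $\partial_t\partial_1^3 u$ and $\partial_t^2\partial_1 u$ until reaching $\partial_t^3 u|_{x_1=0,1}=0$. After the trace theorem and \eqref{prpalge}, each intermediate boundary integral is bounded by a constant times $r\cdot\abs{u}_{X_\rho}\abs{\nabla_{\mathrm h}u}_{X_\rho}^2$, so a sufficiently small $r$ absorbs them into $\frac12\abs{\nabla_{\mathrm h}u}_{X_\rho}^2$. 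Combining all pieces and invoking \eqref{inda}--\eqref{pri:initial1} in the continuation argument closes \eqref{pri:ret1}. The principal obstacle is precisely this chained boundary reduction: preserving compatibility of each substitution with the Gevrey weights and with the nonlinear and pressure contributions already estimated is what forces the norms $\abs{\cdot}_{X_\rho}$ and $\abs{\cdot}_{Y_\rho}$ in Definition~\ref{def:normx3} to include time derivatives and the small factor $r^{\alpha_1}$.
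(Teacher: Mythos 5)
Your overall plan is essentially the one the paper carries out: apply $\partial_3^m\partial_t^j\partial^\alpha$ to the equation, weight by $r^{\alpha_1}L_{\rho,m}^2$, sum; let $\partial_t L_{\rho,m}^2=2\tfrac{\rho'}{\rho}(m+1)L_{\rho,m}^2$ generate the good $\abs{\cdot}_{Y_\rho}$-term that absorbs the lost vertical derivative in the convection; absorb the $e^{t/2}$-term via Poincar\'e; use Leibniz, Young for discrete convolution (the $(m+1)^{6+2\sigma}$ weight makes the kernel summable), and the $\mathcal H^3$-algebra property \eqref{prpalge}; and reduce the boundary trace at $\alpha_1=3$ through the chain $\partial_1^4u\to\partial_1^2\partial_t u\to\cdots\to\partial_t^3u|_{x_1=0,1}=0$, absorbing the residue by smallness of $r$ and then bootstrap from \eqref{inda}--\eqref{pri:initial1}. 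This is the skeleton of Propositions~\ref{purespatial} and \ref{mixder} and the argument after \eqref{ces}.

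There is, however, a genuine gap in your treatment of the pressure, which the paper treats as the most delicate step (Lemma~\ref{lem:S3} and Remark~\ref{remlocal}). Saying that ``a Gevrey-weighted elliptic estimate reduces its contribution to a quadratic expression of the same type as the nonlinearity'' skips the real obstruction. Once you test the elliptic problem for $\partial_3^m\partial^{\tilde\alpha}p$ against itself, the boundary term carries $\partial_3^m\partial^{\tilde\alpha}\Delta_{\rm h}u_1|_{x_1=0,1}$ and $\partial_3^m\partial^{\tilde\alpha}\partial_tu_1|_{x_1=0,1}$ read off \eqref{boundary:Pone}, and at top order $\tilde\alpha=(2,0,0)$ the first involves a trace of $\partial_1^3 u$ --- one $x_1$-derivative more than the $X_\rho$-scheme can control. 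A generic trace or elliptic estimate does not remove this loss. The paper's fix is specific: use $\operatorname{div}u=0$ to rewrite $\partial_1 u_1=-\partial_2u_2-\partial_3u_3$ (trading $x_1$-derivatives for $x_2,x_3$-derivatives), then take a partial Fourier transform in $x_2$ (resp.\ $x_3$), apply Plancherel, and control the $x_1$-trace through the one-dimensional Gagliardo--Nirenberg inequality \eqref{GN}. This converts the loss of a full $x_1$-derivative into a geometric mean $\norm{\cdot}^{1/2}\norm{\partial_1\cdot}^{1/2}$ and produces the factor $r^{-\frac{2\alpha_1-1}{4}}$ rather than $r^{-\alpha_1/2}$ in \eqref{est:Pone}; the resulting extra $r^{1/4}$ in $S_3$ is what makes the pressure's linear contribution absorbable by choosing $r$ small. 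Without this anisotropic trace argument the elliptic step does not close.

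A smaller imprecision: in your boundary reduction you bound ``each intermediate boundary integral'' by $Cr\,\abs{u}_{X_\rho}\abs{\nabla_{\rm h}u}_{X_\rho}^2$. The linear pieces coming from $\partial_tu$ and $\partial_2^2u$ are quadratic, not trilinear --- in the paper these give $Cr^{1/4}\abs{\nabla_{\rm h}u}_{X_\rho}^2$ (see $S_{4,1}$, $S_{4,2}$ in Lemma~\ref{lem:S4}), and must be absorbed by the dissipation via smallness of $r$, not via smallness of $\abs{u}_{X_\rho}$. Only the nonlinear piece $S_{4,3}$ is trilinear. Your absorption mechanism is the right one, but the form of the bound matters for the bookkeeping in \eqref{ces}.
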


We now proceed to prove Theorem \ref{prop:wellposedness} in Subsections \ref{subpure}-\ref{subspcetime}.  We will  use the  following version of Young's inequality for discrete convolution:
\begin{equation}\label{young}
\bigg[\sum^{+\infty}_{m=0}\Big{(}\sum^m_{j=0}p_jq_{m-j}\Big{)}^2\bigg]^{\frac12}\leq \Big{(}\sum^{+\infty}_{m=0}q_m^2\Big{)}^{\frac12}\sum^{+\infty}_{j=0}p_j,
\end{equation}
where $\{p_j\}_{j\ge 0}$ and $\{q_j\}_{j\ge 0}$ are   sequences of nonnegative real numbers.

\subsection{Estimate on pure spatial derivatives}\label{subpure}
This subsection is devoted to estimating the terms in $\abs{u}_{X_\rho}$ involving purely spatial derivatives. The main result is as follows.

\begin{proposition}\label{purespatial}
	Under the hypothesis of Theorem \ref{prop:wellposedness}, for any $\alpha=(\alpha_1,\alpha_2,\alpha_3)\in\mathbb Z_+^3$ with $\abs{\alpha}\leq 3$,  we have that
	\begin{equation*}
		\begin{aligned}
		     &	\frac{1}{2}\frac{d}{dt}\sum_{m=0}^{+\infty} r^{\alpha_1}L_{\rho,m}^2\norm{\partial_3^{m}\partial^{\alpha} u}_{L^2}^2+\sum_{m=0}^{+\infty} r^{\alpha_1}L_{\rho,m}^2 \norm{\partial_3^{m}\partial^{\alpha} \nabla_{\rm h} u}_{L^2}^2\\
		     &\qquad - \frac{\rho'}{\rho}\sum_{m=0}^{+\infty} r^{\alpha_1}(m+1)L_{\rho,m}^2\norm{\partial_3^{m}\partial^{\alpha} u}_{L^2}^2\\
		     &\leq C r^{-\frac{9}{2}}\abs{u}_{X_{\rho}}\abs{u}_{Y_{\rho}}^2+  C\Big(r^{-\frac{9}{2}}\abs{u}_{X_{\rho}}  +r^{\frac{1}{4}} \Big)\abs{\nabla_{\rm h}u}_{X_{\rho}}^2,
		\end{aligned}
	\end{equation*}
	where the norms $\abs{\cdot}_{X_{\rho}}$ and $\abs{\cdot}_{Y_{\rho}}$ are given in Definition \ref{def:normx3}.
\end{proposition}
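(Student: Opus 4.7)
The plan is to apply $\partial_3^m\partial^{\alpha}$ to the momentum equation in \eqref{ANS}, take the $L^2$ inner product with $\partial_3^m\partial^{\alpha} u$, and then weight the resulting identity by $r^{\alpha_1}L_{\rho,m}^2$ before summing over $m\geq 0$. Because $\partial_t L_{\rho,m}/L_{\rho,m}=(m+1)\rho'/\rho$, the time derivative of $\tfrac12 L_{\rho,m}^2\norm{\partial_3^m\partial^{\alpha} u}_{L^2}^2$ produces, in addition to $\tfrac12\tfrac{d}{dt}\bigl(L_{\rho,m}^2\norm{\cdot}_{L^2}^2\bigr)$, a contribution $(m+1)(\rho'/\rho)L_{\rho,m}^2\norm{\partial_3^m\partial^{\alpha} u}_{L^2}^2$; moving this to the left yields, thanks to $\rho'<0$, exactly the third term of the stated inequality. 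Integrating $-\Delta_{\rm h}u$ by parts in $(x_1,x_2)$---using $u|_{x_1=0,1}=0$ and decay in $x_2$---furnishes the horizontal dissipation $\norm{\partial_3^m\partial^{\alpha}\nabla_{\rm h} u}_{L^2}^2$.

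For the pressure I would integrate by parts componentwise to move one derivative off $p$, reassemble $\operatorname{div}u=0$ to kill the interior contribution, and observe that boundary contributions can only come from the $x_1$-integration:
\begin{equation*}
\sum_m r^{\alpha_1}L_{\rho,m}^2\int_{\mathbb{R}^2}\bigl[\partial_3^m\partial^{\alpha} p\,\partial_3^m\partial^{\alpha} u_1\bigr]_{x_1=0}^{x_1=1}dx_2\,dx_3.
\end{equation*}
If $\alpha_1=0$, tangential differentiation preserves $u_1|_{x_1=0,1}=0$ and this boundary term vanishes identically. If $\alpha_1\geq 1$, I would solve the elliptic problem $\Delta p=-\operatorname{div}((u\cdot\nabla)u)$ with Neumann-type data read off from the momentum equation to express $\partial^{\alpha} p$ as a nonlocal quadratic functional of $u$; a trace inequality combined with the small factor $r^{\alpha_1}$ then absorbs the derivative loss and ultimately produces the $r^{1/4}\abs{\nabla_{\rm h}u}_{X_{\rho}}^2$ contribution on the right.

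The main work is the convection. By Leibniz,
\begin{equation*}
\partial_3^m\partial^{\alpha}\bigl[(u\cdot\nabla)u\bigr]=\sum_{l=0}^{m}\sum_{\beta\leq\alpha}\binom{m}{l}\binom{\alpha}{\beta}\bigl(\partial_3^l\partial^{\beta} u\cdot\nabla\bigr)\partial_3^{m-l}\partial^{\alpha-\beta}u,
\end{equation*}
and I would split the trilinear sum into horizontal ($u_j\partial_j u$, $j=1,2$) and vertical ($u_3\partial_3 u$) pieces. Each term is bounded in an anisotropic Hölder fashion, placing two of the three factors in $L^\infty$ through Sobolev embeddings already controlled by $\abs{\cdot}_{X_\rho}$ and the third in $L^2$. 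The horizontal pieces exploit the available $\nabla_{\rm h}$-dissipation and yield the $\abs{u}_{X_\rho}\abs{\nabla_{\rm h}u}_{X_\rho}^2$ term, whereas the vertical piece lacks dissipation so its extra $\partial_3$ derivative is charged against the weight and absorbed into the $Y$-norm (which carries the $(m+1)$-factor), giving the $\abs{u}_{X_\rho}\abs{u}_{Y_\rho}^2$ term. The convolutions in $m$ and $\beta$ are resummed via \eqref{young}, with the algebraic structure of $L_{\rho,m}$ in \eqref{def:L} ensuring that $\sum_l\binom{m}{l}L_{\rho,l}L_{\rho,m-l}/L_{\rho,m}$ is uniformly bounded. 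The $r^{-9/2}$ prefactor is the price for redistributing $x_1$-derivatives among the three factors when $\abs{\alpha}\leq 3$.

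The main obstacle will be the pressure boundary term when $\alpha_1\geq 1$: $p$ is nonlocal in $u$ and effectively one order higher, so controlling its trace at $x_1=0,1$ in the $X_\rho$-norm demands a delicate elliptic-plus-trace argument, and the smallness of $r$ is precisely what makes this contribution absorbable rather than destructive. All remaining ingredients---the convection and the $m$-series generated by the Gevrey weights---are routine once one tracks the $(m+1)$-factor into the $Y$-norm and applies \eqref{young}.
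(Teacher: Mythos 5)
Your framework (energy estimate with $\partial_3^m\partial^\alpha$, weighted $r^{\alpha_1}L_{\rho,m}^2$ sum, Leibniz splitting of the convection with the vertical piece absorbed by the $Y_\rho$-norm, Young's inequality for the $m$-convolution) matches the paper, but your treatment of the pressure and boundary terms diverges from Lemma~\ref{nonvani} in a way that hides the hardest step and leaves a genuine gap. You integrate $\nabla p$ by parts componentwise and invoke $\operatorname{div}u=0$ to kill the interior, which leaves $\int[\partial_3^m\partial^\alpha p\cdot\partial_3^m\partial^\alpha u_1]_{0}^{1}$. The paper instead factors $\partial^\alpha=\partial_1\partial^{\tilde\alpha}$ and integrates by parts \emph{once} in $x_1$, which keeps an interior term $-\inner{\partial_3^m\partial^{\tilde\alpha}\nabla p,\ \partial_3^m\partial^\alpha\partial_1 u}_{L^2}$ (the $S_3$ above) and produces a boundary term whose pressure part \emph{exactly cancels} the pressure piece of the $\Delta_{\mathrm h}u$ boundary contribution after the identity $\partial^\alpha\partial_1 u|_{x_1=0,1}=\partial^{\tilde\alpha}(\partial_t u+(u\cdot\nabla)u-\partial_2^2 u+\nabla p)|_{x_1=0,1}$ is substituted. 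The net boundary term $S_4$ contains no trace of $p$ at all. Your version needs to control $\partial_1^{\alpha_1}p$ on $\{x_1=0,1\}$ for $\alpha_1$ up to $3$, which is a full normal derivative trace of a nonlocal quantity, one order worse than the $L^2$ interior bound on $\partial^{\tilde\alpha}\nabla p$ the paper needs; you give no mechanism to tame this.

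Two closely related omissions compound the gap. First, you never mention the $\Delta_{\mathrm h}u$ boundary term $-\int[\partial^\alpha\partial_1 u\cdot\partial^\alpha u]_{0}^{1}$, which is nonzero for $\alpha_1\geq 1$; in the paper this is the term that gets re-expressed through the momentum equation, trading two $x_1$-derivatives for one $\partial_t$. Second---and this is the structural reason the $X_\rho$ and $Y_\rho$ norms carry time derivatives up to order $j+|\alpha|\leq 3$ (see Remark~\ref{remk})---you never explain how the cascade $\partial_1^4 u\to\partial_t\partial_1^2 u\to\partial_t^2\partial_1^0 u\to 0$ at the boundary is what makes these traces controllable. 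Saying the small factor $r^{\alpha_1}$ ``absorbs the derivative loss'' papers over the anisotropic Gagliardo--Nirenberg/partial-Fourier arguments (the $Q_1,Q_2,R_1,R_2$ bounds in the paper's estimates of $S_3$ and $S_4$) that actually produce the $r^{1/4}\abs{\nabla_{\mathrm h}u}_{X_\rho}^2$ on the right. Until you explain how your alternative boundary term is traced and why your scheme does not need the cancellation, the proof does not close.
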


It suffices to prove Proposition \ref{purespatial}  for multi-indices $\alpha=(\alpha_1,\alpha_2,\alpha_3)$ with $\alpha_1\geq 1.$ The case   $\alpha=(0,\alpha_2,\alpha_3)$ will follow from a similar yet simpler argument, as no boundary term appears in that situation. For details, we refer to  Lemma \ref{nonvani} and Remark \ref{vanishing} below.

\begin{lemma}\label{nonvani}
For any $\alpha=(\alpha_1,\alpha_2,\alpha_3)\in\mathbb{Z}_+^3$ with $\alpha_1\geq 1$ and $\abs\alpha \leq 3,$
we have
\begin{multline*}
	  \inner{-\partial_3^{m}\partial^{\alpha} \Delta_{\rm h}u+ \partial_3^{m}\partial^{\alpha} \nabla p, \ \partial_3^{m}\partial^{\alpha}  u}_{L^2}\\
	 =\norm{\partial_3^{m}\partial^{\alpha} \nabla_{\rm h} u}_{L^2}^2-\inner{\partial_3^m\partial ^{\tilde\alpha} \nabla p, \ \partial_3^{m}\partial^{\alpha} \partial_1 u}_{L^2}\\
	  - \int_{\mathbb R^2} \big[ (\partial_3^{m}\partial^{\alpha}  u) \partial_3^m\partial ^{\tilde\alpha} \big(\partial_tu+(u\cdot\nabla) u-\partial_2^2u\big)\big]\big|^{x_1=1}_{x_1=0}\ dx_2dx_3,
\end{multline*}
where  $ \tilde\alpha =\alpha-(1,0,0)=(\alpha_1-1,\alpha_2, \alpha_3)$ for $\alpha_1\geq 1.$
\end{lemma}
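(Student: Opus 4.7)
The plan is to compute $I := (-\partial_3^{m}\partial^{\alpha}\Delta_{\mathrm h}u + \partial_3^{m}\partial^{\alpha}\nabla p,\,\partial_3^{m}\partial^{\alpha}u)_{L^2}$ by treating the dissipative and pressure pieces separately via integration by parts, and then reconciling the two resulting $x_1$-boundary contributions by invoking the momentum equation on $x_1=0,1$. The hypothesis $\alpha_1\ge 1$ plays a double role: it allows me to shift one $x_1$-derivative off the pressure, and it keeps the rearranged equation $\partial_1^2 u=\partial_t u+(u\cdot\nabla)u+\nabla p-\partial_2^2 u$ differentiable enough in $x_1$ to substitute into the boundary integrals.

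For the dissipative part I would split $\Delta_{\mathrm h}=\partial_1^2+\partial_2^2$. Integration by parts in $x_2$ is boundary-free since $x_2\in\mathbb R$ and produces $\|\partial_3^{m}\partial^{\alpha}\partial_2 u\|_{L^2}^2$, while integration by parts in $x_1$ on $[0,1]$ produces $\|\partial_3^{m}\partial^{\alpha}\partial_1 u\|_{L^2}^2$ together with
\[
B_{\Delta} := -\int_{\mathbb R^2}\bigl[(\partial_3^{m}\partial^{\alpha}\partial_1 u)\cdot \partial_3^{m}\partial^{\alpha}u\bigr]\Big|_{x_1=0}^{x_1=1}\,dx_2\,dx_3.
\]
For the pressure term I would use $\alpha_1\ge 1$ to factor $\partial^{\alpha}=\partial_1\partial^{\tilde\alpha}$ and integrate the outer $\partial_1$ by parts in $x_1$, obtaining
\[
(\partial_3^{m}\partial^{\alpha}\nabla p,\,\partial_3^{m}\partial^{\alpha}u)_{L^2} = -(\partial_3^{m}\partial^{\tilde\alpha}\nabla p,\,\partial_3^{m}\partial^{\alpha}\partial_1 u)_{L^2} + B_P,
\]
where $B_P := \int_{\mathbb R^2}\bigl[(\partial_3^{m}\partial^{\tilde\alpha}\nabla p)\cdot \partial_3^{m}\partial^{\alpha}u\bigr]\big|_{x_1=0}^{x_1=1}\,dx_2\,dx_3$.

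The decisive step is to show that $B_\Delta$ decomposes into a part cancelling $B_P$ and the boundary term in the statement. Rearranging the momentum equation gives the pointwise identity $\partial_1^2 u=\partial_t u+(u\cdot\nabla)u+\nabla p-\partial_2^2 u$ throughout the strip, which extends to $x_1=0,1$ by smoothness of the Gevrey solution. Applying $\partial_1^{\alpha_1-1}\partial_2^{\alpha_2}\partial_3^{m+\alpha_3}$ (legitimate precisely because $\alpha_1\ge 1$) and restricting to the boundary yields
\[
\partial_3^{m}\partial^{\alpha}\partial_1 u\big|_{x_1=0,1} = \partial_3^{m}\partial^{\tilde\alpha}\bigl[\partial_t u+(u\cdot\nabla)u+\nabla p-\partial_2^2 u\bigr]\big|_{x_1=0,1}.
\]
Substituting this into $B_\Delta$ and splitting off the $\nabla p$-contribution, the latter cancels $B_P$ exactly in sign and magnitude, while the remainder is precisely the boundary integral in the claim. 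Collecting the three surviving pieces — $\|\partial_3^{m}\partial^{\alpha}\nabla_{\mathrm h}u\|_{L^2}^2$, the volume pressure pairing $-(\partial_3^{m}\partial^{\tilde\alpha}\nabla p,\,\partial_3^{m}\partial^{\alpha}\partial_1 u)_{L^2}$, and the single boundary integral — produces the stated identity. The principal obstacle is essentially bookkeeping: one must track signs carefully so that the cancellation between the two pressure boundary terms is transparent, and one must verify that applying the rearranged equation and its tangential derivatives on $x_1=0,1$ is justified, which follows from the continuity of the solution up to the boundary established in Theorem \ref{thm:main1}.
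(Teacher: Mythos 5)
Your proof is correct and follows essentially the same route as the paper's: one integration by parts in $x_1$ on the $\partial_1^2$-part of the Laplacian and one on the pressure (after factoring $\partial^\alpha=\partial_1\partial^{\tilde\alpha}$, which uses $\alpha_1\ge1$), followed by rewriting the boundary trace of $\partial_3^m\partial^\alpha\partial_1 u$ via the momentum equation $\partial_1^2u=\partial_tu+(u\cdot\nabla)u+\nabla p-\partial_2^2u$ so that the two pressure boundary integrals cancel. The bookkeeping of $B_\Delta$, $B_P$ and their cancellation is precisely the paper's combination of its two displayed identities, just written out term by term.
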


\begin{remark}\label{vanishing}
 For any multi-index   $\alpha=(\alpha_1,\alpha_2,\alpha_3)\in\mathbb{Z}_+^3$ with $\alpha_1=0,$   it follows from $\divv u=0$ that
\begin{equation*}
	  \inner{-\partial_3^{m}\partial^{\alpha} \Delta_{\rm h}u+ \partial_3^{m}\partial^{\alpha} \nabla p, \ \partial_3^{m}\partial^{\alpha}  u}_{L^2} =\norm{\partial_3^{m}\partial^{\alpha} \nabla_{\rm h} u}_{L^2}^2.
\end{equation*}
\end{remark}

\begin{proof}[Proof of Lemma \ref{nonvani}]
For any $\alpha=(\alpha_1,\alpha_2,\alpha_3)\in\mathbb Z_+^3$ with $\alpha_1\geq1$,  we have $\partial^\alpha=\partial_1\partial^{\tilde\alpha}$ and then  use integration by parts to  write
\begin{align*}
	& \inner{-\partial_3^{m}\partial^{\alpha} \Delta_{\rm h}u+ \partial_3^{m}\partial^{\alpha} \nabla p, \ \partial_3^{m}\partial^{\alpha}  u}_{L^2}\\
	&=\norm{\partial_3^{m}\partial^{\alpha} \nabla_{\rm h} u}_{L^2}^2-\inner{\partial_3^m\partial ^{\tilde\alpha} \nabla p, \ \partial_3^{m}\partial^{\alpha} \partial_1 u}_{L^2}\\
	&\quad + \int_{\mathbb R^2} \Big[(\partial_3^m \partial^{\tilde\alpha}\nabla p )(\partial_3^{m}\partial^{\alpha}  u)\big|^{x_1=1}_{x_1=0}-(\partial_3^{m}\partial^{\alpha} \partial_{1} u)(\partial_3^{m}\partial^{\alpha}  u)\big|^{x_1=1}_{x_1=0}\Big]dx_2dx_3.
\end{align*}
Moreover, it follows from the fact   $\partial^\alpha\partial_1=\partial^{\tilde\alpha}\partial_1^2$ that
\begin{equation*}
	\begin{aligned}
	& \partial_3^{m}\partial^{\alpha} \partial_{1} u|_{x_1=0,1}	=\partial_3^m\partial ^{\tilde\alpha} \big(\partial_tu+(u\cdot\nabla) u-\partial_2^2u\big)|_{x_1=0,1}+\partial_3^m\partial ^{\tilde\alpha}  \nabla p |_{x_1=0,1}.
	\end{aligned}
\end{equation*}
Thus combining the two equations above yields assertion of Lemma \ref{nonvani}. The proof is thus completed.
\end{proof}

The following discussion is devoted to proving Proposition \ref{purespatial}. To do so,
applying $\partial_3^{m}\partial^{\alpha}$ to the first equation in system \eqref{ANS} yields
\begin{equation*}
 \partial_t\partial_3^{m}\partial^{\alpha}  u+\partial_3^{m}\partial^{\alpha} \big[(u\cdot\nabla) u\big]-\partial_3^{m}\partial^{\alpha} \Delta_{\rm h}u+\partial_3^{m}\partial^{\alpha} \nabla p=0,
\end{equation*}
and thus,
\begin{multline*}
    \frac{1}{2}\frac{d}{dt}\norm{\partial_3^{m}\partial^{\alpha} u}_{L^2}^2+ \inner{-\partial_3^{m}\partial^{\alpha} \Delta_{\rm h}u+ \partial_3^{m}\partial^{\alpha} \nabla p, \ \partial_3^{m}\partial^{\alpha}  u}_{L^2}\\
    =-\inner{\partial_3^{m}\partial^{\alpha} \big((u\cdot\nabla) u\big),\ \partial_3^{m}\partial^{\alpha}  u}_{L^2}.
\end{multline*}
Multiplying  both sides   by $r^{\alpha_1}L_{\rho,m}^2$ and summing over   $m\in\mathbb{Z}_+$, and using the identity
\begin{align*}
    \frac{1}{2}\frac{d}{dt}L_{\rho,m}^2=\frac{\rho^{'}}{\rho}(m+1)L_{\rho,m}^2,
\end{align*}
we obtain
\begin{equation*}
	\begin{aligned}
	 &\frac{1}{2}\frac{d}{dt}\sum_{m=0}^{+\infty} r^{\alpha_1}L_{\rho,m}^2\norm{\partial_3^{m}\partial^{\alpha} u}_{L^2}^2- \frac{\rho'}{\rho}\sum_{m=0}^{+\infty} r^{\alpha_1}(m+1)L_{\rho,m}^2\norm{\partial_3^{m}\partial^{\alpha} u}_{L^2}^2\\
	 &\qquad+\sum_{m=0}^{+\infty} r^{\alpha_1}L_{\rho,m}^2 \inner{-\partial_3^{m}\partial^{\alpha} \Delta_{\rm h}u+ \partial_3^{m}\partial^{\alpha} \nabla p, \ \partial_3^{m}\partial^{\alpha}  u}_{L^2}\\
  &  =-\sum_{m=0}^{+\infty} r^{\alpha_1}L_{\rho,m}^2\inner{\partial_3^{m}\partial^{\alpha} \big((u\cdot\nabla) u\big),\ \partial_3^{m}\partial^{\alpha}  u}_{L^2}.	
	\end{aligned}
\end{equation*}
Combining this with Lemma \ref{nonvani}, we conclude that for any  $\alpha\in\mathbb Z_+^3$ with $\alpha_1\geq 1,$
\begin{multline}
	\label{s}
	\frac{1}{2}\frac{d}{dt}\sum_{m=0}^{+\infty} r^{\alpha_1}L_{\rho,m}^2\norm{\partial_3^{m}\partial^{\alpha} u}_{L^2}^2 +\sum_{m=0}^{+\infty} r^{\alpha_1}L_{\rho,m}^2 \norm{\partial_3^{m}\partial^{\alpha} \nabla_{\rm h} u}_{L^2}^2\\
		 - \frac{\rho'}{\rho}\sum_{m=0}^{+\infty} r^{\alpha_1}(m+1)L_{\rho,m}^2\norm{\partial_3^{m}\partial^{\alpha} u}_{L^2}^2     \leq \sum_{j=1}^4 S_j,
\end{multline}
with
\begin{equation}\label{S1-S4}
    \left\{
    \begin{aligned}
&S_1= - \sum_{m= 0}^{+\infty} r^{\alpha_1} L^2_{\rho,m}\inner{\partial_3^{m}\partial^{\alpha}  \big ((u_{\rm h}\cdot\nabla_{\rm h}) u\big ),\ \partial_3^{m}\partial^{\alpha}  u}_{L^2},\\
&S_2=- \sum_{m= 0}^{+\infty} r^{\alpha_1} L^2_{\rho,m}\inner{\partial_3^{m}\partial^{\alpha}  (u_3\partial_3u),\ \partial_3^{m}\partial^{\alpha}  u}_{L^2},\\
&S_3= \sum_{m= 0}^{+\infty}r^{\alpha_1} L^2_{\rho,m}\inner{\partial_3^m\partial ^{\tilde\alpha}\nabla p, \ \partial_3^{m}\partial^{\alpha} \partial_1 u}_{L^2},\\
&S_4= \sum_{m= 0}^{+\infty}r^{\alpha_1}L^2_{\rho,m}\int_{\mathbb R^2} \big[ (\partial_3^{m}\partial^{\alpha}  u) \partial_3^m\partial ^{\tilde\alpha} \big(\partial_tu+(u\cdot\nabla) u-\partial_2^2u\big)\big]\big|^{x_1=1}_{x_1=0}\ dx_2dx_3,
    \end{aligned}
    \right.
\end{equation}
where $ \tilde\alpha = \alpha-(1,0,0)$ in terms $S_3$ and $S_4.$
We will proceed to estimate $S_j$ for $ 1\leq j\leq 4 $ through the following lemmas. The main difficulty lies in handling
 $S_3$ and $S_4$
  which involve boundary terms.

\begin{lemma}[Estimate on $S_1$]\label{lem:S1}
 Let $S_1$ be given in \eqref{S1-S4}.  For any $\alpha=(\alpha_1,\alpha_2,\alpha_3)\in\mathbb Z_+^3$ with $\abs\alpha \leq 3,$  it holds that
 \begin{equation}\label{est:S1}
S_1\leq C r^{-\frac{9}{2}}\abs{u}_{X_{\rho}}^2\abs{\nabla_{\rm h}u}_{X_{\rho}}\leq C r^{-\frac{9}{2}}\abs{u}_{X_{\rho}}\abs{u}_{Y_{\rho}}^2+ Cr^{-\frac{9}{2}}\abs{u}_{X_{\rho}} \abs{\nabla_{\rm h}u}_{X_{\rho}}^2,
 \end{equation}
 recalling $\abs{\cdot}_{X_{\rho}}$ and $\abs{\cdot}_{Y_{\rho}}$ are given in Definition \ref{def:normx3}.
\end{lemma}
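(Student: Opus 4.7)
The plan is to reduce $S_1$ via Cauchy--Schwarz to a weighted $\ell^2$-in-$m$ estimate of $\partial_3^m((u_{\rm h}\cdot\nabla_{\rm h})u)$ in the $\mathcal{H}^3$ norm, and then to exploit the algebra property~\eqref{prpalge} together with Young's discrete convolution inequality~\eqref{young}.

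First I would apply Cauchy--Schwarz to the inner product in the definition of $S_1$ and then to the sum over $m$, arriving at
\begin{equation*}
    \abs{S_1}\leq \Bigl(\sum_{m=0}^{+\infty}r^{\alpha_1}L_{\rho,m}^2\norm{\partial_3^m\partial^\alpha((u_{\rm h}\cdot\nabla_{\rm h})u)}_{L^2}^2\Bigr)^{1/2}\abs{u}_{X_\rho}.
\end{equation*}
Since $\abs{\alpha}\leq 3$, the factor inside the $\ell^2$ norm satisfies $\norm{\partial_3^m\partial^\alpha((u_{\rm h}\cdot\nabla_{\rm h})u)}_{L^2}\leq \norm{\partial_3^m((u_{\rm h}\cdot\nabla_{\rm h})u)}_{\mathcal H^3}$ directly from the definition~\eqref{def:mathcalH3}.

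Next, expanding $\partial_3^m$ by the Leibniz rule and invoking the algebra property of $\mathcal H^3$ recorded in Remark~\ref{algebra}, I would obtain
\begin{equation*}
    \norm{\partial_3^m((u_{\rm h}\cdot\nabla_{\rm h})u)}_{\mathcal H^3}\leq C\sum_{l=0}^{m}\binom{m}{l}\norm{\partial_3^l u_{\rm h}}_{\mathcal H^3}\norm{\partial_3^{m-l}\nabla_{\rm h} u}_{\mathcal H^3}.
\end{equation*}
Each $\mathcal H^3$ norm can then be converted into its $X_\rho$-weighted counterpart: every index $\alpha'_1$ in the $\mathcal H^3$ sum satisfies $\alpha'_1\leq 3$, hence $r^{\alpha'_1}\geq r^3$, which yields $L_{\rho,l}\norm{\partial_3^l g}_{\mathcal H^3}\leq r^{-3/2}b_l(g)$, where $b_l(g)^2\stackrel{\rm def}{=}L_{\rho,l}^2\sum_{j+\abs{\alpha'}\leq 3}r^{\alpha'_1}\norm{\partial_t^j\partial^{\alpha'}\partial_3^l g}_{L^2}^2$ satisfies $\sum_l b_l(g)^2=\abs{g}_{X_\rho}^2$.

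The crucial step is to bound the Gevrey ratio. A direct computation using $\binom{m}{l}(l!(m-l)!/m!)^\sigma=\binom{m}{l}^{1-\sigma}\leq 1$ for $\sigma\geq 1$ gives
\begin{equation*}
    \binom{m}{l}\frac{L_{\rho,m}}{L_{\rho,l}L_{\rho,m-l}}\leq \frac{(m+1)^{6+2\sigma}}{\rho\,(l+1)^{6+2\sigma}(m-l+1)^{6+2\sigma}}\leq C\Bigl(\frac{1}{(l+1)^{6+2\sigma}}+\frac{1}{(m-l+1)^{6+2\sigma}}\Bigr),
\end{equation*}
where the last step uses $(m+1)\leq (l+1)+(m-l+1)$ and the lower bound $\rho\geq\rho_0/2$ from~\eqref{boundrho}. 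Since $(k+1)^{-(6+2\sigma)}$ is summable for $\sigma\geq 1$, Young's inequality~\eqref{young} applied to the two resulting discrete convolutions in $l$ yields
\begin{equation*}
    \Bigl(\sum_m L_{\rho,m}^2\norm{\partial_3^m((u_{\rm h}\cdot\nabla_{\rm h})u)}_{\mathcal H^3}^2\Bigr)^{1/2}\leq Cr^{-3}\abs{u}_{X_\rho}\abs{\nabla_{\rm h} u}_{X_\rho}.
\end{equation*}
Absorbing the harmless factor $r^{\alpha_1/2}\leq 1$ and using $r^{-3}\leq r^{-9/2}$ (since $0<r<1$) produces the first inequality in~\eqref{est:S1}; the second inequality then follows by AM--GM applied to $\abs{u}_{X_\rho}\abs{\nabla_{\rm h} u}_{X_\rho}\leq\tfrac12(\abs{u}_{X_\rho}^2+\abs{\nabla_{\rm h} u}_{X_\rho}^2)$ combined with $\abs{u}_{X_\rho}\leq\abs{u}_{Y_\rho}$ from~\eqref{xyone}.

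The main obstacle is the factorial bookkeeping that produces the summable convolution kernel; the polynomial weight $(m+1)^{6+2\sigma}$ built into the definition~\eqref{def:L} of $L_{\rho,m}$ is precisely what makes Young's inequality applicable, absorbing the $(m+1)^{6+2\sigma}$ growth that emerges from the Gevrey ratio.
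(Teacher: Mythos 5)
Your proposal is correct and follows essentially the same route as the paper: Leibniz in $\partial_3^m$, the $H^3$/$\mathcal H^3$ algebra, the combinatorial bound on the Gevrey quotient $\binom{m}{l}L_{\rho,m}/(L_{\rho,l}L_{\rho,m-l})$ relying on the polynomial weight in $L_{\rho,m}$, Young's inequality~\eqref{young}, and finally~\eqref{xyone} with AM--GM. The one genuine (if minor) organizational difference is how you handle the Gevrey quotient: the paper splits the Leibniz sum into the two regimes $k\leq[m/2]$ and $k>[m/2]$, applying inequalities~\eqref{ineq1} and~\eqref{ineq2} separately (each obtained by noting that the larger of $k+1$ and $m-k+1$ is comparable to $m+1$), whereas you derive the symmetric one-line bound
\begin{equation*}
\binom{m}{l}\frac{L_{\rho,m}}{L_{\rho,l}L_{\rho,m-l}}\leq C\Bigl(\frac{1}{(l+1)^{6+2\sigma}}+\frac{1}{(m-l+1)^{6+2\sigma}}\Bigr)
\end{equation*}
via $(m+1)^{p}\leq 2^{p-1}\bigl[(l+1)^{p}+(m-l+1)^{p}\bigr]$ and run Young's inequality once on each kernel. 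This is the same arithmetic fact packaged without the case split, which is arguably a bit cleaner. You also bound $\norm{\partial_3^m\partial^\alpha(\cdot)}_{L^2}$ by the full $\mathcal H^3$ norm from~\eqref{def:mathcalH3} and invoke the $\mathcal H^3$ algebra of Remark~\ref{algebra}, whereas the paper uses the ordinary $H^3$ algebra; since $S_1$ involves no time derivatives the paper's choice is tighter, but yours is harmless because the $\mathcal H^3$ version majorizes the $H^3$ one and the conversion back to $\abs{\cdot}_{X_\rho}$ still picks up only the $r^{-3/2}$ factor you claim. All the remaining bookkeeping (the $r^{-3}\leq r^{-9/2}$ slack, absorbing $r^{\alpha_1/2}\leq 1$, the final AM--GM with $\abs{u}_{X_\rho}\leq\abs{u}_{Y_\rho}$) matches the paper's argument.
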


\begin{proof}
Note $H^3$ is an algebra under pointwise multiplication. Then we use Leibniz's formula to get that,   for any $ \alpha \in \mathbb{Z}_+^3$ with $\abs{ \alpha}\leq 3$,
\begin{align*}
\left|\inner{\partial_3^{m}\partial^{\alpha}  \big ((u_{\rm h}\cdot\nabla_{\rm h}) u\big ),\ \partial_3^{m}\partial^{\alpha}  u}_{L^2}\right|\leq C\sum^m_{k=0}\binom{m}{k}\norm{\partial_3^ku}_{H^3}\norm{\partial_3^{m-k}\nabla_{\rm h}u}_{H^3}\norm{\partial_3^mu}_{H^3}.
\end{align*}
This gives
 \begin{equation}\label{S1}
\begin{aligned}
S_1\leq &C\sum_{m=0}^{+\infty}\,\sum_{k=0}^{[\frac{m}{2}]}\binom{m}{k}L_{\rho,m}^2\norm{\partial_3^ku}_{H^3}\norm{\partial_3^{m-k}\nabla_{\rm h}u}_{H^3}\norm{\partial_3^mu}_{H^3}\\
&+C\sum_{m=0}^{+\infty}\,\sum_{k=[\frac{m}{2}]+1}^{m}\binom{m}{k}L_{\rho,m}^2\norm{\partial_3^ku}_{H^3}\norm{\partial_3^{m-k}\nabla_{\rm h}u}_{H^3}\norm{\partial_3^mu}_{H^3}\\
\stackrel{\rm def}{=}&S_{1,1}+S_{1,2},
\end{aligned}
\end{equation}
where here and below $[p]$ stands for the largest integer less than or equal to $p$.  To deal with $S_{1,1}$ in \eqref{S1},  we use the  inequality (see Appendix \ref{sec:appendix} for the proof)
\begin{equation}\label{ineq1}
 \forall\ k\in\mathbb Z_+\ \textrm{ with }\ k\leq \big[\frac{m}{2}\big],\quad    \binom{m}{k}\frac{L_{\rho,m}}{L_{\rho,k}L_{\rho,m-k}}\leq\frac{C}{(k+1)^6},
\end{equation}
 to obtain that
\begin{equation}\label{S11}
    \begin{aligned}
S_{1,1}
 &\leq C\sum_{m= 0}^{+\infty}\sum_{k=0}^{[\frac{m}{2}]}\binom{m}{k}\frac{L_{\rho,m}}{L_{\rho,k}L_{\rho,m-k}}\left(L_{\rho,k}\norm{\partial_3^{k}u}_{H^3}\right)\Big (L_{\rho,m-k}\norm{\partial_3^{m-k}\nabla_{\rm h}u}_{H^3}\Big)\\
&\qquad\times \Big (L_{\rho,m}\norm{\partial_3^{m}u}_{H^3}\Big)\\
&\leq C\sum_{m= 0}^{+\infty} \bigg (\sum_{k=0}^{[\frac{m}{2}]}\frac{L_{\rho,k}\norm{\partial_3^{k}u}_{H^3}}{(k+1)^6}L_{\rho,m-k}\norm{\partial_3^{m-k}\nabla_{\rm h}u}_{H^3}\bigg )\Big (L_{\rho,m}\norm{\partial_3^{m}u}_{H^3}\Big)\\
&\leq Cr^{-\frac{3}{2}}\bigg [\sum_{m= 0}^{+\infty}\Big (\sum_{k=0}^{m}\frac{L_{\rho,k}\norm{\partial_3^{k}u}_{H^3}}{(k+1)^6}L_{\rho,m-k}\norm{\partial_3^{m-k}\nabla_{\rm h}u}_{H^3}\Big )^2\bigg ]^\frac{1}{2}\abs{u}_{X_{\rho}},
    \end{aligned}
\end{equation}
the last line using Cauchy inequality as well as the definition of $\abs{\cdot}_{X_{\rho}}$ (see Definition \ref{def:normx3}). Moreover, it follows from  Young's inequality \eqref{young} for discrete convolution that
\begin{equation}\label{applyYong}
    \begin{aligned}
&\bigg [\sum_{m= 0}^{+\infty}\Big (\sum_{k=0}^{m}\frac{L_{\rho,k}\norm{\partial_3^{k}u}_{H^3}}{(k+1)^6}L_{\rho,m-k}\norm{\partial_3^{m-k}\nabla_{\rm h}u}_{H^3}\Big )^2\bigg ]^\frac{1}{2}\\
&\leq C\bigg  (\sum_{m= 0}^{+\infty}\frac{L_{\rho,m}\norm{\partial_3^{m}u}_{H^3}}{(m+1)^6}\bigg )\Big (\sum_{m= 0}^{+\infty}L_{\rho,m}^2\norm{\partial_3^{m}\nabla_{\rm h}u}_{H^3}^2\Big )^\frac{1}{2}\\
&\leq Cr^{-\frac{3}{2}}\abs{\nabla_{\rm h}u}_{X_{\rho}} \Big (\sum_{m= 0}^{+\infty}L_{\rho,m}^2\norm{\partial_3^{m}u}_{H^3}^2\Big )^\frac{1}{2}\leq Cr^{-3}\abs{u}_{X_{\rho}}\abs{\nabla_{\rm h}u}_{X_{\rho}}.
    \end{aligned}
\end{equation}
Consequently,  substituting \eqref{applyYong}  into \eqref{S11} yields
\begin{equation}\label{est:S11}
S_{1,1}\leq Cr^{-\frac{9}{2}}\abs{u}_{X_{\rho}}^2\abs{\nabla_{\rm h}u}_{X_{\rho}}.
\end{equation}
Following the treatment of $S_{1,1}$, we use the inequality (see Appendix \ref{sec:appendix} for the proof)
\begin{equation}\label{ineq2}
 \forall\ k\in\mathbb Z_+\, \textrm{ with } \ \big [\frac{m}{2}\big]+1\leq k\leq m,
 \quad \binom{m}{k}\frac{L_{\rho,m}}{L_{\rho,k}L_{\rho,m-k}}\leq\frac{C}{(m-k+1)^6},
\end{equation}
 to obtain
\begin{equation*}
S_{1,2}\leq Cr^{-\frac{9}{2}}\abs{u}_{X_{\rho}}^2\abs{\nabla_{\rm h}u}_{X_{\rho}}.
\end{equation*}
Substituting this and estimate \eqref{est:S11} into \eqref{S1} yields
\begin{equation}\label{teces}
\begin{aligned}
	S_1& \leq C r^{-\frac{9}{2}}\abs{u}_{X_{\rho}}^2\abs{\nabla_{\rm h}u}_{X_{\rho}}\leq C r^{-\frac{9}{2}}\abs{u}_{X_{\rho}}\abs{u}_{Y_{\rho}}\abs{\nabla_{\rm h}u}_{X_{\rho}}\\
	&\leq   C r^{-\frac{9}{2}}\abs{u}_{X_{\rho}}\abs{u}_{Y_{\rho}}^2+ Cr^{-\frac{9}{2}}\abs{u}_{X_{\rho}} \abs{\nabla_{\rm h}u}_{X_{\rho}}^2,
	\end{aligned}
\end{equation}
the second inequality holding because of  the fact \eqref{xyone}. Thus
 estimate \eqref{est:S1} follows. The proof of Lemma \ref{lem:S1} is completed.
\end{proof}

\begin{lemma}[Estimate on $S_2$]\label{lem:S2}
Let $S_2$ be given in \eqref{S1-S4}, namely,
\begin{equation*}
	S_2=-\sum_{m= 0}^{+\infty} r^{\alpha_1} L^2_{\rho,m}\inner{\partial_3^{m}\partial^{\alpha}  (u_3\partial_3u),\ \partial_3^{m}\partial^{\alpha}  u}_{L^2}.
\end{equation*}
Then for any $\alpha=(\alpha_1,\alpha_2,\alpha_3)\in\mathbb Z_+^3$ with $\abs\alpha \leq 3,$  it holds that
\begin{equation}\label{est:S2}
    S_2\leq Cr^{-\frac{9}{2}}\abs{u}_{X_{\rho}}\abs{u}_{Y_{\rho}}^2,
\end{equation}
 where the norms $\abs{\cdot}_{X_{\rho}}$ and $\abs{\cdot}_{Y_{\rho}}$ are given in Definition \ref{def:normx3}.
\end{lemma}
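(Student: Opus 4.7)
The plan is to prove Lemma~\ref{lem:S2} by mimicking the template of Lemma~\ref{lem:S1}; the essential difference is that the extra vertical derivative in $u_3\partial_3 u$ must be absorbed into the $Y_\rho$-weight rather than the horizontal dissipation factor $|\nabla_{\mathrm h} u|_{X_\rho}$ that appeared in the bound for $S_1$. First I would apply Leibniz's formula to expand $\partial_3^m\partial^\alpha(u_3\partial_3 u)$ into a sum of products $\binom{m}{k}\binom{\alpha}{\beta}(\partial_3^k\partial^\beta u_3)(\partial_3^{m-k+1}\partial^{\alpha-\beta}u)$ with $0\leq k\leq m$, $\beta\leq\alpha$, and use the $H^3$-algebra property to bound the inner product by a sum of three-factor $L^2$-products, exactly in the style of \eqref{S1}. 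I would then split the sum into the regimes $k\leq[m/2]$ and $k\geq[m/2]+1$ and apply the combinatorial estimates \eqref{ineq1}--\eqref{ineq2} to $\binom{m}{k}L_{\rho,m}/(L_{\rho,k}L_{\rho,m-k})$.

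Second, the three convolution factors would be weighted so as to produce the target bound $|u|_{X_\rho}|u|_{Y_\rho}^2$: the low-frequency factor $u_3$ gets the $X_\rho$-weight $L_{\rho,k}$, while the shifted factor $\partial_3^{m-k+1}u$ receives the $Y_\rho$-weight $\sqrt{m-k+2}\,L_{\rho,m-k+1}$ at the shifted index and the outer factor $\partial_3^m u$ receives the $Y_\rho$-weight $\sqrt{m+1}\,L_{\rho,m}$. The key conversion between weights across the shifted index is supplied by the ratio
\begin{equation*}
    \frac{L_{\rho,m-k}}{L_{\rho,m-k+1}}\leq C\rho^{-1}(m-k+1)^{\sigma},
\end{equation*}
which is read off directly from the definition \eqref{def:L} of $L_{\rho,m}$; combined with the elementary inequality $\sqrt{(m+1)(m-k+2)}\geq c(m-k+1)$, valid for $k\leq[m/2]$, this allows the Gevrey-dependent mismatch $(m-k+1)^\sigma/\sqrt{(m+1)(m-k+2)}$ to be absorbed. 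A symmetric bookkeeping based on \eqref{ineq2} would handle the regime $k\geq[m/2]+1$.

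Finally I would apply the discrete Young inequality \eqref{young} in the spirit of \eqref{applyYong}, picking up the factor $r^{-3/2}$ from each sum over the horizontal index $\alpha_1$ (three factors give the announced $r^{-9/2}$), and conclude $S_2\leq Cr^{-9/2}|u|_{X_\rho}|u|_{Y_\rho}^2$. The main obstacle lies in the second step: the shift from $L_{\rho,m-k}$ to $L_{\rho,m-k+1}$ inevitably introduces the Gevrey-dependent factor $(m-k+1)^\sigma$, and distributing the two square-root weights coming from the two $Y_\rho$-factors so as to cancel this growth without destroying the $k$-summability of the Leibniz coefficient is the delicate combinatorial point. This is precisely the place where the auxiliary norm $Y_\rho$, with its extra $(m+1)$-weight (rather than $X_\rho$ alone), becomes indispensable, thereby explaining its introduction in Definition~\ref{def:normx3}.
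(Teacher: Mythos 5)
Your overall structure --- Leibniz expansion in $\partial_3$, split by frequency regime, and Young's inequality --- matches the paper's template, and your weighting scheme (low-frequency factor with $X_\rho$-weight $L_{\rho,k}$, the two high-frequency factors with $Y_\rho$-weights $\sqrt{m-k+2}\,L_{\rho,m-k+1}$ and $\sqrt{m+1}\,L_{\rho,m}$) is the right allocation; however, the argument has two genuine gaps, both of which surface for $\sigma>1$. The first is that you do not isolate the $k=0$ Leibniz term $\inner{u_3\,\partial_3^{m+1}\partial^\alpha u,\ \partial_3^m\partial^\alpha u}_{L^2}$: with your weighting its coefficient is $L_{\rho,m}\big/\big(L_{\rho,0}L_{\rho,m+1}\sqrt{(m+1)(m+2)}\big)\approx(m+1)^{\sigma-1}/(\rho L_{\rho,0})$, unbounded for $\sigma>1$, since the two $Y_\rho$-weights only supply one power of $m+1$ while the shift $L_{\rho,m}\to L_{\rho,m+1}$ costs $(m+1)^\sigma$. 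No choice of weights can rescue this term; the paper instead integrates by parts in $x_3$, rewriting the inner product as $-\tfrac12\inner{(\partial_3 u_3)\,\partial_3^m\partial^\alpha u,\ \partial_3^m\partial^\alpha u}_{L^2}$ plus lower-order Leibniz contributions in $\alpha$, so that the $(m+1)$-st vertical derivative never appears. This is the essential structural difference from the $S_1$ estimate that your proposal misses.

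The second gap is that your ``\eqref{ineq1}-plus-conversion'' route is lossy for $1\le k\le[m/2]$. Capping $\binom{m}{k}L_{\rho,m}/(L_{\rho,k}L_{\rho,m-k})$ at $C/(k+1)^6$ first and then multiplying by $L_{\rho,m-k}/L_{\rho,m-k+1}\le C\rho^{-1}(m-k+1)^\sigma$ and dividing by $\sqrt{(m+1)(m-k+2)}\approx m-k+1$ yields $\approx C(m-k+1)^{\sigma-1}/(k+1)^6$, which grows in $m$ at fixed $k$ for $\sigma>1$ and defeats Young's inequality \eqref{young}. What is actually needed is the single combined estimate \eqref{ineq3}, which has the shifted index $m-k+1$ built in: its proof exploits the decay of $\binom{m}{k}\big[k!(m-k)!/m!\big]^{\sigma-1}=\binom{m}{k}^{2-\sigma}$, which for $k\ge1$ is of order $m^{-(\sigma-1)}$ and exactly compensates the $(m+1)^{\sigma-1}$ cost of the shift. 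Your two-step route discards this compensation by invoking \eqref{ineq1} first, a bound that is very far from sharp for small $k$ when $\sigma>1$. The high-frequency regime $k\ge[m/2]+1$ similarly needs \eqref{ineq4} rather than \eqref{ineq2}. For $\sigma=1$ (analyticity) your computation is essentially correct; it is precisely the Gevrey case $\sigma>1$ that breaks both steps.
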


\begin{proof}
We split $S_2$ as  follows:
\begin{equation*}
\begin{aligned}
 S_2=&- \sum_{m= 0}^{+\infty}r^{  \alpha_1}L^2_{\rho,m}\inner{\partial^{ \alpha }(u_3\partial_3^{m+1}u),\ \partial_3^m\partial^{ \alpha }u}_{L^2}\\
&- \sum_{m= 0}^{+\infty}\sum_{k=1}^{[\frac{m}{2}]}r^{\alpha_1}\binom{m}{k}L^2_{\rho,m}\inner{\partial^{\alpha}[(\partial_3^ku_3)\partial_3^{m-k+1}u],\ \partial_3^{m}\partial^{\alpha}u}_{L^2}\\
&-\sum_{m= 0}^{+\infty}\,\sum_{k=[\frac{m}{2}]+1}^{m}r^{\alpha_1}\binom{m}{k}L^2_{\rho,m}\inner{\partial^{\alpha}[(\partial_3^ku_3)\partial_3^{m-k+1}u],\ \partial_3^{m}\partial^{\alpha}u}_{L^2}.
\end{aligned}
\end{equation*}
This yields, for any     $\alpha\in\mathbb Z_+^3$ with $\abs\alpha\leq 3,$
\begin{equation}\label{+s2}
    \begin{aligned}
        S_2\leq &   \sum_{m= 0}^{+\infty} L^2_{\rho,m} \left|\inner{\partial^{ \alpha }(u_3\partial_3^{m+1}u),\ \partial_3^m\partial^{ \alpha }u}_{L^2}\right|\\
        &+C\sum_{m= 0}^{+\infty}\sum_{k=1}^{[\frac{m}{2}]} \binom{m}{k}L^2_{\rho,m} \norm{\partial_3^ku}_{H^3}\norm{\partial_3^{m-k+1}u}_{H^3}\norm{\partial_3^mu}_{H^3}\\
        &+C \sum_{m= 0}^{+\infty}\,\sum_{k=[\frac{m}{2}]+1}^{m} \binom{m}{k}L^2_{\rho,m}\norm{\partial_3^ku}_{H^3}\norm{\partial_3^{m-k+1}u}_{H^3}\norm{\partial_3^mu}_{H^3}.
    \end{aligned}
\end{equation}
We begin by estimating the first term on the right-hand side of \eqref{+s2}.  For any $\alpha\in\mathbb Z_+^3$ with $\abs\alpha\leq 3,$ Using Leibniz's formula and  integration by parts yields
\begin{align*}
   & \left|\inner{\partial^{ \alpha }(u_3\partial_3^{m+1}u),\ \partial_3^m\partial^{ \alpha }u}_{L^2}\right|
   \\
  &\leq \left|\inner{ u_3\partial_3^{m+1}\partial^{ \alpha }u ,\ \partial_3^m\partial^{ \alpha }u}_{L^2}\right|+C\sum_{0<\gamma\leq \alpha}\binom{\alpha}{\gamma}\left|\inner{(\partial^\gamma u_3)\partial_3^{m+1}\partial^{ \alpha-\gamma }u,\ \partial_3^m\partial^{ \alpha }u}_{L^2}\right|\\
  &\leq \left|\inner{ (\partial_3u_3)\partial_3^{m}\partial^{ \alpha }u ,\ \partial_3^m\partial^{ \alpha }u}_{L^2}\right|+C\sum_{0<\gamma\leq \alpha} \left|\inner{(\partial^\gamma u_3)\partial_3^{m+1}\partial^{ \alpha-\gamma }u,\ \partial_3^m\partial^{ \alpha }u}_{L^2}\right|
  \\
   &\leq C \norm{u}_{H^3} \norm{\partial_3^mu}_{H^3}^2,
\end{align*}
where the last inequality uses  the following  Sobolev embedding  inequalities:
\begin{equation*}
	\norm{f}_{L^\infty}\leq C\norm{f}_{H^2}\ \textrm{ and }\ \norm{fg}_{L^2}\leq   C\norm{f}_{H^1}\norm{g}_{H^1}.
\end{equation*}
Combining this with the fact that   $\norm{u}_{H^3} \leq Cr^{-\frac32} \abs{u}_{X_{\rho}}$ by  \eqref{boundrho}, we obtain
\begin{multline}\label{est:S21}
 \sum_{m= 0}^{+\infty} L^2_{\rho,m} \left|\inner{\partial^{ \alpha }(u_3\partial_3^{m+1}u),\ \partial_3^m\partial^{ \alpha }u}_{L^2}\right|\leq Cr^{-\frac32} \abs{u}_{X_{\rho}}\sum_{m= 0}^{+\infty}L^2_{\rho,m}  \norm{\partial_3^mu}_{H^3}^2\\
\leq Cr^{-\frac{9}{2}}\abs{u}_{X_{\rho}}^3\leq Cr^{-\frac{9}{2}}\abs{u}_{X_{\rho}}\abs{u}_{Y_{\rho}}^2,
\end{multline}
where the last inequality follows from \eqref{xyone}.

To estimate the remaining terms on the right-hand side of \eqref{+s2},  we use the following two inequalities (see Appendix \ref{sec:appendix} for the proofs). The first one   states  that for any $k\in\mathbb Z_+$ with $1\leq k\leq [\frac{m}{2}],$
\begin{equation}\label{ineq3}
 \binom{m}{k}\frac{L_{\rho,m}}{L_{\rho,k}L_{\rho,m-k+1}(m-k+2)^\frac{1}{2}(m+1)^\frac{1}{2}}\leq \frac{C}{(k+1)^6},
\end{equation}
while the second  one is that
\begin{equation}\label{ineq4}
\forall\ k\in\mathbb Z_+\ \textrm{ with }\  \Big[\frac{m}{2}\Big]+1\leq k\leq m,\quad  \binom{m}{k}\frac{L_{\rho,m}}{L_{\rho,k}L_{\rho,m-k+1}}\leq \frac{C}{(m-k+1)^6}.
\end{equation}
These two inequalities, together with    Young's inequality \eqref{young} for discrete convolution,  allow us to follow a similar argument to those in \eqref{S11} and \eqref{applyYong}, to obtain
\begin{equation*}
    \begin{aligned}
&\sum_{m= 0}^{+\infty}\sum_{k=1}^{[\frac{m}{2}]} \binom{m}{k}L^2_{\rho,m} \norm{\partial_3^ku}_{H^3}\norm{\partial_3^{m-k+1}u}_{H^3}\norm{\partial_3^mu}_{H^3}\\
&\leq C\sum_{m=0}^{+\infty}\,\sum_{k=1}^{[\frac{m}{2}]}\binom{m}{k}\frac{L_{\rho,m}}{L_{\rho,k}L_{\rho,m-k+1}(m-k+2)^\frac{1}{2}(m+1)^\frac{1}{2}}(L_{\rho,k}\norm{\partial_3^ku}_{H^3})\\
&\quad\times\big[(m-k+2)^\frac{1}{2}L_{\rho,m-k+1}\norm{\partial_3^{m-k+1}u}_{H^3}\big]\times\big[(m+1)^\frac{1}{2}L_{\rho,m}\norm{\partial_3^mu}_{H^3}\big]\\
& \leq  Cr^{-\frac{9}{2}}\abs{u}_{X_{\rho}}\abs{u}_{Y_{\rho}}^2
    \end{aligned}
\end{equation*}
and
\begin{equation*}
    \begin{aligned}
&\sum_{m= 0}^{+\infty}\,\sum_{k=[\frac{m}{2}]+1}^{m} \binom{m}{k}L^2_{\rho,m}\norm{\partial_3^ku}_{H^3}\norm{\partial_3^{m-k+1}u}_{H^3}\norm{\partial_3^mu}_{H^3}\\
&\leq C\sum_{m=0}^{+\infty}\,\sum_{k=[\frac{m}{2}]+1}^{m}\binom{m}{k}\frac{L_{\rho,m}}{L_{\rho,k}L_{\rho,m-k+1}}(L_{\rho,k}\norm{\partial_3^ku}_{H^3})\\
&\quad\times\big(L_{\rho,m-k+1}\norm{\partial_3^{m-k+1}u}_{H^3}\big)\big(L_{\rho,m}\norm{\partial_3^mu}_{H^3}\big)\\
& \leq  Cr^{-\frac{9}{2}}\abs{u}_{X_{\rho}}^3 \leq  Cr^{-\frac{9}{2}}\abs{u}_{X_{\rho}}\abs{u}_{Y_{\rho}}^2,
    \end{aligned}
\end{equation*}
where the last inequality follows from \eqref{xyone}.
Substituting the above two estimates and  \eqref{est:S21}  into \eqref{+s2} yields the desired estimate \eqref{est:S2}. The proof of Lemma \ref{lem:S2} is thus completed.
\end{proof}

\begin{lemma}[Estimate on $S_3$]\label{lem:S3}
Let $S_3$ be given in \eqref{S1-S4}, that is,
\begin{equation*}
	S_3= \sum_{m= 0}^{+\infty}r^{\alpha_1} L^2_{\rho,m}\inner{\partial_3^m\partial ^{\tilde\alpha}\nabla p, \ \partial_3^{m}\partial^{\alpha} \partial_1 u}_{L^2}\ \textrm{ with }\ \tilde\alpha =\alpha-(1,0,0).
\end{equation*}
  Then for any $ \alpha \in\mathbb{Z}_+^3$ with $\abs{ \alpha }\leq 3$ and $ \alpha_1\ge 1$, it holds that
\begin{equation}\label{est:S3}
    S_3\leq Cr^{-\frac92}\abs{u}_{X_{\rho}} \abs{u}_{Y_{\rho}}^2+C\Big( r^{-\frac92}\abs{u}_{X_{\rho}} +r^{\frac{1}{4}}\Big)\abs{\nabla_{\rm h}u}_{X_{\rho}}^2,
\end{equation}
 where the norms $\abs{\cdot}_{X_{\rho}}$ and $\abs{\cdot}_{Y_{\rho}}$ are given in Definition \ref{def:normx3}.
\end{lemma}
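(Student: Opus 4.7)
The strategy is to reduce $S_3$ to a pure boundary integral at $x_1=0,1$ and then control the boundary contribution via the momentum equation restricted to the boundary together with a trace inequality. First, I rewrite $\partial^\alpha \partial_1 = \partial^{\tilde\alpha} \partial_1^2$ (since $\alpha = \tilde\alpha + (1,0,0)$), so that the test field becomes $\partial_3^m \partial^{\tilde\alpha} \partial_1^2 u$. Because
\begin{equation*}
\divv(\partial_3^m \partial^{\tilde\alpha} \partial_1^2 u)=\partial_3^m \partial^{\tilde\alpha} \partial_1^2 \divv u=0,
\end{equation*}
integration by parts against $\nabla p$ kills all interior contributions and leaves only the $x_1$-boundary term
\begin{equation*}
S_3=\sum_{m=0}^{+\infty}r^{\alpha_1}L_{\rho,m}^2\int_{\mathbb R^2}\big[(\partial_3^m \partial^{\tilde\alpha}p)(\partial_3^m \partial^{\tilde\alpha} \partial_1^2 u_1)\big]_{x_1=0}^{x_1=1}\,dx_2\,dx_3.
\end{equation*}

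Next, using $u|_{x_1=0,1}=0$, the quantities $\partial_t u$, $(u\cdot\nabla)u$, and $\partial_2^2 u_1$ all vanish on $\{x_1=0,1\}$, so the $u_1$-momentum equation restricted to the boundary reduces to $\partial_1^2 u_1|_{\partial}=\partial_1 p|_{\partial}$; iteratively differentiating in $x_1$ and restricting the resulting identities yields $\partial_1^{k+1}u_1|_{\partial}=\partial_1^k p|_{\partial}$ for each $k\ge 1$. Combined with commutativity of tangential derivatives with boundary restriction, this rewrites the boundary integrand above purely in terms of $p$. Applying the trace inequality $\norm{g|_{x_1=0,1}}_{L^2(\mathbb R^2)}^2\le C(\norm{g}_{L^2}^2+\norm{g}_{L^2}\norm{\partial_1 g}_{L^2})$ converts the boundary norms into interior norms, and I then eliminate $p$ through the momentum equation $\nabla p=\Delta_{\rm h}u-\partial_t u-(u\cdot\nabla)u$.

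Substituting for $\nabla p$ splits $S_3$ into a nonlinear piece from $(u\cdot\nabla)u$ and linear pieces from $\partial_t u$ and $\Delta_{\rm h}u$. The nonlinear piece is handled exactly as $S_1$ and $S_2$ in Lemmas \ref{lem:S1}--\ref{lem:S2}: Leibniz's formula, the algebra property \eqref{prpalge}, the inequalities \eqref{ineq1}--\eqref{ineq4}, and the discrete Young inequality \eqref{young} together yield a contribution of size $Cr^{-9/2}\abs{u}_{X_\rho}(\abs{u}_{Y_\rho}^2+\abs{\nabla_{\rm h}u}_{X_\rho}^2)$. The linear pieces, after the trace inequality, fit into $\abs{\nabla_{\rm h}u}_{X_\rho}^2$ because $\abs{\tilde\alpha}\le 2$ and only one additional time- or $\nabla_{\rm h}$-derivative is introduced, keeping the total derivative count within the budget $j+\abs{\beta}\le 3$ defining $\abs{\cdot}_{X_\rho}$; the prefactor $r^{\alpha_1}\ge r$ combined with a Young-type split $r=r^{1/4}\cdot r^{3/4}$ produces the small coefficient $r^{1/4}$ in front of $\abs{\nabla_{\rm h}u}_{X_\rho}^2$.

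The main obstacle is the boundary term itself. A direct Cauchy--Schwarz estimate on $(\partial_3^m \partial^{\tilde\alpha} \nabla p,\,\partial_3^m \partial^\alpha \partial_1 u)_{L^2}$ would force me to control $\partial_3^m \partial^{\tilde\alpha} \nabla p$ in $L^2$, which after eliminating $p$ via the momentum equation involves $\Delta_{\rm h}u$ with up to $\abs{\tilde\alpha}+2\le 4$ spatial derivatives---one more than permitted by $\abs{\cdot}_{X_\rho}$. The clean reduction to a boundary integral via $\partial_1^2\divv u=0$ circumvents this derivative loss, and the weighting $r^{\alpha_1}$ is calibrated precisely so that the resulting coefficient $r^{1/4}$ in front of $\abs{\nabla_{\rm h}u}_{X_\rho}^2$ is small enough to be absorbed by the dissipative term on the left-hand side of the energy identity \eqref{s}.
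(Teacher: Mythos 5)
Your proposed iteration $\partial_1^{k+1}u_1|_{x_1=0,1}=\partial_1^k p|_{x_1=0,1}$ for all $k\ge 1$ is false once $k\ge 3$. Differentiating the $u_1$-momentum equation twice in $x_1$ and restricting to the boundary gives
\[
\partial_1^4 u_1|_{x_1=0,1}=\partial_1^3 p|_{x_1=0,1}+\partial_t\partial_1^2 u_1|_{x_1=0,1}-\partial_2^2\partial_1^2 u_1|_{x_1=0,1},
\]
and the extra terms survive because $\partial_1^2 u_1|_{x_1=0,1}=\partial_1 p|_{x_1=0,1}\neq 0$ in general, unlike $u_1|_{x_1=0,1}$ and $\partial_1 u_1|_{x_1=0,1}$ which vanish. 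Since $\alpha=(3,0,0)$ with $\tilde\alpha=(2,0,0)$ is allowed under $|\alpha|\le 3$ and $\alpha_1\ge 1$, this case breaks your reduction to a pure $p$-boundary integrand. Moreover, the closing step "eliminate $p$ through the momentum equation $\nabla p = \Delta_{\rm h}u - \partial_t u - (u\cdot\nabla)u$" re-instates precisely the derivative loss you flag at the outset: for $\tilde\alpha=(2,0,0)$ your trace inequality would require $\|\partial_3^m\partial_1^4 p\|_{L^2}$, and substituting $\partial_1 p=\Delta_{\rm h}u_1-\cdots$ produces $\partial_1^3\Delta_{\rm h}u_1$, i.e.\ five spatial derivatives of $u$, beyond even the budget of $\abs{\nabla_{\rm h}u}_{X_\rho}$.

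The paper in fact begins with the direct Cauchy--Schwarz step you set aside, bounding $S_3\le Cr^{\alpha_1/2}\abs{\nabla_{\rm h}u}_{X_\rho}\bigl(\sum_m L_{\rho,m}^2\|\partial_3^m\partial^{\tilde\alpha}\nabla p\|_{L^2}^2\bigr)^{1/2}$, and then controls $\nabla p$ via the \emph{elliptic} equation $-\Delta p=\divv\bigl((u\cdot\nabla)u\bigr)$ rather than by substituting the momentum equation. This is derivative-neutral: pairing with $\partial_3^m\partial^{\tilde\alpha}p$ and integrating by parts gives an interior term with the same derivative count on both sides, so no loss occurs. The boundary terms of that elliptic estimate carry the Neumann data $\partial_1 p|_{x_1=0,1}$ expressed through $\partial_t u_1$ and $\Delta_{\rm h}u_1$; for $\tilde\alpha=(2,0,0)$ the paper invokes $\divv u=0$ to rewrite $\partial_1^2 u_1=-\partial_1\partial_2 u_2-\partial_1\partial_3 u_3$ (trading an $x_1$-derivative for a tangential one), then uses the Gagliardo--Nirenberg trace in $x_1$ combined with the Fourier transform in $x_2$ (resp.\ $x_3$) to transfer a tangential derivative from $u$ onto $p$, turning $|\xi_2|\,\hat p$ into $\widehat{\partial_2 p}$, a component of $\nabla p$. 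That Fourier transfer is what lets the pressure norm be absorbed and the estimate close under the derivative budget; your proposal has no analogue of it, and a plain trace inequality on the boundary integral will not close.
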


\begin{proof} For any   $ \alpha \in\mathbb{Z}_+^3$ with $\abs{ \alpha }\leq 3$ and $ \alpha_1\ge 1$, we have
\begin{align*}
S_3&\leq \sum_{m= 0}^{+\infty}r^{  \alpha_1}L^2_{\rho,m}\norm{\partial_3^m\partial^{ \tilde \alpha}  \nabla p}_{L^2}\norm{ \partial_3^{m}\partial^{\alpha}\partial_1u}_{L^2}\\
&\leq r^{\frac{\alpha_1}{2}} \Big(\sum_{m= 0}^{+\infty}L^2_{\rho,m}\norm{\partial_3^m\partial^{ \tilde \alpha}  \nabla p}_{L^2}^2\Big)^{\frac12}\Big(\sum_{m= 0}^{+\infty}r^{  \alpha_1}L^2_{\rho,m} \norm{ \partial_3^{m}\partial^{\alpha}\partial_1u}_{L^2}^2\Big)^{\frac12}\\
&\leq C r^{\frac{ \alpha_1}{2}} \abs{\nabla_{\rm h} u}_{X_{\rho}}\Big(\sum_{m= 0}^{+\infty}L^2_{\rho,m}\norm{\partial_3^m\partial^{ \tilde \alpha}  \nabla p}_{L^2}^2\Big)^{\frac12}.
\end{align*}
On the other hand, for any  $ \alpha \in\mathbb{Z}_+^3$ with $\abs{ \alpha }\leq 3$ and $ \alpha_1\ge 1$, we claim that (recalling $\tilde{\alpha} = \alpha - (1,0,0)$)
\begin{equation}\label{est:Pone}
\Big(\sum_{m= 0}^{+\infty}L^2_{\rho,m}\norm{\partial_3^m\partial^{ \tilde \alpha}  \nabla p}_{L^2}^2\Big)^{\frac12}\leq Cr^{-\frac92}\abs{u}_{X_{\rho}}^2+Cr^{-\frac{2\alpha_1-1}{4}}\abs{\nabla_{\rm h}u}_{X_{\rho}}
	\end{equation}
	with the proof postponed to the following four steps.  Combining the above inequalities  yields
	\begin{align*}
		S_3 &\leq  Cr^{-\frac92}\abs{u}_{X_{\rho}}^2\abs{\nabla_{\rm h}u}_{X_{\rho}}+Cr^{\frac{1}{4}}\abs{\nabla_{\rm h}u}_{X_{\rho}}^2\\
		&\leq Cr^{-\frac92}\abs{u}_{X_{\rho}} \abs{u}_{Y_{\rho}}^2+C\Big( r^{-\frac92}\abs{u}_{X_{\rho}} +r^{\frac{1}{4}}\Big)\abs{\nabla_{\rm h}u}_{X_{\rho}}^2,
	\end{align*}
	the last inequality following from the same argument as that in \eqref{teces}. This gives the desired
	 estimate \eqref{est:S3}.

{\it Step 1.} This step, together with the other three ones that follow,  is devoted to proving estimate \eqref{est:Pone}.
Observe $p$ satisfies the elliptic equation $-\Delta p=\divv \big( (u\cdot\nabla ) u \big )$ and thus
 \begin{equation}\label{eq:ptwo}
 -\partial_3^m\partial^{\tilde{\alpha}} \Delta p=\partial_3^m\partial^{\tilde{\alpha}}\divv \big( (u\cdot\nabla ) u \big ),
 \end{equation}
 which is complemented with the boundary condition that
 \begin{equation}\label{boundary:Pone}
\partial_3^m\partial^{\tilde  \alpha } \partial_1 p|_{x_1=0,1}=-\partial_3^m\partial^{\tilde{\alpha}}\big (\partial_tu_1+(u\cdot\nabla) u_1-\Delta_{\rm h}u_1\big )\big|_{x_1=0,1}.
 \end{equation}
Taking the $L^2$ inner product with $\partial_3^m\partial^{ \tilde \alpha} p$ on both sides of  equation \eqref{eq:ptwo}, we obtain
 \begin{align*}
 & \norm{\partial_3^m\partial^{ \tilde \alpha}  \nabla p}_{L^2}^2-\int_{\mathbb R^2}   (\partial_3^m\partial^{\tilde  \alpha } \partial_1 p)(\partial_3^m\partial^{\tilde  \alpha} p)\big|^{x_1=1}_{x_1=0} \  dx_2dx_3\\
&  =-\inner{\partial_3^m\partial^{\tilde{\alpha}}  \big( (u\cdot\nabla ) u \big ),  \partial_3^m\partial^{\tilde\alpha} \nabla p }_{L^2}+\int_{\mathbb R^2}    \Big (\partial_3^m\partial^{\tilde{\alpha}}  \big( (u\cdot\nabla ) u_1 \big )\Big)(\partial_3^m\partial^{\tilde \alpha} p)\big |^{x_1=1}_{x_1=0}   dx_2dx_3.
\end{align*}
Combining the above identity  with \eqref{boundary:Pone}, and  multiplying  both sides  by $L_{\rho,m}^2$ and then summing over  $m\in\mathbb{Z}_+$, we obtain
\begin{equation}\label{p+}
	\begin{aligned}
		& \sum_{m= 0}^{+\infty}L^2_{\rho,m}\norm{\partial_3^m\partial^{ \tilde \alpha}  \nabla p}_{L^2}^2\leq  \sum_{m=0}^{+\infty}L_{\rho,m}^2\norm{\partial_3^m\partial^{\tilde{\alpha}} \big (u\cdot\nabla) u\big)}_{L^2}\norm{\partial_3^m\partial^{\tilde{\alpha}} \nabla p}_{L^2}\\
		&\qquad\qquad\qquad+\sum_{m= 0}^{+\infty}L^2_{\rho,m}\bigg|\int_{\mathbb R^2}   (\partial_3^m\partial^{\tilde{\alpha}} \partial_tu_1)(\partial_3^m\partial^{\tilde  \alpha} p)\big|^{x_1=1}_{x_1=0} \  dx_2dx_3\bigg|\\
		&\qquad\qquad\qquad+\sum_{m= 0}^{+\infty}L^2_{\rho,m}\bigg|\int_{\mathbb R^2}   (\partial_3^m\partial^{\tilde{\alpha}} \Delta_{\rm h} u_1)(\partial_3^m\partial^{\tilde  \alpha} p)\big|^{x_1=1}_{x_1=0} \  dx_2dx_3\bigg|.
	\end{aligned}
\end{equation}
We begin by estimating  the first term on the right-hand side of \eqref{p+}.  Since $\tilde \alpha=\alpha-(1,0,0)$ and $\abs\alpha\leq 3$, it follows that  $|\tilde \alpha|\leq 2 $. Then
 \begin{align*}
\norm{\partial_3^m\partial^{\tilde{\alpha}} \big ( (u\cdot\nabla) u\big)}_{L^2}\leq \sum_{i=1}^3\norm{\partial_3^m\partial^{\tilde{\alpha}}\partial_i   (  u_i  u )}_{L^2}\leq C\sum_{k=0}^m\binom{m}{k}\norm{\partial_3^ku}_{H^3}\norm{\partial_3^{m-k}u}_{H^3},
 \end{align*}
where the second inequality uses  the fact that $\divv u=0.$ Consequently,
\begin{equation}\label{est:tildeS1}
	\begin{aligned}
	&\sum_{m=0}^{+\infty}L_{\rho,m}^2\norm{\partial_3^m\partial^{\tilde{\alpha}} \big (u\cdot\nabla) u\big)}_{L^2}\norm{\partial_3^m\partial^{\tilde{\alpha}} \nabla p}_{L^2}\\
	&\leq \sum_{m=0}^{+\infty}\sum_{k=0}^m\binom{m}{k}L_{\rho,m}^2\norm{\partial_3^ku}_{H^3}\norm{\partial_3^{m-k}u}_{H^3}\norm{\partial_3^m\partial^{\tilde{\alpha}}\nabla p}_{L^2}	\\
	&\leq Cr^{-\frac92}\abs{u}_{X_{\rho}}^2 \Big(\sum_{m= 0}^{+\infty}L^2_{\rho,m}\norm{\partial_3^m\partial^{ \tilde \alpha}  \nabla p}_{L^2}^2\Big)^{\frac12},
	\end{aligned}
\end{equation}
where the last inequality follows by   the same argument as  in the proof of Lemma \ref{lem:S1}.

{\it Step 2.}  Here we estimate the second term on the right-hand side of \eqref{p+} and claim that (recalling $\tilde\alpha=(\alpha_1-1,\alpha_2,\alpha_3)$)
\begin{multline}\label{df}
	\sum_{m= 0}^{+\infty}L^2_{\rho,m}\bigg|\int_{\mathbb R^2}   (\partial_3^m\partial^{\tilde{\alpha}} \partial_tu_1)(\partial_3^m\partial^{\tilde  \alpha} p)\big|^{x_1=1}_{x_1=0} \  dx_2dx_3\bigg|\\
	 \leq  C r^{-\frac{2\alpha_1-1}{4}} \abs{\nabla_{\rm h} u}_{X_{\rho}}  \Big(\sum_{m=0}^{+\infty}L_{\rho,m}^2   \norm{\partial_3^m\partial^{\tilde\alpha}\nabla p}_{L^{2}} ^2\Big)^{\frac12}.
\end{multline}
Since $
   u_1|_{x_1=0,1}=  \partial_1u_1|_{x_1=0,1}=0,$ then for multi-indices $\tilde\alpha=\alpha-(1,0,0)\in\mathbb Z_+^3$  with $\tilde\alpha_1\leq 1$ we have
   \begin{equation*}
	\sum_{m= 0}^{+\infty}L^2_{\rho,m}\bigg|\int_{\mathbb R^2}   (\partial_3^m\partial^{\tilde{\alpha}} \partial_tu_1)(\partial_3^m\partial^{\tilde  \alpha} p)\big|^{x_1=1}_{x_1=0} \  dx_2dx_3\bigg| = 0.
\end{equation*}
This, with the fact $|\tilde\alpha|\leq 2$,  yields the validity of \eqref{df} for  multi-indices   $ \tilde \alpha\in\mathbb Z_+^3$ with $\tilde\alpha\neq (2,0,0).$

Now we consider the case $\tilde\alpha=  (2,0,0)$ and  use the fact $\partial^{\tilde\alpha}=\partial_1^2$ and  the identity
$ \partial^{\tilde\alpha}u_1= \partial_1^2u_1 =- \partial_1\partial_2u_2-\partial_1\partial_3u_3
$  to write
 \begin{equation}\label{+tildes2}
 \begin{aligned}
&\sum_{m= 0}^{+\infty}L^2_{\rho,m}\bigg|\int_{\mathbb R^2}   (\partial_3^m\partial^{\tilde{\alpha}} \partial_tu_1)(\partial_3^m\partial^{\tilde  \alpha} p)\big|^{x_1=1}_{x_1=0} \  dx_2dx_3\bigg| \\
&\leq  2\sup_{x_1\in[0,1]} \sum_{m=0}^{+\infty}L_{\rho,m}^2\left|\int_{\mathbb{R}^2}(\partial_3^m\partial_t\partial_1\partial_2u_2)(\partial_3^m\partial_1^{2} p)dx_2dx_3\right|\\
&\quad+2\sup_{x_1\in[0,1]} \sum_{m=0}^{+\infty}L_{\rho,m}^2\left|\int_{\mathbb{R}^2}(\partial_3^m\partial_t\partial_1\partial_3u_3)(\partial_3^m\partial_1^{2} p)dx_2dx_3\right|.
 \end{aligned}
 \end{equation}
To estimate the first term on the right-hand side of \eqref{+tildes2},  we  denote by $\widehat{h}(t,x_1,\xi_2,x_3)$ the (partial) Fourier transform of $h(t, x_1,x_2,x_3)$ with respect to $x_2\in\mathbb R,$ and by   $\xi_2$   the   Fourier dual variable of $x_2$. Then by the Plancherel Theorem and the following  one-dimensional  Gagliardo-Nirenberg inequality
\begin{equation}\label{GN}
\norm{h}_{L_y^{\infty}}\leq C \norm{h}_{L_y^2}^{\frac{1}{2}}\left(\norm{h}_{L_y^2}^{\frac{1}{2}}+\norm{\partial_yh}_{L_y^2}^{\frac{1}{2}}\right),
\end{equation}
we obtain
\begin{multline}\label{q1q2}
	\sup_{x_1\in[0,1]} \sum_{m=0}^{+\infty}L_{\rho,m}^2\left|\int_{\mathbb{R}^2}(\partial_3^m\partial_t\partial_1\partial_2u_2)(\partial_3^m\partial_1^{2} p)dx_2dx_3\right|	 \\
	  \leq \sup_{x_1\in[0,1]} \sum_{m=0}^{+\infty}L_{\rho,m}^2\left|\int_{\mathbb{R}^2}\xi_2 (\partial_3^m\partial_t\partial_1\widehat u_2)(\partial_3^m\partial_1^{2} \widehat p) d\xi_2 dx_3\right|\leq Q_1+Q_2,
\end{multline}
where
\begin{multline*}
	Q_1=C\sum_{m=0}^{+\infty}L_{\rho,m}^2 \int_{\mathbb R^2}\abs{\xi_2}\norm{
\partial_3^m\partial_t\partial_1\widehat{ u}_2}_{L_{x_1}^{2}}^\frac{1}{2}\Big(\norm{
\partial_3^m\partial_t\partial_1\widehat{ u}_2}_{L_{x_1}^{2}}^\frac{1}{2}+\norm{
\partial_3^m\partial_t\partial_1^2\widehat{ u}_2}_{L_{x_1}^{2}}^\frac{1}{2}\Big)\\ \times\norm{\partial_3^m\partial_1^2\widehat{p}}_{L_{x_1}^{2}}d\xi_2dx_3,
\end{multline*}
and
\begin{multline*}
	Q_2=C \sum_{m=0}^{+\infty}L_{\rho,m}^2 \int_{\mathbb R^2}\abs{\xi_2}\norm{
\partial_3^m\partial_t\partial_1\widehat{ u}_2}_{L_{x_1}^{2}}^\frac{1}{2}\Big(\norm{
\partial_3^m\partial_t\partial_1\widehat{ u}_2}_{L_{x_1}^{2}}^\frac{1}{2}+\norm{
\partial_3^m\partial_t\partial_1^2\widehat{ u}_2}_{L_{x_1}^{2}}^\frac{1}{2}\Big)\\
\times\norm{\partial_3^m\partial_1^2\widehat{p}}_{L_{x_1}^{2}}^\frac{1}{2} \norm{\partial_3^m\partial_1^3\widehat{p}}_{L_{x_1}^{2}}^\frac{1}{2} d\xi_2dx_3.
\end{multline*}
Moreover,  using $ \abs{\xi_2}\norm{\partial_3^m\partial_1^2\widehat{p}}_{L_{x_1}^{2}} \leq  \norm{\partial_3^m\partial_1^2\widehat{\partial_2p}}_{L_{x_1}^{2}}\leq  \norm{\partial_3^m\partial_1^2\widehat{\nabla p}}_{L_{x_1}^{2}}$ gives
\begin{equation}\label{q1}
\begin{aligned}
	Q_1&\leq C \sum_{m=0}^{+\infty}L_{\rho,m}^2 \int_{\mathbb R^2}
 \big(\norm{
\partial_3^m\partial_t\partial_1\widehat{ u}_2}_{L_{x_1}^{2}} +\norm{
\partial_3^m\partial_t\partial_1^2\widehat{ u}_2}_{L_{x_1}^{2}} \big)\norm{\partial_3^m\partial_1^2 \widehat{\nabla p}}_{L_{x_1}^{2}} d\xi_2dx_3\\
&\leq C \sum_{m=0}^{+\infty}L_{\rho,m}^2   \big(\norm{
\partial_3^m\partial_t\partial_1 u _2}_{L^{2}} +\norm{
\partial_3^m\partial_t\partial_1^2u_2}_{L^{2}} \big)\norm{\partial_3^m\partial_1^2\nabla p}_{L^{2}} \\
&\leq C r^{-\frac12} \abs{\nabla_{\rm h} u}_{X_{\rho}}  \Big(\sum_{m=0}^{+\infty}L_{\rho,m}^2   \norm{\partial_3^m\partial_1^2\nabla p}_{L^{2}} ^2\Big)^{\frac12},
\end{aligned}
\end{equation}
the last inequality using the definition of $\abs{\cdot}_{X_{\rho}}$ (see Definition \ref{def:normx3}).
Similarly, observing
\begin{equation*}
	\abs{\xi_2}\norm{
\partial_3^m\partial_t\partial_1\widehat{ u}_2}_{L_{x_1}^{2}}^\frac{1}{2}\norm{\partial_3^m\partial_1^2\widehat{p}}_{L_{x_1}^{2}}^\frac{1}{2}\leq \norm{\partial_3^m\partial_t\partial_1\widehat{ \partial_2u}_2}_{L_{x_1}^{2}}^\frac{1}{2}\norm{\partial_3^m\partial_1^2\widehat{\partial_2p}}_{L_{x_1}^{2}}^\frac{1}{2},
\end{equation*}
we have
\begin{equation}\label{q2}
	\begin{aligned}
Q_2&\leq C	\sum_{m=0}^{+\infty}L_{\rho,m}^2 \int_{\mathbb R^2} \norm{
\partial_3^m\partial_t\partial_1\widehat{ \partial_2u}_2}_{L_{x_1}^{2}}^\frac{1}{2}\big(\norm{
\partial_3^m\partial_t\partial_1\widehat{ u}_2}_{L_{x_1}^{2}}^\frac{1}{2}+\norm{
\partial_3^m\partial_t\partial_1^2\widehat{ u}_2}_{L_{x_1}^{2}}^\frac{1}{2}\big)\\
&\qquad\qquad\qquad\qquad \qquad\qquad\qquad\times\norm{\partial_3^m\partial_1^2\widehat{\partial_2p}}_{L_{x_1}^{2}}^\frac{1}{2} \norm{\partial_3^m\partial_1^3\widehat{p}}_{L_{x_1}^{2}}^\frac{1}{2} d\xi_2dx_3\\
&\leq  C \sum_{m=0}^{+\infty}L_{\rho,m}^2  \norm{\partial_3^m\partial_t\partial_1\partial_2u}_{L^2}^{\frac{1}{2}}\big(\norm{\partial_3^m\partial_t\partial_1u}_{L^2}^{\frac{1}{2}}+\norm{\partial_3^m\partial_t\partial_1^2u}_{L^2}^{\frac{1}{2}}\big)\norm{\partial_3^m\partial_1^2\nabla p}_{L^2}\\
&\leq C r^{-\frac12} \abs{\nabla_{\rm h} u}_{X_{\rho}}  \Big(\sum_{m=0}^{+\infty}L_{\rho,m}^2   \norm{\partial_3^m\partial_1^2\nabla p}_{L^{2}} ^2\Big)^{\frac12}.
	\end{aligned}
\end{equation}
Substituting  \eqref{q1} and \eqref{q2} into \eqref{q1q2} yields
\begin{multline*}
	\sup_{x_1\in[0,1]} \sum_{m=0}^{+\infty}L_{\rho,m}^2\left|\int_{\mathbb{R}^2}(\partial_3^m\partial_t\partial_1\partial_2u_2)(\partial_3^m\partial_1^{2} p)dx_2dx_3\right|\\
	\leq
	C r^{-\frac12} \abs{\nabla_{\rm h} u}_{X_{\rho}}  \Big(\sum_{m=0}^{+\infty}L_{\rho,m}^2   \norm{\partial_3^m\partial_1^2\nabla p}_{L^{2}} ^2\Big)^{\frac12}.
\end{multline*}
In the same way,   performing the (partial) Fourier transform with respect to $x_3$ and then following  the same argument as above, we have
\begin{multline*}
\sup_{x_1\in[0,1]} \sum_{m=0}^{+\infty}L_{\rho,m}^2\left|\int_{\mathbb{R}^2}(\partial_3^m\partial_t\partial_1\partial_3u_3)(\partial_3^m\partial_1^{2} p)dx_2dx_3\right|\\
\leq C r^{-\frac12} \abs{\nabla_{\rm h} u}_{X_{\rho}}  \Big(\sum_{m=0}^{+\infty}L_{\rho,m}^2   \norm{\partial_3^m\partial_1^2\nabla p}_{L^{2}} ^2\Big)^{\frac12}.
\end{multline*}
Combining   the   two estimates above  with \eqref{+tildes2}, we obtain that, for $\tilde\alpha=\alpha-(1,0,0)=(2,0,0)$,
\begin{multline*}
	 \sum_{m= 0}^{+\infty}L^2_{\rho,m}\bigg|\int_{\mathbb R^2}   (\partial_3^m\partial^{\tilde{\alpha}} \partial_tu_1)(\partial_3^m\partial^{\tilde  \alpha} p)\big|^{x_1=1}_{x_1=0} \  dx_2dx_3\bigg|\\
	  \leq C r^{-\frac12} \abs{\nabla_{\rm h} u}_{X_{\rho}}  \Big(\sum_{m=0}^{+\infty}L_{\rho,m}^2   \norm{\partial_3^m\partial_1^2\nabla p}_{L^{2}} ^2\Big)^{\frac12}\\
	  \leq C r^{-\frac{2\alpha_1-1}{4}} \abs{\nabla_{\rm h} u}_{X_{\rho}}  \Big(\sum_{m=0}^{+\infty}L_{\rho,m}^2   \norm{\partial_3^m\partial^{\tilde\alpha}\nabla p}_{L^{2}} ^2\Big)^{\frac12}.	
	\end{multline*}
This with the validity  for $\tilde\alpha\neq(2,0,0)$
yields the desired assertion \eqref{df}.

 {\it Step 3.} It remains to treat the last term  in \eqref{p+} and prove that, for $\tilde\alpha=\alpha-(1,0,0)\in\mathbb Z_+^3$ with $\abs\alpha\leq3,$
 \begin{multline}\label{est:tildeS3}
 	\sum_{m= 0}^{+\infty}L^2_{\rho,m}\bigg|\int_{\mathbb R^2}   (\partial_3^m\partial^{\tilde{\alpha}} \Delta_{\rm h} u_1)(\partial_3^m\partial^{\tilde  \alpha} p)\big|^{x_1=1}_{x_1=0} \  dx_2dx_3\bigg|\\
 	\leq C r^{-\frac{2\alpha_1-1}{4} }\abs{\nabla_{\rm h}u}_{X_{\rho}}\Big( \sum_{m=0}^{+\infty}L_{\rho,m}^2  \norm{\partial_3^m\partial^{\tilde{\alpha}}\nabla p}_{L^2}^2\Big)^{\frac12}.
 \end{multline}
 To do so, using the fact that $\partial^{\tilde\alpha}\partial_1=\partial^\alpha$ and $\divv u=0$, we split the left-hand side term into three parts:
 \begin{equation}\label{+tildes3}
 \begin{aligned}
	&\sum_{m= 0}^{+\infty}L^2_{\rho,m}\bigg|\int_{\mathbb R^2}   (\partial_3^m\partial^{\tilde{\alpha}} \Delta_{\rm h} u_1)(\partial_3^m\partial^{\tilde  \alpha} p)\big|^{x_1=1}_{x_1=0} \  dx_2dx_3\bigg|\\
	 & \leq  2\sup_{x_1\in[0,1]} \sum_{m=0}^{+\infty}L_{\rho,m}^2\left|\int_{\mathbb{R}^2}(\partial_3^{m}\partial^{\alpha}\partial_2u_2)(\partial_3^m\partial^{\tilde  \alpha} p) dx_2dx_3\right|\\
&\qquad +2\sup_{x_1\in[0,1]} \sum_{m=0}^{+\infty}L_{\rho,m}^2\left|\int_{\mathbb{R}^2}(\partial_3^{m}\partial^{\alpha}\partial_3u_3)(\partial_3^m\partial^{\tilde  \alpha} p)dx_2dx_3\right|\\
&\qquad\qquad+2\sup_{x_1\in[0,1]} \sum_{m=0}^{+\infty}L_{\rho,m}^2\left|\int_{\mathbb{R}^2}(\partial_3^m \partial^{\tilde{\alpha}}\partial_2^2u_1)(\partial_3^m\partial^{\tilde  \alpha} p) dx_2dx_3\right|.
 \end{aligned}
 \end{equation}
Applying  the  Plancherel Theorem and the  Gagliardo-Nirenberg inequality \eqref{GN},  and denoting by  $\hat h$  the Fourier transform of $h$ with respect to $x_2$, we estimate the first term on the right-hand side of \eqref{+tildes3} as follows:
\begin{equation}\label{r1r2}
	\sup_{x_1\in[0,1]} \sum_{m=0}^{+\infty}L_{\rho,m}^2\left|\int_{\mathbb{R}^2}(\partial_3^{m}\partial^{\alpha}\partial_2u_2)(\partial_3^m\partial^{\tilde  \alpha} p) dx_2dx_3\right|\leq R_1+R_2,
\end{equation}
where
\begin{multline*}
	R_1=C\sum_{m=0}^{+\infty} L_{\rho,m}^2   \int_{\mathbb R^2} \abs{\xi_2}\norm{
\partial_3^m\widehat{ \partial^{\alpha}u}_2}_{L_{x_1}^{2}}^\frac{1}{2}\Big(\norm{
\partial_3^m\widehat{ \partial^{\alpha}u}_2}_{L_{x_1}^{2}}^\frac{1}{2}+\norm{
\partial_3^m\partial_1\widehat{ \partial^{\alpha}u}_2}_{L_{x_1}^{2}}^\frac{1}{2}\Big)\\
 \times  \norm{\partial_3^m\widehat{\partial^{\tilde{\alpha}}p}}_{L_{x_1}^{2}} d\xi_2dx_3,
 \end{multline*}
 and
 \begin{multline*}
 	R_2=  C\sum_{m=0}^{+\infty} L_{\rho,m}^2\int_{\mathbb R^2} \abs{\xi_2}\norm{
\partial_3^m\widehat{ \partial^{\alpha}u}_2}_{L_{x_1}^{2}}^\frac{1}{2}\Big(\norm{
\partial_3^m\widehat{ \partial^{\alpha}u}_2}_{L_{x_1}^{2}}^\frac{1}{2}+\norm{
\partial_3^m\partial_1\widehat{ \partial^{\alpha}u}_2}_{L_{x_1}^{2}}^\frac{1}{2}\Big)\\
 \times   \norm{\partial_3^m\widehat{\partial^{\tilde{\alpha}}p}}_{L_{x_1}^{2}}^\frac{1}{2}\norm{\partial_3^m\partial_1\widehat{\partial^{\tilde{\alpha}}p}}_{L_{x_1}^{2}}^\frac{1}{2} d\xi_2dx_3.
 \end{multline*}
By repeating the proofs of \eqref{q1} and \eqref{q2}, and using the identity $\partial^\alpha = \partial^{\tilde{\alpha}} \partial_1$, we obtain
    \begin{align*}
R_1&\leq C \sum_{m=0}^{+\infty}L_{\rho,m}^2  \norm{\partial_3^{m}\partial^{\alpha}u_2}_{L^2}^{\frac12}\Big(\norm{\partial_3^{m}\partial^{\alpha}u_2}_{L^2}^{\frac12} +\norm{\partial_3^m\partial_1\partial^{\alpha}u_2}_{L^2}^{\frac12} \Big)\norm{\partial_3^m\partial^{\tilde{\alpha}}\nabla p}_{L^2}\\
&\leq C \sum_{m=0}^{+\infty}L_{\rho,m}^2  \norm{\partial_3^{m}\partial^{\tilde\alpha}\nabla_{\rm h}u_2}_{L^2}^{\frac12}\Big(\norm{\partial_3^{m}\partial^{\alpha}u_2}_{L^2}^{\frac12} +\norm{\partial_3^m\partial_1\partial^{\alpha}u_2}_{L^2}^{\frac12} \Big)\norm{\partial_3^m\partial^{\tilde{\alpha}}\nabla p}_{L^2}\\
&\leq Cr^{-\frac{\alpha_1-1}{4}-\frac{\alpha_1}{4}}\abs{\nabla_{\rm h} u}_{X_{\rho}} \Big( \sum_{m=0}^{+\infty}L_{\rho,m}^2  \norm{\partial_3^m\partial^{\tilde{\alpha}}\nabla p}_{L^2}^2\Big)^{\frac12},
\end{align*}
and
\begin{align*}
R_{2}
&\leq C \sum_{m=0}^{+\infty}L_{\rho,m}^2  \norm{\partial_3^{m}\partial^{\alpha}\partial_2u_2}_{L^2}^{\frac12}\Big(\norm{\partial_3^{m}\partial^{\alpha}u_2}_{L^2}^{\frac12} +\norm{\partial_3^m\partial_1\partial^{\alpha}u_2}_{L^2}^{\frac12} \Big)\\
&\qquad\qquad \times\norm{\partial_3^m \partial^{\tilde{\alpha}}\partial_2p}_{L^2}^\frac{1}{2}\norm{\partial_3^m\partial^{\tilde{\alpha}}\partial_1p}_{L^2}^\frac{1}{2}\\
&\leq C \sum_{m=0}^{+\infty}L_{\rho,m}^2  \norm{\partial_3^m\partial^{\tilde\alpha}\partial_2 \nabla_{\rm h}u}_{L^2}^{\frac12}\Big(\norm{\partial_3^{m}\partial^{\alpha}u}_{L^2}^{\frac12} +\norm{\partial_3^m\partial_1\partial^{\alpha}u}_{L^2}^{\frac12} \Big) \norm{\partial_3^m\partial^{\tilde{\alpha}}\nabla_{\rm h}p}_{L^2} \\
&\leq Cr^{-\frac{\alpha_1-1}{4}-\frac{\alpha_1}{4}} \abs{\nabla_{\rm h} u}_{X_{\rho}}\Big( \sum_{m=0}^{+\infty}L_{\rho,m}^2  \norm{\partial_3^m\partial^{\tilde{\alpha}}\nabla p}_{L^2}^2\Big)^{\frac12},
\end{align*}
where the factor $r^{-\frac{\alpha_1-1}{4}-\frac{\alpha_1}{4}}$ arises from the definition of $\abs{\cdot}_{X_{\rho}}$ (see Definition \ref{def:normx3}). Substituting the two estimates above into \eqref{r1r2} yields that, for any $\alpha\in\mathbb Z_+^3$ with $\abs\alpha\leq 3$ and  $ \alpha_1\ge 1,$
\begin{multline*}
\sup_{x_1\in[0,1]} \sum_{m=0}^{+\infty}L_{\rho,m}^2\left|\int_{\mathbb{R}^2}(\partial_3^{m}\partial^{\alpha}\partial_2u_2)(\partial_3^m\partial^{\tilde  \alpha} p) dx_2dx_3\right|\\
 \leq C r^{-\frac{2\alpha_1-1}{4}} \abs{\nabla_{\rm h}u}_{X_{\rho}}\Big( \sum_{m=0}^{+\infty}L_{\rho,m}^2  \norm{\partial_3^m\partial^{\tilde{\alpha}}\nabla p}_{L^2}^2\Big)^{\frac12}.
    \end{multline*}
Similarly,  for any $\alpha\in\mathbb Z_+^3$ satisfying $\abs\alpha\leq 3$ and $ \alpha_1\ge 1$,
\begin{align*}
&\sup_{x_1\in[0,1]} \sum_{m=0}^{+\infty}L_{\rho,m}^2\left|\int_{\mathbb{R}^2}(\partial_3^{m}\partial^{\alpha}\partial_3u_3)(\partial_3^m\partial^{\tilde  \alpha} p)dx_2dx_3\right|\\
&\qquad\qquad+ \sup_{x_1\in[0,1]} \sum_{m=0}^{+\infty}L_{\rho,m}^2\left|\int_{\mathbb{R}^2}(\partial_3^m \partial^{\tilde{\alpha}}\partial_2^2u_1)(\partial_3^m\partial^{\tilde  \alpha} p) dx_2dx_3\right|\\
&\leq Cr^{-\frac{2\alpha_1-1}{4}}\abs{\nabla_{\rm h}u}_{X_{\rho}}\Big( \sum_{m=0}^{+\infty}L_{\rho,m}^2  \norm{\partial_3^m\partial^{\tilde{\alpha}}\nabla p}_{L^2}^2\Big)^{\frac12}.
\end{align*}
Substituting the two estimates above  into \eqref{+tildes3} yields \eqref{est:tildeS3}.

{\it Step 4.} Combining   estimates \eqref{est:tildeS1}, \eqref{df} and \eqref{est:tildeS3} with \eqref{p+}, we obtain  that, for any    $\tilde\alpha=\alpha-(1,0,0)$ with $\abs{ \alpha }\leq 3$ and $ \alpha_1\ge 1$,
\begin{multline*}
 \sum_{m= 0}^{+\infty}L^2_{\rho,m}\norm{\partial_3^m\partial^{ \tilde \alpha}  \nabla p}_{L^2}^2 \leq   Cr^{-\frac92}\abs{u}_{X_{\rho}}^2\Big( \sum_{m=0}^{+\infty}L_{\rho,m}^2  \norm{\partial_3^m\partial^{\tilde{\alpha}}\nabla p}_{L^2}^2\Big)^{\frac12}\\
 +Cr^{-\frac{2\alpha_1-1}{4}}\abs{\nabla_{\rm h}u}_{X_{\rho}}\Big( \sum_{m=0}^{+\infty}L_{\rho,m}^2  \norm{\partial_3^m\partial^{\tilde{\alpha}}\nabla p}_{L^2}^2\Big)^{\frac12}.
\end{multline*}
Then  the desired estimate \eqref{est:Pone} follows. This completes   the proof of Lemma \ref{lem:S3}.
\end{proof}

\begin{lemma}[Estimate on $S_4$]\label{lem:S4}
Let $S_4$ be given in \eqref{S1-S4}, that is,
\begin{equation*}
S_4= \sum_{m= 0}^{+\infty}r^{\alpha_1}L^2_{\rho,m}\int_{\mathbb R^2} \big[ (\partial_3^{m}\partial^{\alpha}  u) \partial_3^m\partial ^{\tilde\alpha} \big(\partial_tu+u\cdot\nabla u-\partial_2^2u\big)\big]\big|^{x_1=1}_{x_1=0}\ dx_2dx_3,
\end{equation*}
where $\tilde\alpha=\alpha-(1,0,0)=(\alpha_1-1,\alpha_2,\alpha_3)$.
Then for any $\alpha\in\mathbb Z_+^3$ with $\abs\alpha \leq 3 $ and $\alpha_1\geq 1,$ it holds that
\begin{equation*}
\begin{aligned}
S_4&\leq Cr^{-\frac{9}{2}}\abs{u}_{X_{\rho}}^2\abs{\nabla_{\rm h}u}_{X_{\rho}}+Cr^{\frac{1}{4}}\abs{\nabla_{\rm h}u}_{X_{\rho}}^2\\
&\leq Cr^{-\frac92}\abs{u}_{X_{\rho}} \abs{u}_{Y_{\rho}}^2+C\Big( r^{-\frac92}\abs{u}_{X_{\rho}} +r^{\frac{1}{4}}\Big)\abs{\nabla_{\rm h}u}_{X_{\rho}}^2,
\end{aligned}
\end{equation*}
 recalling the norms $\abs{\cdot}_{X_{\rho}}$ and $\abs{\cdot}_{Y_{\rho}}$ are given in Definition \ref{def:normx3}.
\end{lemma}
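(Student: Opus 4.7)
The strategy mirrors the treatment of $S_3$ in Lemma~\ref{lem:S3}, combining the boundary conditions on $u$ with a tangential Plancherel plus the one-dimensional Gagliardo--Nirenberg inequality \eqref{GN}. First, decompose
\begin{equation*}
S_4 = S_{4,1} + S_{4,2} + S_{4,3},
\end{equation*}
corresponding to the three contributions $\partial_t u$, $(u\cdot\nabla)u$ and $-\partial_2^2 u$ inside the $\partial_3^m\partial^{\tilde\alpha}$ factor. The boundary trace $u|_{x_1=0,1}=0$ implies $\partial^{\beta}(\partial_t u)|_{x_1=0,1} = \partial^{\beta}(\partial_2^2 u)|_{x_1=0,1}=\partial^{\beta}((u\cdot\nabla)u)|_{x_1=0,1}=0$ whenever $\beta=(0,\beta_2,\beta_3)$. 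Hence $S_{4,1}, S_{4,2}, S_{4,3}$ all vanish when $\tilde\alpha_1=0$, and it suffices to treat $\tilde\alpha_1\in\{1,2\}$, i.e.\ $\alpha_1\in\{2,3\}$, in exact analogy with the reduction to $\tilde\alpha=(2,0,0)$ performed in Step~2 of the proof of Lemma~\ref{lem:S3}.

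For each nontrivial $\tilde\alpha$, I bound the boundary integral by $2\sup_{x_1\in[0,1]}$ of the corresponding $L^2_{x_2,x_3}$ inner product, then apply Plancherel in $x_2$ (or $x_3$) so that a derivative $\partial_2$ landing on one factor appears as the multiplier $|\xi_2|$. Distributing $|\xi_2|=|\xi_2|^{1/2}|\xi_2|^{1/2}$ between the two factors of the inner product, I invoke the one-dimensional Gagliardo--Nirenberg inequality~\eqref{GN} in $x_1$, i.e.\ $\|f\|_{L^\infty_{x_1}}\leq C\|f\|_{L^2_{x_1}}^{1/2}(\|f\|_{L^2_{x_1}}^{1/2}+\|\partial_1 f\|_{L^2_{x_1}}^{1/2})$, to convert the pointwise-in-$x_1$ trace into integrated $L^2$ norms. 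This is precisely the device used in the chain \eqref{q1q2}--\eqref{q2} and \eqref{r1r2}--\eqref{+tildes3}; tracking the weight $r^{\alpha_1}$ against the normalisation $r^{\alpha_1}$ inside $\abs{\cdot}_{X_{\rho}}$, every such contribution carries the combined power $r^{\alpha_1/2}\cdot r^{-(2\alpha_1-1)/4}=r^{1/4}$.

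The linear pieces $S_{4,1}$ and $S_{4,3}$ thus directly produce the bound $C r^{1/4} \abs{\nabla_{\rm h} u}_{X_{\rho}}^2$, using that $\partial_t \partial^{\tilde\alpha}u$ and $\partial^{\tilde\alpha}\partial_2^2 u$ with $|\tilde\alpha|\le 2$ and $\tilde\alpha_1\le 2$ are both controlled by $\abs{\nabla_{\rm h}u}_{X_\rho}$ after using $\partial_t=\partial_2\cdot(\partial_2^{-1}\partial_t)$ only in the schematic sense that time derivatives and one horizontal derivative are included in the $X_\rho$ norm (cf.\ Remark~\ref{remk}). The nonlinear piece $S_{4,2}$ requires, after the GN step, estimating sums of the form $\sum_m L_{\rho,m}^2 \sum_k \binom{m}{k}\|\partial_3^k u\|_{H^3}\|\partial_3^{m-k}\nabla_{\rm h}u\|_{H^3}\|\partial_3^m\partial^{\alpha}u\|_{L^2}$, which is dispatched via the algebra property~\eqref{prpalge}, the convolution inequalities~\eqref{ineq1}--\eqref{ineq4}, and the discrete Young inequality~\eqref{young}, exactly as in the treatment of $S_1$ in Lemma~\ref{lem:S1}. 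This yields the bound $C r^{-9/2} \abs{u}_{X_{\rho}}^2 \abs{\nabla_{\rm h}u}_{X_{\rho}}$.

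The main obstacle is to keep the powers of $r$ in balance simultaneously for the three multi-indices $\tilde\alpha\in\{(1,0,0),(1,0,1),(1,1,0),(2,0,0)\}$, while the Leibniz expansion in $S_{4,2}$ forces a three-fold convolution in $m$ that must be resummed; the algebra property~\eqref{prpalge} and the tangential GN split are what make the powers match. Combining the three estimates gives the first line of the claimed inequality, and the second line then follows from \eqref{xyone} and the Young inequality $ab\le a^2/2+b^2/2$, exactly as in the passage \eqref{teces} of Lemma~\ref{lem:S1}.
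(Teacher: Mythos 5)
Your overall scaffolding---the three-way decomposition, the use of the boundary condition $u|_{x_1=0,1}=0$ to dispose of the $\tilde\alpha_1=0$ case for the linear pieces, 1D Gagliardo--Nirenberg for the trace, and Leibniz plus the convolution inequalities~\eqref{ineq1}--\eqref{ineq4} with Young~\eqref{young} for the nonlinear piece---matches the paper's proof and lands on the right bound. However, your treatment of the time-derivative piece $S_{4,1}$ is not an ``exact analogy with the reduction to $\tilde\alpha=(2,0,0)$ in Step~2 of Lemma~\ref{lem:S3},'' and the gap is conceptual, not cosmetic. In Step~2 of Lemma~\ref{lem:S3}, the boundary integrand pairs $\partial^{\tilde\alpha}\partial_t u_1$ against $\partial^{\tilde\alpha}p$; since only $u_1$ appears there, divergence-freeness rewrites $\partial_1^2 u_1 = -\partial_1\partial_2 u_2 - \partial_1\partial_3 u_3$, which manufactures tangential derivatives that can be sent to the other factor via Plancherel. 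In the present $S_{4,1}$, the integrand $(\partial_3^m\partial^\alpha u)\,\partial_3^m\partial^{\tilde\alpha}\partial_t u$ involves all three components of $u$, so the divergence substitution is unavailable, and $\partial_t u$ supplies no $\partial_2$ to turn into a Fourier multiplier $|\xi_2|$. The schematic device ``$\partial_t=\partial_2\cdot(\partial_2^{-1}\partial_t)$'' does not repair this.

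The paper handles $S_{4,1}$ more crudely and more simply: bound each factor in $\sup_{x_1}L^2_{x_2,x_3}$ by the 1D Gagliardo--Nirenberg inequality in $x_1$ alone, with no Plancherel, then observe that because $\alpha_1\geq 2$ we have $\tilde\alpha_1\geq 1$, so $\partial^{\tilde\alpha}\partial_t u = \partial^{(\tilde\alpha_1-1,\tilde\alpha_2,\tilde\alpha_3)}\partial_t(\partial_1 u)$ is a space-time derivative (with $j+|\beta|\leq 3$) of $\nabla_{\rm h}u$, hence controlled by $|\nabla_{\rm h}u|_{X_\rho}$. Balancing $r$-powers this way gives $r^{\alpha_1} S_{4,1}\lesssim r\,|\nabla_{\rm h}u|_{X_\rho}^2$, not the $r^{1/4}$ your formula $r^{\alpha_1/2}\cdot r^{-(2\alpha_1-1)/4}$ predicts. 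The lemma's conclusion is unaffected since $r\leq r^{1/4}$ for $0<r<1$, but the exponent-tracking identity you wrote is specific to the $\partial_2^2 u$ piece (where Plancherel \emph{is} used, and where it reproduces the $r^{-(2\alpha_1-1)/4}$ appearing in \eqref{est:tildeS3}); it should not be presented as applying uniformly to all three contributions. Finally, in the nonlinear piece, the trilinear sum you write has $\nabla_{\rm h}u$ in the wrong slot: after bounding the trace of $\partial_3^m\partial^{\tilde\alpha}[(u\cdot\nabla)u]$ by Leibniz with $|\gamma|>0$ (the $\gamma=0$ term vanishes on the boundary) and the trace of $\partial_3^m\partial^\alpha u$ by Sobolev using $\partial^\alpha=\partial_1\partial^{\tilde\alpha}$, one lands on $\sum_m\sum_k\binom{m}{k}L_{\rho,m}^2\|\partial_3^k u\|_{H^3}\|\partial_3^{m-k}u\|_{H^3}\|\partial_3^m\nabla_{\rm h}u\|_{H^3}$, i.e.\ with $\nabla_{\rm h}u$ in the last factor only; the estimate then closes via Lemma~\ref{lem:S1}'s argument and gives $Cr^{-9/2}|u|_{X_\rho}^2|\nabla_{\rm h}u|_{X_\rho}$, as you state.
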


\begin{proof}
Note that
  $\tilde \alpha =\alpha-(1,0,0)$  and
\begin{equation*}
	    \partial_3^m\partial^{\tilde{\alpha}} \partial_tu |_{x_1=0,1}=\partial_3^m\partial^{\tilde{\alpha}}  \partial_2^2 u |_{x_1=0,1} = 0 \textrm{ for }\ \tilde\alpha_1=0.
\end{equation*}
Then we split $S_4$ as
\begin{equation}\label{+S4}
     S_4\leq  2 r^{ \alpha_1}\big(S_{4,1}+S_{4,2}+S_{4,3}\big),
\end{equation}
where
 \begin{equation*}
     \left\{
     \begin{aligned}
&S_{4,1}={\bf 1}_{\{\alpha_1\ge 2\}}\sup_{x_1\in[0,1]}\sum_{m=0}^{+\infty}L_{\rho,m}^2\left|\int_{\mathbb R^2}   (\partial_3^{m}\partial^{\alpha}  u) \partial_3^m\partial ^{\tilde\alpha}  \partial_tu \,  dx_2dx_3\right|,\\
&S_{4,2}={\bf 1}_{\{ \alpha_1\ge 2\}}\sup_{x_1\in[0,1]}\sum_{m=0}^{+\infty}L_{\rho,m}^2\left|\int_{\mathbb R^2}   (\partial_3^{m}\partial^{\alpha}  u) \partial_3^m\partial ^{\tilde\alpha}\partial_2^2u\,  dx_2dx_3\right|,\\
&S_{4,3}= \sum_{m=0}^{+\infty}L_{\rho,m}^2\left|\int_{\mathbb R^2} \big[  (\partial_3^{m}\partial^{\alpha}  u) \partial_3^m\partial ^{\tilde\alpha} \big( (u\cdot\nabla) u\big ) \big]\big|_{x_1=0,1} \, dx_2dx_3\right|.
     \end{aligned}
     \right.
 \end{equation*}
Here and below ${\bf 1}_{A}$ stands for the indicator function of set $A.$

For the term $S_{4,1}$, it  follows from  the Gagliardo-Nirenberg inequality \eqref{GN} that, observing $\partial^{\tilde\alpha}\partial_1=\partial^{\alpha},$
\begin{align*}
&  S_{4,1}
  \leq C {\bf 1}_{\{\alpha_1\ge 2\}} \sum_{m=0}^{+\infty}L_{\rho,m}^2 \norm{\partial_3^m \partial^{\tilde{\alpha}}\partial_tu}_{L^2}^\frac{1}{2}\Big (\norm{\partial_3^m \partial^{\tilde{\alpha}}\partial_tu}_{L^2}^\frac{1}{2}+\norm{\partial_3^{m}\partial^{\alpha} \partial_tu}_{L^2}^\frac{1}{2}\Big )\\
&\qquad\qquad\qquad\times\norm{\partial_3^{m}\partial^{\alpha}u}_{L^2}^\frac{1}{2}\Big(\norm{\partial_3^{m}\partial^{\alpha}u}_{L^2}^\frac{1}{2}+\norm{\partial_3^{m}\partial^{\alpha}\partial_1u}_{L^2}^\frac{1}{2}\Big).
\end{align*}
Moreover, for $\alpha_1\geq 2$ we have $\tilde\alpha_1=\alpha_1-1\geq 1$ and thus
\begin{align*}
	&\sum_{m=0}^{+\infty}L_{\rho,m}^2 \norm{\partial_3^m \partial^{\tilde{\alpha}}\partial_tu}_{L^2}^\frac{1}{2}\Big (\norm{\partial_3^m \partial^{\tilde{\alpha}}\partial_tu}_{L^2}^\frac{1}{2}+\norm{\partial_3^{m}\partial^{\alpha} \partial_tu}_{L^2}^\frac{1}{2}\Big )\\
&\qquad\qquad\qquad\times\norm{\partial_3^{m}\partial^{\alpha}u}_{L^2}^\frac{1}{2}\Big(\norm{\partial_3^{m}\partial^{\alpha}u}_{L^2}^\frac{1}{2}+\norm{\partial_3^{m}\partial^{\alpha}\partial_1u}_{L^2}^\frac{1}{2}\Big)\\
&= \sum_{m=0}^{+\infty}\Big(r^{\frac{\alpha_1-2}{4}}L_{\rho,m}^\frac{1}{2}\norm{\partial_3^m \partial^{\tilde{\alpha}}\partial_tu}_{L^2}^\frac{1}{2}\Big)  r^{\frac{\alpha_1-1}{4}}L_{\rho,m}^\frac{1}{2} \Big (\norm{\partial_3^m \partial^{\tilde{\alpha}}\partial_tu}_{L^2}^\frac{1}{2}+\norm{\partial_3^{m}\partial^{\alpha} \partial_tu}_{L^2}^\frac{1}{2}\Big )\\
&\qquad\quad \times \Big( r^{\frac{\alpha_1-1}{4}}L_{\rho,m}^\frac{1}{2}\norm{\partial_3^{m}\partial^{\alpha}u}_{L^2}^\frac{1}{2}\Big)   r^{\frac{\alpha_1}{4}}L_{\rho,m}^\frac{1}{2}\Big(\norm{\partial_3^{m}\partial^{\alpha}u}_{L^2}^\frac{1}{2}+\norm{\partial_3^{m}\partial^{\alpha}\partial_1u}_{L^2}^\frac{1}{2}\Big)  \frac{1}{r^{\alpha_1-1}} \\
&\leq C r^{-\alpha_1+1} \abs{\nabla_{\rm h} u}_{X_{\rho}}^2.
\end{align*}
Combining the above estimates yields
\begin{equation}\label{est:S41}
 \begin{aligned}
     S_{4,1} \leq  C{\bf 1}_{ \{\alpha_1 \ge 2\}}r^{-\alpha_1+1} \abs{\nabla_{\rm h}u}_{X_{\rho}}^2.
 \end{aligned}
\end{equation}
The estimate for  $S_{4,2}$  follows from a similar argument  to that  in \eqref{est:tildeS3}. By repeating the proof of \eqref{est:tildeS3} and using the identity $\partial^{\alpha}=\partial^{\tilde\alpha}\partial_1$ for $\alpha_1\geq 1,$  we have
\begin{equation}\label{est:S42}
    \begin{aligned}
S_{4,2}
&\leq C{\bf 1}_{\{\alpha_1\ge 2\}}r^{-\frac{2 \alpha_1-1}{4}}\abs{\nabla_{\rm h}u}_{X_{\rho}}\Big(\sum_{m=0}^{+\infty}L_{\rho,m}^2 \norm{\partial_3^{m}\partial^{\alpha}  \nabla u}_{L^2}^2\Big)^{\frac12} \\
&\leq C{\bf 1}_{\{\alpha_1\ge 2\}}r^{-\frac{2 \alpha_1-1}{4}}\abs{\nabla_{\rm h}u}_{X_{\rho}}  r^{-\frac{\alpha_1}{2}}\Big(\sum_{m=0}^{+\infty} r^{\alpha_1}L_{\rho,m}^2 \norm{\partial_3^m \partial^{ \tilde \alpha}  \nabla \partial_1u}_{L^2}^2\Big)^{\frac12} \\
&\leq C{\bf 1}_{\{\alpha_1\ge 2\}}r^{- \alpha_1+\frac{1}{4}}\abs{\nabla_{\rm h}u}_{X_{\rho}}^2.
    \end{aligned}
\end{equation}
It remains to estimate the term $S_{4,3}$.  The boundary condition $u|_{x_1=0,1}=0$ yields
\begin{align*}
	\partial_3^m \partial^{\tilde{\alpha}}\big ((u\cdot\nabla) u\big)|_{x_1=0,1}
	=\sum_{   \gamma\leq \tilde\alpha,\, \abs\gamma>0}\,\sum_{k=0}^m\binom{\tilde\alpha}{\gamma}\binom{m}{k} \big[ (\partial^{ \gamma}\partial_3^k u\cdot\nabla) \partial^{\tilde\alpha-\gamma} \partial_3^{m-k}u \big]\big|_{x_1=0,1}.
\end{align*}
This with the fact that $|\tilde\alpha|=\abs\alpha-1\leq 2$ implies
\begin{align*}
\big\|\partial_3^m \partial^{\tilde{\alpha}}((u\cdot\nabla) u) |_{x_1=0,1}\big\|_{L_{x_2, x_3}^2}\leq C\sum_{k=0}^m\binom{m}{k}\norm{\partial_3^ku}_{H^3}\norm{\partial_3^{m-k}u}_{H^3}.
\end{align*}
As a result,
\begin{equation}\label{est:S43}
    \begin{aligned}
  S_{4,3}= &\sum_{m=0}^{+\infty}L_{\rho,m}^2\left|\int_{\mathbb R^2} \big[  (\partial_3^{m}\partial^{\alpha}  u) \partial_3^m\partial ^{\tilde\alpha} \big( (u\cdot\nabla) u\big ) \big]\big|_{x_1=0,1} \, dx_2dx_3\right|\\
 \leq & C \sum_{m=0}^{+\infty}L_{\rho,m}^2\big\|\partial_3^m \partial^{\tilde{\alpha}}((u\cdot\nabla) u) |_{x_1=0,1}\big\|_{L_{x_2, x_3}^2}\big\|\partial_3^{m}\partial^{\alpha}u |_{x_1=0,1}\big\|_{L_{x_2, x_3}^2}\\\leq&C \sum_{m=0}^{+\infty}\sum_{k=0}^m\binom{m}{k}L_{\rho,m}^2\norm{\partial_3^ku}_{H^3}\norm{\partial_3^{m-k}u}_{H^3} \norm{\partial_3^m\nabla_{\rm h}u}_{H^3}\\
 \leq &Cr^{-\frac{9}{2}}\abs{u}_{X_{\rho}}^2\abs{\nabla_{\rm h} u}_{X_{\rho}},
    \end{aligned}
\end{equation}
where the last inequality  follows from a similar argument to that used in the proof of Lemma \ref{lem:S1}.
Substituting  estimates \eqref{est:S41}, \eqref{est:S42} and \eqref{est:S43} into \eqref{+S4} yields
\begin{align*}
    S_4\leq &C r^{\alpha_1}\Big(r^{-\alpha_1+\frac{1}{4}}\abs{\nabla_{\rm h}u}_{X_{\rho}}^2+r^{-\frac{9}{2}}\abs{u}_{X_{\rho}}^2\abs{\nabla_{\rm h} u}_{X_{\rho}}\Big)\\
 \leq &Cr^{-\frac{9}{2}}\abs{u}_{X_{\rho}}^2\abs{\nabla_{\rm h}u}_{X_{\rho}}+Cr^{\frac{1}{4}}\abs{\nabla_{\rm h}u}_{X_{\rho}}^2\\
 \leq & Cr^{-\frac92}\abs{u}_{X_{\rho}} \abs{u}_{Y_{\rho}}^2+C\Big( r^{-\frac92}\abs{u}_{X_{\rho}} +r^{\frac{1}{4}}\Big)\abs{\nabla_{\rm h}u}_{X_{\rho}}^2,
\end{align*}
 the last inequality using the same argument as that in \eqref{teces}.  The proof of Lemma \ref{lem:S4} is thus completed.
\end{proof}

\begin{proof}[Completing the proof of Proposition \ref{purespatial}]
Substituting  the estimates for terms $S_j$ from Lemmas \ref{lem:S1}-\ref{lem:S4} into \eqref{s}, we obtain that, for any $\alpha=(\alpha_1,\alpha_2,\alpha_3)\in\mathbb Z_+^3$ with $\alpha_1\geq1,$ 	
\begin{equation*}
	\begin{aligned}
		&\frac{1}{2}\frac{d}{dt}\sum_{m=0}^{+\infty} r^{\alpha_1}L_{\rho,m}^2\norm{\partial_3^{m}\partial^{\alpha} u}_{L^2}^2+\sum_{m=0}^{+\infty} r^{\alpha_1}L_{\rho,m}^2 \norm{\partial_3^{m}\partial^{\alpha} \nabla_{\rm h} u}_{L^2}^2\\
		       &\qquad- \frac{\rho'}{\rho}\sum_{m=0}^{+\infty} r^{\alpha_1}(m+1)L_{\rho,m}^2\norm{\partial_3^{m}\partial^{\alpha} u}_{L^2}^2\\
		       &\leq C r^{-\frac{9}{2}}\abs{u}_{X_{\rho}}\abs{u}_{Y_{\rho}}^2+ C\Big(r^{-\frac{9}{2}}\abs{u}_{X_{\rho}}  +r^{\frac{1}{4}} \Big)\abs{\nabla_{\rm h}u}_{X_{\rho}}^2.
	\end{aligned}
\end{equation*}
A similar, though simpler, argument shows that the above estimate remains valid for any $\alpha\in\mathbb Z_+^3$ with $\alpha_1=0$ (see  Remark \ref{vanishing}). This yields  the desired assertion of Proposition \ref{purespatial},  completing the proof.
\end{proof}

\subsection{Estimate on space-time derivatives and completing the proof of Theorem \ref{prop:wellposedness}}\label{subspcetime}

To complete the proof of Theorem \ref{prop:wellposedness}, it remains to estimate the terms involving time derivatives in the norm $\abs{u}_{X_\rho}$. We state it  as follows.

\begin{proposition}\label{mixder}
	Under the hypothesis of Theorem \ref{prop:wellposedness}, for any $(j, \alpha)\in \mathbb Z_+\times\mathbb Z_+^3$ with $j+\abs\alpha\leq 3$ and $j\geq 1,$  we have that
	\begin{equation*}
		\begin{aligned}
		     &	\frac{1}{2}\frac{d}{dt}\sum_{m=0}^{+\infty} r^{\alpha_1}L_{\rho,m}^2\norm{\partial_3^{m}\partial_t^j\partial^{\alpha} u}_{L^2}^2+\sum_{m=0}^{+\infty} r^{\alpha_1}L_{\rho,m}^2 \norm{\partial_3^{m}\partial_t^j\partial^{\alpha} \nabla_{\rm h} u}_{L^2}^2\\
		     &\qquad- \frac{\rho'}{\rho}\sum_{m=0}^{+\infty} r^{\alpha_1}(m+1)L_{\rho,m}^2\norm{\partial_3^{m}\partial_t^j\partial^{\alpha} u}_{L^2}^2 \\
		     &\leq C r^{-\frac{9}{2}}\abs{u}_{X_{\rho}}\abs{u}_{Y_{\rho}}^2+  C\Big(r^{-\frac{9}{2}}\abs{u}_{X_{\rho}}  +r^{\frac{1}{4}} \Big)\abs{\nabla_{\rm h}u}_{X_{\rho}}^2,
		\end{aligned}
	\end{equation*}
	where the norms $\abs{\cdot}_{X_{\rho}}$ and $\abs{\cdot}_{Y_{\rho}}$ are given in Definition \ref{def:normx3}.
\end{proposition}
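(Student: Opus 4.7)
The plan is to mirror the proof of Proposition \ref{purespatial}, replacing the multi-index operator $\partial^{\alpha}$ by the mixed space--time operator $\partial_t^j\partial^{\alpha}$ with $j+\abs{\alpha}\leq 3$ and $j\geq 1$. Applying $\partial_3^m\partial_t^j\partial^{\alpha}$ to the first equation in \eqref{ANS}, pairing with $\partial_3^m\partial_t^j\partial^{\alpha}u$ in $L^2$, multiplying by $r^{\alpha_1}L_{\rho,m}^2$, and summing over $m$, we obtain (as in \eqref{s}) an inequality containing the three desired left-hand-side terms up to the right-hand-side remainder
\[
\sum_{j'=1}^{4}T_{j'},
\]
where $T_1,\dots,T_4$ are the exact analogs of $S_1,\dots,S_4$ in \eqref{S1-S4} with $\partial^{\alpha}$ replaced by $\partial_t^j\partial^{\alpha}$ and $\partial^{\tilde{\alpha}}$ by $\partial_t^j\partial^{\tilde{\alpha}}$. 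The case $\alpha_1=0$ presents no boundary term, exactly as in Remark \ref{vanishing}, so it suffices to focus on $\alpha_1\geq 1$.

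For $\alpha_1\geq 1$ I establish the space--time version of Lemma \ref{nonvani} by writing $\partial^{\alpha}=\partial_1\partial^{\tilde{\alpha}}$, integrating by parts in $x_1$, and then substituting the identity
\[
\partial_3^m\partial_t^j\partial^{\alpha}\partial_1u\big|_{x_1=0,1}=\partial_3^m\partial_t^j\partial^{\tilde{\alpha}}\bigl(\partial_tu+(u\cdot\nabla)u-\partial_2^2u+\nabla p\bigr)\big|_{x_1=0,1}
\]
obtained from the equation \eqref{ANS} itself. This yields $T_3,T_4$ in exactly the same form as $S_3,S_4$. The interior nonlinear contributions $T_1,T_2$ are handled by first applying Leibniz's formula in $t$ to distribute the $j$ time derivatives across the products $(u_{\rm h}\cdot\nabla_{\rm h})u$ and $u_3\partial_3 u$, then Leibniz in $x$, and finally invoking the algebra property \eqref{prpalge} of $\mathcal{H}^3$ together with the combinatorial inequalities \eqref{ineq1}--\eqref{ineq4} and Young's convolution inequality \eqref{young}, exactly as in Lemmas \ref{lem:S1}--\ref{lem:S2}. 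Since every resulting factor carries a total order of space--time derivatives not exceeding $3$, and since the norms $\abs{\cdot}_{X_\rho}$ and $\abs{\cdot}_{Y_\rho}$ in Definition \ref{def:normx3} already include all space--time derivatives with $j+\abs{\alpha}\leq 3$, each Leibniz term is directly controlled by the right-hand side $C r^{-9/2}\abs{u}_{X_\rho}\abs{u}_{Y_\rho}^2 + C(r^{-9/2}\abs{u}_{X_\rho}+r^{1/4})\abs{\nabla_{\rm h}u}_{X_\rho}^2$.

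The main obstacle is the pressure term $T_3$ and the boundary term $T_4$. For $T_3$ I apply $\partial_t^j$ to the elliptic problem \eqref{eq:ptwo} and to the Neumann condition \eqref{boundary:Pone}, which preserves the elliptic structure and only adds time derivatives on the right-hand side. The crucial simplification is that the constraint $j\geq 1$ forces $\abs{\alpha}\leq 2$, hence $\alpha_1\leq 2$ and $\tilde{\alpha}_1\leq 1$; this \emph{excludes} the delicate subcase $\tilde{\alpha}=(2,0,0)$ which required Steps~2--3 of Lemma \ref{lem:S3}. Consequently the linear boundary contributions in the $\partial_t^j\partial^{\tilde{\alpha}}p$ energy identity all vanish using $u\big|_{x_1=0,1}=0$ (so that $\partial_t^{j+1}u\big|_{x_1=0,1}=0$ and $\partial_t^j\partial_2^2 u\big|_{x_1=0,1}=0$ whenever $\tilde{\alpha}_1=0$), and only the interior estimate \eqref{est:tildeS1}-type contribution survives, yielding the analog of \eqref{est:Pone} with the constant $C r^{-9/2}\abs{u}_{X_\rho}^2$. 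For $T_4$, the linear boundary pieces $\partial_t^{j+1}u$ and $\partial_t^j\partial_2^2 u$ vanish at $x_1=0,1$ when $\tilde{\alpha}_1=0$ by the same reasoning, and the one remaining subcase $\tilde{\alpha}_1=1$ is estimated via the Gagliardo--Nirenberg/partial Fourier argument of Steps~2--3 of Lemma \ref{lem:S3}, since it only involves $\alpha_1=2$. The nonlinear boundary piece $\partial_t^j\partial^{\tilde{\alpha}}\bigl((u\cdot\nabla)u\bigr)\big|_{x_1=0,1}$ is bounded exactly as in \eqref{est:S43}: applying Leibniz in both $t$ and $x$ and using $\partial_t^k u\big|_{x_1=0,1}=0$ for all $k\geq 0$ leaves only those terms where a $\partial_1$ falls on at least one of the two factors, which are then controlled by $\norm{\partial_3^{\ell}\partial_t^{k}\partial^{\gamma}u}_{H^3}$-type quantities and combined through \eqref{young} as in Lemma \ref{lem:S1}.

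Combining the estimates of $T_1,T_2,T_3,T_4$ exactly as at the end of Subsection \ref{subpure} gives the stated inequality and completes the proposal. The only genuinely new ingredient beyond Proposition \ref{purespatial} is keeping track of the time--Leibniz expansions, which introduce no new combinatorial difficulty because $\abs{\cdot}_{X_\rho}$ already accommodates space--time derivatives up to total order $3$.
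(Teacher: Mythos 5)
Your overall outline — replacing $\partial^\alpha$ by $\partial_t^j\partial^\alpha$, splitting into $T_1,\dots,T_4$, using the $\mathcal H^3$ algebra property for the time--Leibniz expansions, and noting that $\alpha_1 = 0$ needs no boundary argument — matches the route the paper intends (the paper's own proof is the one-line reference to Remarks \ref{algebra} and \ref{remk} and Proposition \ref{purespatial}). However, your treatment of the pressure term $T_3$ contains a genuine error. You claim that because $j\geq 1$ forces $\tilde\alpha_1\leq 1$, ``the linear boundary contributions in the $\partial_t^j\partial^{\tilde\alpha}p$ energy identity all vanish using $u|_{x_1=0,1}=0$\,'' and that ``only the interior estimate \eqref{est:tildeS1}-type contribution survives.'' This is false. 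The Neumann datum \eqref{boundary:Pone} contains $-\Delta_{\rm h}u_1 = -\partial_1^2 u_1 - \partial_2^2 u_1$, and while $\partial_t^j\partial^{\tilde\alpha}\partial_2^2 u_1\big|_{x_1=0,1}=0$ when $\tilde\alpha_1=0$ (a tangential derivative of $u_1|_{x_1=0,1}=0$), the piece $\partial_t^j\partial^{\tilde\alpha}\partial_1^2 u_1\big|_{x_1=0,1}$ does \emph{not} vanish: by $\operatorname{div}u=0$ it equals $-\partial_t^j\partial^{\tilde\alpha}\partial_1(\partial_2 u_2+\partial_3 u_3)\big|_{x_1=0,1}$, which involves one normal derivative of the tangential velocity components at the wall and has no reason to vanish. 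Your parenthetical lists only $\partial_t^{j+1}u$ and $\partial_t^j\partial_2^2 u$ as the boundary pieces, so you appear to have simply dropped the $\partial_1^2 u_1$ contribution.

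You also misread the structure of Lemma \ref{lem:S3}: the statement that the subcase $\tilde\alpha=(2,0,0)$ ``required Steps 2--3'' is wrong. Only Step 2 (the $\partial_t u_1$ boundary term) reduces to the single nontrivial case $\tilde\alpha=(2,0,0)$; Step 3 (the $\Delta_{\rm h}u_1$ boundary term, estimate \eqref{est:tildeS3}) is established for \emph{all} $\tilde\alpha$ with $|\alpha|\leq 3$ and is not optional. Consequently your proposed pressure bound is missing the $Cr^{-\frac{2\alpha_1-1}{4}}|\nabla_{\rm h}u|_{X_\rho}$ contribution from \eqref{est:tildeS3}, which is precisely what produces the $r^{1/4}|\nabla_{\rm h}u|_{X_\rho}^2$ term in \eqref{est:S3}. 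The remedy is straightforward: the Plancherel/Gagliardo--Nirenberg argument of Step 3 commutes with $\partial_t^j$ and applies verbatim here, so the final inequality still holds; but as written your justification incorrectly asserts a boundary term vanishes when it does not, and the proof would not be valid without including that step.
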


\begin{proof}
	 The proof is analogous  to that of Proposition \ref{purespatial},  and  there is  no any new difficulty with the help of Remarks \ref{algebra} and \ref{remk}.   So we omit it for brevity.
\end{proof}

\begin{proof}[Completing the proof of Theorem \ref{prop:wellposedness}] By the definitions of $\abs{\cdot}_{X_{\rho}}$ and $\abs{\cdot}_{Y_{\rho}}$  from  Definition \ref{def:normx3}, we combine Proposition \ref{purespatial} and  Proposition \ref{mixder} to obtain
\begin{multline}\label{ces}
 \frac{1}{2}\frac{d}{dt}\abs{u}_{X_{\rho}}^2+\abs{\nabla_{\rm h}u}_{X_{\rho}}^2-\frac{\rho^{'}}{\rho} \abs{u}_{Y_{\rho}}^2\\
 \leq C r^{-\frac{9}{2}}\abs{u}_{X_{\rho}}\abs{u}_{Y_{\rho}}^2+  C\Big(r^{-\frac{9}{2}}\abs{u}_{X_{\rho}}  +r^{\frac{1}{4}} \Big)\abs{\nabla_{\rm h}u}_{X_{\rho}}^2.
\end{multline}
On the other hand, we assert that
\begin{equation}\label{Poin}
     \abs{u}_{X_{\rho}}^2\leq \abs{\nabla_{\rm h}u}_{X_{\rho}}^2.
\end{equation}
To confirm  this, recall that
\begin{align*}
    \abs{u}_{X_{\rho}}^2= \sum_{\stackrel{(j,   \alpha) \in\mathbb Z_+\times \mathbb{Z}_+^3}{j+\abs{ \alpha }\leq 3}}\ \sum_{m= 0}^{+\infty}r^{ \alpha_1}L^2_{\rho,m}\norm{\partial_3^m\partial_t^j\partial^{ \alpha }u}^2_{L^2}.
\end{align*}
Then for any $(j,\alpha)\in\mathbb Z_+\times\mathbb Z_+^3$ with $j+\abs\alpha\leq 3$, the following holds:  if $\alpha_1 \geq 1$, then
\begin{multline*}
r^{\alpha_1} \norm{\partial_3^m\partial_t^j\partial^{\alpha }u}_{L^2}^2=r^{\alpha_1}\norm{\partial_3^m\partial_t^j\partial_1^{\alpha_1-1}\partial_2^{\alpha_2}\partial_3^{\alpha_3}(\partial_1u)}_{L^2}^2\\
\leq r^{\alpha_1-1}\norm{\partial_3^m\partial_t^j\partial_1^{\alpha_1-1}\partial_2^{\alpha_2}\partial_3^{\alpha_3} \nabla_{\rm h}u }_{L^2}^2;
\end{multline*}
 if $\alpha_1=0$ then by the Poincar\'e inequality and the boundary condition  $u|_{x_1 = 0,1} = 0$, we have
\begin{align*}
r^{\alpha_1}\norm{\partial_3^m\partial_t^j\partial^{\alpha }u}_{L^2}^2\leq   r^{\alpha_1}\norm{\partial_3^m\partial_t^j\partial^{\alpha } \partial_1u }_{L^2}^2\leq   r^{\alpha_1}\norm{\partial_3^m\partial_t^j\partial^{\alpha }\nabla_{\rm h}u}_{L^2}^2.
\end{align*}
   Combining the above estimates yields inequality \eqref{Poin}.

It follows from \eqref{ces} and \eqref{Poin}  that
\begin{multline*}
	 \frac{1}{2}\frac{d}{dt}\abs{u}_{X_{\rho}}^2+\frac{1}{4} \abs{u}_{X_{\rho}}^2
	 +\frac 34 \abs{\nabla_{\rm h}u}_{X_{\rho}}^2\\
 \leq \Big(\frac{\rho^{'}}{\rho}+C r^{-\frac{9}{2}}\abs{u}_{X_{\rho}}\Big)\abs{u}_{Y_{\rho}}^2+  C\Big(r^{-\frac{9}{2}}\abs{u}_{X_{\rho}}  +r^{\frac{1}{4}} \Big)\abs{\nabla_{\rm h}u}_{X_{\rho}}^2,
\end{multline*}
and therefore
\begin{equation}\label{DD}
\begin{aligned}
	& \frac{1}{2}\frac{d}{dt} e^{\frac{t}{2}}\abs{u}_{X_{\rho}}^2
	 +\frac34 e^{\frac{t}{2}} \abs{\nabla_{\rm h}u}_{X_{\rho}}^2\\
&\quad\leq \Big(\frac{\rho^{'}}{\rho}+C r^{-\frac{9}{2}}\abs{u}_{X_{\rho}}\Big)e^{\frac{t}{2}}\abs{u}_{Y_{\rho}}^2+   C\Big(r^{-\frac{9}{2}}\abs{u}_{X_{\rho}}  +r^{\frac{1}{4}} \Big)e^{\frac{t}{2}}  \abs{\nabla_{\rm h}u}_{X_{\rho}}^2.
 \end{aligned}
\end{equation}
This allows us to apply a standard bootstrap argument to prove assertion \eqref{pri:ret1} in Theorem \ref{prop:wellposedness}. Assume that
\begin{equation}\label{pri:ass1}
\forall\ t\ge 0,\quad e^{\frac{t}{2}}\abs{u(t)}_{X_{\rho}}^2+\int^t_0e^{\frac{s}{2}}\abs{\nabla_{\rm h}u(s)}_{X_{\rho}}^2ds\leq 4\varepsilon_0^2.
\end{equation}
Then, combining \eqref{DD} with \eqref{pri:ass1} and noting from  \eqref{def:rerho}  and \eqref{boundrho} that
\begin{equation*}
-\frac14 e^{-\frac{t}{4}}\leq 	\frac{\rho'}{\rho}=-\frac{  \rho_0}{8\rho} e^{-\frac{t}{4}} \leq -\frac{ 1}{8}e^{-\frac{t}{4}},
\end{equation*}
we conclude that, for any $t\geq 0,$
\begin{multline*}
\frac{1}{2}\frac{d}{dt} e^{\frac{t}{2}}\abs{u}_{X_{\rho}}^2
	 +\frac 34 e^{\frac{t}{2}} \abs{\nabla_{\rm h}u}_{X_{\rho}}^2\\
	 \leq  \Big(-\frac18 e^{-\frac{t}{4}} +  2 Cr^{-\frac{9}{2}} \eps_0 e^{-\frac{t}{4}}  \Big)e^{\frac{t}{2}}\abs{u}_{Y_{\rho}}^2
	 +\Big(2Cr^{-\frac{9}{2}} \eps_0  +Cr^{\frac{1}{4}} \Big)e^{\frac{t}{2}} \abs{\nabla_{\rm h}u}_{X_{\rho}}^2.
\end{multline*}
We first choose $r > 0$ sufficiently small such that
\begin{equation*}
	Cr^{\frac14} \leq \frac 18,
\end{equation*}
and then  choose $\eps_0$ small enough so that
\begin{equation*}
2 Cr^{-\frac{9}{2}} \eps_0 \leq \frac18.
\end{equation*}
As a consequence, the above inequalities imply
\begin{equation}\label{3direct}
 \forall\ t\ge 0,\quad \frac12\frac{d}{dt} e^{\frac{t}{2}} \abs{u}_{X_{\rho}}^2+\frac{1}{2}e^{\frac{t}{2}} \abs{\nabla_{\rm h}u}_{X_{\rho}}^2\leq 0.
\end{equation}
On the other hand,  it follows from \eqref{inda}  that
\begin{align*}
\lim_{t\rightarrow 0}e^{\frac{t}{2}}\abs{u(t)}_{X_{\rho}}^2=\lim_{t\rightarrow 0}\abs{u(t)}_{X_{\rho}}^2
\leq C_*^2\Big(\norm{u_0}_{G_{\rho_0,\sigma,6}}+\norm{u_0}_{G_{\rho_0,\sigma,6}}^4\Big)^2
 \leq \varepsilon_0^2,
\end{align*}
where $C_*$ is the constant in \eqref{inda} and  the last inequality uses the smallness assumption \eqref{pri:initial1}.
Combining the above estimate with \eqref{3direct}, we obtain
\begin{equation*}
\forall\ t\ge 0,\quad e^{\frac{t}{2}}\abs{u(t)}_{X_{\rho}}^2+\int^t_0e^{\frac{s}{2}}\abs{\nabla_{\rm h}u(s)}_{X_{\rho}}^2ds
\leq\varepsilon_0^2.
\end{equation*}
Thus, assertion \eqref{pri:ret1} follows. This completes the proof of Theorem \ref{prop:wellposedness}.
\end{proof}

 \section{Proof of Theorem \ref{thm:smoothing}: smoothing effect for the time variable}\label{sec:time}

This section establishes the Gevrey smoothing effect in time. We will adapt the previous section's argument and emphasize  the differences. To do so, we first introduce two auxiliary norms corresponding to those in Definition \ref{def:normx3}.

\begin{definition}\label{def:normt}
Let $\rho$ be defined as in \eqref{def:rerho} and $r$ be the parameter specified  in  Theorem  \ref{prop:wellposedness},  and let  $0<\lambda<1$ be a given parameter to be determined later.   We define  the  space-time derivatives $H_\lambda^\beta$ by setting
\begin{equation}
	\label{hlambda}
	H_{\lambda}^\beta\stackrel{\rm def}{=}\lambda^{\beta_1}t^{\beta_1}\partial_t^{\beta_1}\partial_3^{\beta_2}\  \textrm{ for }\ \beta=(\beta_1,\beta_2)\in \mathbb Z_+^2,
\end{equation}
and introduce two norms $\abs{\cdot}_{\tilde{X}_{\rho,\lambda}}$ and $\abs{\cdot}_{\tilde{Y}_{\rho,\lambda}}$ as follows:
\begin{equation*}
    \left\{
    \begin{aligned}
&\abs{g}_{\tilde{X}_{\rho,\lambda}}^2\stackrel{\rm def}{=}\sum_{\stackrel{(j,\alpha)\in\mathbb Z_+\times \mathbb{Z}_+^3}{j+\abs{\alpha}\leq 3}}\ \sum_{\beta\in\mathbb Z_+^2}r^{\alpha_1}L_{\rho,\abs\beta}^2\norm{H_{\lambda}^{\beta}\partial_t^j\partial^{\alpha}g}_{L^2}^2,\\
&\abs{g}_{\tilde{Y}_{\rho,\lambda}}^2\stackrel{\rm def}{=}\sum_{\stackrel{(j,\alpha)\in\mathbb Z_+\times \mathbb{Z}_+^3}{j+\abs{\alpha}\leq 3}} \ \sum_{\beta\in\mathbb Z_+^2} r^{\alpha_1}(\abs\beta+1)L_{\rho,\abs\beta}^2\norm{H_{\lambda}^{\beta}\partial_t^j\partial^{\alpha}g}_{L^2}^2,
    \end{aligned}
    \right.
\end{equation*}
where $L_{\rho,m}$ is defined in  \eqref{def:L}.
\end{definition}

\begin{remark}\label{rek:xyt}
 Recalling $\abs{\cdot}_{X_{\rho}}$ is the norm from   Definition \ref{def:normx3}, we have
 \begin{align*}
 \abs{g}_{\tilde{X}_{\rho,\lambda}}^2
=\abs{g}_{X_{\rho}}^2+\sum_{\stackrel{(j,\alpha)\in\mathbb Z_+\times \mathbb{Z}_+^3}{j+\abs{\alpha}\leq 3}}\ \sum_{\stackrel{\beta\in\mathbb Z_+^2}{\beta_1\geq 1}}r^{\alpha_1}L_{\rho,\abs\beta}^2\norm{H_{\lambda}^{\beta}\partial_t^j\partial^{\alpha}g}_{L^2}^2,
 \end{align*}
and similarly for $\abs{g}_{\tilde{Y}_{\rho,\lambda}}^2$. Moreover, it holds that
\begin{equation}\label{xyt}
 \abs{g}_{\tilde{X}_{\rho,\lambda}}\leq \abs{g}_{\tilde{Y}_{\rho,\lambda}}.
\end{equation}
\end{remark}

Using the  norms defined above,  we state the main result on the Gevrey smoothing effect in the time variable as follows.

\begin{proposition}\label{prop:t}
Let $u\in L^{\infty}([0,+\infty[, G_{\rho,\sigma,3})$ be the solution to system \eqref{ANS} constructed in Theorem \ref{thm:main1}.  Then there exists a small constant $0<\lambda<1$ such that,  shrinking  the number $r$ in Definition \ref{def:normx3} and the number $\eps_0$ in  Theorem \ref{prop:wellposedness} if necessary,
\begin{equation}\label{pri:ret3}
\forall\ t\ge 0,\quad e^{\frac{t}{2}}\abs{u(t)}_{\tilde{X}_{\rho,\lambda}}^2+\int^t_0e^{\frac{s}{2}}\abs{\nabla_{\rm h}u(s)}_{\tilde{X}_{\rho,\lambda}}^2ds\leq \varepsilon_0^2,
\end{equation}
recalling $\abs{\cdot}_{\tilde{X}_{\rho,\lambda}}$ is given in Definition \ref{def:normt}.
\end{proposition}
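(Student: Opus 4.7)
The plan is to carry the scheme of the proof of Theorem \ref{prop:wellposedness} through verbatim, with the operator $\partial_3^m$ replaced by the space-time operator $H_\lambda^\beta=\lambda^{\beta_1}t^{\beta_1}\partial_t^{\beta_1}\partial_3^{\beta_2}$ and the summation index $m$ replaced by $\abs{\beta}=\beta_1+\beta_2$. Since $H_\lambda^\beta$ reduces to $\partial_3^{\beta_2}$ when $\beta_1=0$, the only genuinely new content is the $\beta_1\ge 1$ piece; and the factor $t^{\beta_1}$ makes this piece vanish at $t=0$, so the initial bound $\lim_{t\to 0^+}\abs{u(t)}_{\tilde X_{\rho,\lambda}}^2=\lim_{t\to 0^+}\abs{u(t)}_{X_{\rho}}^2\le\varepsilon_0^2$ comes for free from Theorem \ref{prop:wellposedness}.

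For each $(j,\alpha)\in\mathbb Z_+\times\mathbb Z_+^3$ with $j+\abs{\alpha}\le 3$ and each $\beta\in\mathbb Z_+^2$, I would apply $H_\lambda^\beta\partial_t^j\partial^\alpha$ to \eqref{ANS} and test against $H_\lambda^\beta\partial_t^j\partial^\alpha u$. A direct computation yields the commutator identity
\begin{equation*}
[\partial_t,H_\lambda^\beta]=\beta_1\lambda\,H_\lambda^{\beta-(1,0)}\partial_t,\qquad \beta_1\ge 1,
\end{equation*}
so the resulting energy identity reads
\begin{equation*}
\tfrac12\tfrac{d}{dt}\norm{H_\lambda^\beta\partial_t^j\partial^\alpha u}_{L^2}^2+\norm{\nabla_{\rm h}H_\lambda^\beta\partial_t^j\partial^\alpha u}_{L^2}^2 =\beta_1\lambda\,\inner{H_\lambda^{\beta-(1,0)}\partial_t^{j+1}\partial^\alpha u,\,H_\lambda^\beta\partial_t^j\partial^\alpha u}_{L^2}+\tilde S_1+\tilde S_2+\tilde S_3+\tilde S_4,
\end{equation*}
where $\tilde S_1,\ldots,\tilde S_4$ are the $H_\lambda^\beta$-analogues of the terms $S_k$ in \eqref{S1-S4}. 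Because $H_\lambda^\beta$ factors as the scalar weight $(\lambda t)^{\beta_1}$ composed with the commuting derivations $\partial_t^{\beta_1}$ and $\partial_3^{\beta_2}$, the factor $\lambda^{\beta_1}t^{\beta_1}$ cancels inside each inner product; the Leibniz expansion, the algebra property \eqref{prpalge}, and the combinatorial inequalities \eqref{ineq1}--\eqref{ineq4} (now at index $\abs{\beta}$) then transplant the bounds of Lemmas \ref{lem:S1}--\ref{lem:S4} onto the $\tilde S_k$ with no essential change.

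The only new ingredient is the commutator term. Cauchy-Schwarz, summation with weights $r^{\alpha_1}L_{\rho,\abs{\beta}}^2$, the substitution $\gamma=\beta-(1,0)$, and the elementary ratio bound $\beta_1L_{\rho,\abs{\beta}}/L_{\rho,\abs{\beta}-1}\le C\rho\,\abs{\beta}^{1-\sigma}\le C\rho$ (which uses $\sigma\ge 1$) give
\begin{equation*}
\sum_{\beta_1\ge 1}r^{\alpha_1}L_{\rho,\abs{\beta}}^2\,\beta_1\lambda\,\norm{H_\lambda^{\beta-(1,0)}\partial_t^{j+1}\partial^\alpha u}\,\norm{H_\lambda^\beta\partial_t^j\partial^\alpha u}\le C\lambda\rho\,\abs{u}_{\tilde X_{\rho,\lambda}}\,\mathcal N,
\end{equation*}
with $\mathcal N^2=\sum r^{\alpha_1}L_{\rho,\abs{\gamma}}^2\norm{H_\lambda^\gamma\partial_t^{j+1}\partial^\alpha u}^2$. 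For $j+\abs{\alpha}\le 2$ the factor $\mathcal N$ is already part of $\abs{u}_{\tilde X_{\rho,\lambda}}$. At the top order $j+\abs{\alpha}=3$ the pair $(j+1,\alpha)$ lies outside the norm, so I would substitute $\partial_t^{j+1}\partial^\alpha u=\partial_t^j\partial^\alpha(\Delta_{\rm h}u-(u\cdot\nabla)u-\nabla p)$ via the momentum equation and control the three pieces by $\abs{\nabla_{\rm h}u}_{\tilde X_{\rho,\lambda}}$, the algebra property \eqref{prpalge}, and a pressure bound modeled on \eqref{est:Pone}, respectively.

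Putting everything together, the analogue of \eqref{ces} reads
\begin{equation*}
\tfrac12\tfrac{d}{dt}\abs{u}_{\tilde X_{\rho,\lambda}}^2+\abs{\nabla_{\rm h}u}_{\tilde X_{\rho,\lambda}}^2-\tfrac{\rho'}{\rho}\abs{u}_{\tilde Y_{\rho,\lambda}}^2\le C\bigl(\lambda+r^{-9/2}\abs{u}_{\tilde X_{\rho,\lambda}}+r^{1/4}\bigr)\bigl(\abs{u}_{\tilde Y_{\rho,\lambda}}^2+\abs{\nabla_{\rm h}u}_{\tilde X_{\rho,\lambda}}^2\bigr).
\end{equation*}
Choosing first $\lambda$, then $r$, then $\varepsilon_0$ small enough absorbs the right-hand side into the dissipation and into the $-(\rho'/\rho)$ term (bounded below by $\tfrac18 e^{-t/4}$, as in \eqref{DD}). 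The Poincar\'e-type inequality \eqref{Poin} extends verbatim to $\tilde X_{\rho,\lambda}$ because the boundary condition $u|_{x_1=0,1}=0$ is preserved by $\partial_t$ and $\partial_3$, and a bootstrap identical to the one closing Theorem \ref{prop:wellposedness} then yields \eqref{pri:ret3}. The main obstacle is this top-order commutator: since $\partial_t^{j+1}\partial^\alpha u$ with $j+\abs{\alpha}=3$ is outside the norm, one must trade the extra time derivative for spatial ones via the equation, and the condition $\sigma\ge 1$ is exactly what keeps the Gevrey weight ratio bounded so that the smallness of $\lambda$ is enough to close the estimate uniformly in time.
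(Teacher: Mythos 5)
Your overall scheme matches the paper's: carry the energy argument of Theorem~\ref{prop:wellposedness} over with $\partial_3^m$ replaced by $H_\lambda^\beta$, identify the commutator term (the paper's $K_5$), bound the ratio $\beta_1 L_{\rho,\abs\beta}/L_{\rho,\abs{\tilde\beta}}\le C$ using $\sigma\ge 1$, and close by a bootstrap once the short-time limit is established. The commutator identity, the factorial bound, the Poincar\'e absorption, and the treatment of the commutator for $j+\abs\alpha\le 2$ are all correct.

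The genuine gap is in your treatment of the top-order commutator, $j+\abs\alpha=3$, when $\alpha_1\ge 1$. You propose to substitute $\partial_t u=\Delta_{\rm h}u-(u\cdot\nabla)u-\nabla p$ uniformly in $\alpha$ and then bound the three pieces, including ``a pressure bound modeled on \eqref{est:Pone}.'' But \eqref{est:Pone} controls $\partial^{\tilde\alpha}\nabla p$ for $\abs{\tilde\alpha}\le 2$ only; your substitution produces $\partial^\alpha\nabla p$ at order $\abs\alpha=3$, one derivative too many, and the pressure boundary condition used in Step~1 of Lemma~\ref{lem:S3} is derived specifically at the level $\tilde\alpha=\alpha-(1,0,0)$. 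Likewise, the $\Delta_{\rm h}u$ piece forces an integration by parts in $x_1$, which generates a non-vanishing boundary term at $x_1=0,1$ once $\alpha_1\ge 1$ (since $H_\lambda^\beta\partial_t^j\partial^\alpha u$ does not vanish there). Neither issue is addressed; resolving them as stated would amount to re-running the full $S_3$/$S_4$ boundary and pressure machinery inside the commutator estimate.

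The paper avoids all of this by never invoking the equation when $\alpha_1\ge 1$. Writing $\partial^\alpha=\partial^{\tilde\alpha}\partial_1$, one has
\begin{equation*}
K_5=\lambda\sum_{\beta_1\ge 1}r^{\alpha_1}\beta_1 L_{\rho,\abs\beta}^2\inner{H_\lambda^{\tilde\beta}\partial^{\tilde\alpha}\partial_t\partial_1 u,\ H_\lambda^\beta\partial^{\tilde\alpha}\partial_1 u}_{L^2},
\end{equation*}
and both factors are now $\nabla_{\rm h}u$-type norms at admissible orders $(1,\tilde\alpha)$ and $(0,\tilde\alpha)$ with $1+\abs{\tilde\alpha}=\abs\alpha\le 3$, so Cauchy--Schwarz together with $\beta_1 L_{\rho,\abs\beta}\le CL_{\rho,\abs{\tilde\beta}}$ yields $K_5\le C\lambda\abs{\nabla_{\rm h}u}_{\tilde X_{\rho,\lambda}}^2$ directly, with no pressure term and no boundary term. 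The equation is substituted only when $\alpha_1=0$, precisely where the $\partial_1^2$ boundary term vanishes and the pressure contribution drops out after integration by parts in $x_2,x_3$ using $\operatorname{div}u=0$. This case-split on $\alpha_1$ is the missing ingredient; without it your plan cannot close as written.
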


To prove this proposition, we  begin with   two preliminary lemmas.

\begin{lemma}
	\label{vecx}
	Under the hypothesis of Proposition \ref{prop:t}, we have that
	\begin{multline}\label{des}
\frac12 \frac{d}{dt}\abs{u}_{\tilde{X}_{\rho,\lambda}}^2+ \abs{\nabla_{\rm h} u}_{\tilde{X}_{\rho,\lambda}}^2-\frac{\rho'}{\rho}\abs{\nabla_{\rm h} u}_{\tilde{Y}_{\rho,\lambda}}^2  \\
\leq Cr^{-\frac{9}{2}}\abs{u}_{\tilde{X}_{\rho,\lambda}}\abs{u}_{\tilde{Y}_{\rho,\lambda}}^2
    +C\Big(r^{-\frac{9}{2}}\abs{u}_{\tilde{X}_{\rho,\lambda}} +r^{\frac{1}{4}}+\lambda\Big)\abs{\nabla_{\rm h}u}_{\tilde{X}_{\rho,\lambda}}^2,
	\end{multline}
    recalling the norms $\abs{\cdot}_{\tilde{X}_{\rho,\lambda}}$ and $\abs{\cdot}_{\tilde{Y}_{\rho,\lambda}}$ are from Definition \ref{def:normt}.
\end{lemma}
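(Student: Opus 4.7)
The plan is to reduce everything to the $\beta_1 = 0$ case already handled by Propositions~\ref{purespatial} and~\ref{mixder}, plus one genuinely new term arising from the temporal commutator $[\partial_t, H_\lambda^\beta] = \tfrac{\beta_1}{t} H_\lambda^\beta$. Using the decomposition in Remark~\ref{rek:xyt}, it suffices to control the $\beta_1 \geq 1$ piece of $\abs{u}_{\tilde X_{\rho,\lambda}}^2$; the $\beta_1 = 0$ piece coincides with $\abs{u}_{X_\rho}^2$ and was already handled in Section~\ref{subpure}.

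For $\beta_1 \geq 1$ and $(j, \alpha)$ with $j + \abs{\alpha} \leq 3$, I set $v_\beta = H_\lambda^\beta \partial_t^j \partial^\alpha u$. Since $H_\lambda^\beta$ commutes with every spatial derivative (and hence with $\nabla p$, $\Delta_{\rm h}$, and the divergence-free condition), applying $H_\lambda^\beta \partial_t^j \partial^\alpha$ to \eqref{ANS} yields
\[
\partial_t v_\beta - \Delta_{\rm h} v_\beta + \nabla \bigl(H_\lambda^\beta \partial_t^j \partial^\alpha p\bigr) = \tfrac{\beta_1}{t}\, v_\beta - H_\lambda^\beta \partial_t^j \partial^\alpha \bigl((u \cdot \nabla) u\bigr).
\]
Pairing with $v_\beta$, weighting by $L_{\rho,\abs{\beta}}^2$ and summing over $\beta_2$, then weighting by $r^{\alpha_1}$ and summing over $\beta_1 \geq 1$ and $(j,\alpha)$, I reproduce the identity \eqref{s} with $\abs{\beta}$ playing the role of $m$. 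The four resulting nonlinear/boundary pieces $S_1$--$S_4$ are then estimated verbatim as in Lemmas~\ref{lem:S1}--\ref{lem:S4}, since those arguments rely only on the algebra property \eqref{prpalge}, the convolution bounds \eqref{ineq1}--\eqref{ineq4}, and Young's inequality \eqref{young}, all of which are insensitive to swapping $\partial_3^m$ for $H_\lambda^\beta$ of total order $\abs{\beta}$. These contribute exactly $C r^{-9/2} \abs{u}_{\tilde X_{\rho,\lambda}} \abs{u}_{\tilde Y_{\rho,\lambda}}^2 + C (r^{-9/2} \abs{u}_{\tilde X_{\rho,\lambda}} + r^{1/4}) \abs{\nabla_{\rm h} u}_{\tilde X_{\rho,\lambda}}^2$ on the right of \eqref{des}, while the identity $\tfrac{d}{dt} L_{\rho,\abs{\beta}}^2 = 2 \tfrac{\rho'}{\rho}(\abs{\beta} + 1) L_{\rho,\abs{\beta}}^2$ together with the Poincar\'e-type bound \eqref{Poin} produces the $-\tfrac{\rho'}{\rho}\abs{\nabla_{\rm h} u}_{\tilde Y_{\rho,\lambda}}^2$ term on the left.

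The main obstacle is the genuinely new commutator contribution
\[
\mathcal C \stackrel{\mathrm{def}}{=} \sum_{\substack{\beta_1 \geq 1,\,\beta_2 \geq 0 \\ j + \abs{\alpha} \leq 3}} r^{\alpha_1} L_{\rho,\abs{\beta}}^2 \, \tfrac{\beta_1}{t}\, \norm{v_\beta}_{L^2}^2,
\]
whose apparent $1/t$ singularity is fatal if handled naively. The key observation is the identity
\[
\tfrac{\beta_1}{t} v_\beta = \beta_1 \lambda\, H_\lambda^{\beta - e_1} \partial_t^j \partial^\alpha \partial_t u, \qquad e_1 = (1, 0),
\]
valid for $\beta_1 \geq 1$. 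Substituting $\partial_t u = \Delta_{\rm h} u - (u \cdot \nabla) u - \nabla p$ from \eqref{ANS} and integrating by parts in $x_1$ in the resulting diffusion piece (boundary-free since $v_\beta|_{x_1=0,1} = 0$) produces
\[
-\beta_1 \lambda \bigl(\nabla_{\rm h} H_\lambda^{\beta - e_1} \partial_t^j \partial^\alpha u,\ \nabla_{\rm h} v_\beta\bigr)_{L^2}.
\]
Applying Cauchy--Schwarz, shifting $\beta \mapsto \beta + e_1$ in the first factor, and invoking the elementary ratio bound
\[
(\beta_1 + 1)^2 \, \frac{L_{\rho, \abs{\beta}+1}^2}{L_{\rho, \abs{\beta}}^2} \leq C (\abs{\beta} + 1), \qquad \sigma \geq 1,
\]
which follows directly from \eqref{def:L}, controls this piece by $C \lambda\, \abs{\nabla_{\rm h} u}_{\tilde X_{\rho,\lambda}}^2$ --- precisely the $\lambda$-small term on the right of \eqref{des}. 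The convection and pressure pieces generated by the same substitution are then treated by the techniques of Lemmas~\ref{lem:S1}--\ref{lem:S3} and absorbed into the $S_1$--$S_4$ bounds already accounted for. Combining the $\beta_1 = 0$ and $\beta_1 \geq 1$ pieces yields \eqref{des}.
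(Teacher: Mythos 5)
Your plan follows the paper's structure closely: decompose via Remark~\ref{rek:xyt} into the $\beta_1=0$ piece, already controlled by Propositions~\ref{purespatial} and~\ref{mixder}, and a $\beta_1\geq 1$ piece; for the latter, re-derive the analogue of \eqref{s}, re-use the $S_1$--$S_4$ machinery via $\binom{\beta}{\gamma}\leq\binom{\abs\beta}{\abs\gamma}$, and control the new commutator term $\mathcal{C}$ through the identity $\tfrac{\beta_1}{t}v_\beta = \beta_1\lambda\,H_\lambda^{\beta-e_1}\partial_t^j\partial^\alpha\partial_tu$, which is exactly the paper's reformulation \eqref{sk5} of the term $K_5$. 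The gap is in your next step: you substitute the equation for $\partial_t u$ and integrate by parts the $\Delta_{\rm h}$ piece, asserting this is ``boundary-free since $v_\beta|_{x_1=0,1}=0$.'' That claim is false whenever $\alpha_1\geq 1$: the operator $H_\lambda^\beta\partial_t^j$ is tangential in $(t,x_3)$ and so preserves the Dirichlet condition, but $\partial^\alpha$ contains $\partial_1^{\alpha_1}$, and $\partial_1 u$ does not vanish on $\{x_1=0,1\}$. For such $\alpha$ the boundary integral produced by integrating $\partial_1^2 H_\lambda^{\beta-e_1}\partial_t^j\partial^\alpha u$ against $v_\beta$ survives, and your argument does not address it; the pressure term likewise fails to cancel for large enough $\alpha_1$.

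The paper circumvents this with a case split inside $K_5$. For $\alpha_1\geq 1$ it does \emph{not} substitute the evolution equation: it peels off one $\partial_1$ via $\partial^\alpha = \partial^{\tilde\alpha}\partial_1$ so that each side of the inner product carries a $\partial_1$, and then applies Cauchy--Schwarz together with the weight inequality $\beta_1 L_{\rho,\abs\beta}\leq CL_{\rho,\abs{\tilde\beta}}$. Both factors are read directly off the definition of $\abs{\nabla_{\rm h}u}_{\tilde X_{\rho,\lambda}}$ (using $1+\abs{\tilde\alpha}\leq 3$), giving $K_5\leq C\lambda\abs{\nabla_{\rm h}u}_{\tilde X_{\rho,\lambda}}^2$ with no boundary term and no equation substitution at all. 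Only when $\alpha_1 = 0$ --- precisely where your claim $v_\beta|_{x_1=0,1}=0$ actually holds --- does the paper substitute $\partial_t u = \Delta_{\rm h}u - (u\cdot\nabla)u - \nabla p$ as you propose, eliminate the pressure using $\divv v_\beta = 0$ with vanishing boundary contribution, integrate the diffusion piece by parts boundary-free, and treat the convection piece by the techniques of Lemmas~\ref{lem:S1}--\ref{lem:S2}. Inserting this dichotomy repairs your argument; the remainder of your plan, including the $L_{\rho,\cdot}$ ratio bound, is sound.
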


\begin{proof}
The proof of Lemma \ref{vecx} is quite analogous  to those of Propositions \ref{purespatial} and   \ref{mixder}. Similar to the derivation of \eqref{s}, we   have that, for any $\alpha\in\mathbb Z_+^3$ with $\abs\alpha\leq 3,$
\begin{multline}\label{est:t}
 \frac{1}{2}\frac{d}{dt} \sum_{\stackrel{\beta \in \mathbb Z_+^2}{\beta_1\ge 1}} r^{\alpha_1}L_{\rho,\abs{\beta}}^2\norm{H_\lambda^\beta \partial^{\alpha}  u}_{L^2}^2+ \sum_{\stackrel{\beta \in \mathbb Z_+^2}{\beta_1\ge 1}}r^{\alpha_1}L_{\rho,\abs\beta}^2\norm{ H_\lambda^\beta \partial^{ \alpha} \nabla_{\rm h} u}_{L^2}^2\\
 -\frac{\rho^{'}}{\rho}  \sum_{\stackrel{\beta \in \mathbb Z_+^2}{\beta_1\ge 1}}r^{\alpha_1}(\abs{\beta }+1)L_{\rho,\abs\beta}^2\norm{H_\lambda^\beta \partial^{\alpha} u}_{L^2}^2\leq \sum_{\ell=1}^5K_\ell,
 \end{multline}
where
\begin{equation*}
	  \left\{
    \begin{aligned}
&K_1= - \sum_{\stackrel{\beta \in \mathbb Z_+^2}{\beta_1\ge 1}}r^{\alpha_1}L_{\rho,\abs\beta}^2  \inner{H_\lambda^\beta\partial^{\alpha}\big ((u_{\rm h}\cdot\nabla_{\rm h}) u\big ),\   H_\lambda^\beta\partial^{\alpha} u}_{L^2},\\
&K_2=- \sum_{\stackrel{\beta \in \mathbb Z_+^2}{\beta_1\ge 1}}r^{\alpha_1}L_{\rho,\abs\beta}^2\inner{H_\lambda^\beta\partial^{\alpha} (u_3\partial_3u),\   H_\lambda^\beta\partial^{\alpha} u}_{L^2},\\
&K_3= \sum_{\stackrel{\beta \in \mathbb Z_+^2}{\beta_1\ge 1}}r^{\alpha_1}L_{\rho,\abs\beta}^2\inner{H_\lambda^\beta\partial^{\tilde \alpha}\nabla p, \   H_\lambda^\beta\partial^{\alpha}\partial_1 u}_{L^2},\\
&K_4= \sum_{\stackrel{\beta \in \mathbb Z_+^2}{\beta_1\ge 1}}r^{\alpha_1}L_{\rho,\abs\beta}^2\int_{\mathbb R^2} \big[ (H_\lambda^\beta\partial^{\alpha} u)  H_\lambda^\beta\partial^{\tilde \alpha} \big(\partial_tu+u\cdot\nabla u-\partial_2^2u\big)\big]\big|^{x_1=1}_{x_1=0}\ dx_2dx_3,\\
&K_5= \sum_{\stackrel{\beta \in \mathbb Z_+^2}{\beta_1\ge 1}} r^{\alpha_1} \frac{ \beta_1}{t} L_{\rho,\abs\beta}^2\norm{H_\lambda^\beta\partial^{\alpha} u}_{L^2}^2,
    \end{aligned}
    \right.
\end{equation*}
recalling  $\tilde\alpha=\alpha-(1,0,0)=(\alpha_1-1,\alpha_2,\alpha_3).$ Moreover, observe
\begin{equation}\label{factorial}
	\binom{\beta}{\gamma}\leq \binom{\abs\beta}{\abs\gamma}\ \textrm{for}\ 0\leq\gamma\leq\beta.
\end{equation}
This enables us to
repeat the proofs of Lemmas \ref{lem:S1}-\ref{lem:S4} to conclude that
\begin{equation}\label{est:K14}
   \sum_{\ell=1}^4 K_\ell \leq Cr^{-\frac{9}{2}}\abs{u}_{\tilde{X}_{\rho,\lambda}}\abs{u}_{\tilde{Y}_{\rho,\lambda}}^2+Cr^{-\frac{9}{2}}\abs{u}_{\tilde{X}_{\rho,\lambda}}\abs{\nabla_{\rm h}u}_{\tilde{X}_{\rho,\lambda}}^2 +Cr^{\frac{1}{4}}\abs{\nabla_{\rm h}u}_{\tilde{X}_{\rho,\lambda}}^2.
\end{equation}
It remains to estimate $K_5$. Recall $\beta\in\mathbb Z_+^2$ with $\beta_1\geq 1$ and $\tilde\beta=\beta-(1,0)$. Then we use   \eqref{hlambda} to write
\begin{equation*}
	H_\lambda^\beta=\lambda t H_\lambda^{\tilde \beta}\partial_t.
\end{equation*}
 This yields
\begin{equation}\label{sk5}
	\begin{aligned}
		K_5=  \sum_{\stackrel{\beta \in \mathbb Z_+^2}{\beta_1\ge 1}} r^{\alpha_1} \frac{ \beta_1}{t} L_{\rho,\abs\beta}^2\norm{H_\lambda^\beta\partial^{\alpha} u}_{L^2}^2 =\lambda \sum_{\stackrel{\beta \in \mathbb Z_+^2}{\beta_1\ge 1}} r^{\alpha_1}  \beta_1 L_{\rho,\abs\beta}^2\inner{H_\lambda^{\tilde\beta}\partial^{\alpha} \partial_t u, H_\lambda^\beta\partial^{\alpha} u}_{L^2}.
	\end{aligned}
\end{equation}
We claim that,  for any multi-index  $\alpha\in\mathbb Z_+^3$ with  $\abs\alpha\leq 3,$
\begin{equation}\label{k5}
  K_5\leq  C  r^{-\frac{9}{2}}\abs{u}_{\tilde{X}_{\rho,\lambda}}\abs{u}_{\tilde{Y}_{\rho,\lambda}}^2+C  r^{-\frac{9}{2}}\abs{u}_{\tilde{X}_{\rho,\lambda}}\abs{\nabla_{\rm h} u}_{\tilde{X}_{\rho,\lambda}}^2+C\lambda  \abs{\nabla_{\rm h}u}_{\tilde{X}_{\rho,\lambda}}^2.
\end{equation}
We first prove \eqref{k5} for multi-indices $\alpha\in\mathbb Z_+^3$ satisfying  $\abs\alpha\leq 3$ and $\alpha_1\geq 1$. For such $\alpha$ we have   $\partial^\alpha=\partial^{\tilde\alpha}\partial_1$ with $\tilde\alpha=(\tilde{\alpha}_1,\tilde{\alpha}_2,\tilde{\alpha}_3)=\alpha-(1,0,0)$. Then using \eqref{sk5} gives
\begin{equation*}
	\begin{aligned}
		K_5&=\lambda \sum_{\stackrel{\beta \in \mathbb Z_+^2}{\beta_1\ge 1}} r^{\alpha_1}  \beta_1 L_{\rho,\abs\beta}^2\inner{H_\lambda^{\tilde\beta}\partial^{\tilde\alpha} \partial_t \partial_1 u, H_\lambda^\beta\partial^{\tilde\alpha}\partial_1 u}_{L^2}\\
		&\leq C\lambda \sum_{\stackrel{\beta \in \mathbb Z_+^2}{\beta_1\ge 1}}\Big( r^{\frac{\tilde\alpha_1}{2}}  L_{\rho,|\tilde\beta|}     \norm{H_\lambda^{\tilde\beta}\partial^{\tilde\alpha} \partial_t \partial_1 u}_{L^2}\Big)\Big( r^{\frac{\tilde \alpha_1}{2}}      L_{\rho,\abs\beta}\norm{H_\lambda^\beta\partial^{\tilde \alpha}\partial_1u}_{L^2}\Big)\\
		&\leq C\lambda\abs{\nabla_{\rm h}u}_{\tilde{X}_{\rho,\lambda}}^2,
	\end{aligned}
\end{equation*}
the second inequality using the fact that $\beta_1L_{\rho,|\beta|} \leq C L_{\rho, |\tilde\beta|}$ and the last line following from the definition of $\abs{\cdot}_{\tilde{X}_{\rho,\lambda}}$ (see Definition \ref{def:normt}) as well as the fact that $|\tilde\alpha|\leq 2.$  Thus \eqref{k5} holds for any multi-index  $\alpha\in\mathbb Z_+^3$ with  $\abs\alpha\leq 3$ and $\alpha_1\geq 1$.  On the other hand, for multi-indices $\alpha\in\mathbb Z_+^3$ satisfying  $\abs\alpha\leq 3$ and $\alpha_1=0$,    we use the fact that
\begin{equation*}
	\partial_tu=\Delta_{\rm h}u-(u\cdot \nabla) u-\nabla p
\end{equation*}
as well as \eqref{sk5} to write
\begin{multline*}
		K_5  = \lambda \sum_{\stackrel{\beta \in \mathbb Z_+^2}{\beta_1\ge 1}} r^{\alpha_1}  \beta_1 L_{\rho,\abs\beta}^2\inner{H_\lambda^{\tilde\beta}\partial^{\alpha} (\Delta_{\rm h} u-(u\cdot \nabla) u-\nabla p), H_\lambda^\beta\partial^{\alpha} u}_{L^2}\\
		 = -\lambda\sum_{\stackrel{\beta \in \mathbb Z_+^2}{\beta_1\ge 1}} r^{\alpha_1}  \beta_1 L_{\rho,\abs\beta}^2\inner{H_\lambda^{\tilde\beta}\partial^{\alpha} \nabla_{\rm h} u, H_\lambda^\beta\partial^{\alpha} \nabla_{\rm h}u}_{L^2}\\
		  -\lambda \sum_{\stackrel{\beta \in \mathbb Z_+^2}{\beta_1\ge 1}} r^{\alpha_1}  \beta_1 L_{\rho,\abs\beta}^2\inner{H_\lambda^{\tilde\beta}\partial^{\alpha}\big( (u\cdot \nabla) u\big), H_\lambda^\beta\partial^{\alpha} u}_{L^2},
	\end{multline*}
where	the last equality  follows from the integration by parts for $\alpha_1=0$ and the fact that $\divv u=0.$
Moreover, applying the inequality $\beta_1 L_{\rho,|\beta|} \leq C L_{\rho,|\tilde{\beta}|}$ for $\tilde{\beta} = \beta - (1,0)$ with $\beta_1 \geq 1$, we derive
\begin{equation*}
		-\lambda \sum_{\stackrel{\beta \in \mathbb Z_+^2}{\beta_1\ge 1}} r^{\alpha_1}  \beta_1 L_{\rho,\abs\beta}^2\inner{H_\lambda^{\tilde\beta}\partial^{\alpha} \nabla_{\rm h} u, H_\lambda^\beta\partial^{\alpha} \nabla_{\rm h}u}_{L^2}\leq C\lambda  \abs{\nabla_{\rm h}u}_{\tilde{X}_{\rho,\lambda}}^2,
	\end{equation*}
    and
	\begin{multline*}
		 -\lambda \sum_{\stackrel{\beta \in \mathbb Z_+^2}{\beta_1\ge 1}} r^{\alpha_1}  \beta_1 L_{\rho,\abs\beta}^2\inner{H_\lambda^{\tilde\beta}\partial^{\alpha}\big( (u\cdot \nabla) u\big), H_\lambda^\beta\partial^{\alpha} u}_{L^2}\\
		 \leq  C\lambda r^{-\frac{9}{2}}\abs{u}_{\tilde{X}_{\rho,\lambda}}\abs{u}_{\tilde{Y}_{\rho,\lambda}}^2+C\lambda r^{-\frac{9}{2}}\abs{u}_{\tilde{X}_{\rho,\lambda}}\abs{\nabla_{\rm h} u}_{\tilde{X}_{\rho,\lambda}}^2,
	\end{multline*}
where the second inequality is obtained by following the proofs of Lemmas \ref{lem:S1} and \ref{lem:S2} with minor modifications. As a result, combining the above estimates and recalling  the fact $0<\lambda< 1,$  we obtain the validity of \eqref{k5} for any multi-index  $\alpha\in\mathbb Z_+^3$ with  $\abs\alpha\leq 3$ and $\alpha_1=0.$  Consequently,  \eqref{k5} holds for all multi-indices $\alpha\in\mathbb Z_+^3$ with $|\alpha|\leq 3.$
	
Now, we substitute estimates \eqref{est:K14} and \eqref{k5} into \eqref{est:t} to get
\begin{equation*}
	\begin{aligned}
		& \frac{1}{2}\frac{d}{dt} \sum_{\stackrel{\beta \in \mathbb Z_+^2}{\beta_1\ge 1}} r^{\alpha_1}L_{\rho,\abs{\beta}}^2\norm{H_\lambda^\beta \partial^{\alpha}  u}_{L^2}^2+ \sum_{\stackrel{\beta \in \mathbb Z_+^2}{\beta_1\ge 1}}r^{\alpha_1}L_{\rho,\abs\beta}^2\norm{  H_\lambda^\beta \partial^{ \alpha} \nabla_{\rm h}u}_{L^2}^2\\
&\qquad  -\frac{\rho^{'}}{\rho}  \sum_{\stackrel{\beta \in \mathbb Z_+^2}{\beta_1\ge 1}}r^{\alpha_1}(\abs{\beta }+1)L_{\rho,\abs\beta}^2\norm{H_\lambda^\beta \partial^{\alpha} u}_{L^2}^2\\
&\leq C  r^{-\frac{9}{2}}\abs{u}_{\tilde{X}_{\rho,\lambda}}\abs{u}_{\tilde{Y}_{\rho,\lambda}}^2+C \Big( r^{-\frac{9}{2}}\abs{u}_{\tilde{X}_{\rho,\lambda}}+ r^{\frac14}+\lambda\Big)\abs{\nabla_{\rm h} u}_{\tilde{X}_{\rho,\lambda}}^2.
	\end{aligned}
\end{equation*}
Using \eqref{prpalge}, we extend the validity of the above estimate  to general indices $(j,\alpha)\in\mathbb{Z}_+\times\mathbb{Z}_+^3$ satisfying $j+|\alpha|\leq 3$:
\begin{equation*}
	\begin{aligned}
		& \frac{1}{2}\frac{d}{dt} \sum_{\stackrel{\beta \in \mathbb Z_+^2}{\beta_1\ge 1}} r^{\alpha_1}L_{\rho,\abs{\beta}}^2\norm{H_\lambda^\beta \partial_t^j \partial^{\alpha}  u}_{L^2}^2+ \sum_{\stackrel{\beta \in \mathbb Z_+^2}{\beta_1\ge 1}}r^{\alpha_1}L_{\rho,\abs\beta}^2\norm{ H_\lambda^\beta \partial_t^j  \partial^{ \alpha} \nabla_{\rm h} u}_{L^2}^2\\
&\qquad  -\frac{\rho^{'}}{\rho}  \sum_{\stackrel{\beta \in \mathbb Z_+^2}{\beta_1\ge 1}}r^{\alpha_1}(\abs{\beta }+1)L_{\rho,\abs\beta}^2\norm{H_\lambda^\beta \partial_t^j  \partial^{\alpha} u}_{L^2}^2\\
&\leq C  r^{-\frac{9}{2}}\abs{u}_{\tilde{X}_{\rho,\lambda}}\abs{u}_{\tilde{Y}_{\rho,\lambda}}^2+C \Big( r^{-\frac{9}{2}}\abs{u}_{\tilde{X}_{\rho,\lambda}}+ r^{\frac14}+\lambda\Big)\abs{\nabla_{\rm h} u}_{\tilde{X}_{\rho,\lambda}}^2.
	\end{aligned}
\end{equation*}
This  with estimate \eqref{ces} and Remark \ref{xyt} yields the desired estimate \eqref{des}. The proof of Lemma \ref{vecx} is completed.
\end{proof}

\begin{lemma}\label{lemshort}
Let the norms $\abs{\cdot}_{X_{\rho}},\abs{\cdot}_{\tilde{X}_{\rho,\lambda}}$ and $\abs{\cdot}_{\tilde{Y}_{\rho,\lambda}}$  be given in Definitions \ref{def:normx3} and \ref{def:normt}. If
\begin{equation}\label{sht+}
\sup_{t\leq 1} \abs{u(t)}_{\tilde{X}_{\rho,\lambda}}^2+\int_0^{1}   \big(\abs{u(t)}_{\tilde{Y}_{\rho,\lambda}}^2+\abs{\nabla_{\rm h} u(t)}_{\tilde{X}_{\rho,\lambda}}^2\big) dt <+\infty,
\end{equation}
then
\begin{equation*}
 \lim_{t\to 0} \abs{u}_{\tilde{X}_{\rho,\lambda}}^2= \lim_{t\to0} \abs{u}_{X_{\rho}}^2.		
\end{equation*}
\end{lemma}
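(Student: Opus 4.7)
The decomposition of Remark \ref{rek:xyt} gives
\begin{equation*}
\abs{u(t)}_{\tilde X_{\rho,\lambda}}^2 - \abs{u(t)}_{X_\rho}^2 = E(t) := \sum_{\stackrel{(j,\alpha)}{j+\abs\alpha\leq 3}}\sum_{\stackrel{\beta\in\mathbb Z_+^2}{\beta_1\geq 1}}r^{\alpha_1}\lambda^{2\beta_1}t^{2\beta_1}L_{\rho,\abs\beta}^2\norm{\partial_t^{j+\beta_1}\partial_3^{\beta_2}\partial^\alpha u(t)}_{L^2}^2,
\end{equation*}
whose summands are nonnegative and each carry the prefactor $t^{2\beta_1}$ with $\beta_1\geq 1$. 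Consequently the lemma amounts to proving $\lim_{t\to 0^+}E(t)=0$, since the two limits in the statement then differ by $\lim E(t)$.

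My plan is to apply the dominated convergence theorem to the discrete measure over index pairs $((j,\alpha),\beta)$. To verify pointwise convergence of each summand to $0$ as $t\to 0^+$, I would iterate the equation $\partial_t u = \Delta_{\rm h}u-(u\cdot\nabla)u-\nabla p$ to recast $\partial_t^{j+\beta_1}u$ as a polynomial in spatial derivatives of $u$ and $p$ of bounded order depending only on the fixed indices. Combined with the propagated regularity from Theorem \ref{prop:wellposedness}, the elliptic control of the pressure in terms of $u$, and parabolic smoothing in $x_1,x_2$ when the time-derivative count exceeds the available horizontal regularity of $u_0\in G_{\rho_0,\sigma,6}$, this yields that $\norm{\partial_t^{j+\beta_1}\partial_3^{\beta_2}\partial^\alpha u(t)}_{L^2}^2$ grows at most polynomially in $1/t$ of order strictly less than $2\beta_1$; the $t^{2\beta_1}$ prefactor then drives each summand to $0$.

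To supply a summable dominant, I would exploit the Gevrey structure: $L_{\rho,\abs\beta}^2$ decays as $\rho^{2\abs\beta}/(\abs\beta!)^{2\sigma}$ up to polynomial factors, while propagating Gevrey regularity through the equation inside $G_{\rho,\sigma,3}$ yields a Gevrey-type bound $\norm{\partial_t^{j+\beta_1}\partial_3^{\beta_2}\partial^\alpha u(t)}_{L^2}\leq C^{\abs\beta+1}(\abs\beta!)^\sigma$ uniformly for $t\in(0,1]$. The product of these two estimates produces $A_\beta^{(j,\alpha)}(t)\leq C'(C\rho)^{2\abs\beta}\lambda^{2\beta_1}$, which is summable over $\beta\in\mathbb Z_+^2$ provided $C\rho<1$ (attainable by shrinking $\rho$ if needed). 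Dominated convergence then gives $\lim_{t\to 0^+}E(t)=0$, completing the proof.

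The main obstacle is the explicit $\beta$-dependent control of $\norm{\partial_t^{j+\beta_1}\partial_3^{\beta_2}\partial^\alpha u(t)}_{L^2}$ with Gevrey-type growth uniformly in $t\in(0,1]$: each $\partial_t$ substituted via the equation brings in two horizontal spatial derivatives through $\Delta_{\rm h}$, and matching this derivative loss against the Gevrey weight $L_{\rho,\abs\beta}$ and the initial $H^6$-horizontal regularity of $u_0$ demands combinatorial bookkeeping analogous to that carried out in Lemmas \ref{lem:S1}--\ref{lem:S4}, together with the elliptic estimate for $p$ developed in the proof of Lemma \ref{lem:S3}.
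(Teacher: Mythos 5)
Your decomposition via Remark~\ref{rek:xyt} is the right starting point, but the strategy you adopt afterwards differs from the paper's and contains a genuine gap. The claimed uniform-in-$t$ Gevrey bound $\norm{\partial_t^{j+\beta_1}\partial_3^{\beta_2}\partial^\alpha u(t)}_{L^2}\leq C^{\abs\beta+1}(\abs\beta!)^\sigma$ for $t\in(0,1]$ cannot hold once $\beta_1$ is large: substituting each $\partial_t$ via the equation introduces $\Delta_{\rm h}$ (two horizontal derivatives), while $u_0$ has only $H^6$ regularity in $(x_1,x_2)$, so for $j+\beta_1$ large the required horizontal regularity exceeds what $u_0$ provides and the norm necessarily blows up as $t\to0^+$. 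The weight $t^{\beta_1}$ inside $\abs{\cdot}_{\tilde X_{\rho,\lambda}}$ exists precisely to absorb this blow-up, and the best the hypothesis~\eqref{sht+} delivers pointwise is $\norm{\partial_t^{j+\beta_1}\partial_3^{\beta_2}\partial^\alpha u(t)}_{L^2}^2\leq Ct^{-2\beta_1}$ (exactly order $2\beta_1$, not strictly less). Your two claims---unbounded polynomial growth of order $<2\beta_1$ for pointwise convergence, uniform $t$-independent Gevrey bound for the dominant---are in fact mutually inconsistent, and neither is substantiated, so the dominated convergence argument does not close.

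The paper avoids pointwise estimates entirely. Fixing $(j,\alpha)$ with $j+\abs\alpha\leq 3$, it reruns the $K_5$-type computation from Lemma~\ref{vecx} (with $\partial^\alpha$ replaced by $\partial_t^j\partial^\alpha$) to get
\begin{equation*}
\int_0^1 t^{-1}\sum_{\beta_1\geq 1}r^{\alpha_1}L_{\rho,\abs\beta}^2\norm{H_\lambda^\beta\partial_t^j\partial^\alpha u}_{L^2}^2\,dt
\leq C\int_0^1\Big(r^{-\frac92}\abs{u}_{\tilde X_{\rho,\lambda}}\abs{u}_{\tilde Y_{\rho,\lambda}}^2+r^{-\frac92}\abs{u}_{\tilde X_{\rho,\lambda}}\abs{\nabla_{\rm h}u}_{\tilde X_{\rho,\lambda}}^2+\lambda\abs{\nabla_{\rm h}u}_{\tilde X_{\rho,\lambda}}^2\Big)\,dt,
\end{equation*}
which is finite by~\eqref{sht+}. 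Since the integrand $\sum_{\beta_1\geq1}r^{\alpha_1}L_{\rho,\abs\beta}^2\norm{H_\lambda^\beta\partial_t^j\partial^\alpha u}_{L^2}^2$ is nonnegative, continuous, and integrable against $dt/t$ near $0$, its limit as $t\to0^+$ must be $0$; summing over the finitely many $(j,\alpha)$ gives the lemma. This is a closed-loop energy argument that requires no parabolic smoothing rates and no term-by-term Gevrey growth bounds, and is both shorter and genuinely correct. You should adopt it in place of the dominated-convergence route.
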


\begin{proof}
In  what follows, let $(j, \alpha) \in \mathbb Z_+ \times \mathbb Z_+^3$ be a fixed multi-index satisfying $j + |\alpha| \leq 3$.
We repeat the argument used in the estimate \eqref{k5} of $K_5$ with $\partial^\alpha$ replaced by $\partial_t^j\partial^\alpha$ to conclude that
\begin{align*}	
&\int_0^{1} t^{-1} \sum_{\stackrel{\beta \in \mathbb Z_+^2}{\beta_1\ge 1}} r^{\alpha_1}L_{\rho,\abs{\beta}}^2 \norm{H_\lambda^{\beta}\partial_t^j\partial^{ \alpha } u}^2_{L^2} dt\leq \int_0^{1}   \sum_{\stackrel{\beta \in \mathbb Z_+^2}{\beta_1\ge 1}} r^{\alpha_1}\frac{\beta_1}{t}L_{\rho,\abs{\beta}}^2 \norm{H_\lambda^{\beta}\partial_t^j\partial^{ \alpha } u}^2_{L^2} dt\\
	 &\qquad\qquad\leq C  \int_0^1\Big( r^{-\frac{9}{2}}\abs{u}_{\tilde{X}_{\rho,\lambda}}\abs{u}_{\tilde{Y}_{\rho,\lambda}}^2+   r^{-\frac{9}{2}}\abs{u}_{\tilde{X}_{\rho,\lambda}}\abs{\nabla_{\rm h} u}_{\tilde{X}_{\rho,\lambda}}^2+ \lambda  \abs{\nabla_{\rm h}u}_{\tilde{X}_{\rho,\lambda}}^2\Big) dt.
\end{align*}
Thus if \eqref{sht+} holds, then
\begin{eqnarray*}
	\int_0^{1} t^{-1} \sum_{\stackrel{\beta \in \mathbb Z_+^2}{\beta_1\ge 1}} r^{\alpha_1}L_{\rho,\abs{\beta}}^2 \norm{H_\lambda^{\beta}\partial_t^j\partial^{ \alpha } u}^2_{L^2} dt<+\infty.
\end{eqnarray*}
This,  with the continuity of
\begin{equation*}
 	t\mapsto \sum_{\stackrel{\beta \in \mathbb Z_+^2}{\beta_1\ge 1}} r^{\alpha_1}L_{\rho,\abs{\beta}}^2 \norm{H_\lambda^{\beta}\partial_t^j\partial^{ \alpha } u}^2_{L^2},
\end{equation*}
implies that
\begin{equation*}
\forall\ (j,\alpha)\in\mathbb Z_+\times\mathbb Z_+^3\  \textrm{ with }\  j+\abs \alpha\leq 3,\quad	\lim_{t\rightarrow 0}\sum_{\stackrel{\beta \in \mathbb Z_+^2}{\beta_1\ge 1}} r^{\alpha_1}L_{\rho,\abs{\beta}}^2 \norm{H_\lambda^{\beta}\partial_t^j\partial^{ \alpha } u}^2_{L^2}=0.
\end{equation*}
Consequently,  the desired assertion of Lemma \ref{lemshort}
  follows by observing that
\begin{equation*}
	 \lim_{t\to0} \abs{u}_{\tilde{X}_{\rho,\lambda}}^2= \lim_{t\to0} \abs{u}_{X_{\rho}}^2+\lim_{t\rightarrow 0} \sum_{\stackrel{(j,\alpha)\in\mathbb Z_+\times\mathbb Z_+^3}{ j+\abs \alpha\leq 3}} \, \sum_{\stackrel{ \beta\in\mathbb Z_+^2}{\beta_1\geq 1} }r^{\alpha_1}  L_{\rho,\abs{\beta}}^2  \norm{H_\lambda^{\beta}\partial_t^j\partial^{ \alpha } u}^2_{L^2}
\end{equation*}
due to Remark \ref{rek:xyt}. The proof of Lemma \ref{lemshort} is completed.
\end{proof}
	
\begin{proof}[Completing the proof of Proposition \ref{prop:t}]
Combining     estimate \eqref{des} in Lemma \ref{vecx} with the  assertion of Lemma  \ref{lemshort}, we apply the same bootstrap argument as that after \eqref{ces} to conclude  estimate \eqref{pri:ret3} for sufficiently small $\lambda$.  The proof of Proposition \ref{prop:t} is thus completed.
\end{proof}

\section{Proof of Theorem \ref{thm:smoothing}: smoothing effect for the variable $x_2$}\label{sec:x2}

 Having established the global existence and uniqueness for system \eqref{ANS} (Theorem \ref{thm:main1}) as well as the Gevrey smoothing effect in the time variable $t$ (Proposition \ref{prop:t}),  we  derive in this part the Gevrey smoothing effect for the variable $x_2$.  Before stating the main result, we first introduce   $M_{\rho,\beta}$ and two new auxiliary norms $\abs{\cdot}_{ X_{\rho,\lambda}}$ and $ \abs{\cdot}_{ Y_{\rho,\lambda}},$ which correspond to
$L_{\rho,m}$ and the two norms $\abs{\cdot}_{X_{\rho}}$ and $\abs{\cdot}_{Y_{\rho}}$ in \eqref{def:L} and Definition \ref{def:normx3}, respectively.

\begin{definition}
	\label{def:normx2}   Let $\rho$ be as given in \eqref{def:rerho} and let  $0<r,\lambda<1$ be the parameters determined in Theorem \ref{prop:wellposedness} and Proposition \ref{prop:t},respectively.   We define  the  space-time derivatives $D_\lambda^\beta$ by
	\begin{equation}\label{dlambda}
	D_\lambda^\beta\stackrel{\rm def}{=}\lambda^{\beta_1}t^{\frac{\beta_1}{2}}  \partial_2 ^{\beta_1}\partial_3^{\beta_2} \ \textrm{ for } \  \beta=(\beta_1,\beta_2)\in\mathbb Z_+^2,
\end{equation}
and  introduce  two new auxiliary norms $\abs{\cdot}_{  X_{\rho,\lambda}}$ and $\abs{\cdot}_{ Y_{\rho,\lambda}} $   as follows:
 \begin{eqnarray*}
 \left\{
\begin{aligned}
	 &\abs{g}_{  X_{\rho,\lambda}}^2\stackrel{\rm def}{=} \sum_{\stackrel{(j,\alpha)\in\mathbb Z_+\times\mathbb Z_+^3}{ j+\abs \alpha\leq 3}} \, \sum_{\beta\in\mathbb Z_+^2}   r^{ \alpha_1}  M_{\rho,\beta}^2  \norm{ D_\lambda^{\beta}\partial_t^j\partial^{ \alpha }g}^2_{L^2},\\
	 &\abs{g}_{  Y_{\rho,\lambda}}^2\stackrel{\rm def}{=}   \sum_{\stackrel{(j,\alpha)\in\mathbb Z_+\times\mathbb Z_+^3}{ j+\abs \alpha\leq 3}} \, \sum_{ \beta\in\mathbb Z_+^2  } r^{\alpha_1} \inner{\abs\beta +1}    M_{\rho,\beta}^2  \norm{ D_\lambda^{\beta}\partial_t^j\partial^{ \alpha }g}^2_{L^2},
	 \end{aligned}
	 \right.
\end{eqnarray*}
where, throughout the paper,
\begin{equation}\label{mrho}
	M_{\rho,\beta}\stackrel{\rm def}{=}\frac{\rho^{\abs\beta+1}(\abs\beta+1)^{6+2\sigma}}{(\beta_2!)^{\sigma-\delta}(\abs\beta!)^{\delta}} \ \textrm{ for }\  \beta=(\beta_1,\beta_2)\in\mathbb Z_+^2,
\end{equation}
with $\delta=\delta(\sigma)$ defined as in \eqref{gamma1}, namely,
\begin{equation*}
	\delta=\max\Big\{1,\frac{\sigma+1}{3},\frac{2\sigma-1}{4}\Big\}.
\end{equation*}
 \end{definition}

  \begin{remark}\label{rmk:xyz}
  Note for $\beta=(\beta_1,\beta_2)\in\mathbb Z_+^2$ with  $\beta_1=0,$
  \begin{equation*}
  	M_{\rho,\beta}=L_{\rho,\beta_2}.
  \end{equation*}
 Therefore, recalling $\abs{\cdot}_{X_{\rho}}$ is given in  Definition \ref{def:normx3},  we obtain
 \begin{equation}\label{xandtx}
 \abs{g}_{  X_{\rho,\lambda}  }^2=\abs{g}_{X_{\rho}  }^2+  \sum_{\stackrel{(j,\alpha)\in\mathbb Z_+\times\mathbb Z_+^3}{ j+\abs \alpha\leq 3}} \, \sum_{\stackrel{ \beta\in\mathbb Z_+^2}{\beta_1\geq 1} } r^{\alpha_1} M_{\rho,\beta}^2  \norm{ D_\lambda^{\beta}\partial_t^j\partial^{ \alpha }g}^2_{L^2},
 \end{equation}
 and a similar identity  holds for  $\abs{g}_{  Y_{\rho,\lambda}}^2.$ Moreover, we have
 \begin{equation}\label{xyx2}
 \abs{g}_{  X_{\rho,\lambda}  }^2\leq \abs{g}_{  Y_{\rho,\lambda}}^2,
 \end{equation}
 which is analogous  to inequality \eqref{xyone}.
\end{remark}

Using the above notations,  the main result concerning the Gevrey smoothing effect in variable $x_2$ is stated as follows.

\begin{proposition}\label{prop:x2}
Let $u\in L^{\infty}([0,+\infty[, G_{\rho,\sigma,3})$ be the solution to system \eqref{ANS} constructed in Theorem \ref{thm:main1}.
  Then by shrinking  the numbers $r,\eps_0$ in  Theorem \ref{prop:wellposedness}  if necessary, we can find  a small constant $0<\lambda<1$ such that
\begin{equation}\label{pri:ret2}
\forall\ t\ge 0,\quad e^{\frac t2}\abs{u(t)}_{ X_{\rho,\lambda}}^2+\int^t_0e^{\frac s2}\abs{\nabla_{\rm h}u(s)}_{ X_{\rho,\lambda}}^2ds\leq \varepsilon_0^2,
\end{equation}
recalling the norm $\abs{\cdot}_{  X_{\rho,\lambda}}$ is given in Definition \ref{def:normx2}.
\end{proposition}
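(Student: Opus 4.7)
The plan is to mirror the strategy of Sections~\ref{sec:wellposdeness} and~\ref{sec:time}: derive a differential inequality for $|u|_{X_{\rho,\lambda}}^2$ of the same shape as \eqref{ces}, combine it with a Poincar\'e-type lower bound $|u|_{X_{\rho,\lambda}}^2 \leq |\nabla_{\rm h}u|_{X_{\rho,\lambda}}^2$, and then close by a bootstrap argument that successively shrinks $r$, $\lambda$, and $\varepsilon_0$. First I would apply $D_\lambda^\beta \partial_t^j \partial^\alpha$ to the momentum equation and take the $L^2$ inner product with $D_\lambda^\beta \partial_t^j \partial^\alpha u$. For $\beta_1=0$ the weight reduces to $M_{\rho,\beta}=L_{\rho,\beta_2}$ and $D_\lambda^\beta=\partial_3^{\beta_2}$, so in view of \eqref{xandtx} these contributions are already controlled by the estimate produced by Propositions~\ref{purespatial} and~\ref{mixder}. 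The new work concerns $\beta_1\ge 1$.

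For $\beta_1\ge 1$, decomposing the right-hand side as in \eqref{S1-S4} gives four bilinear/pressure/boundary contributions $K_1,\dots,K_4$ in perfect analogy with $S_1,\dots,S_4$, plus an additional term
\begin{equation*}
K_5 \;=\; \sum_{\stackrel{\beta\in\mathbb Z_+^2}{\beta_1\ge 1}} r^{\alpha_1}\frac{\beta_1}{2t}M_{\rho,\beta}^2\norm{D_\lambda^\beta\partial_t^j\partial^\alpha u}_{L^2}^2
\end{equation*}
arising from $\partial_t(t^{\beta_1/2})$. I would treat $K_5$ by writing $D_\lambda^\beta = \lambda t^{1/2} D_\lambda^{\tilde\beta}\partial_2$ with $\tilde\beta=\beta-(1,0)$ and applying Cauchy--Schwarz (together with the momentum equation when $\alpha_1=0$, as in the proof of Lemma~\ref{vecx}), yielding $K_5\lesssim \lambda |\nabla_{\rm h}u|_{X_{\rho,\lambda}}^2 + \lambda r^{-9/2}|u|_{X_{\rho,\lambda}}|u|_{Y_{\rho,\lambda}}^2$. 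The transport terms $K_1,K_2$ are handled by Leibniz's rule combined with new convolution inequalities for the weight $M_{\rho,\beta}$ analogous to \eqref{ineq1}--\eqref{ineq4}, giving the same type of $r^{-9/2}|u|_{X_{\rho,\lambda}}|u|_{Y_{\rho,\lambda}}^2 + r^{-9/2}|u|_{X_{\rho,\lambda}}|\nabla_{\rm h}u|_{X_{\rho,\lambda}}^2$ bound.

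The main obstacle is the pressure term $K_3$ and the boundary term $K_4$. Because $\partial_2$ is tangential to the boundary $\{x_1=0,1\}$, the operator $D_\lambda^\beta$ commutes with the boundary condition, and Step 1 of the proof of Lemma~\ref{lem:S3} adapts directly once one performs Leibniz expansion and uses the new convolution inequalities. However, the Gagliardo--Nirenberg/Fourier estimates of Steps~2--3, which bound boundary pressure contributions of the form $\partial_t u_1$ and $\Delta_{\rm h} u_1$ on $x_1=0,1$ against $|\nabla_{\rm h}u|$ and a half power of $\nabla p$, now interact with the $M_{\rho,\beta}$ weights through three distinct convolution losses: one from the nonlinear pressure source $\divv((u\cdot\nabla)u)$, one from the boundary integral involving $\partial_t u_1$, and one from the boundary integral involving $\partial_2^2 u_1$. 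Balancing the three resulting $|\beta|$-dependent losses against the Gevrey denominator $(|\beta|!)^\delta$ in $M_{\rho,\beta}$ is precisely what forces the three-case definition
\begin{equation*}
\delta \;=\; \max\Big\{1,\,\frac{\sigma+1}{3},\,\frac{2\sigma-1}{4}\Big\}
\end{equation*}
in \eqref{gamma1}; the factor $(\beta_2!)^{\sigma-\delta}$ is inserted so that the $\partial_3$-direction still carries the full Gevrey-$\sigma$ budget inherited from $L_{\rho,\beta_2}$.

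Summing the estimates of $K_1,\dots,K_5$ over $\alpha,j,\beta$ and combining with the already-controlled $\beta_1=0$ piece via \eqref{xandtx} produces the differential inequality
\begin{equation*}
\tfrac12\tfrac{d}{dt}|u|_{X_{\rho,\lambda}}^2 + |\nabla_{\rm h}u|_{X_{\rho,\lambda}}^2 - \tfrac{\rho'}{\rho}|u|_{Y_{\rho,\lambda}}^2 \;\leq\; Cr^{-\frac92}|u|_{X_{\rho,\lambda}}|u|_{Y_{\rho,\lambda}}^2 + C\bigl(r^{-\frac92}|u|_{X_{\rho,\lambda}} + r^{\frac14} + \lambda\bigr)|\nabla_{\rm h}u|_{X_{\rho,\lambda}}^2.
\end{equation*}
A Poincar\'e/shifting argument identical to that yielding \eqref{Poin} gives $|u|_{X_{\rho,\lambda}}^2\le |\nabla_{\rm h}u|_{X_{\rho,\lambda}}^2$. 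To handle the initial trace I would adapt Lemma~\ref{lemshort}: integrating $t^{-1}$ times the $\beta_1\ge 1$ portion of the norm against the RHS above and using $D_\lambda^\beta = \lambda t^{1/2}D_\lambda^{\tilde\beta}\partial_2$ yields $\lim_{t\to 0^+}|u(t)|_{X_{\rho,\lambda}}^2 = \lim_{t\to 0^+}|u(t)|_{X_\rho}^2$, controlled by \eqref{inda}. Then, weighting by $e^{t/2}$, using $\rho'/\rho\le -\tfrac18 e^{-t/4}$, and choosing $r$, then $\lambda$, then $\varepsilon_0$ sufficiently small (in that order) absorbs every RHS term into the dissipation and the negative $\rho'/\rho$ contribution, and the standard bootstrap from $4\varepsilon_0^2$ down to $\varepsilon_0^2$ (as in the end of Section~\ref{sec:wellposdeness}) closes the proof of \eqref{pri:ret2}.
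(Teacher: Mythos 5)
Your overall architecture matches the paper's: decompose into terms $J_1,\dots,J_5$ (your $K_1,\dots,K_5$), estimate each via Leibniz plus Young's inequality with new weight-function bounds, combine with the $\beta_1=0$ piece via \eqref{xandtx}, handle the initial trace by a Lemma~\ref{lemshort}-type argument, and bootstrap. But there is a genuine conceptual misattribution in the key technical part.

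You claim the three-case lower bound $\delta=\max\{1,\frac{\sigma+1}{3},\frac{2\sigma-1}{4}\}$ comes from balancing three convolution losses in the pressure/boundary terms $K_3,K_4$. That is wrong. In the paper, the pressure term $J_3$ and boundary term $J_4$ are handled in Lemma~\ref{lem:J34} by a near-verbatim transcription of Lemmas~\ref{lem:S3}--\ref{lem:S4}, replacing \eqref{ineq1}--\eqref{ineq2} with \eqref{ineq5}--\eqref{ineq6}; those inequalities hold for the full range $1\le\delta\le\sigma$ and impose no further restriction on $\delta$. The actual constraint on $\delta$ is generated by the \emph{vertical convection term} $J_2 = -\sum r^{\alpha_1}M_{\rho,\beta}^2\inner{D_\lambda^\beta\partial^\alpha(u_3\partial_3 u),D_\lambda^\beta\partial^\alpha u}_{L^2}$. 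The point is that $\partial_3 u$ shifts the multi-index $\beta$ in the $\partial_3$-slot (which carries the full Gevrey-$\sigma$ weight $(\beta_2!)^{\sigma-\delta}$), and controlling the resulting commutator against $(|\beta|!)^\delta$ requires the delicate Lemmas~\ref{cr1} and~\ref{cr2}, which force $\delta\ge\frac{2\sigma-1}{4}$ (when $\gamma_1=\beta_1$) and $\delta\ge\frac{\sigma+1}{3}$ (when $\gamma_1<\beta_1$) respectively. This is stated explicitly at the top of Subsection~\ref{subsec:lemm}. Misplacing the source of the $\delta$ constraint would lead you to work on the wrong term.

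A secondary (less serious) discrepancy: your treatment of $K_5$ invokes the momentum equation when $\alpha_1=0$, in analogy with Lemma~\ref{vecx}. That workaround is needed for $H_\lambda^\beta$ (where $\partial_t$ does not land in the dissipation directly) but is unnecessary here: since $D_\lambda^\beta = \lambda t^{1/2}D_\lambda^{\tilde\beta}\partial_2$ and $\partial_2$ is already horizontal, one gets $J_5\le C\lambda^2|\nabla_{\rm h}u|_{X_{\rho,\lambda}}^2$ for \emph{all} $\alpha$ by a single direct Cauchy--Schwarz (Lemma~\ref{lem:J5}); the extra nonlinear term $\lambda r^{-9/2}|u|_{X_{\rho,\lambda}}|u|_{Y_{\rho,\lambda}}^2$ in your bound is superfluous.
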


The proof of Proposition \ref{prop:x2} is presented in Subsection \ref{subsec:proof}. As a preliminary step, we first establish two technical lemmas in Subsection \ref{subsec:lemm} which with Lemma \ref{lem:J2} reveal  the intrinsic anisotropic Gevrey regularity in the variable $x_2$.

\subsection{Technical lemmas}\label{subsec:lemm}

To improve the $x_2$-regularity of the solution $u$  at positive times, the major difficulty arises from  handling the convection term $u_3\partial_3 u$. This prevents us from achieving  the analyticity, limiting the regularity to the Gevrey class with index $\max\{\frac{2\sigma-1}{4}, \frac{\sigma+1}{3}\}$.  We need the following two technical lemmas when dealing with $u_3\partial_3 u$, the first one requiring the condition $\delta\geq \frac{2\sigma-1}{4}$ and the second one asking  $\delta\geq \frac{\sigma+1}{3}.$ Note these technical lemmas are only used in the proof of Lemma \ref{lem:J2}.

\begin{lemma}\label{cr1}
 Let  $M_{\rho,\beta}$ be given in \eqref{mrho},
 namely,
 \begin{equation*}
 	M_{\rho,\beta}=\frac{\rho^{\abs\beta+1}(\abs\beta+1)^{6+2\sigma}}{(\beta_2!)^{\sigma-\delta}(\abs\beta!)^{\delta}} \ \textrm {with } \  1\leq \delta\leq\sigma.
 \end{equation*}
If $ \delta\geq \frac{2\sigma-1}{4},$  then  for    any $\gamma\leq\beta\in \mathbb Z_+^2$ with $\gamma_1=\beta_1\ge 1$  and $1\leq \abs\gamma\leq [\frac{\abs\beta}{2}]$, we have
\begin{equation*}
\binom{\beta}{\gamma}  \frac{M_{\rho,\beta }^2}{M_{\rho,\tilde\beta}M_{\rho,\gamma}M_{\rho,\beta-\gamma+(0,1)}(\abs\beta-\abs\gamma+2)^{\frac12} } \leq\frac{C}{(\abs{\gamma}+1)^{6}},
\end{equation*}
recalling $\tilde \beta=\beta-(1,0)=(\beta_1-1,\beta_2).$
\end{lemma}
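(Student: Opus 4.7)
The plan is to expand the ratio $M_{\rho,\beta}^2/(M_{\rho,\tilde\beta}M_{\rho,\gamma}M_{\rho,\beta-\gamma+(0,1)})$ using the explicit formula \eqref{mrho}, tracking separately the $\rho$ exponents (which telescope to a bounded $\rho^{-1}$ via \eqref{boundrho}), the polynomial prefactors of the form $(|\beta|+1)^{6+2\sigma}$, and the factorial parts. The hypothesis $|\gamma|\leq[|\beta|/2]$ gives $|\beta|-|\gamma|+2\approx |\beta|+1$, which collapses the polynomial piece to $C/(|\gamma|+1)^{6+2\sigma}$ and already delivers the target $C/(|\gamma|+1)^6$ up to a factor $(|\gamma|+1)^{-2\sigma}$ that still has to be absorbed.

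Combining $\binom{\beta}{\gamma}=\binom{\beta_2}{\gamma_2}$ (which holds since $\gamma_1=\beta_1$) with the factorial quotients, using the identities $\gamma_2!(\beta_2-\gamma_2+1)!/\beta_2!=(\beta_2-\gamma_2+1)/\binom{\beta_2}{\gamma_2}$ and $|\gamma|!(|\beta|-|\gamma|+1)!/|\beta|!=(|\beta|-|\gamma|+1)/\binom{|\beta|}{|\gamma|}$, together with the trivial inequality $(|\beta|-|\gamma|+1)/|\beta|\leq 1$ (valid since $|\gamma|\geq 1$), I will reduce the whole claim to the single combinatorial inequality
\begin{equation*}
\binom{\beta_2}{\gamma_2}^{1-(\sigma-\delta)}(\beta_2-\gamma_2+1)^{\sigma-\delta}\leq C(|\gamma|+1)^{2\sigma}\binom{|\beta|}{|\gamma|}^{\delta}(|\beta|-|\gamma|+2)^{1/2}.
\end{equation*}

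To prove this reduced inequality I will split on the sign of $1-(\sigma-\delta)$. If $\sigma-\delta\leq 1$, I will apply \eqref{factorial} in the form $\binom{\beta_2}{\gamma_2}\leq\binom{|\beta|}{|\gamma|}$ with the non-negative exponent, then invoke $\binom{|\beta|}{|\gamma|}\geq|\beta|\geq|\beta|-|\gamma|+1$ (for $|\gamma|\geq 1$) together with $\sigma,\delta\geq 1$; the whole left-hand side collapses to $(|\beta|-|\gamma|+1)^{1-\delta}\leq 1$. If $\sigma-\delta>1$, I will discard $\binom{\beta_2}{\gamma_2}^{1-(\sigma-\delta)}\leq 1$ and use the sharper lower bound $\binom{|\beta|}{|\gamma|}\geq(|\beta|-|\gamma|+1)^{|\gamma|}/|\gamma|!$, which reduces the claim to controlling $(|\gamma|!)^{\delta}/[(|\gamma|+1)^{2\sigma}(|\beta|-|\gamma|+1)^{\delta(|\gamma|+1)-\sigma}(|\beta|-|\gamma|+2)^{1/2}]\leq C$. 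Whenever $\delta(|\gamma|+1)\geq\sigma$ this is immediate using $|\beta|-|\gamma|+1\geq|\gamma|+1$ and $(|\gamma|!)^{\delta}\leq(|\gamma|+1)^{\delta|\gamma|}$.

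The only delicate sub-case will be $\delta(|\gamma|+1)<\sigma$: here $|\gamma|$ is a priori bounded by $\sigma/\delta-1$, so $(|\gamma|!)^{\delta}$ is a harmless constant, and the claim reduces to $(|\beta|-|\gamma|+2)^{\sigma-\delta(|\gamma|+1)-1/2}\leq C$. This is equivalent to $\delta\geq(2\sigma-1)/[2(|\gamma|+1)]$, the most restrictive instance being $|\gamma|=1$, which forces precisely $\delta\geq(2\sigma-1)/4$. This is the unique place where the hypothesis is needed and where it is sharp, and it is the main (if rather mild) obstacle: the exponent bookkeeping must be done carefully so that the sharpness surfaces at exactly the multi-index $\gamma=(1,0)$ while all other multi-indices benefit from comfortable slack.
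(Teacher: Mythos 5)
Your proposal is correct, and the reduction to the combinatorial inequality
\begin{equation*}
\binom{\beta_2}{\gamma_2}^{1-(\sigma-\delta)}(|\beta|-|\gamma|+1)^{\sigma-\delta}\leq C(|\gamma|+1)^{2\sigma}\binom{|\beta|}{|\gamma|}^{\delta}(|\beta|-|\gamma|+2)^{1/2}
\end{equation*}
is a clean and valid reformulation of the claim after the $\rho$-powers telescope, the polynomial prefactors collapse via $|\beta|-|\gamma|+2\approx|\beta|+1$, and the crude bound $(|\beta|-|\gamma|+1)/|\beta|\leq 1$ is applied. Your case analysis on the sign of $1-(\sigma-\delta)$ is sound, and the sub-case analysis using $\binom{|\beta|}{|\gamma|}\geq(|\beta|-|\gamma|+1)^{|\gamma|}/|\gamma|!$ correctly isolates where the hypothesis is binding. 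I verified that the extremal sub-case requires $\delta(|\gamma|+1)\geq\sigma-\tfrac12$ for all $|\gamma|\geq 1$, which reduces to $2\delta\geq\sigma-\tfrac12$, i.e., $\delta\geq(2\sigma-1)/4$, at $|\gamma|=1$.

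Your route differs genuinely from the paper's. The paper does not reduce to a single binomial inequality; instead it manipulates the factorial ratio into the form $\left(\frac{(|\gamma|-1)!(\beta_2-\gamma_2+1)!}{|\beta|!}\right)^{\delta}$, recognizes this as $\binom{|\beta|}{|\gamma|-1}^{-\delta}$, and then invokes the Vandermonde-type inequality $\binom{|\beta|}{|\gamma|-1}\geq\binom{\beta_1}{\beta_1-1}\binom{\beta_2}{\gamma_2}$ (their inequality \eqref{factorial}) to bound it by $\left(\gamma_2!(\beta_2-\gamma_2)!/\beta_2!\right)^{\delta}$. The $\binom{\beta}{\gamma}=\binom{\beta_2}{\gamma_2}$ prefactor then absorbs one copy of this ratio (using $\delta\geq 1$), leaving a leftover $|\beta|^{\sigma-2\delta-1/2}$ whose boundedness gives the same constraint $\delta\geq(2\sigma-1)/4$ in one stroke, with no case split. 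The paper's argument is shorter and more unified; your argument trades uniformity for transparency, explicitly locating the sharp constraint at the index $|\gamma|=1$ in the small-$|\gamma|$ regime, and isolating the elementary bound $\binom{n}{k}\geq(n-k+1)^k/k!$ as the only nontrivial binomial estimate needed. Both are valid; for publication the paper's version is tighter, but your version is easier to sanity-check and more clearly explains why the threshold has the shape it does.

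One small expository caveat: in your Case~2 you write $\sigma-\delta>1$ rather than $\geq 1$; since you handled $\sigma-\delta\leq 1$ in Case~1 (using the nonnegative exponent $1-(\sigma-\delta)\geq 0$), the dichotomy is exhaustive, but it is worth noting explicitly that the two cases meet cleanly at $\sigma-\delta=1$. Also you should state that $|\gamma|\geq 1$ ensures $\binom{|\beta|}{|\gamma|}\geq|\beta|$ (using $|\gamma|\leq|\beta|-1$, which holds since $|\gamma|\leq[|\beta|/2]$ and $|\beta|\geq 2$), a small detail you allude to but do not spell out.
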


\begin{proof}
For  $\gamma\leq \beta\in\mathbb Z_+^2$ with  $\gamma_1=\beta_1$ and $1\leq\abs{\gamma}\leq [\frac{\abs{\beta}}{2}]$,  we have
\begin{equation}\label{fact1}
	\beta_2-\gamma_2=\abs\beta-\abs\gamma\approx \abs\beta\ \textrm{ and }\ \binom{\beta}{\gamma}=\frac{\beta_2!}{\gamma_2!(\beta_2-\gamma_2)!},
\end{equation}
 and thus
 \begin{align*}
	&\frac{1}{M_{\rho,\beta-\gamma+(0,1)}(\abs\beta-\abs\gamma+2)^{\frac12}} =\frac{[(\beta_2-\gamma_2+1)!]^{\sigma-\delta}[( \abs\beta-\abs\gamma+1)!]^\delta}{\rho^{\abs\beta-\abs\gamma+2}(\abs \beta-\abs\gamma+2)^{6+2\sigma +\frac12}}\\
	&=  \frac{[(\beta_2-\gamma_2+1)!]^{\sigma} }{\rho^{\abs\beta-\abs\gamma+2}(\abs \beta-\abs\gamma+2)^{6+2\sigma } (\beta_2-\gamma_2+2)^{\frac12} } \leq C \frac{[(\beta_2-\gamma_2)!]^{\sigma} \abs\beta^{\sigma-\frac12} }{\rho^{\abs\beta-\abs\gamma+2}(\abs \beta-\abs\gamma+2)^{6+2\sigma } }.
\end{align*}
As a result, by the above estimate and \eqref{fact1},
\begin{equation*}
	\begin{aligned}
	&  \frac{M_{\rho,\beta }}{M_{\rho,\gamma}M_{\rho,\beta-\gamma+(0,1)}(\abs\beta-\abs\gamma+2)^{\frac12} }\\
	&\leq C
	     \frac{ \rho^{\abs\beta+1}(\abs\beta+1)^{6+2\sigma}} {(\beta_2!)^{\sigma-{\delta} } (\abs\beta !) ^{{\delta} }} \frac{(\gamma_2!)^{\sigma-{\delta} } (\abs\gamma !) ^{{\delta} }}{ \rho^{\abs\gamma+1}(\abs\gamma+1)^{6+2\sigma}}\frac{[(\beta_2-\gamma_2)!]^{\sigma} \abs\beta^{\sigma-\frac12} }{\rho^{\abs\beta-\abs\gamma+2}(\abs \beta-\abs\gamma+2)^{6+2\sigma } }\\
	&\leq \frac{C \abs\beta^{\sigma-\frac12}  }{(\abs\gamma+1)^{6+2\sigma}}     \frac{(\gamma_2!)^{\sigma-{\delta}} (\abs\gamma !) ^{{\delta} } [( \beta_2- \gamma_2)!]^{\sigma}}{(\beta_2!)^{\sigma-{\delta}}(\abs\beta!)^\delta}.\\
	\end{aligned}
\end{equation*}
For the last factor above, we use the fact that $	p!q!\leq (p+q)!$ together with the condition  $\delta\leq\sigma$ to compute
\begin{equation*}
    \begin{aligned}
        &\frac{(\gamma_2!)^{\sigma-{\delta}} (\abs\gamma !) ^{{\delta} } [( \beta_2- \gamma_2)!]^{\sigma}}{(\beta_2!)^{\sigma-{\delta}}(\abs\beta!)^\delta}=\frac{(\gamma_2!)^{\sigma-{\delta}} (\abs\gamma !) ^{{\delta} } [( \beta_2- \gamma_2)!]^{\sigma-\delta}[( \beta_2- \gamma_2)!]^{\delta}}{(\beta_2!)^{\sigma-{\delta}}(\abs\beta!)^\delta}\\
        &= \frac{[\gamma_2!(\beta_2-\gamma_2)!]^{\sigma-\delta}}{(\beta_2!)^{\sigma-{\delta}}} \frac{[(\abs{\gamma}-1)!]^\delta[(\beta_2-\gamma_2+1)!]^{\delta}}{(\abs\beta!)^\delta} \frac{\abs{\gamma}^\delta}{(\beta_2-\gamma_2+1)^\delta}\\
        &\leq  C\bigg (\frac{ (\abs{\gamma}-1)!  (\beta_2-\gamma_2+1)! }{ \abs\beta!} \bigg)^{\delta}\frac{\abs{\gamma}^\delta}{\abs{\beta}^\delta}\\
      &  \leq C \frac{[\gamma_2!(\beta_2-\gamma_2)!]^\delta}{(\beta_2!)^\delta}\frac{\abs{\gamma}^\delta}{\abs{\beta}^\delta},
    \end{aligned}
\end{equation*}
where the first inequality holds because of \eqref{fact1} and the last one relies on the estimate below. Noting $\gamma_1=\beta_1\ge 1$ and using the property \eqref{factorial}  of   binomial coefficients,  we obtain
\begin{multline*}
 \frac{(\abs{\gamma}-1)!(\beta_2-\gamma_2+1)!}{\abs{\beta}!}=\frac{(\beta_1+\gamma_2-1)!(\beta_2-\gamma_2+1)!}{(\beta_1+\beta_2)!}=\binom{ |(\beta_1,\beta_2)|}{|(\beta_1-1,\gamma_2)|}^{-1}\\
\leq \bigg[\binom{ \beta_1}{ \beta_1-1}\binom{ \beta_2}{ \gamma_2}\bigg]^{-1}
 \leq \frac{(\beta_1-1)!}{\beta_1!}\frac{\gamma_2!(\beta_2-\gamma_2)!}{\beta_2!}\leq  \frac{\gamma_2!(\beta_2-\gamma_2)!}{\beta_2!}.
\end{multline*}
 Combining the above estimates and using $\delta\ge 1$ and the fact \eqref{fact1}, we have
\begin{align*}
    \frac{M_{\rho,\beta }}{M_{\rho,\gamma}M_{\rho,\beta-\gamma+(0,1)}(\abs\beta-\abs\gamma+2)^{\frac12} }
  \leq  \frac{C \abs\beta^{\sigma-\delta-\frac12}  }{(\abs\gamma+1)^{6+2\sigma-\delta}}     \frac{[\gamma_2!(\beta_2-\gamma_2)!]^{\delta}}{(\beta_2!)^{\delta}}
\end{align*}
 and thus, recalling $\binom{\beta}{\gamma}=\binom{\beta_2}{\gamma_2}$ for $\beta_1=\gamma_1,$
\begin{multline*}
  \binom{\beta}{\gamma}  \frac{M_{\rho,\beta }}{M_{\rho,\gamma}M_{\rho,\beta-\gamma+(0,1)}(\abs\beta-\abs\gamma+2)^{\frac12} } \\
  \leq  \frac{C \abs\beta^{\sigma-\delta-\frac12}  }{(\abs\gamma+1)^{6+2\sigma-\delta}}     \frac{[\gamma_2!(\beta_2-\gamma_2)!]^{\delta-1}}{(\beta_2!)^{\delta-1}}\leq  \frac{C \abs\beta^{\sigma-\delta-\frac12}  }{(\abs\gamma+1)^{6+2\sigma-\delta}}.
\end{multline*}
This, with the fact that
\begin{equation}\label{betatilde}
	\begin{aligned}
	\frac{M_{\rho,\beta}}{M_{\rho,\tilde\beta}}= \frac{ \rho^{\abs{\beta}+1}(\abs{\beta}+1)^{6+2\sigma}}{(\beta_2!)^{\sigma-{\delta} }(\abs{\beta}!)^{{\delta} }}\frac{(\beta_2!)^{\sigma-{\delta} }[(\abs{\beta}-1)!]^{{\delta} }}{\rho^{\abs{\beta}}\abs{\beta}^{6+2\sigma}}\leq \frac{\rho_02^{6+2\sigma}}{\abs\beta^{\delta}},
	\end{aligned}
\end{equation}
yields
\begin{multline*}
	 \binom{\beta}{\gamma}  \frac{M_{\rho,\beta }^2}{M_{\rho,\tilde\beta}M_{\rho,\gamma}M_{\rho,\beta-\gamma+(0,1)}(\abs\beta-\abs\gamma+2)^{\frac12} }\\
	 = \frac{M_{\rho,\beta }}{M_{\rho,\tilde\beta}}\times \binom{\beta}{\gamma}  \frac{M_{\rho,\beta }}{M_{\rho,\gamma}M_{\rho,\beta-\gamma+(0,1)}(\abs\beta-\abs\gamma+2)^{\frac12} }	 \\
	 \leq \frac{C \abs\beta^{\sigma-2\delta-\frac12}  }{(\abs\gamma+1)^{6+2\sigma-\delta}}     \leq \frac{C }{(\abs\gamma+1)^{6}},
	\end{multline*}
provided $2\delta-\sigma+\frac{1}{2}\ge 0.$ The proof of Lemma \ref{cr1} is completed.
\end{proof}

\begin{lemma}\label{cr2}
 Let  $M_{\rho,\beta}$ be given in \eqref{mrho},
 namely,
 \begin{equation*}
 	M_{\rho,\beta}=\frac{\rho^{\abs\beta+1}(\abs\beta+1)^{6+2\sigma}}{(\beta_2!)^{\sigma-\delta}(\abs\beta!)^{\delta}} \ \textrm {with } \  1\leq \delta\leq\sigma.
 \end{equation*}
If $ \delta\geq \frac{ \sigma+1}{3},$   then  for    any $\gamma\leq\beta\in \mathbb Z_+^2$ with $\gamma_1<\beta_1$  and $1\leq \abs\gamma\leq [\frac{\abs\beta}{2}]$, we have
	 \begin{equation*}
 \binom{\beta}{\gamma} \frac{M_{\rho,\beta}^2}{M_{\rho,\tilde\beta}M_{\rho,\gamma}M_{\rho,\beta-\gamma+(-1,1)} } \leq\frac{C}{(\abs{\gamma}+1)^{6}},
\end{equation*}
recalling $\tilde \beta=\beta-(1,0)=(\beta_1-1,\beta_2).$
\end{lemma}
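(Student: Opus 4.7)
The plan is to mirror the two-step strategy of the proof of Lemma~\ref{cr1}: first peel off the ratio $M_{\rho,\beta}/M_{\rho,\tilde\beta}$ via the estimate \eqref{betatilde}, which gives $M_{\rho,\beta}/M_{\rho,\tilde\beta}\le C/\abs\beta^{\delta}$, and then control the residual convolution quotient
\begin{equation*}
\mathcal R\stackrel{\rm def}{=}\binom{\beta}{\gamma}\frac{M_{\rho,\beta}}{M_{\rho,\gamma}M_{\rho,\beta-\gamma+(-1,1)}}
\end{equation*}
by direct computation.  The structural fact making this well-posed is that $\gamma_1<\beta_1$ guarantees $\beta-\gamma+(-1,1)\in\mathbb Z_+^2$, with modulus $\abs\beta-\abs\gamma$ and second component $\beta_2-\gamma_2+1$.

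Expanding $\mathcal R$ through definition \eqref{mrho} produces three groups of factors. First, a power-type prefactor built from $(\abs\beta+1)^{6+2\sigma}$, $(\abs\gamma+1)^{6+2\sigma}$ and $(\abs\beta-\abs\gamma+1)^{6+2\sigma}$; the assumption $\abs\gamma\le [\abs\beta/2]$ yields $\abs\beta-\abs\gamma+1\approx \abs\beta+1$, so this prefactor is bounded by $C/(\abs\gamma+1)^{6+2\sigma}$. Second, the $(\sigma-\delta)$-power piece $A=[\gamma_2!(\beta_2-\gamma_2+1)!/\beta_2!]^{\sigma-\delta}$, which via $\gamma_2!(\beta_2-\gamma_2)!\le \beta_2!$ and $\beta_2-\gamma_2+1\le \abs\beta+1$ satisfies $A\le (\abs\beta+1)^{\sigma-\delta}$. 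Third, $B=\binom{\abs\beta}{\abs\gamma}^{-\delta}$ coming from the $\delta$-power $\abs\beta!$-factorials.

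I would then combine $\binom{\beta}{\gamma}$ with $B$ via \eqref{factorial}, obtaining $\binom{\beta}{\gamma}B\le \binom{\abs\beta}{\abs\gamma}^{1-\delta}$, and---this is the essential new input compared with Lemma~\ref{cr1}---exploit the inequality $\binom{\abs\beta}{\abs\gamma}\ge\abs\beta$, valid whenever $1\le\abs\gamma\le [\abs\beta/2]$ (which forces $\abs\beta\ge 2$), so that $\binom{\abs\beta}{\abs\gamma}^{1-\delta}\le \abs\beta^{1-\delta}$ since $\delta\ge 1$. Assembling all the bounds together with the $1/\abs\beta^{\delta}$ coming from $M_{\rho,\beta}/M_{\rho,\tilde\beta}$ yields
\begin{equation*}
\binom{\beta}{\gamma}\frac{M_{\rho,\beta}^2}{M_{\rho,\tilde\beta}M_{\rho,\gamma}M_{\rho,\beta-\gamma+(-1,1)}}\le \frac{C\,\abs\beta^{\sigma-3\delta+1}}{(\abs\gamma+1)^{6+2\sigma}},
\end{equation*}
and the hypothesis $\delta\ge(\sigma+1)/3$ is precisely what makes the exponent $\sigma-3\delta+1\le 0$, so $\abs\beta^{\sigma-3\delta+1}\le 1$ and the desired bound $C/(\abs\gamma+1)^6$ follows at once.

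The main technical obstacle I anticipate is the anisotropic bookkeeping: the two Gevrey indices $\sigma$ (on $\beta_2$) and $\delta$ (on $\abs\beta$) interact with $\binom{\beta_1}{\gamma_1}\binom{\beta_2}{\gamma_2}$, and the worst case is $\gamma_2=0$, where $\binom{\beta_2}{\gamma_2}=1$ provides no suppression and $A$ genuinely attains its bound $(\abs\beta+1)^{\sigma-\delta}$. It is exactly there that one must lean most heavily on the extra power of $\abs\beta$ coming from $\binom{\abs\beta}{\abs\gamma}\ge\abs\beta$, and keeping track of this balance is what pins down the critical threshold $\delta=(\sigma+1)/3$.
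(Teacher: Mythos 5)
Your argument is correct and reaches the same threshold by the same overall factoring: peel off $M_{\rho,\beta}/M_{\rho,\tilde\beta}\lesssim\abs\beta^{-\delta}$ via \eqref{betatilde}, majorize the $(\abs\beta+1)$-power prefactor by $C/(\abs\gamma+1)^{6+2\sigma}$ using $\abs\beta-\abs\gamma+1\approx\abs\beta$, absorb the $\sigma$-piece $[\gamma_2!(\beta_2-\gamma_2+1)!/\beta_2!]^{\sigma-\delta}\le(\abs\beta+1)^{\sigma-\delta}$, and fold $\binom{\beta}{\gamma}$ into the $\delta$-power factorials to produce $\binom{\abs\beta}{\abs\gamma}^{1-\delta}$.

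Where you genuinely deviate from the paper is in how this last factor is controlled. The paper rewrites $\abs\gamma!\,(\abs\beta-\abs\gamma)!$ as $(\abs\gamma-1)!\,(\abs\beta-\abs\gamma+1)!\cdot\abs\gamma/(\abs\beta-\abs\gamma+1)$ and uses $p!\,q!\le(p+q)!$ to bound $\binom{\abs\beta}{\abs\gamma}^{1-\delta}\le\big[\abs\gamma/(\abs\beta-\abs\gamma+1)\big]^{\delta-1}$; the surplus $\abs\gamma^{\delta-1}$ is then absorbed into $(\abs\gamma+1)^{6+2\sigma}$, costing $\delta-1$ of that exponent. You instead invoke the elementary monotonicity $\binom{\abs\beta}{\abs\gamma}\ge\binom{\abs\beta}{1}=\abs\beta$ on the range $1\le\abs\gamma\le[\abs\beta/2]$, giving $\binom{\abs\beta}{\abs\gamma}^{1-\delta}\le\abs\beta^{1-\delta}$ outright and leaving the $(\abs\gamma+1)$-exponent untouched at $6+2\sigma$. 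Both routes produce the combined exponent $\sigma-3\delta+1$ on $\abs\beta$ (equivalently, the paper's $3\delta-1-\sigma$ on $\abs\beta-\abs\gamma+1$) and hence the identical threshold $\delta\ge(\sigma+1)/3$. Your variant is arguably cleaner; the paper's variant yields a formally sharper bound $\abs\gamma^{\delta-1}$ (rather than $\abs\beta^{\delta-1}$) that happens to be unnecessary here because the generous $2\sigma$ head-room on the $(\abs\gamma+1)$-exponent already covers the $6$ needed. Your identification of $\gamma_2=0$ as the sharp case for the $\sigma$-piece is also on point.
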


\begin{proof}
For any $\gamma\leq\beta$ with $\gamma_1<\beta_1$  and $1\leq \abs\gamma\leq [\frac{\abs\beta}{2}]$, we have $\abs\beta-\abs\gamma+1\approx \abs\beta$ and thus
\begin{equation*}
	\begin{aligned}
	&\binom{\beta}{\gamma}  \frac{M_{\rho,\beta}}{M_{\rho,\gamma}M_{\rho,\beta-\gamma+(-1,1)} }\leq \binom{\abs \beta}{\abs \gamma}  \frac{M_{\rho,\beta}}{M_{\rho,\gamma}M_{\rho,\beta-\gamma+(-1,1)} }\\
	&\leq \frac{\abs \beta!}{\abs \gamma!(\abs \beta-\abs \gamma)!} \frac{\rho^{\abs\beta+1}(\abs\beta+1)^{6+2\sigma}}{(\beta_2!)^{\sigma-{\delta} } (\abs\beta !) ^{{\delta}}}\\
	&\qquad\times \frac{(\gamma_2!)^{\sigma-{\delta} } (\abs\gamma !) ^{{\delta} }}{ \rho^{\abs\gamma+1}(\abs\gamma+1)^{6+2\sigma}}\frac{[(\beta_2-\gamma_2+1)!]^{\sigma-\delta}[( \abs\beta-\abs\gamma)!]^\delta}{\rho^{\abs\beta-\abs\gamma+1}(\abs \beta-\abs\gamma+1)^{6+2\sigma }}\\
	&\leq   \frac{C}{(\abs\gamma+1)^{6+2\sigma}}  \frac{  (\abs\gamma !) ^{{\delta} -1} [(\abs\beta-\abs\gamma)!]^{\delta-1}}{ (\abs \beta!)^{\delta-1}}\frac{(\gamma_2!)^{\sigma-{\delta} }  [(\beta_2-\gamma_2+1)!]^{\sigma-\delta} }{(\beta_2!)^{\sigma-\delta} }.
	\end{aligned}
\end{equation*}
Moreover, using  the fact that $p!q!\leq (p+q)!$ and $\delta\leq\sigma$ gives that
\begin{multline*}
	\frac{(\gamma_2!)^{\sigma-{\delta} }[ (\beta_2-\gamma_2+1)!]^{\sigma-\delta}}{(\beta_2!)^{\sigma-\delta}}=\frac{(\gamma_2!)^{\sigma-{\delta} }[ (\beta_2-\gamma_2)!]^{\sigma-\delta} (\beta_2-\gamma_2+1)^\delta}{(\beta_2!)^{\sigma-\delta}}\\
    \leq (\beta_2-\gamma_2+1)^{\sigma-\delta}  \leq  (\abs \beta-\abs\gamma+1)^{\sigma-\delta}
\end{multline*}
and that, observing $\abs\gamma\geq1$ and $\delta\ge 1$,
\begin{equation*}
\begin{aligned}
	\frac{  (\abs\gamma !) ^{{\delta} -1} [(\abs\beta-\abs\gamma)!]^{\delta-1}}{ (\abs \beta!)^{\delta-1}}&\leq \frac{  [(\abs\gamma-1)!]^{\delta-1}[(\abs\beta-\abs\gamma+1)!]^{\delta-1}}{  (\abs \beta!)^{\delta-1} } \frac{\abs\gamma^{\delta-1}}{(\abs\beta-\abs\gamma+1)^{\delta-1}}\\
	&\leq     \frac{ \abs\gamma^{\delta-1}}{(\abs\beta-\abs\gamma+1)^{\delta-1}}.
	\end{aligned}
\end{equation*}
Consequently, combining the above estimates, we conclude that, for any $1\leq\abs\gamma\leq [\frac{\abs\beta}{2}]$ with $\gamma_1<\beta_1$,
\begin{equation*}
	\binom{\beta}{\gamma}  \frac{M_{\rho,\beta}}{M_{\rho,\gamma}M_{\rho,\beta-\gamma+(-1,1)} } \leq  \frac{C}{(\abs\gamma+1)^{6+2\sigma-(\delta-1)}}  \frac{1}{(\abs\beta-\abs\gamma+1)^{\delta-1-(\sigma-\delta)}},
\end{equation*}
which with   \eqref{betatilde} yields
\begin{multline*}
	\binom{\beta}{\gamma}  \frac{M_{\rho,\beta}^2}{M_{\rho,\tilde\beta}M_{\rho,\gamma}M_{\rho,\beta-\gamma+(-1,1)} }=\frac{M_{\rho,\beta} }{M_{\rho,\tilde\beta}} \times \binom{\beta}{\gamma}  \frac{M_{\rho,\beta}}{ M_{\rho,\gamma}M_{\rho,\beta-\gamma+(-1,1)} } \\
	\leq  \frac{C}{(\abs\gamma+1)^{6+2\sigma-(\delta-1)}}  \frac{1}{(\abs\beta-\abs\gamma+1)^{2\delta-1-(\sigma-\delta)}}\leq \frac{C}{(\abs\gamma+1)^{6}} ,
\end{multline*}
provided $3\delta-1-\sigma\geq 0.$ This completes the proof of Lemma \ref{cr2}.
\end{proof}

 \subsection{Proof of Proposition \ref{prop:x2}} \label{subsec:proof}

The rest part of this section is devoted to the proof of Proposition \ref{prop:x2}. As shown in Section \ref{sec:wellposdeness}, it suffices to estimate the terms involving pure spatial derivatives in the definition of  $\abs{u}_{X_{\rho,\lambda}}$; the terms with time derivatives can be handled analogously. Following a procedure similar to the derivation of \eqref{s} without additional difficulty,  we have that , for any $\alpha\in\mathbb Z_+^3$ with $\abs \alpha\leq 3,$
\begin{multline}\label{est:x2}
 \frac{1}{2}\frac{d}{dt} \sum_{\stackrel{\beta \in \mathbb Z_+^2}{\beta_1\ge 1}} r^{\alpha_1}M_{\rho,\beta}^2\norm{D_\lambda^\beta\partial^{\alpha} u}_{L^2}^2+ \sum_{\stackrel{\beta \in \mathbb Z_+^2}{\beta_1\ge 1}}r^{\alpha_1}M_{\rho,\beta}^2\norm{D_\lambda^\beta\partial^{\alpha} \nabla_{\rm h} u}_{L^2}^2\\
 -\frac{\rho^{'}}{\rho}  \sum_{\stackrel{\beta \in \mathbb Z_+^2}{\beta_1\ge 1}}r^{\alpha_1}(\abs{\beta }+1)M_{\rho,\beta}^2\norm{D_\lambda^\beta\partial^{\alpha} u}_{L^2}^2\leq \sum_{\ell=1}^5 J_\ell,
\end{multline}
where
\begin{equation}
	\label{J1-J5}
	  \left\{
    \begin{aligned}
&J_1= - \sum_{\stackrel{\beta \in \mathbb Z_+^2}{\beta_1\ge 1}}r^{\alpha_1}M_{\rho,\beta}^2  \inner{  D_\lambda^\beta\partial^{\alpha}\big ((u_{\rm h}\cdot\nabla_{\rm h}) u\big ),\   D_\lambda^\beta\partial^{\alpha} u}_{L^2},\\
&J_2=- \sum_{\stackrel{\beta \in \mathbb Z_+^2}{\beta_1\ge 1}}r^{\alpha_1}M_{\rho,\beta}^2\inner{ D_\lambda^\beta\partial^{\alpha} (u_3\partial_3u),\   D_\lambda^\beta\partial^{\alpha} u}_{L^2},\\
&J_3= \sum_{\stackrel{\beta \in \mathbb Z_+^2}{\beta_1\ge 1}}r^{\alpha_1}M_{\rho,\beta}^2\inner{  D_\lambda^\beta\partial^{\tilde \alpha}\nabla p, \   D_\lambda^\beta\partial^{\alpha}\partial_1 u}_{L^2},\\
&J_4= \sum_{\stackrel{\beta \in \mathbb Z_+^2}{\beta_1\ge 1}}r^{\alpha_1}M_{\rho,\beta}^2\int_{\mathbb R^2} \big[ (  D_\lambda^\beta\partial^{\alpha} u)  D_\lambda^\beta\partial^{\tilde \alpha} \big(\partial_tu+u\cdot\nabla u-\partial_2^2u\big)\big]\big|^{x_1=1}_{x_1=0}\ dx_2dx_3,\\
&J_5= \frac12   \sum_{\stackrel{\beta \in \mathbb Z_+^2}{\beta_1\ge 1}} r^{\alpha_1} \frac{ \beta_1}{t} M_{\rho,\beta}^2\norm{D_\lambda^\beta\partial^{\alpha} u}_{L^2}^2,
    \end{aligned}
    \right.
\end{equation}
recalling  $\tilde\alpha=\alpha-(1,0,0)=(\alpha_1-1,\alpha_2,\alpha_3).$   In the following discussion we will proceed to estimate the terms $J_\ell, 1\leq \ell\leq 5.$

\begin{lemma}[Estimate on $J_1$]\label{lem:J1}
Let $J_1$ be given in \eqref{J1-J5}. For any $\alpha\in\mathbb Z_+^3$ with $\abs\alpha \leq 3,$ it holds that
 \begin{equation}\label{est:J1}
J_1\leq   C r^{-\frac{9}{2}}\abs{u}_{ X_{\rho,\lambda}}^2\abs{\nabla_{\rm h}u}_{ X_{\rho,\lambda}}\leq   C r^{-\frac{9}{2}}\abs{u}_{ X_{\rho,\lambda}}\abs{u}_{ Y_{\rho,\lambda}}^2+ Cr^{-\frac{9}{2}}\abs{u}_{ X_{\rho,\lambda}} \abs{\nabla_{\rm h}u}_{ X_{\rho,\lambda}}^2,
 \end{equation}
 recalling $\abs{\cdot}_{ X_{\rho,\lambda}}$ and $\abs{\cdot}_{ Y_{\rho,\lambda}}$ are given in Definition \ref{def:normx2}.
\end{lemma}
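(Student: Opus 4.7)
The strategy is to mirror the proof of Lemma \ref{lem:S1} almost verbatim, substituting the weight $L_{\rho,m}$ by $M_{\rho,\beta}$ and the single index $m\in\mathbb Z_+$ by a multi-index $\beta\in\mathbb Z_+^2$. The key observation that makes this transplant possible is that $D_\lambda^\beta=\lambda^{\beta_1}t^{\beta_1/2}\partial_2^{\beta_1}\partial_3^{\beta_2}$ satisfies the Leibniz product rule with the multinomial coefficient $\binom{\beta}{\gamma}$, since both the factor $\lambda^{\beta_1}t^{\beta_1/2}$ and the differential operator distribute as $D_\lambda^\beta=D_\lambda^\gamma D_\lambda^{\beta-\gamma}$ on the corresponding factors. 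Thus the convection structure in $J_1$ is purely horizontal and does not interact with the time-dependent weights.

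First I would apply Leibniz's formula to expand
\begin{equation*}
D_\lambda^\beta\partial^\alpha\bigl((u_{\rm h}\cdot\nabla_{\rm h})u\bigr)=\sum_{\gamma\leq\beta,\,\alpha'\leq\alpha}\binom{\beta}{\gamma}\binom{\alpha}{\alpha'}(D_\lambda^\gamma\partial^{\alpha'}u_{\rm h})\cdot\nabla_{\rm h}(D_\lambda^{\beta-\gamma}\partial^{\alpha-\alpha'}u),
\end{equation*}
then use the algebra property \eqref{prpalge} of the $\mathcal H^3$-norm (absorbing the finitely many $\partial^{\alpha'}$ into the $H^3$ norm since $|\alpha|\leq 3$) to bound
\begin{equation*}
\bigl|\inner{D_\lambda^\beta\partial^\alpha((u_{\rm h}\cdot\nabla_{\rm h})u),D_\lambda^\beta\partial^\alpha u}_{L^2}\bigr|\leq C\sum_{\gamma\leq\beta}\binom{\beta}{\gamma}\norm{D_\lambda^\gamma u}_{H^3}\norm{D_\lambda^{\beta-\gamma}\nabla_{\rm h}u}_{H^3}\norm{D_\lambda^\beta u}_{H^3}.
\end{equation*}
Splitting the $\gamma$-sum into $|\gamma|\leq[|\beta|/2]$ and $[|\beta|/2]+1\leq|\gamma|\leq|\beta|$ yields a decomposition $J_1\leq J_{1,1}+J_{1,2}$ analogous to \eqref{S1}.

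The main technical point is to establish the multi-index analogues of the combinatorial inequalities \eqref{ineq1} and \eqref{ineq2}; namely, for $|\gamma|\leq[|\beta|/2]$,
\begin{equation*}
\binom{\beta}{\gamma}\frac{M_{\rho,\beta}}{M_{\rho,\gamma}M_{\rho,\beta-\gamma}}\leq\frac{C}{(|\gamma|+1)^6},
\end{equation*}
with the symmetric statement for $[|\beta|/2]+1\leq|\gamma|\leq|\beta|$. This follows from the identity $\binom{\beta}{\gamma}=\binom{\beta_1}{\gamma_1}\binom{\beta_2}{\gamma_2}\leq\binom{|\beta|}{|\gamma|}$, the estimates $\binom{|\beta|}{|\gamma|}^{-\delta}\binom{\beta}{\gamma}\leq\binom{|\beta|}{|\gamma|}^{1-\delta}\leq 1$ thanks to $\delta\geq 1$, the bound $(|\beta-\gamma|+1)\approx(|\beta|+1)$ on the range $|\gamma|\leq[|\beta|/2]$, and the fact $\sigma\geq\delta$ so that the powers of $(\beta_2!)^{\sigma-\delta}$ are controlled by the polynomial decay $(|\gamma|+1)^{-6-2\sigma}$ coming from the $(|\beta|+1)^{6+2\sigma}$-prefactor in $M_{\rho,\beta}$.

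With these inequalities in hand, the remainder of the argument is a verbatim copy of the manipulations in \eqref{S11} and \eqref{applyYong}: factor out the weighted norms $M_{\rho,\gamma}\norm{D_\lambda^\gamma u}_{H^3}$, $M_{\rho,\beta-\gamma}\norm{D_\lambda^{\beta-\gamma}\nabla_{\rm h}u}_{H^3}$, $M_{\rho,\beta}\norm{D_\lambda^\beta u}_{H^3}$, apply the Cauchy--Schwarz inequality in $\beta$, and use the discrete Young inequality \eqref{young} on $\mathbb Z_+^2$ to convert the convolution structure into a product of norms. The summability of $\sum_{\gamma\in\mathbb Z_+^2}(|\gamma|+1)^{-6}<+\infty$ supplies the summability constant. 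Tracking the factor $r^{-\alpha_1}$ as in the proof of Lemma \ref{lem:S1} produces the prefactor $r^{-9/2}$, and the second inequality in \eqref{est:J1} follows from \eqref{xyx2} via the same elementary splitting $\abs{u}_{X_{\rho,\lambda}}\abs{\nabla_{\rm h}u}_{X_{\rho,\lambda}}\leq\abs{u}_{Y_{\rho,\lambda}}^2+\abs{\nabla_{\rm h}u}_{X_{\rho,\lambda}}^2$ used in \eqref{teces}. The main obstacle here is merely the verification of the combinatorial inequality above, which is where the condition $\delta\geq 1$ (rather than the stronger constraints $\delta\geq(\sigma+1)/3$ or $\delta\geq(2\sigma-1)/4$ needed for the $u_3\partial_3u$ term) enters.
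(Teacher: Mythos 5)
Your proof plan is correct and follows exactly the paper's route: identify the multi-index analogues of \eqref{ineq1}--\eqref{ineq2} (these are \eqref{ineq5}--\eqref{ineq6} in the paper, proved in Appendix~\ref{sec:appendix} precisely via $\binom{\beta}{\gamma}\leq\binom{|\beta|}{|\gamma|}$, $\delta\geq 1$, and $\sigma\geq\delta$), then transplant the $S_1$ argument using discrete Young's inequality and conclude with \eqref{xyx2}. One minor misstatement worth fixing: the factor $\bigl[\gamma_2!(\beta_2-\gamma_2)!/\beta_2!\bigr]^{\sigma-\delta}$ is bounded by $1$ directly from $p!\,q!\leq(p+q)!$ together with $\sigma\geq\delta$, not because it is "controlled by the polynomial decay" from $(|\beta|+1)^{6+2\sigma}$; the polynomial prefactor serves only to supply the $(|\gamma|+1)^{-6}$ decay after using $|\beta|-|\gamma|+1\approx|\beta|+1$.
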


\begin{proof}
The proof  is quite similar to that in  Lemma \ref{lem:S1}.
Indeed, analogous to \eqref{ineq1} and \eqref{ineq2}, we have accordingly the following two estimates (see Appendix \ref{sec:appendix} for the proof):
\begin{equation}\label{ineq5}
\forall\ \gamma\leq\beta\in\mathbb Z_+^2\ \textrm{ with }\ 0\leq \abs{\gamma }\leq \Big[\frac{\abs{\beta }}{2}\Big],  \quad     \binom{\beta }{\gamma }\frac{M_{\rho,\beta}}{M_{\rho,\gamma}M_{\rho,\beta -\gamma}}\leq\frac{C}{(\abs{\gamma }+1)^6},
\end{equation}
and
\begin{equation}\label{ineq6}
 \forall\ \gamma\leq\beta\in\mathbb Z_+^2\ \textrm{ with } \  \Big[\frac{\abs{\beta }}{2}\Big]+1\leq \abs{\gamma }\leq \abs{\beta },\quad   \binom{\beta }{\gamma }\frac{M_{\rho,\beta}}{M_{\rho,\gamma}M_{\rho,\beta -\gamma}} \leq\frac{C}{(\abs{\beta -\gamma }+1)^6}.
\end{equation}
These estimates allow us to adapt the argument from the proof of Lemma \ref{lem:S1} and use the fact \eqref{xyx2}  to conclude  assertion \eqref{est:J1}.  The proof of Lemma \ref{lem:J1} is completed.
\end{proof}

\begin{lemma}[Estimate on $J_2$]\label{lem:J2}
Let $J_2$ be given in \eqref{J1-J5}, namely,
\begin{equation*}
	J_2=- \sum_{\stackrel{\beta \in \mathbb Z_+^2}{\beta_1\ge 1}}r^{\alpha_1}M_{\rho,\beta}^2\inner{ D_\lambda^\beta\partial^{\alpha} (u_3\partial_3u),\   D_\lambda^\beta\partial^{\alpha} u}_{L^2}.
\end{equation*}
For any $\alpha\in\mathbb Z_+^3$ with $\abs\alpha\leq 3,$ it holds that
\begin{equation}\label{est:J2}
    J_2\leq  Cr^{-\frac{9}{2}}\abs{u}_{ X_{\rho,\lambda}}\abs{u}_{ Y_{\rho,\lambda}}^2
    +C\lambda^2 t r^{-\frac{9}{2}}\abs{u}_{ X_{\rho,\lambda}}\abs{\nabla_{\rm h}u}_{ X_{\rho,\lambda}}^2,
\end{equation}
recalling $\abs{\cdot}_{ X_{\rho,\lambda}}$ and $\abs{\cdot}_{ Y_{\rho,\lambda}}$ are given in Definition \ref{def:normx2}.
\end{lemma}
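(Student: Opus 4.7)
The plan is to mimic the proof of Lemma \ref{lem:S2}, with the one-dimensional convolution in $m$ now replaced by a two-dimensional convolution over $\gamma \leq \beta$ arising from the Leibniz expansion
\begin{equation*}
D_\lambda^\beta \partial^\alpha(u_3 \partial_3 u) = \sum_{\gamma \leq \beta,\, \tau \leq \alpha} \binom{\beta}{\gamma}\binom{\alpha}{\tau} \bigl(D_\lambda^\gamma \partial^\tau u_3\bigr)\bigl(D_\lambda^{\beta-\gamma+(0,1)} \partial^{\alpha-\tau} u\bigr).
\end{equation*}
After Cauchy--Schwarz, I will split the resulting sum into three pieces in parallel with \eqref{+s2}: the diagonal $\gamma=0$, a low-convolution piece $1 \leq |\gamma| \leq [|\beta|/2]$, and a high-convolution piece $[|\beta|/2]+1 \leq |\gamma| \leq |\beta|$, and estimate each separately.

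For the diagonal piece I isolate $\tau=0$ and integrate by parts in $x_3$ to rewrite
\begin{equation*}
\inner{u_3\, D_\lambda^\beta \partial^\alpha \partial_3 u,\, D_\lambda^\beta \partial^\alpha u}_{L^2} = -\tfrac{1}{2}\inner{(\partial_3 u_3)\, D_\lambda^\beta \partial^\alpha u,\, D_\lambda^\beta \partial^\alpha u}_{L^2},
\end{equation*}
removing the troublesome extra $\partial_3$ on $u$. Sobolev embedding $\|u\|_{H^3}\leq Cr^{-3/2}\abs{u}_{X_{\rho,\lambda}}$ together with \eqref{xyx2} then gives the first summand $Cr^{-9/2}\abs{u}_{X_{\rho,\lambda}}\abs{u}_{Y_{\rho,\lambda}}^2$ of the claimed bound. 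The residual $\tau>0$ subcases (still with $\gamma=0$) are estimated by the same mechanism, the extra $\partial_3$ on $u$ being compensated by the $\sqrt{|\beta-\gamma+(0,1)|+1}$ enhancement in $\abs{\cdot}_{Y_{\rho,\lambda}}$.

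For the off-diagonal pieces the decomposition into $\gamma_1=\beta_1$ and $\gamma_1<\beta_1$ is crucial. When $\gamma_1=\beta_1$, the shifted multi-index becomes $\beta-\gamma+(0,1)=(0,\beta_2-\gamma_2+1)$ and is purely vertical; invoking Lemma \ref{cr1}, whose factor $(|\beta|-|\gamma|+2)^{1/2}$ is exactly matched by the $Y$-norm enhancement, together with the discrete Young inequality \eqref{young}, closes this part within $\abs{u}_{X_{\rho,\lambda}}\abs{u}_{Y_{\rho,\lambda}}^2$. When $\gamma_1<\beta_1$, the strict inequalities $\beta_1\geq1$ and $\beta_1-\gamma_1\geq1$ both hold, which lets me convert
\begin{equation*}
D_\lambda^{\beta-\gamma+(0,1)} u = \lambda\sqrt{t}\, D_\lambda^{\beta-\gamma+(-1,1)} \partial_2 u, \qquad D_\lambda^\beta \partial^\alpha u = \lambda\sqrt{t}\, D_\lambda^{\tilde\beta}\,\partial_2 \partial^\alpha u,
\end{equation*}
trading two vertical derivatives for horizontal ones and extracting a total prefactor $\lambda^2 t$. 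Lemma \ref{cr2} then furnishes the decay in $|\gamma|$ needed to apply \eqref{young}, producing the second summand $C\lambda^2 t r^{-9/2}\abs{u}_{X_{\rho,\lambda}}\abs{\nabla_{\rm h}u}_{X_{\rho,\lambda}}^2$. The high-convolution piece is reduced to the low one by swapping $\gamma \leftrightarrow \beta-\gamma$ and invoking the symmetric analogues of Lemmas \ref{cr1} and \ref{cr2}.

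The principal obstacle is the extra vertical derivative on $u$ inherent in $u_3\partial_3 u$: the weight $M_{\rho,\beta}$ has been designed so that the shift $\beta\mapsto\beta-\gamma+(0,1)$ can be absorbed, but only at the price of a prefactor whose decay in $|\beta|$ requires the sharp thresholds $\delta\geq(2\sigma-1)/4$ (from Lemma \ref{cr1}) and $\delta\geq(\sigma+1)/3$ (from Lemma \ref{cr2}), which, together with the baseline $\delta\geq1$, are exactly the three branches defining $\delta$ in \eqref{gamma1}.
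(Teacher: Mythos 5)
Your decomposition into a diagonal ($\gamma=0$), low-convolution ($1\leq|\gamma|\leq[|\beta|/2]$) and high-convolution ($[|\beta|/2]+1\leq|\gamma|\leq|\beta|$) piece, and the further split of the low piece by $\gamma_1=\beta_1$ versus $\gamma_1<\beta_1$, matches the paper exactly, and you correctly identify that the $\gamma_1<\beta_1$ subcase is where the prefactor $\lambda^2 t$ is extracted via the identities $D_\lambda^{\beta-\gamma}\partial_3 u = \lambda\sqrt{t}D_\lambda^{\beta-\gamma+(-1,1)}\partial_2 u$ and $D_\lambda^\beta\partial^\alpha u = \lambda\sqrt{t}D_\lambda^{\tilde\beta}\partial_2\partial^\alpha u$. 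However, there are two genuine gaps in the remaining subcases.

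For the $\gamma_1=\beta_1$ subcase, you claim the piece ``closes within $|u|_{X_{\rho,\lambda}}|u|_{Y_{\rho,\lambda}}^2$'' by invoking Lemma~\ref{cr1} together with the $Y$-enhancement. But Lemma~\ref{cr1} has $M_{\rho,\tilde\beta}$ --- not $M_{\rho,\beta}$ --- in its denominator, so to pair it with the test function you are forced to also rewrite $D_\lambda^\beta\partial^\alpha u = \lambda\sqrt{t}D_\lambda^{\tilde\beta}\partial_2\partial^\alpha u$, exactly as you do in the $\gamma_1<\beta_1$ subcase. Skipping this conversion leaves a residual ratio $\binom{\beta}{\gamma}M_{\rho,\beta}/(M_{\rho,\gamma}M_{\rho,\beta-\gamma+(0,1)}(|\beta|-|\gamma|+2)^{1/2}) \lesssim |\beta|^{\sigma-\delta-\frac12}(|\gamma|+1)^{-(6+2\sigma-\delta)}$, and the factor $|\beta|^{\sigma-\delta-\frac12}$ is \emph{unbounded} whenever $\delta<\sigma-\frac12$ (e.g.\ $\sigma=2,\ \delta=1$), so the sum over $\beta$ does not close. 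With the conversion the natural bound is the cross term $C\lambda\sqrt{t}\,r^{-9/2}|u|_{X_{\rho,\lambda}}|u|_{Y_{\rho,\lambda}}|\nabla_{\rm h}u|_{X_{\rho,\lambda}}$, which must then be split by Young's inequality and contributes to \emph{both} summands of \eqref{est:J2}, not just the first.

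Similarly, for the residual $\tau>0$, $\gamma=0$ terms of the diagonal piece, you claim the extra $\partial_3$ is ``compensated by the $\sqrt{|\beta-\gamma+(0,1)|+1}$ enhancement in $|\cdot|_{Y_{\rho,\lambda}}$''. That cannot be: passing from $M_{\rho,\beta}$ to $M_{\rho,\beta+(0,1)}$ costs a factor $\approx (\beta_2+1)^{\sigma-\delta}(|\beta|+1)^{\delta}$, far larger than the $(|\beta|+2)^{1/2}$ the $Y$-enhancement supplies (recall $\delta\geq1$). The correct mechanism, used in the paper for the analogous term $S_{2,1}$, is that once a genuine spatial derivative is peeled off via $\tau>0$, $|\alpha-\tau|+1\leq 3$, so $\partial^{\alpha-\tau}\partial_3 D_\lambda^\beta u$ is controlled by $\|D_\lambda^\beta u\|_{H^3}$ and no $Y$-enhancement is required at all; the whole diagonal piece closes as $Cr^{-9/2}|u|_{X_{\rho,\lambda}}^3$. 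Finally, as a smaller point, for the high-convolution piece the paper does not need any ``symmetric analogues'' of Lemmas~\ref{cr1}--\ref{cr2}: since $|\beta-\gamma|$ is small, the shift $(0,1)$ is cheap, and the elementary inequality \eqref{ineq9}, $\binom{\beta}{\gamma}M_{\rho,\beta}/(M_{\rho,\gamma}M_{\rho,\beta-\gamma+(0,1)})\leq C(|\beta-\gamma|+1)^{-6}$, suffices with no $\lambda\sqrt{t}$ conversion and no constraint on $\delta$ beyond $\delta\geq1$.
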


\begin{proof}
We begin by decomposing $J_2$ as follows:
\begin{equation}\label{+J2}
    \begin{aligned}
 J_2=&-\sum_{\stackrel{\beta \in \mathbb Z_+^2}{\beta_1\ge 1}}r^{\alpha_1}M_{\rho,\beta}^2\inner{\partial^{ \alpha}(u_3 D_\lambda^{\beta} \partial_3u),\  D_\lambda^\beta\partial^{\alpha} u}_{L^2}\\
&- \sum_{\stackrel{\beta \in \mathbb Z_+^2}{\beta_1\ge 1}}\ \sum_{\stackrel{\gamma \leq\beta }{1\leq\abs{\gamma }\leq[\frac{\abs{\beta }}{2}]}}r^{\alpha_1}\binom{\beta }{\gamma }M_{\rho,\beta}^2\inner{\partial^{\alpha}[(D_{\lambda}^{\gamma }u_3)D_{\lambda}^{\beta -\gamma }\partial_3u],\  D_\lambda^\beta\partial^{\alpha} u}_{L^2}\\
&- \sum_{\stackrel{\beta \in \mathbb Z_+^2}{\beta_1\ge 1}}\ \sum_{\stackrel{\gamma \leq\beta }{[\frac{\abs{\beta }}{2}]+1\leq \abs{\gamma }\leq \abs{\beta }}}r^{\alpha_1}\binom{\beta }{\gamma }M_{\rho,\beta}^2\inner{\partial^{\alpha}[(D_{\lambda}^{\gamma }u_3)D_{\lambda}^{\beta -\gamma }\partial_3u],\  D_\lambda^\beta\partial^{\alpha} u}_{L^2}\\
\stackrel{\rm def}{=}&J_{2,1}+J_{2,2}+J_{2,3}.
    \end{aligned}
\end{equation}
We then estimate the right-hand side terms of \eqref{+J2} through three steps.

{\it Step 1.} The term $J_{2,1}$ can be estimated similarly to the bound for $S_{2,1}$ in \eqref{+s2} (established in \eqref{est:S21}). Repeating the proof of \eqref{est:S21} yields
\begin{equation}
\label{est:J21}
 J_{2,1}\leq  C \sum_{\stackrel{\beta \in \mathbb Z_+^2}{\beta_1\ge 1}}M_{\rho,\beta}^2 \norm{u}_{H^3} \norm{D_{\lambda}^{\beta }u}_{H^3}^2
 \leq  Cr^{-\frac{9}{2}}\abs{u}_{ X_{\rho,\lambda}}^3\leq Cr^{-\frac{9}{2}}\abs{u}_{ X_{\rho,\lambda}}\abs{u}_{ Y_{\rho,\lambda}}^2.
\end{equation}

{\it Step 2.}
To estimate   $J_{2,2}$  in \eqref{+J2},  we split it as:
\begin{equation}\label{+J22}
    \begin{aligned}
J_{2,2}= &- \sum_{\stackrel{\beta\in \mathbb Z_+^2}{\beta_1\ge 1}}\,\sum_{\stackrel{\gamma\leq\beta,\gamma_1 =\beta_1}{1\leq\abs{\gamma}\leq[\frac{\abs{\beta}}{2}]}}r^{\alpha_1}\binom{\beta}{\gamma} M_{\rho,\beta}^2\inner{\partial^{ \alpha}[(D_{\lambda}^\gamma u_3)D_{\lambda}^{\beta-\gamma} \partial_3u],\ D_\lambda^\beta\partial^{\alpha} u}_{L^2}\\
&-\sum_{\stackrel{\beta\in \mathbb Z_+^2}{\beta_1\ge 1}}\,\sum_{\stackrel{\gamma\leq\beta,\gamma_1 <\beta_1}{1\leq\abs{\gamma}\leq[\frac{\abs{\beta}}{2}]}}r^{\alpha_1}\binom{\beta}{\gamma} M_{\rho,\beta}^2\inner{\partial^{ \alpha}[(D_{\lambda}^\gamma u_3)D_{\lambda}^{\beta-\gamma} \partial_3u],\ D_\lambda^\beta\partial^{\alpha} u}_{L^2}.
    \end{aligned}
\end{equation}
For the first term on the right-hand side of \eqref{+J22},  using  Lemma \ref{cr1} and recalling  $D_\lambda^{\beta}=\lambda t^{\frac12}D_\lambda^{\tilde \beta}\partial_2$ for  $\tilde \beta=\beta-(1,0)$  from \eqref{dlambda}, we obtain
\begin{equation}\label{j221}
\begin{aligned}
&- \sum_{\stackrel{\beta\in \mathbb Z_+^2}{\beta_1\ge 1}}\,\sum_{\stackrel{\gamma\leq\beta,\gamma_1 =\beta_1}{1\leq\abs{\gamma}\leq[\frac{\abs{\beta}}{2}]}}r^{\alpha_1}\binom{\beta}{\gamma} M_{\rho,\beta}^2\inner{\partial^{ \alpha}[(D_{\lambda}^\gamma u_3)D_{\lambda}^{\beta-\gamma} \partial_3u],\ D_\lambda^\beta\partial^{\alpha} u}_{L^2}\\
& \leq   C\sum_{\stackrel{\beta\in \mathbb Z_+^2}{\beta_1\ge 1}}\sum_{\stackrel{\gamma\leq\beta,\gamma_1=\beta_1}{1\leq\abs{\gamma}\leq[\frac{\abs{\beta}}{2}]}}\binom{\beta}{\gamma}\frac{M_{\rho,\beta}^2}{M_{\rho,\tilde\beta} M_{\rho,\gamma}M_{\rho,\beta-\gamma+(0,1)}(\abs{\beta}-\abs{\gamma}+2)^\frac{1}{2}}\big(M_{\rho,\gamma}\norm{D_{\lambda}^{\gamma}u}_{H^3}\big)\\
&\qquad\times\big[(\abs{\beta}-\abs{\gamma}+2)^\frac{1}{2}M_{\rho,\beta-\gamma+(0,1)}\norm{D_{\lambda}^{\beta-\gamma} \partial_3u}_{H^3}\big]\times \big(\lambda t^{\frac12}M_{\rho,\tilde\beta}\norm{D_\lambda^{\tilde\beta} \partial_2u }_{H^3}\big)\\
&\leq  C\sum_{\stackrel{\beta\in \mathbb Z_+^2}{\beta_1\ge 1}}\sum_{\stackrel{\gamma\leq\beta,\gamma_1=\beta_1}{1\leq\abs{\gamma}\leq[\frac{\abs{\beta}}{2}]}} \frac{M_{\rho,\gamma}\norm{D_{\lambda}^{\gamma}u}_{H^3}}{(\abs\gamma+1)^6}\\
&\qquad \times \big[(\abs{\beta}-\abs{\gamma}+2)^\frac{1}{2}M_{\rho,\beta-\gamma+(0,1)}\norm{D^{\beta-\gamma}_{\lambda}\partial_3u}_{H^3}\big] \times \big(\lambda t^{\frac12}M_{\rho,\tilde\beta}\norm{D_\lambda^{\tilde\beta}(\partial_2u)}_{H^3}\big)\\
&\leq C\lambda t^{\frac12} r^{-\frac{9}{2}} \abs{u}_{ X_{\rho,\lambda}} \abs{u}_{ Y_{\rho,\lambda}} \abs{\nabla_{\rm h}u}_{ X_{\rho,\lambda}},
\end{aligned}
\end{equation}
where the second inequality follows from Lemma \ref{cr1}, and the third from the arguments in \eqref{S11} and \eqref{applyYong}.  Similarly, for the second term on the right-hand side of \eqref{+J22},  we use Lemma \ref{cr2} to obtain that
\begin{equation*}
	\begin{aligned}
	&-\sum_{\stackrel{\beta\in \mathbb Z_+^2}{\beta_1\ge 1}}\,\sum_{\stackrel{\gamma\leq\beta,\gamma_1 <\beta_1}{1\leq\abs{\gamma}\leq[\frac{\abs{\beta}}{2}]}}r^{\alpha_1}\binom{\beta}{\gamma} M_{\rho,\beta}^2\inner{\partial^{ \alpha}[(D_{\lambda}^\gamma u_3)D_{\lambda}^{\beta-\gamma} \partial_3u],\ D_\lambda^\beta\partial^{\alpha} u}_{L^2}	\\
	&\leq     C\sum_{\stackrel{\beta\in \mathbb Z_+^2}{\beta_1\ge 1}}\sum_{\stackrel{\gamma\leq\beta,\gamma_1<\beta_1}{1\leq\abs{\gamma}\leq[\frac{\abs{\beta}}{2}]}}\binom{\beta}{\gamma}\frac{M_{\rho,\beta}^2}{M_{\rho,\tilde\beta} M_{\rho,\gamma}M_{\rho,\beta-\gamma+(-1,1)}}\big(M_{\rho,\gamma}\norm{D_{\lambda}^{\gamma}u}_{H^3}\big)\\
&\qquad\times\big( M_{\rho,\beta-\gamma+(-1,1)}\norm{D_{\lambda}^{\beta-\gamma} \partial_3u}_{H^3}\big) \big(\lambda t^{\frac12}M_{\rho,\tilde\beta}\norm{D_\lambda^{\tilde\beta}(\partial_2u)}_{H^3}\big)\\
&\leq  C\sum_{\stackrel{\beta\in \mathbb Z_+^2}{\beta_1\ge 1}}\sum_{\stackrel{\gamma\leq\beta,\gamma_1<\beta_1}{1\leq\abs{\gamma}\leq[\frac{\abs{\beta}}{2}]}} \frac{M_{\rho,\gamma}\norm{D_{\lambda}^{\gamma}u}_{H^3}}{(\abs\gamma+1)^6} \\
&\qquad\qquad\times\big( \lambda t^{\frac12}M_{\rho,\beta-\gamma+(-1,1)}\norm{D_{\lambda}^{\beta-\gamma+(-1,1)} \partial_2u}_{H^3}\big) \big(\lambda t^{\frac12}M_{\rho,\tilde\beta}\norm{D_\lambda^{\tilde\beta}(\partial_2u)}_{H^3}\big)\\
&\leq C\lambda^2 t r^{-\frac{9}{2}} \abs{u}_{ X_{\rho,\lambda}}  \abs{\nabla_{\rm h}u}_{ X_{\rho,\lambda}}^2,
	\end{aligned}
\end{equation*}
where in the second inequality we use \eqref{dlambda}  to write  $D_{\lambda}^{\beta-\gamma}\partial_3u=\lambda t^{\frac12}D_{\lambda}^{\beta-\gamma+(-1,1)} \partial_2u$ for $\gamma_1<\beta_1.$  Substituting the above estimate and \eqref{j221} into \eqref{+J22} yields
 \begin{equation}\label{est:J22}
 \begin{aligned}
J_{2,2}&\leq C\lambda t^{\frac12} r^{-\frac{9}{2}} \abs{u}_{ X_{\rho,\lambda}} \abs{u}_{Y_{\rho,\lambda}} \abs{\nabla_{\rm h}u}_{ X_{\rho,\lambda}}+C\lambda^2 t r^{-\frac{9}{2}} \abs{u}_{ X_{\rho,\lambda}}  \abs{\nabla_{\rm h}u}_{ X_{\rho,\lambda}}^2\\
&\leq   C\lambda^2t r^{-\frac{9}{2}}\abs{u}_{  X_{\rho,\lambda}} \abs{\nabla_{\rm h}u}_{ X_{\rho,\lambda}}^2+Cr^{-\frac{9}{2}}\abs{u}_{ X_{\rho,\lambda}} \abs{u}_{ Y_{\rho,\lambda}}^2.
\end{aligned}
\end{equation}

 {\it Step 3.} It remains to estimate  $J_{2,3}$ in \eqref{+J2}  and  the treatment  is straightforward. Indeed,  observe
\begin{equation}\label{ineq9}
\forall\ \gamma\leq\beta \ \textrm{ with }\  \Big[\frac{\abs{\beta }}{2}\Big]+1\leq\abs{\gamma }\leq\abs{\beta },\quad \binom{\beta }{\gamma }\frac{M_{\rho,\beta}}{M_{\rho,\gamma}M_{\rho,\beta -\gamma +(0,1)}}\leq\frac{C}{(\abs{\beta -\gamma }+1)^{6}},
\end{equation}
 seeing Appendix \ref{sec:appendix} for the proof.
 Then we follow the arguments in \eqref{S11} and \eqref{applyYong} and use \eqref{xyx2}  to conclude that
\begin{equation}\label{est:J23}
    \begin{aligned}
J_{2,3}\leq &C\sum_{\stackrel{\beta \in \mathbb Z_+^2}{\beta_1\ge 1}}\ \sum_{\stackrel{\gamma \leq\beta }{[\frac{\abs{\beta }}{2}]+1\leq\abs{\gamma }\leq\abs{\beta }}}\binom{\beta }{\gamma }\frac{M_{\rho,\beta}}{M_{\rho,\gamma}M_{\rho,\beta -\gamma +(0,1)}}\big(M_{\rho,\gamma}\norm{D_{\lambda}^{\gamma }u}_{H^3}\big)\\
&\times\big(M_{\rho,\beta -\gamma +(0,1)}\norm{D_{\lambda}^{\beta -\gamma }\partial_3u}_{H^3}\big) \big(M_{\rho,\beta}\norm{D_\lambda^\beta u}_{H^3}\big)\\
\leq& Cr^{-\frac{9}{2}}\abs{u}_{ X_{\rho,\lambda}}^3\leq Cr^{-\frac{9}{2}}\abs{u}_{ X_{\rho,\lambda}}\abs{u}_{ Y_{\rho,\lambda}}^2.
    \end{aligned}
\end{equation}
Substituting estimates \eqref{est:J21}, \eqref{est:J22}  and \eqref{est:J23} into \eqref{+J2} yields the desired assertion \eqref{est:J2}.
This completes the proof of Lemma \ref{lem:J2}.
\end{proof}

\begin{lemma}[Estimates on $J_3$ and $J_4$]\label{lem:J34}
Let $J_3$ and $J_4$ be given in \eqref{J1-J5}. For any $\alpha\in\mathbb Z_+^3$ with $\abs\alpha\leq 3$, it holds that
\begin{equation}\label{est:J34}
    J_3+J_4\leq
    Cr^{-\frac92}\abs{u}_{ X_{\rho,\lambda}} \abs{u}_{ Y_{\rho,\lambda}}^2+C\Big( r^{-\frac92}\abs{u}_{ X_{\rho,\lambda}} +r^{\frac{1}{4}}\Big)\abs{\nabla_{\rm h}u}_{ X_{\rho,\lambda}}^2,
\end{equation}
recalling $\abs{\cdot}_{ X_{\rho,\lambda}}$ and $\abs{\cdot}_{ Y_{\rho,\lambda}}$ are given in Definition \ref{def:normx2}.
\end{lemma}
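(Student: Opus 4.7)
The plan is to mirror the proofs of Lemmas \ref{lem:S3} and \ref{lem:S4} with the operator $\partial_3^m$ replaced by $D_\lambda^\beta$ and the weight $L_{\rho,m}$ replaced by $M_{\rho,\beta}$. The key observation is that, unlike the convection term $u_3\partial_3 u$ treated in Lemma \ref{lem:J2}, the pressure term and the non-convective boundary contributions do not create a ``mismatch'' between the $x_3$-derivatives appearing in the equation and the $x_2$-derivatives built into $D_\lambda^\beta$. Consequently, one can work with the standard combinatorial inequalities \eqref{ineq5}--\eqref{ineq6} (rather than the more delicate Lemmas \ref{cr1}--\ref{cr2}), losing only a factor that can be absorbed by the smallness of $r$ or the smallness of $\lambda$.

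For $J_3$, I will first apply Cauchy--Schwarz in the spirit of the opening lines of the proof of Lemma \ref{lem:S3}, separating a factor $r^{\alpha_1/2}|\nabla_{\mathrm h}u|_{X_{\rho,\lambda}}$ from $\bigl(\sum_\beta M_{\rho,\beta}^2\|D_\lambda^\beta\partial^{\tilde\alpha}\nabla p\|_{L^2}^2\bigr)^{1/2}$. To control this pressure sum I will reproduce the four-step scheme of Lemma \ref{lem:S3}: take the $L^2$-inner product of the elliptic equation $-\Delta p=\operatorname{div}((u\cdot\nabla)u)$, after applying $D_\lambda^\beta\partial^{\tilde\alpha}$, against $D_\lambda^\beta\partial^{\tilde\alpha} p$, integrate by parts in $x_1$, and use the Neumann-type boundary condition $\partial_1 p|_{x_1=0,1}=-(\partial_t u_1+(u\cdot\nabla)u_1-\Delta_{\mathrm h}u_1)|_{x_1=0,1}$. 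The bulk term is handled by Leibniz combined with \eqref{ineq5}--\eqref{ineq6} and Young's inequality \eqref{young} exactly as in Step~1 of Lemma \ref{lem:S3}, yielding an $r^{-9/2}|u|_{X_{\rho,\lambda}}^2$ factor. For the boundary terms (Steps~2 and 3), when $\tilde\alpha\ne(2,0,0)$ they vanish by the boundary condition or its differentiated form; the only nontrivial case $\tilde\alpha=(2,0,0)$ is treated by substituting $\partial_1^2u_1=-\partial_1\partial_2 u_2-\partial_1\partial_3 u_3$ (from $\operatorname{div}u=0$) and performing the partial Fourier transform in $x_2$ (respectively $x_3$) together with the one-dimensional Gagliardo--Nirenberg inequality \eqref{GN} exactly as in the derivation of \eqref{q1}--\eqref{q2}. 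The combinatorial factors produced by Leibniz with $D_\lambda^\beta$ are absorbed using \eqref{ineq5}--\eqref{ineq6} (no $x_2$-$x_3$ mismatch appears since these boundary terms carry the same type of derivatives as $D_\lambda^\beta$).

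For $J_4$, I will decompose it as $J_{4,1}+J_{4,2}+J_{4,3}$ corresponding to $\partial_t u$, $\partial_2^2 u$, and $(u\cdot\nabla)u$, in parallel with \eqref{+S4}. The first two pieces are nonzero only when $\alpha_1\ge 2$ (so $\tilde\alpha_1\ge 1$), in which case the Gagliardo--Nirenberg approach from the proof of Lemma \ref{lem:S4} applies verbatim after replacing $L_{\rho,m}^{1/2}$ factors by $M_{\rho,\beta}^{1/2}$, yielding the contribution $r^{-\alpha_1+1/4}|\nabla_{\mathrm h}u|_{X_{\rho,\lambda}}^2$. The convective boundary piece $J_{4,3}$ is bounded pointwise in $x_1$ by Sobolev embedding (using $|\tilde\alpha|\le 2$), and then the $M_{\rho,\beta}$-weighted sum in $\beta$ is closed via \eqref{ineq5}--\eqref{ineq6} and Young's inequality \eqref{young}, exactly as in \eqref{est:S43}. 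Combining and using \eqref{xyx2} to convert $|u|_{X_{\rho,\lambda}}^2|\nabla_{\mathrm h}u|_{X_{\rho,\lambda}}$ into the form $|u|_{X_{\rho,\lambda}}|u|_{Y_{\rho,\lambda}}^2+|u|_{X_{\rho,\lambda}}|\nabla_{\mathrm h}u|_{X_{\rho,\lambda}}^2$ (as in \eqref{teces}) gives \eqref{est:J34}.

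The main obstacle I anticipate is the bookkeeping in Step~2 of the pressure estimate: one must verify that the various tensor-product Gagliardo--Nirenberg reductions still fit the structure of $D_\lambda^\beta$ (which combines $t^{\beta_1/2}\partial_2^{\beta_1}$ with $\partial_3^{\beta_2}$) so that the extra $\partial_2$ or $\partial_3$ extracted by Fourier transformation lands on the correct factor and produces either an additional $\nabla_{\mathrm h}u$-weight or an acceptable $M_{\rho,\beta+(0,1)}$-type weight. Because the analysis is purely ``horizontal plus vertical'' and no mismatch of the type encountered in Lemma \ref{lem:J2} arises, the inequalities \eqref{ineq5}--\eqref{ineq6} should suffice, and no use of Lemmas \ref{cr1}--\ref{cr2} (and hence of $\delta\ge\max\{(\sigma+1)/3,(2\sigma-1)/4\}$) is needed here.
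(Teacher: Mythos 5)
Your proposal is correct and reproduces the paper's own argument, which simply says to re-run the proofs of Lemmas \ref{lem:S3} and \ref{lem:S4} with $\partial_3^m$ replaced by $D_\lambda^\beta$, $L_{\rho,m}$ by $M_{\rho,\beta}$, and \eqref{ineq1}--\eqref{ineq2} by \eqref{ineq5}--\eqref{ineq6}. Your added diagnosis of why no mismatch of the Lemma \ref{cr1}--\ref{cr2} type arises --- namely that every $\partial_3$ produced by $u\cdot\nabla$ or by $\operatorname{div}$ falls either on a factor carrying only $|\tilde\alpha|\leq 2$ spatial derivatives (hence absorbed by $H^3$) or on $\nabla p$, so $\beta_2$ is never shifted --- is exactly the reason the substitution works, and the paper leaves this implicit.
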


\begin{proof}
    The treatment of $J_3$ and $J_4$ are quite similar to those of $S_3$ and $S_4$ established in Lemmas \ref{lem:S3}-\ref{lem:S4}.  Indeed,   estimate \eqref{est:J34} will follow  if we follow  the  proofs of Lemmas  \ref{lem:S3}-\ref{lem:S4}, replacing inequalities \eqref{ineq1} and \eqref{ineq2} with \eqref{ineq5} and \eqref{ineq6}.
     The proof of Lemma \ref{lem:J34} is completed.
\end{proof}

\begin{lemma}[Estimates on $J_5$]\label{lem:J5}
  For any $\alpha\in\mathbb Z_+^3$ with $\abs\alpha\leq 3,$ it holds that
\begin{equation*}
   J_5= \frac12   \sum_{\stackrel{\beta \in \mathbb Z_+^2}{\beta_1\ge 1}} r^{\alpha_1} \frac{ \beta_1}{t} M_{\rho,\beta}^2\norm{D_\lambda^\beta\partial^{\alpha} u}_{L^2}^2 \leq  C\lambda^2\abs{\nabla_{\rm h}u}_{ X_{\rho,\lambda}}^2,
\end{equation*}
recalling $\abs{\cdot}_{ X_{\rho,\lambda}}$ is given in Definition \ref{def:normx2}.
\end{lemma}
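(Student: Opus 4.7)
The plan is to exploit the structure of the operator $D_\lambda^\beta$ in exactly the same spirit as the estimate for $K_5$ in Lemma \ref{vecx}, absorbing the singular prefactor $\beta_1/t$ into a factor $\lambda^2 t$ produced by dropping one $\partial_2$. Since the argument requires $\beta_1 \geq 1$, the index $\tilde\beta = \beta - (1,0)$ is well-defined, and by \eqref{dlambda} we have the identity
\begin{equation*}
D_\lambda^\beta = \lambda\, t^{1/2}\, D_\lambda^{\tilde\beta}\, \partial_2,
\end{equation*}
which yields
\begin{equation*}
\frac{\beta_1}{t} M_{\rho,\beta}^2 \norm{D_\lambda^\beta \partial^{\alpha} u}_{L^2}^2
= \lambda^2 \beta_1 M_{\rho,\beta}^2 \norm{D_\lambda^{\tilde\beta} \partial^{\alpha} \partial_2 u}_{L^2}^2.
\end{equation*}

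The next step is the arithmetic bound on the coefficients. Using inequality \eqref{betatilde}, which gives $M_{\rho,\beta}/M_{\rho,\tilde\beta} \leq C |\beta|^{-\delta}$, together with $\beta_1 \leq \abs{\beta}$ and the fact that $\delta \geq 1$ (so $2\delta - 1 \geq 1 > 0$), I obtain
\begin{equation*}
\beta_1 M_{\rho,\beta}^2 \leq \frac{C\, \beta_1}{\abs{\beta}^{2\delta}} M_{\rho,\tilde\beta}^2 \leq C M_{\rho,\tilde\beta}^2.
\end{equation*}
This is the only quantitative content of the lemma, and it is immediate once \eqref{betatilde} is in hand.

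Finally, reindexing the sum by $\tilde\beta \in \mathbb{Z}_+^2$ (which covers all of $\mathbb{Z}_+^2$ as $\beta$ ranges over indices with $\beta_1 \geq 1$) and noting $\norm{D_\lambda^{\tilde\beta}\partial^{\alpha}\partial_2 u}_{L^2} \leq \norm{D_\lambda^{\tilde\beta}\partial^{\alpha}\nabla_{\rm h}u}_{L^2}$, the expression collapses into a piece of the norm defined in Definition \ref{def:normx2}:
\begin{equation*}
J_5 \leq \frac{C\lambda^2}{2}\sum_{\tilde\beta\in\mathbb Z_+^2} r^{\alpha_1} M_{\rho,\tilde\beta}^2 \norm{D_\lambda^{\tilde\beta}\partial^{\alpha}\nabla_{\rm h}u}_{L^2}^2 \leq C\lambda^2 \abs{\nabla_{\rm h}u}_{X_{\rho,\lambda}}^2,
\end{equation*}
which is the claimed bound. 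There is no genuine obstacle here; the only point requiring care is verifying that the exponent $\delta \geq 1$ (guaranteed by \eqref{gamma1}) is sufficient to absorb the factor $\beta_1$, and this is exactly the role played by the definition \eqref{mrho} of $M_{\rho,\beta}$ together with the ratio estimate \eqref{betatilde}.
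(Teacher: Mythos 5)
Your proof is correct and follows the paper's argument line for line: you use the factorization $D_\lambda^\beta = \lambda\, t^{1/2}\, D_\lambda^{\tilde\beta}\,\partial_2$ to trade the singular $\beta_1/t$ factor for $\lambda^2\beta_1$, then absorb $\beta_1$ via the ratio estimate \eqref{betatilde} using $\beta_1 \leq |\beta|$ and $2\delta \geq 2$, and finally reindex into $\abs{\nabla_{\rm h}u}_{X_{\rho,\lambda}}^2$. This is exactly the route taken in the paper's proof of Lemma~\ref{lem:J5}.
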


\begin{proof}
For $\tilde\beta=\beta-(1,0),$ we use \eqref{dlambda}
 and the fact  $\beta_1\geq 1$ to obtain
\begin{equation*}
	D_{\lambda}^{\beta}=\lambda t^{\frac12} D_{\lambda}^{\tilde \beta} \partial_2,
\end{equation*}
and moreover, in view of \eqref{betatilde},
\begin{equation*}
	  \frac{M_{\rho,\beta}^2}{M_{\rho,\tilde\beta}^2}\leq \frac{C}{\abs\beta^{2\delta}}.
\end{equation*}
As a result,
\begin{equation*}
J_5 \leq C \lambda^2 \sum_{\stackrel{\beta \in \mathbb Z_+^2}{\beta_1\ge 1}}r^{\alpha_1}\frac{\beta_1}{\abs\beta^{2\delta}}M_{\rho,\tilde\beta}^2\norm{ D_{\lambda}^{\tilde\beta} \partial^{ \alpha }\partial_2 u}_{L^2}^2
\leq  C\lambda^2 \abs{\nabla_{\rm h}u}_{ X_{\rho,\lambda}}^2.
\end{equation*}
This completes the proof of Lemma \ref{lem:J5}.
\end{proof}

\begin{proof}[Completing the proof of Proposition \ref{prop:x2}]
Substituting the estimates in Lemmas \ref{lem:J1}-\ref{lem:J5} into \eqref{est:x2} yields, for any $\alpha\in\mathbb Z_+^3$ with $\abs\alpha\leq 3,$
\begin{equation}\label{mrbr}
\begin{aligned}
& \frac{1}{2}\frac{d}{dt} \sum_{\stackrel{\beta \in \mathbb Z_+^2}{\beta_1\ge 1}} r^{\alpha_1}M_{\rho,\beta}^2\norm{D_\lambda^\beta\partial^{\alpha} u}_{L^2}^2+ \sum_{\stackrel{\beta \in \mathbb Z_+^2}{\beta_1\ge 1}}r^{\alpha_1}M_{\rho,\beta}^2\norm{D_\lambda^\beta\partial^{\alpha} \nabla_{\rm h} u}_{L^2}^2\\
 &\qquad -\frac{\rho^{'}}{\rho}  \sum_{\stackrel{\beta \in \mathbb Z_+^2}{\beta_1\ge 1}}r^{\alpha_1}(\abs{\beta }+1)M_{\rho,\beta}^2\norm{D_\lambda^\beta\partial^{\alpha} u}_{L^2}^2 \\
&\leq  Cr^{-\frac{9}{2}}\abs{u}_{ X_{\rho,\lambda}}\abs{u}_{ Y_{\rho,\lambda}}^2
    +C\Big( \lambda^2 t r^{-\frac{9}{2}}\abs{u}_{ X_{\rho,\lambda}}+r^{-\frac92}\abs{u}_{ X_{\rho,\lambda}} +r^{\frac{1}{4}}+\lambda^2\Big)\abs{\nabla_{\rm h}u}_{ X_{\rho,\lambda}}^2.
\end{aligned}
\end{equation}
Using \eqref{prpalge}, we extend the validity of \eqref{mrbr} to general indices $(j,\alpha)\in\mathbb{Z}_+\times\mathbb{Z}_+^3$ satisfying $j+|\alpha|\leq 3$:
\begin{align*}
& \frac{1}{2}\frac{d}{dt} \sum_{\stackrel{\beta \in \mathbb Z_+^2}{\beta_1\ge 1}} r^{\alpha_1}M_{\rho,\beta}^2\norm{D_\lambda^\beta\partial_t^j\partial^{\alpha} u}_{L^2}^2+ \sum_{\stackrel{\beta \in \mathbb Z_+^2}{\beta_1\ge 1}}r^{\alpha_1}M_{\rho,\beta}^2\norm{D_\lambda^\beta\partial_t^j\partial^{\alpha} \nabla_{\rm h} u}_{L^2}^2\\
 &\qquad -\frac{\rho^{'}}{\rho}  \sum_{\stackrel{\beta \in \mathbb Z_+^2}{\beta_1\ge 1}}r^{\alpha_1}(\abs{\beta }+1)M_{\rho,\beta}^2\norm{D_\lambda^\beta\partial_t^j\partial^{\alpha} u}_{L^2}^2\\
&\leq  Cr^{-\frac{9}{2}}\abs{u}_{ X_{\rho,\lambda}}\abs{u}_{ Y_{\rho,\lambda}}^2
    +C\Big( \lambda^2t r^{-\frac{9}{2}}\abs{u}_{ X_{\rho,\lambda}}+r^{-\frac92}\abs{u}_{ X_{\rho,\lambda}} +r^{\frac{1}{4}}+\lambda^2\Big)\abs{\nabla_{\rm h}u}_{ X_{\rho,\lambda}}^2.
\end{align*}
This, with estimate \eqref{ces}  and Remark \ref{rmk:xyz}, yields
\begin{equation}\label{DDD}
\begin{aligned}
&\frac12 \frac{d}{dt}\abs{u}_{ X_{\rho,\lambda}}^2+ \abs{\nabla_{\rm h} u}_{ X_{\rho,\lambda}}^2-\frac{\rho'}{\rho}\abs{\nabla_{\rm h} u}_{ Y_{\rho,\lambda}}^2  \\
&\leq Cr^{-\frac{9}{2}}\abs{u}_{ X_{\rho,\lambda}}\abs{u}_{ Y_{\rho,\lambda}}^2
    +C\Big(\lambda^2t r^{-\frac{9}{2}}\abs{u}_{ X_{\rho,\lambda}}+r^{-\frac92}\abs{u}_{ X_{\rho,\lambda}} +r^{\frac{1}{4}}+\lambda^2\Big)\abs{\nabla_{\rm h}u}_{ X_{\rho,\lambda}}^2.
    \end{aligned}
\end{equation}
We now claim the following implication:
\begin{equation}\label{sht}
\int_0^{+\infty} \abs{\nabla_{\rm h} u}_{ X_{\rho,\lambda}}^2 dt <+\infty\Longrightarrow
    \lim_{t\to0} \abs{u}_{ X_{\rho,\lambda}}^2= \lim_{t\to0} \abs{u}_{X_{\rho}}^2,
\end{equation}
whose proof will be provided subsequently.
 Combining    \eqref{DDD} with  implication \eqref{sht}, we use the bootstrap argument after \eqref{ces}
to conclude that,  for sufficiently small $\lambda$,
\begin{equation*}
\forall\ t\ge 0,\quad e^{\frac t2}\abs{u(t)}_{ X_{\rho,\lambda}}^2+\int^t_0 e^{\frac s2}\abs{\nabla_{\rm h}u(s)}_{ X_{\rho,\lambda}}^2ds\leq  \varepsilon_0^2.
\end{equation*}
This establishes \eqref{pri:ret2}, leaving only \eqref{sht} to be proven.
 In what follows  we let $(j,\alpha)\in\mathbb Z_+\times\mathbb Z_+^3$   be any given multi-index with $j+\abs\alpha\leq 3.$
For any $\beta\in\mathbb Z_+^2$ with $\beta_1\geq 1$, we use \eqref{dlambda} to get, recalling $\tilde\beta=\beta-(1,0),$
\begin{align*}
	&\int_0^{+\infty} t^{-1} \sum_{\stackrel{ \beta\in\mathbb Z_+^2}{\beta_1\geq 1} } r^{\alpha_1}M_{\rho,\beta}^2 \norm{ D_\lambda^{\beta}\partial_t^j\partial^{ \alpha }u}^2_{L^2} dt\\
    &=\int_0^{+\infty} t^{-1}\sum_{\stackrel{ \beta\in\mathbb Z_+^2}{\beta_1\geq 1} } r^{\alpha_1}M_{\rho,\beta}^2 \big(\lambda^2 t\norm{ D_\lambda^{\tilde \beta}\partial_t^j\partial^{ \alpha }\partial_2u}^2_{L^2} \big)dt \\
	&\leq C \int_0^{+\infty}   \sum_{\stackrel{ \beta\in\mathbb Z_+^2}{\beta_1\geq 1} } r^{\alpha_1}M_{\rho,\tilde \beta}^2 \norm{ D_\lambda^{\tilde \beta}\partial_t^j\partial^{ \alpha }\partial_2u}^2_{L^2}dt\leq C\int^{+\infty}_0\abs{\nabla_{\rm h} u}_{ X_{\rho,\lambda}}^2dt<+\infty,
\end{align*}
the last line using \eqref{betatilde} and $0<\lambda<1$.
This, with the continuity of
\begin{equation*}
 t\mapsto \sum_{\stackrel{ \beta\in\mathbb Z_+^2}{\beta_1\geq 1} } r^{\alpha_1}M_{\rho,\beta}^2 \norm{D_\lambda^{\beta}\partial_t^j\partial^{ \alpha } u}^2_{L^2},
\end{equation*}
implies that
\begin{equation}\label{shotbeha}
\forall\ (j,\alpha)\in\mathbb Z_+\times\mathbb Z_+^3\  \textrm{ with }\  j+\abs \alpha\leq 3,\quad		\lim_{t\rightarrow 0}\sum_{\stackrel{ \beta\in\mathbb Z_+^2}{\beta_1\geq 1} } r^{\alpha_1}M_{\rho,\beta}^2 \norm{ D_\lambda^{\beta}\partial_t^j\partial^{ \alpha }u}^2_{L^2}=0.
\end{equation}
Thus implication \eqref{sht} follows by observing that
\begin{equation*}
	 \lim_{t\to0} \abs{u}_{ X_{\rho,\lambda}}^2= \lim_{t\to0} \abs{u}_{X_{\rho}}^2+\lim_{t\rightarrow 0} \sum_{\stackrel{(j,\alpha)\in\mathbb Z_+\times\mathbb Z_+^3}{ j+\abs \alpha\leq 3}} \, \sum_{\stackrel{ \beta\in\mathbb Z_+^2}{\beta_1\geq 1} } r^{\alpha_1}M_{\rho,\beta}^2  \norm{ D_\lambda^{\beta}\partial_t^j\partial^{ \alpha }u}^2_{L^2}
\end{equation*}
due to \eqref{xandtx}. The proof of Proposition \ref{prop:x2} is completed.
\end{proof}

 \section{Proof of Theorem \ref{thm:smoothing}: smoothing effect for the all  variables}\label{sec:x1}

 In this section, we establish the Gevrey smoothing effect of the solution $u$ (obtained in Theorem \ref{thm:main1}) with respect to the variable $x_1$, thereby completing the proof of Theorem \ref{thm:smoothing}. Prior to presenting the main result (Proposition \ref{prop:x1} below), we combine  the argument  from the preceding two sections to derive the following global estimate in the variables $t$ and $x_2$.

 \begin{proposition}
 	 \label{prop:tx2}
Let $u\in L^{\infty}([0,+\infty[, G_{\rho,\sigma,3})$ be the solution constructed in Theorem \ref{thm:main1}, which satisfies the properties stated in  Propositions  \ref{prop:t} and   \ref{prop:x2}. Then, by possibly shrinking the parameters $r,\lambda$ and $\varepsilon_0$ from Theorem  \ref{prop:wellposedness} and  Proposition \ref{prop:t}, we obtain
\begin{equation}\label{alfl}
	\forall\ t\geq0,\quad e^{\frac t2}\abs{u(t)}_{\mathcal X_{\rho,\lambda}}^2+\int^t_0e^{\frac s2}\abs{\nabla_{\rm h}u(s)}_{\mathcal X_{\rho,\lambda}}^2ds\leq \varepsilon_0^2,
\end{equation}
where the norm $\abs{\cdot}_{\mathcal X_{\rho,\lambda}}$ is defined by
 \begin{equation}\label{lasx}
 	\abs{g}_{ \mathcal X_{\rho,\lambda}}^2\stackrel{\rm def}{=} \sum_{\stackrel{(j,\alpha)\in\mathbb Z_+\times\mathbb Z_+^3}{ j+\abs \alpha\leq 3}} \, \sum_{(k,n,m)\in\mathbb Z_+^3}   r^{ \alpha_1}\Big( \widetilde M_{\rho,(k,n,m)}   \lambda^{k+n} t^{k+\frac{n}{2}}  \norm{\partial_t^k  \partial_2^n\partial_3^m  \partial_t^j\partial^{ \alpha }g}_{L^2} \Big)^2
 \end{equation}
 with
 \begin{equation*}
 \widetilde	M_{\rho,(k,n,m)}\stackrel{\rm def}{=}\frac{\rho^{k+n+m+1}(k+n+m+1)^{6+2\sigma}}{[(k+m)!]^{\sigma-\delta} [(k+n+m)!]^\delta}.
 \end{equation*}
 \end{proposition}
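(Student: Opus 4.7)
The plan is to combine the arguments of Propositions \ref{prop:t} and \ref{prop:x2} into a single bootstrap estimate for the mixed norm $\abs{\cdot}_{\mathcal X_{\rho,\lambda}}$. For each fixed $(j,\alpha)\in\mathbb Z_+\times\mathbb Z_+^3$ with $j+\abs\alpha\leq 3$ and each $(k,n,m)\in\mathbb Z_+^3$, I would apply the operator $\lambda^{k+n}t^{k+n/2}\partial_t^k\partial_2^n\partial_3^m\partial_t^j\partial^{\alpha}$ to the first equation of \eqref{ANS}, take the $L^2$ inner product with the same expression applied to $u$, multiply by $r^{\alpha_1}\widetilde M_{\rho,(k,n,m)}^2$, and sum. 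Using the identity $\tfrac12 d(\widetilde M_{\rho,(k,n,m)}^2)/dt=(\rho'/\rho)(k+n+m+1)\widetilde M_{\rho,(k,n,m)}^2$, this produces a differential inequality of the same form as \eqref{DD} and \eqref{DDD}, with an additional radius-dissipation term controlled by $(-\rho'/\rho)\abs{u}_{\mathcal Y_{\rho,\lambda}}^2$, where $\mathcal Y$ denotes the companion norm with an extra factor of $\abs{(k,n,m)}+1$ inside each summand.

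For the nonlinear terms $(u_{\rm h}\cdot\nabla_{\rm h})u$ and $u_3\partial_3u$, the pressure term, and the boundary contributions, I would repeat the decompositions already carried out in $S_1$--$S_4$, $K_1$--$K_5$, and $J_1$--$J_5$. The key algebraic input is a unified set of convolution inequalities for $\widetilde M_{\rho,(k,n,m)}$: because $\widetilde M_{\rho,(k,0,m)}=L_{\rho,k+m}$ and $\widetilde M_{\rho,(0,n,m)}=M_{\rho,(n,m)}$, the estimates \eqref{ineq1}--\eqref{ineq4} and \eqref{ineq5}--\eqref{ineq9} extend to $\widetilde M$ by splitting each Leibniz-convolution sum according to whether the ``transferred'' index sits in the $(k,m)$-block (use the $L$-type bounds) or the $n$-slot (use the $M$-type bounds together with Lemmas \ref{cr1}--\ref{cr2}, which is where the $\delta\geq(\sigma+1)/3$ and $\delta\geq(2\sigma-1)/4$ constraints are used). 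The decomposition of $u_3\partial_3u$ in particular requires Lemmas \ref{cr1}--\ref{cr2} in their present form, since only the $x_2$-derivatives (not the $t$-derivatives) contribute the $\lambda t^{1/2}\partial_2$ factor that absorbs the loss of vertical regularity.

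The differentiation of the time-weight $t^{k+n/2}$ contributes two extra terms, $k/t$ and $n/(2t)$, analogous to $K_5$ and $J_5$. The first is reduced to a dissipation gain by using $\partial_t=\Delta_{\rm h}-(u\cdot\nabla)-\nabla p$ at the top $t$-slot when $\alpha_1=0$ (as in Lemma \ref{vecx}) and by $\partial^{\alpha}=\partial_1\partial^{\tilde\alpha}$ combined with the $\lambda$-factor otherwise; the second term is bounded using $\widetilde M_{\rho,(k,n,m)}/\widetilde M_{\rho,(k,n-1,m)}\lesssim(k+n+m)^{-\delta}$ and the shift $\lambda t^{1/2}\partial_2$, converting the $n/t$ loss into $\lambda^2\abs{\nabla_{\rm h}u}_{\mathcal X_{\rho,\lambda}}^2$. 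Collecting all estimates and invoking the $x_1$-Poincar\'e inequality as in \eqref{Poin} yields
\begin{equation*}
    \tfrac12\tfrac{d}{dt}\abs{u}_{\mathcal X_{\rho,\lambda}}^2+\abs{\nabla_{\rm h}u}_{\mathcal X_{\rho,\lambda}}^2-\tfrac{\rho'}{\rho}\abs{u}_{\mathcal Y_{\rho,\lambda}}^2\leq Cr^{-9/2}\abs{u}_{\mathcal X_{\rho,\lambda}}\abs{u}_{\mathcal Y_{\rho,\lambda}}^2+C\bigl(r^{-9/2}\abs{u}_{\mathcal X_{\rho,\lambda}}+r^{1/4}+\lambda+\lambda^2\bigr)\abs{\nabla_{\rm h}u}_{\mathcal X_{\rho,\lambda}}^2,
\end{equation*}
from which the $e^{t/2}$-weighted decay follows.

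Finally, the short-time reduction $\lim_{t\to 0}\abs{u(t)}_{\mathcal X_{\rho,\lambda}}^2=\lim_{t\to 0}\abs{u(t)}_{X_{\rho}}^2$ follows from arguments identical to Lemma \ref{lemshort} and implication \eqref{sht}, since the factor $t^{k+n/2}$ forces the vanishing at $t=0$ of every summand with $k+n\geq 1$. Choosing $r$ small so that $Cr^{1/4}\leq 1/8$, then $\lambda$ small so that $C(\lambda+\lambda^2)\leq 1/8$, and finally $\varepsilon_0$ small enough that $2Cr^{-9/2}\varepsilon_0\leq 1/8$ closes the bootstrap exactly as after \eqref{3direct}. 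The main obstacle I anticipate is the pressure analysis in the fully mixed regime $k,n,m\geq 1$: Step 2 of Lemma \ref{lem:S3} uses a Gagliardo--Nirenberg estimate in $x_2$ together with a one-derivative transfer, and one must verify that the combined quotient $\widetilde M_{\rho,(k,n,m)}^2/[\widetilde M_{\rho,(k,n,m)-(0,1,0)\text{ or }(0,0,0)}\cdot(\text{convolution term})]$ satisfies bounds parallel to \eqref{ineq3}--\eqref{ineq4} and \eqref{ineq9} simultaneously in both the $n$- and $(k,m)$-directions; this is where the full strength of the definition of $\delta$ in \eqref{gamma1} is needed.
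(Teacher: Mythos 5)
Your proposal is correct and takes essentially the same approach the paper indicates: the paper itself omits the proof of Proposition \ref{prop:tx2}, stating only that it ``is analogous to that of Propositions \ref{prop:t} and \ref{prop:x2},'' and your outline is precisely that synthesis — the $\widetilde M$ weight reduces to $L$ on the $(k,m)$-slice and to $M$ on the $(n,m)$-slice, the $k/t$ and $n/(2t)$ commutator terms are handled exactly as $K_5$ and $J_5$, Lemmas \ref{cr1}--\ref{cr2} carry over once one notes that a $\partial_t$-peel ($\lambda t\,\partial_t$, gaining $(k+m)^{-(\sigma-\delta)}(k+n+m)^{-\delta}$ via the $\tilde\beta$-shift) is if anything \emph{better} than the $\partial_2$-peel you emphasize, and the bootstrap closes as after \eqref{ces}. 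Your anticipated obstacle in the pressure analysis does not in fact arise: Steps 2--3 of Lemma \ref{lem:S3} involve no convolution in the high index (the same $m$ appears in every factor), so the weight $\widetilde M$ simply factors through the Gagliardo--Nirenberg estimates unchanged; the only place the $\widetilde M$ convolution inequalities are needed for the pressure is Step 1, which is the same $(u\cdot\nabla)u$ expansion already covered by the $S_1$-type bound.
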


 \begin{proof}
 	 The proof is  analogous  to that of Propositions \ref{prop:t} and \ref{prop:x2}, so we omit it for brevity.
 \end{proof}

 As an immediate consequence of Proposition \ref{prop:tx2}, we have the following anisotropic Gevrey-class regularity which is global in time and valid
up to the boundary.

\begin{corollary}\label{corpur}
	Under the hypothesis of Proposition \ref{prop:tx2}, we can find a constant $A\geq1,$ depending only on $\lambda,r$ and $\rho_0,$ such that
	for any $j\in\mathbb Z_+$ and any $\alpha=(\alpha_1,\alpha_2,\alpha_3)\in\mathbb Z_+^3$ with $\alpha_1=0$, it holds that
	\begin{align*}
		 \sup_{t\geq 0} e^{\frac t4}  t^{j+\frac{\alpha_2}{2}}\norm{ \partial^{\alpha}  \partial_t^{j}  u}_{H^3} 		 \leq
    \frac{\eps_0 A^{j+\alpha_2+\alpha_3+1} [(j+ \alpha_3)!]^{\sigma-\delta} [( j+\alpha_2+\alpha_3)!]^{\delta}}{(j+\alpha_2+\alpha_3+1)^{6+2\sigma}}.
		\end{align*}	
\end{corollary}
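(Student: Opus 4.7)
The plan is to extract Corollary \ref{corpur} directly from the \emph{a priori} estimate \eqref{alfl} of Proposition \ref{prop:tx2} by isolating a single term in the double sum defining $|\cdot|_{\mathcal X_{\rho,\lambda}}$ and doing bookkeeping on the weights; no new analytic estimate is required.

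First, for fixed $j\in\mathbb Z_+$ and $\alpha=(0,\alpha_2,\alpha_3)$, I will specialize the inner summation index $(k,n,m)$ in \eqref{lasx} to $(j,\alpha_2,\alpha_3)$, while letting the outer index run over $j'=0$ and $|\alpha'|\le 3$. Since the derivatives commute, $\partial_t^k\partial_2^n\partial_3^m\partial^{\alpha'}u=\partial^{\alpha'}(\partial^\alpha\partial_t^j u)$, and summing $r^{\alpha_1'}\|\cdot\|_{L^2}^2$ over $|\alpha'|\le 3$ bounds from below (up to a factor $r^3$) the quantity $\|\partial^\alpha\partial_t^j u\|_{H^3}^2$. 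Combining this with $e^{t/2}|u(t)|_{\mathcal X_{\rho,\lambda}}^2\le\varepsilon_0^2$ from \eqref{alfl} and taking square roots yields
\[
e^{t/4}\,\lambda^{j+\alpha_2}\,t^{j+\alpha_2/2}\,\widetilde M_{\rho,(j,\alpha_2,\alpha_3)}\,\|\partial^\alpha\partial_t^j u\|_{H^3}\leq C r^{-3/2}\varepsilon_0.
\]

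Second, I will divide through by the factor on the left (except for the $H^3$-norm itself) and substitute the explicit formula for $\widetilde M_{\rho,(j,\alpha_2,\alpha_3)}$ from \eqref{lasx}. Using $\rho_0/2\le\rho(t)\le\rho_0$ from \eqref{boundrho}, the resulting right-hand side reads
\[
\frac{C r^{-3/2}\varepsilon_0\,[(j+\alpha_3)!]^{\sigma-\delta}[(j+\alpha_2+\alpha_3)!]^{\delta}}{\lambda^{j+\alpha_2}\,\rho^{j+\alpha_2+\alpha_3+1}\,(j+\alpha_2+\alpha_3+1)^{6+2\sigma}}.
\]
The three negative-power factors $\lambda^{-(j+\alpha_2)}$, $\rho^{-(j+\alpha_2+\alpha_3+1)}$ and $r^{-3/2}$ can then be absorbed into a single geometric base by setting, for instance, $A \stackrel{\mathrm{def}}{=} Cr^{-3/2}\max\{\lambda^{-1},\,2/\rho_0\}\ge 1$, which depends only on $\lambda$, $r$, $\rho_0$ (and the Sobolev constant). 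This produces exactly the form asserted in the corollary.

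There is no genuine analytic obstacle: the main checking is verifying that all the weight-induced constants combine into the single base $A^{j+\alpha_2+\alpha_3+1}$ without spoiling either the factorial structure or the polynomial tail $(j+\alpha_2+\alpha_3+1)^{-(6+2\sigma)}$, and that the assumption $\alpha_1=0$ is what allows us to avoid an extra $\lambda^{-\alpha_1}$ weight (the norm \eqref{lasx} introduces a $\lambda$-gain only in the $t$- and $x_2$-directions, so Proposition \ref{prop:tx2} offers no Gevrey control in $x_1$; the $x_1$-direction will be handled separately in Section~\ref{sec:x1}).
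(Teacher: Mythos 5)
Your proposal mirrors the paper's proof exactly: isolate the term of $\abs{u}_{\mathcal X_{\rho,\lambda}}^2$ in \eqref{lasx} corresponding to $(k,n,m)=(j,\alpha_2,\alpha_3)$ and $j'=0$, sum over $\abs{\alpha'}\le 3$ to reconstruct $\norm{\partial_t^j\partial_2^{\alpha_2}\partial_3^{\alpha_3}u}_{H^3}$ at the cost of a factor $r^{-3/2}$, apply the bound $e^{t/4}\abs{u}_{\mathcal X_{\rho,\lambda}}\le\varepsilon_0$ from \eqref{alfl}, and absorb the remaining weights $\lambda^{-(j+\alpha_2)}\rho^{-(j+\alpha_2+\alpha_3+1)}r^{-3/2}$ into the geometric base $A^{j+\alpha_2+\alpha_3+1}$. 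The only slips are cosmetic: the base should be the product $A\ge 1/(\lambda\rho r^{3/2})$ (as in the paper) rather than $Cr^{-3/2}\max\{\lambda^{-1},2/\rho_0\}$, and no Sobolev constant enters since recovering $\norm{\cdot}_{H^3}$ from the $L^2$ norms of $\partial^{\alpha'}$ with $\abs{\alpha'}\le 3$ is purely definitional.
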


\begin{proof}
	   For $\alpha=(0,\alpha_2,\alpha_3)$,  we have  $\partial^\alpha=\partial_2^{\alpha_2}\partial_3^{\alpha_3}$ and thus  use \eqref{lasx} to get that
	\begin{multline*}
	 e^{\frac t4} t^{j+\frac{\alpha_2}{2}}\norm{ \partial^{\alpha}  \partial_t^{j}  u}_{H^3}=e^{\frac t4}  t^{j+\frac{\alpha_2}{2}}\norm{ \partial_t^{j}\partial_2^{\alpha_2}\partial_3^{\alpha_3}u}_{H^3}\\
	 \leq   e^{\frac t4}\abs{u}_{\mathcal X_{\rho,\lambda}}	r^{-\frac 32} \lambda^{-(j+\alpha_2)}   \frac{[(j+\alpha_3)!]^{\sigma-\delta} [(j+\alpha_2+\alpha_3)!]^\delta}{\rho^{j+\alpha_2+\alpha_3+1}(j+\alpha_2+\alpha_3+1)^{6+2\sigma}}.
	\end{multline*}
	  Then assertion of Corollary \ref{corpur} follows from \eqref{alfl} if we choose  $A$ large enough such that
	\begin{equation*}
		A\geq \frac{1}{\lambda\rho r^\frac{3}{2}}.
	\end{equation*}
	The proof is thus completed.
 \end{proof}

 \begin{proposition}\label{prop:x1}
 	Suppose the hypothesis of Proposition \ref{prop:tx2} is fulfilled and let $A\geq 1$ be the constant constructed in Corollary \ref{corpur}.  Then there exists  a constant  $B\geq A$, depending only on $\sigma$, the constant  $A$ and the Sobolev embedding constants, such that for any $T\geq 1$ and for any $j\in\mathbb Z_+$ and any $\alpha=(\alpha_1,\alpha_2,\alpha_3) \in\mathbb Z_+^3$, it holds that
	\begin{multline*}
	\sup_{t\leq T} e^{\frac t4}  t^{j+\frac{\alpha_1+\alpha_2}{2}}\norm{ \partial^{\alpha} \partial_t^{j}  u}_{H^2} \\
    \leq
    \frac{ \eps_0 A^{j+\alpha_2+\alpha_3+1}T^{\frac{\alpha_1}{2}}B^{\alpha_1} [(j+\alpha_1+\alpha_3)!]^{\sigma-\delta} [(j+\abs\alpha)!]^{\delta}}{(j+\abs\alpha+1)^{6+2\sigma}}.
  \end{multline*}
 \end{proposition}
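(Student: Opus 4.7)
The plan is induction on $\alpha_1$. The base cases $\alpha_1 \in \{0,1\}$ follow immediately from Corollary \ref{corpur}, since $\|\partial^\alpha \partial_t^j u\|_{H^2} \leq \|\partial_2^{\alpha_2}\partial_3^{\alpha_3}\partial_t^j u\|_{H^{2+\alpha_1}}$ and $2+\alpha_1\leq 3$; Corollary~\ref{corpur} then provides the desired bound upon choosing $B \geq A$ (and using $T \geq 1$).

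For the inductive step $\alpha_1 \geq 2$, the first equation of \eqref{ANS} reduces two $x_1$-derivatives:
\begin{equation*}
\partial_1^2 u = \partial_t u + (u\cdot\nabla)u - \partial_2^2 u + \nabla p.
\end{equation*}
Applying $\partial_1^{\alpha_1-2}\partial_2^{\alpha_2}\partial_3^{\alpha_3}\partial_t^j$ and taking the $H^2$-norm, $\|\partial^\alpha\partial_t^j u\|_{H^2}$ is controlled by the sum of $\|\partial_1^{\alpha_1-2}\partial_2^{\alpha_2}\partial_3^{\alpha_3}\partial_t^{j+1}u\|_{H^2}$, $\|\partial_1^{\alpha_1-2}\partial_2^{\alpha_2+2}\partial_3^{\alpha_3}\partial_t^j u\|_{H^2}$, a Leibniz expansion of the nonlinearity, and a term coming from $\nabla p$. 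The nonlinearity is treated via Leibniz's rule, the $H^2$-algebra property \eqref{prpalge}, the inductive hypothesis applied factor-by-factor, and the discrete Young inequality \eqref{young} for binomial-weighted convolutions. The pressure contribution is handled using the elliptic equation $-\Delta p = \operatorname{div}((u\cdot\nabla)u)$ with Neumann data $\partial_1 p|_{x_1=0,1}$ read off from the first component of \eqref{ANS} at the boundary; anisotropic elliptic regularity then bounds $\|\partial_1^{\alpha_1-2}\partial^{(0,\alpha_2,\alpha_3)}\partial_t^j\nabla p\|_{H^2}$ by norms of quadratic expressions in $u$ involving strictly fewer $\partial_1$-derivatives, which fall under the inductive hypothesis.

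Under the linear moves $(\alpha_1,j) \to (\alpha_1-2, j+1)$ and $(\alpha_1,\alpha_2)\to(\alpha_1-2,\alpha_2+2)$ the weight $t^{j+(\alpha_1+\alpha_2)/2}$ is preserved exactly, and the factorials $(j+\alpha_1+\alpha_3)!$ and $(j+|\alpha|)!$ either remain constant or strictly decrease, so these terms contribute with a multiplicative deficit of order $A/(TB^2) \ll 1$ provided $B^2 \geq 4A$ and $T\geq 1$. By contrast, the Leibniz expansion of $(u\cdot\nabla)u$ distributes $t^{j+(\alpha_1+\alpha_2)/2}$ across two inductive factors whose weights jointly cover only $t^{j+(\alpha_1-2)/2+\alpha_2/2}$ plus half-powers of $t$ coming from the horizontal derivatives landing on each factor; this shortfall is bounded by $T$ and accumulates over the $\alpha_1/2$ iterations of the induction to produce precisely the $T^{\alpha_1/2}$ factor in the claim. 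The factor $B^{\alpha_1}$ absorbs $\alpha_1/2$ copies of the parasitic multiplicative constants arising from the Sobolev, algebra, and Young estimates at each step, once $B$ is chosen sufficiently large in terms of $A$, $\sigma$, $\rho_0$, and the Sobolev embedding constants.

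The main obstacle is the combinatorial and analytic bookkeeping for the pressure contribution. Because the Neumann problem for $p$ couples all three spatial directions and the boundary data itself involves $\partial_t u_1$ and $(u\cdot\nabla)u_1$, obtaining a sharp $H^2$-bound on $\partial_1^{\alpha_1-2}\partial^{(0,\alpha_2,\alpha_3)}\partial_t^j\nabla p$ requires either iterating an anisotropic elliptic regularity estimate (controlling boundary traces via trace inequalities in the spirit of Lemma~\ref{lem:S3}) or a direct $L^2$-energy argument on $\nabla p$. In either case one must verify that the resulting bounds respect the precise factorial structure $[(j+\alpha_1+\alpha_3)!]^{\sigma-\delta}[(j+|\alpha|)!]^\delta/(j+|\alpha|+1)^{6+2\sigma}$ and that the interleaved induction closes with all constants absorbed into $B^{\alpha_1}$; this is where the appearance of the $T^{\alpha_1/2}$ factor becomes essentially unavoidable and constitutes the technical core of the argument.
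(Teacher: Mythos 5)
Your overall strategy matches the paper's: induction on $\alpha_1$, with base cases $\alpha_1 \in \{0,1\}$ from Corollary~\ref{corpur}, and for $\alpha_1 \geq 2$ using $\partial_1^2 u = \partial_t u - \partial_2^2 u + (u\cdot\nabla)u + \nabla p$ to descend, with the nonlinearity handled by Leibniz plus the algebra property and the pressure handled via the elliptic Neumann problem. However, two of your assertions about the pressure term and the source of $T^{\alpha_1/2}$ are incorrect, and these are exactly the places where the argument is delicate.

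First, you claim that the elliptic estimate bounds $\partial_1^{\alpha_1-2}\partial_2^{\alpha_2}\partial_3^{\alpha_3}\partial_t^j \nabla p$ (in $H^2$) ``by norms of quadratic expressions in $u$ involving strictly fewer $\partial_1$-derivatives, which fall under the inductive hypothesis.'' This is not what happens. Lemma~\ref{lem-pre} inevitably produces, on the right-hand side, the term $\tfrac{1}{2}\norm{\mathcal{D}^{\beta}\partial_t^{j}\partial_1^2 u}_{H^2}$ with the same number $\alpha_1$ of $\partial_1$-derivatives and the same weight as the quantity being estimated (since $\mathcal{D}^\alpha = t\,\mathcal{D}^\beta\partial_1^2$). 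This term cannot be fed into the inductive hypothesis; it must be absorbed onto the left-hand side, and getting the coefficient below $1$ (here $\tfrac12$) requires the precise Gagliardo--Nirenberg / trace bookkeeping carried out in Lemma~\ref{lem-pre} and its proof. Without recognizing this absorption step the induction does not close.

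Second, your explanation of the $T^{\alpha_1/2}$ factor is misattributed. You locate it in ``the Leibniz expansion of $(u\cdot\nabla)u$'' through a shortfall in the time weight, but in the paper the nonlinear term in Step~3 closes without any such loss. The $T^{1/2}$-per-step loss actually comes from the pressure boundary terms in Lemma~\ref{lem-pre}: the terms
$t^{1/2}\big(\norm{\mathcal{D}^{\tilde\beta}\partial_t^{j+1} u}_{H^2} + \cdots\big)$ and, crucially, $\norm{\mathcal{D}^{\beta}\partial_t^{j}\partial_1\partial_3 u}_{H^2}$, whose time weight is deficient by $t^{1/2}$ relative to $\mathcal{D}^{\beta+(1,0,1)}$ because the $\partial_3$ derivative carries no $t^{1/2}$ weight. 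Remark~\ref{remlocal} explicitly identifies these as the obstruction to a global-in-time estimate in $x_1$. Your write-up, by putting the blame on the nonlinearity, would mislead a reader into trying to close the induction with a global-in-time bound, which fails for the reason the paper flags.

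In summary: same skeleton, but the two mechanisms you single out as decisive (inductive descent on the pressure in $\partial_1$, and the convective Leibniz expansion as the source of $T^{\alpha_1/2}$) are not the ones that actually operate; the correct mechanisms are the absorption of the top-order $\partial_1^2$ pressure contribution and the mixed $\partial_1\partial_3$ boundary terms, respectively. These are not cosmetic differences but the technical crux.
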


The rest of this section is devoted to the proof of Proposition \ref{prop:x1}, and  the key part in the proof is the treatment of the pressure. To address this,  we introduce the following  space-time derivatives
\begin{equation}\label{degamma}
	\mathcal D^\alpha\stackrel{\rm def}{=}t^{\frac{\alpha_1+\alpha_2}{2}}\partial_1^{\alpha_1}\partial_2^{\alpha_2}\partial_3^{\alpha_3} \ \textrm{ for }\ \alpha\in\mathbb Z_+^3,
\end{equation}
and state the estimate for the pressure as follows.
\begin{lemma}[Estimate on the pressure]\label{lem-pre}
For any $(j,\beta)\in\mathbb Z_+\times\mathbb Z_+^3$,  it holds that
 \begin{equation}\label{crues}
 \begin{aligned}
 	&\norm{\mathcal{D}^{\beta} \partial_t^{j}  \nabla p}_{H^2}
 	\leq C\norm{\mathcal{D}^{\beta} \partial_t^{j}\big (u\cdot\nabla) u\big)}_{H^2}+C\norm{\mathcal{D}^{\beta} \partial_t^{j+1}  u}_{H^2}+\frac12\norm{\mathcal{D}^{\beta} \partial_t^{j}\partial_1^2  u}_{H^2} \\
    &\quad+Ct^{\frac{1}{2}}\Big( \norm{\mathcal{D}^{\tilde \beta} \partial_t^{j+1}u}_{H^2}+ \norm{\mathcal{D}^{\tilde \beta} \partial_t^{j+1}\partial_2u}_{H^2}+\norm{\mathcal{D}^{\tilde \beta} \partial_t^{j+1}\partial
     _3u}_{H^2}\Big)\\
 	&\quad +C\Big(   \norm{\mathcal{D}^{\beta} \partial_t^{j}\nabla  u}_{H^2}+  \norm{\mathcal{D}^{\beta} \partial_t^{j}\partial_1\partial_2  u}_{H^2}+ \norm{\mathcal{D}^{\beta} \partial_t^{j}\partial_1\partial_3  u}_{H^2}
     +  \norm{\mathcal{D}^{\beta} \partial_t^{j}\partial_2^2  u}_{H^2}\Big),
 	\end{aligned}
 \end{equation}
 recalling $\tilde\beta=\beta-(1,0,0).$
 \end{lemma}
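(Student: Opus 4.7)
The plan is to read $\nabla p$ directly from the Navier--Stokes equations, bypassing any formal inversion of the Neumann Poisson problem. Writing the $i$-th component of $\partial_t u+(u\cdot\nabla)u=\Delta_{\mathrm{h}} u-\nabla p$ as
\[
\partial_i p=-\partial_t u_i-(u\cdot\nabla)u_i+\partial_1^2 u_i+\partial_2^2 u_i,\qquad i=1,2,3,
\]
and applying $\mathcal{D}^\beta\partial_t^j$ to each identity will reproduce, after taking $H^2$-norms, the three terms on the first line of~\eqref{crues} together with the last block.

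The first refinement concerns the normal component $\partial_1 p$: differentiating the divergence-free condition $\partial_1 u_1=-\partial_2 u_2-\partial_3 u_3$ once in $x_1$ yields $\partial_1^2 u_1=-\partial_1\partial_2 u_2-\partial_1\partial_3 u_3$, and substituting into the $i=1$ identity eliminates $\partial_1^2 u_1$ entirely from $\partial_1 p$, leaving only the mixed derivatives $\partial_1\partial_2 u$ and $\partial_1\partial_3 u$ appearing in the last block of~\eqref{crues}. For the tangential components $\partial_2 p$ and $\partial_3 p$ this trick is not available, so $\partial_1^2 u_2$ and $\partial_1^2 u_3$ must be retained; since only two of the three components of $\partial_1^2 u$ actually contribute, the vector-valued identity
\[
\norm{\partial_1^2 u}_{H^2}^2=\sum_{i=1}^3\norm{\partial_1^2 u_i}_{H^2}^2
\]
combined with a careful Cauchy--Schwarz splitting is what produces the sharp coefficient $\frac12$ in front of $\norm{\mathcal{D}^\beta\partial_t^j\partial_1^2 u}_{H^2}$.

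The middle block carrying the prefactor $t^{1/2}$ and the operator $\mathcal{D}^{\tilde\beta}\partial_t^{j+1}$ appears only when $\beta_1\geq 1$; there the plan is to use the factorisation $\mathcal{D}^\beta=t^{1/2}\partial_1\mathcal{D}^{\tilde\beta}$ to pull one $\partial_1$ off $\mathcal{D}^\beta$ and then invoke the second-order identity obtained by differentiating the $u_1$-equation once more in $x_1$, namely
\[
\partial_1^2 p=-\partial_t\partial_1 u_1-\partial_1\left[(u\cdot\nabla)u_1\right]+\partial_1^3 u_1+\partial_1\partial_2^2 u_1.
\]
Using divergence-free once more to rewrite $\partial_t\partial_1 u_1=-\partial_t\partial_2 u_2-\partial_t\partial_3 u_3$ and $\partial_1^3 u_1=-\partial_1^2\partial_2 u_2-\partial_1^2\partial_3 u_3$ converts the $\partial_t\partial_1 u_1$ contribution into exactly the three terms $\mathcal{D}^{\tilde\beta}\partial_t^{j+1}u$, $\mathcal{D}^{\tilde\beta}\partial_t^{j+1}\partial_2 u$ and $\mathcal{D}^{\tilde\beta}\partial_t^{j+1}\partial_3 u$ of~\eqref{crues}, with the factor $t^{1/2}$ recording the one $\partial_1$ that was extracted. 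The catch-all $C\norm{\mathcal{D}^\beta\partial_t^j\nabla u}_{H^2}$ absorbs the Sobolev-embedding and lower-order corrections generated along the way, and an analogous treatment of $\partial_1\partial_2 p$ and $\partial_1\partial_3 p$ (needed when $\nabla p$ is split by component) supplies the remaining pieces.

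The main obstacle is securing the precise coefficient $\frac12$ in front of $\norm{\mathcal{D}^\beta\partial_t^j\partial_1^2 u}_{H^2}$: a crude triangle inequality would leave a constant of order one or larger, which would be fatal for the absorption step used subsequently in Proposition~\ref{prop:x1}, where this lemma is combined with another bound that itself contains $\norm{\partial_1^2 u}_{H^2}$. Getting $\frac12$ forces one to treat $\partial_1^2 u$ as a vector-valued object and to isolate the divergence-controlled $u_1$-component from the ``bad'' $(u_2,u_3)$-components, which is the technical heart of the lemma.
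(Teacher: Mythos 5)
Your approach—reading $\nabla p$ directly from the momentum equation $\partial_i p=-\partial_t u_i-(u\cdot\nabla)u_i+\Delta_{\rm h}u_i$—is genuinely different from the paper's, which instead works with the Poisson problem $-\Delta p=\divv\big((u\cdot\nabla)u\big)$ tested against $p$ in $L^2$, so that $\norm{\nabla p}_{H^2}^2$ only ever appears through a \emph{bilinear} pairing with the other quantities (plus boundary integrals). That bilinearity is exactly what produces the coefficient $\tfrac12$: after the Gagliardo--Nirenberg trace bound the boundary term is of the form $C\norm{A}_{H^2}^{1/2}\norm{\mathcal D^\beta\partial_t^j\partial_1^2u}_{H^2}^{1/2}\norm{\mathcal D^\beta\partial_t^j\nabla p}_{H^2}$, and Young's inequality on the square-root factors absorbs the constant $C$ into the harmless piece, leaving exactly $\tfrac12\norm{\mathcal D^\beta\partial_t^j\partial_1^2u}_{H^2}\norm{\mathcal D^\beta\partial_t^j\nabla p}_{H^2}$.

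In the direct-substitution route this mechanism is unavailable, and the step where you claim the $\tfrac12$ is where the argument breaks down. After eliminating $\partial_1^2u_1$ from $\partial_1p$ via the divergence-free condition, the remaining contributions $\partial_1^2u_2$ (in $\partial_2 p$) and $\partial_1^2u_3$ (in $\partial_3 p$) enter \emph{additively}, not multiplicatively: one gets $\norm{\mathcal D^\beta\partial_t^j\nabla p}_{H^2}^2\le\cdots+\norm{\mathcal D^\beta\partial_t^j\partial_1^2u_2}_{H^2}^2+\norm{\mathcal D^\beta\partial_t^j\partial_1^2u_3}_{H^2}^2+\cdots$, and no choice of splitting can make the induced coefficient on $\norm{\mathcal D^\beta\partial_t^j\partial_1^2u}_{H^2}$ smaller than $1$ (take any solution with $u_1\equiv 0$ to see that $(\norm{\partial_1^2u_2}^2+\norm{\partial_1^2u_3}^2)^{1/2}=\norm{\partial_1^2u}$). ``Only two of three components contribute'' therefore buys nothing—those two can already carry the full $\ell^2$-norm. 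Since $\tfrac12$ is not a cosmetic constant but precisely what allows the absorption in Proposition~\ref{prop:x1} (there $\tfrac12\norm{\mathcal D^\beta\partial_t^j\partial_1^2u}_{H^2}=\tfrac12\norm{\mathcal D^\alpha\partial_t^ju}_{H^2}$ moves to the left-hand side), losing it to a constant $\ge 1$ makes the downstream argument collapse. A secondary issue is that your explanation of the $t^{1/2}$ block does not actually follow from your framework: in the paper that factor comes from shifting one $\partial_1$ inside $\mathcal D^\beta$ to turn a vanishing boundary trace of $\partial_t u_1$ into a nonvanishing one—a boundary phenomenon with no analogue in a pure interior substitution, where the $\partial_t u$ contribution is just $C\norm{\mathcal D^\beta\partial_t^{j+1}u}_{H^2}$ and the middle block would simply not arise.
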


 \begin{remark}\label{remlocal}

 Note that the term $\norm{\mathcal{D}^{\beta} \partial_t^{j}\partial_1\partial_3 u}_{H^2}$, along with the terms in the second line on the right-hand side of \eqref{crues}, prevents  us from achieving   a global-in-time estimate for the Gevrey radius. Furthermore, the term $\norm{\mathcal{D}^{\beta} \partial_t^{j}\partial_1\partial_3 u}_{H^2}$ also impedes any improvement in the Gevrey smoothing effect in the $x_1$ direction-specifically, it prevents enhancing the Gevrey index from $\sigma$ to $\delta$.
 \end{remark}

\begin{proof}[Proof of Lemma \ref{lem-pre}]
   	Observe $p$ satisfies  the elliptic equation $-\Delta p=\divv \big( (u\cdot\nabla ) u \big )$. Then
    for any $(j,\beta)\in\mathbb Z_+\times\mathbb Z_+^3$ and any $\gamma\in\mathbb Z_+^3$ with $\abs{\gamma}\leq 2$,
 \begin{equation*}
  -\partial^\gamma\mathcal{D}^{\beta}\partial_t^j \Delta p=\partial^\gamma\mathcal{D}^{\beta}\partial_t^j  \divv \big( (u\cdot\nabla ) u \big ).
   \end{equation*}
   Then repeating the argument in the derivation of \eqref{p+} gives
\begin{multline*}
		   \sum_{\abs\gamma\leq 2} \norm{ \partial^\gamma \mathcal{D}^{\beta} \partial_t^{j}  \nabla p}_{L^2}^2
		 \leq  \norm{\mathcal{D}^{\beta} \partial_t^{j}\big (u\cdot\nabla) u\big)}_{H^2}\norm{\mathcal{D}^{\beta} \partial_t^{j}\nabla p}_{H^2} \\
		 + \sum_{\abs\gamma\leq 2} \bigg|\int_{\mathbb R^2}   (\partial^{\gamma}\mathcal{D}^{\beta} \partial_t^{j}\partial_tu_1)(\partial^\gamma \mathcal{D}^{\beta} \partial_t^{j}  p)\big|^{x_1=1}_{x_1=0} \  dx_2dx_3\bigg|\\
		  + \sum_{\abs\gamma\leq 2} \bigg|\int_{\mathbb R^2}   (\partial^{\gamma}\mathcal{D}^{\beta} \partial_t^{j}\Delta_{\rm h} u_1)(\partial^\gamma \mathcal{D}^{\beta} \partial_t^{j}  p)\big|^{x_1=1}_{x_1=0} \  dx_2dx_3\bigg|,
	\end{multline*}
	and thus,
	\begin{equation}\label{pres}
	\begin{aligned}
		&  \norm{\mathcal{D}^{\beta} \partial_t^{j}  \nabla p}_{H^2}^2=  \sum_{\abs\gamma\leq 2} \norm{ \partial^\gamma \mathcal{D}^{\beta} \partial_t^{j}  \nabla p}_{L^2}^2 \\
		 &\leq  \norm{\mathcal{D}^{\beta} \partial_t^{j}\big (u\cdot\nabla) u\big)}_{H^2}^2  		 +2 \sum_{\abs\gamma\leq 2} \bigg|\int_{\mathbb R^2}   (\partial^{\gamma}\mathcal{D}^{\beta} \partial_t^{j}\partial_tu_1)(\partial^\gamma \mathcal{D}^{\beta} \partial_t^{j}  p)\big|^{x_1=1}_{x_1=0} \  dx_2dx_3\bigg|\\
		  &\quad+2 \sum_{\abs\gamma\leq 2} \bigg|\int_{\mathbb R^2}   (\partial^{\gamma}\mathcal{D}^{\beta} \partial_t^{j}\Delta_{\rm h} u_1)(\partial^\gamma \mathcal{D}^{\beta} \partial_t^{j}  p)\big|^{x_1=1}_{x_1=0} \  dx_2dx_3\bigg|.
		  \end{aligned}
	\end{equation}
	We will proceed to estimate the last two terms on the right-hand side of \eqref{pres}.
	
For the second term on the right-hand side of \eqref{pres}, we claim that
\begin{equation}\label{eop}
\begin{aligned}
	&\sum_{\abs\gamma\leq 2}\bigg|\int_{\mathbb R^2}   (\partial^{\gamma}\mathcal{D}^{\beta} \partial_t^{j}\partial_tu_1)(\partial^\gamma \mathcal{D}^{\beta} \partial_t^{j}  p)\big|^{x_1=1}_{x_1=0} \  dx_2dx_3\bigg|\\
	 &\leq  Ct^\frac{1}{2}\Big( \norm{\mathcal{D}^{\tilde \beta} \partial_t^{j+1}u}_{H^2}+ \norm{\mathcal{D}^{\tilde \beta} \partial_t^{j+1}\partial_2u}_{H^2}+\norm{\mathcal{D}^{\tilde \beta} \partial_t^{j+1}\partial
     _3u}_{H^2}\Big)\norm{\mathcal{D}^{\beta} \partial_t^{j}\nabla p}_{H^2}\\
    &\quad +C\norm{\mathcal{D}^{\beta} \partial_t^{j+1}  u}_{H^2}\norm{\mathcal{D}^{\beta} \partial_t^{j}\nabla p}_{H^2},
     \end{aligned}
\end{equation}
recalling $\tilde\beta=\beta-(1,0,0)=(\beta_1-1,\beta_2,\beta_3).$
It suffices to prove \eqref{eop} for $  \gamma_1+\beta_1\geq 1$, since for  $ \gamma_1+\beta_1=0$   it follows from the fact
$
   u_1|_{x_1=0,1}=0$ that
\begin{equation*}
	  \bigg|\int_{\mathbb R^2}   (\partial^{\gamma}\mathcal{D}^{\beta} \partial_t^{j}\partial_tu_1)(\partial^\gamma \mathcal{D}^{\beta} \partial_t^{j}  p)\big|^{x_1=1}_{x_1=0} \  dx_2dx_3\bigg|= 0.
\end{equation*}
 Now we consider the case of $\beta_1+\gamma_1\geq1$.  If $\beta_1\geq 1$, then using the fact
 \begin{equation}\label{rela}
 	\mathcal D^\beta=t^{\frac{1}{2}}\mathcal D^{\tilde\beta}\partial_1 \ \textrm{ for }\ \tilde\beta=\beta-(1,0,0)
 \end{equation}
 as well as
$  \partial_1u_1 =- \partial_2u_2- \partial_3u_3
$,  we have, recalling $\abs\gamma\leq 2,$
 \begin{equation*}
 \begin{aligned}
& \bigg|\int_{\mathbb R^2}   (\partial^{\gamma}\mathcal{D}^{\beta} \partial_t^{j}\partial_tu_1)(\partial^\gamma \mathcal{D}^{\beta} \partial_t^{j}  p)\big|^{x_1=1}_{x_1=0} \  dx_2dx_3\bigg| \\
&\leq  2t^{\frac{1}{2}}\sup_{x_1\in[0,1]}  \left|\int_{\mathbb{R}^2}(\partial^{\gamma}\mathcal{D}^{\tilde \beta} \partial_t^{j+1} \partial_2u_2)(\partial^\gamma \mathcal{D}^{\beta} \partial_t^{j} p)dx_2dx_3\right|\\
&\quad+2t^{\frac{1}{2}} \sup_{x_1\in[0,1]}  \left|\int_{\mathbb{R}^2}(\partial^{\gamma}\mathcal{D}^{\tilde \beta} \partial_t^{j+1} \partial_3u_3)(\partial^\gamma \mathcal{D}^{\beta} \partial_t^{j} p)dx_2dx_3\right|\\
&\leq Ct^{\frac{1}{2}}\Big(\norm{\mathcal{D}^{\tilde \beta} \partial_t^{j+1}u}_{H^2}+\norm{\mathcal{D}^{\tilde \beta} \partial_t^{j+1}\nabla u}_{H^2}\Big)\norm{\mathcal{D}^{\beta} \partial_t^{j}\nabla p}_{H^2},
 \end{aligned}
 \end{equation*}
 the last inequality following the same argument as that after \eqref{q1q2}. Similarly, if $\abs\gamma\leq 2$ with $\gamma_1\geq 1$, then we have, recalling $\tilde\gamma=\gamma-(1,0,0),$
 \begin{equation*}
 	 \begin{aligned}
& \bigg|\int_{\mathbb R^2}   (\partial^{\gamma}\mathcal{D}^{\beta} \partial_t^{j}\partial_tu_1)(\partial^\gamma \mathcal{D}^{\beta} \partial_t^{j}  p)\big|^{x_1=1}_{x_1=0} \  dx_2dx_3\bigg| \\
&\leq  2 \sup_{x_1\in[0,1]}  \left|\int_{\mathbb{R}^2}(\partial^{\tilde\gamma}\mathcal{D}^{ \beta} \partial_t^{j+1} \partial_2u_2)(\partial^\gamma \mathcal{D}^{\beta} \partial_t^{j} p)dx_2dx_3\right|\\
&\quad+2  \sup_{x_1\in[0,1]}  \left|\int_{\mathbb{R}^2}(\partial^{\tilde\gamma}\mathcal{D}^{ \beta} \partial_t^{j+1} \partial_3u_3)(\partial^\gamma \mathcal{D}^{\beta} \partial_t^{j} p)dx_2dx_3\right|\\
&\leq C \Big(\norm{\mathcal{D}^{  \beta} \partial_t^{j+1}u}_{H^1}+\norm{\mathcal{D}^{ \beta} \partial_t^{j+1}\nabla u}_{H^1}\Big)\norm{\mathcal{D}^{\beta} \partial_t^{j}\nabla p}_{H^2}\\
&\leq C \norm{\mathcal{D}^{\beta} \partial_t^{j+1}  u}_{H^2} \norm{\mathcal{D}^{\beta} \partial_t^{j}\nabla p}_{H^2}.
 \end{aligned}
 \end{equation*}
 Combining the above estimates and observing
 \begin{equation*}
    t^{\frac{1}{2}} \norm{\mathcal{D}^{\tilde \beta} \partial_t^{j+1}\partial_1 u}_{H^2} =  \norm{\mathcal{D}^{\beta} \partial_t^{j+1}  u}_{H^2}
 \end{equation*}
by relation \eqref{rela}, we get the desired  conclusion \eqref{eop}.

 It remains to estimate the last term on the right-hand side of \eqref{pres}, and the argument is  similar to that above. In fact,  we use the argument after \eqref{r1r2} to obtain that
 \begin{equation*}
 	\begin{aligned}
 	   & \sum_{\abs\gamma\leq 2} \bigg|\int_{\mathbb R^2}   (\partial^{\gamma}\mathcal{D}^{\beta} \partial_t^{j}\partial_1^2 u_1)(\partial^\gamma \mathcal{D}^{\beta} \partial_t^{j}  p)\big|^{x_1=1}_{x_1=0} \  dx_2dx_3\bigg|\\
 	   & \leq  2 \sum_{\abs\gamma\leq 2} \sup_{x_1\in[0,1]}	 \bigg|\int_{\mathbb R^2}   (\partial^{\gamma}\mathcal{D}^{\beta} \partial_t^{j}\partial_1  \partial_2u_2)(\partial^\gamma \mathcal{D}^{\beta} \partial_t^{j}  p)  dx_2dx_3\bigg|\\
 	   &\quad +2\sum_{\abs\gamma\leq 2} \sup_{x_1\in[0,1]}  \bigg|\int_{\mathbb R^2}   (\partial^{\gamma}\mathcal{D}^{\beta} \partial_t^{j}\partial_1\partial_3u_3)(\partial^\gamma \mathcal{D}^{\beta} \partial_t^{j}  p)   dx_2dx_3\bigg|\\
 	   & \leq C\Big( \norm{\mathcal{D}^{\beta} \partial_t^{j}\partial_1 u}_{H^2}^\frac{1}{2}+  \norm{\mathcal{D}^{\beta} \partial_t^{j}\partial_1\partial_2 u}_{H^2}^\frac{1}{2}+  \norm{\mathcal{D}^{\beta} \partial_t^{j}\partial_1\partial_3 u}_{H^2}^\frac{1}{2}    \Big)\\
       &\qquad\times \Big( \norm{\mathcal{D}^{\beta} \partial_t^{j}\partial_1 u}_{H^2}^\frac{1}{2}+  \norm{\mathcal{D}^{\beta} \partial_t^{j}\partial_1^2 u}_{H^2}^\frac{1}{2}   \Big)\norm{\mathcal{D}^{\beta} \partial_t^{j}\nabla p}_{H^2}\\
 	   &\leq   C\Big( \norm{\mathcal{D}^{\beta} \partial_t^{j}\partial_1 u}_{H^2}+  \norm{\mathcal{D}^{\beta} \partial_t^{j}\partial_1\partial_2 u}_{H^2}+  \norm{\mathcal{D}^{\beta} \partial_t^{j}\partial_1\partial_3 u}_{H^2}    \Big)\norm{\mathcal{D}^{\beta} \partial_t^{j}\nabla p}_{H^2}\\
       &\quad+\frac12\norm{\mathcal{D}^{\beta} \partial_t^{j}\partial_1^2 u}_{H^2}\norm{\mathcal{D}^{\beta} \partial_t^{j}\nabla p}_{H^2}.	
       \end{aligned}
 \end{equation*}
 Similarly,
 \begin{equation*}
 	\begin{aligned}
 	   & \sum_{\abs\gamma\leq 2} \bigg|\int_{\mathbb R^2}   (\partial^{\gamma}\mathcal{D}^{\beta} \partial_t^{j}\partial_2^2 u_1)(\partial^\gamma \mathcal{D}^{\beta} \partial_t^{j}  p)\big|^{x_1=1}_{x_1=0} \  dx_2dx_3\bigg|\\
 	& \leq C\Big( \norm{\mathcal{D}^{\beta} \partial_t^{j}\partial_2 u}_{H^2}^\frac{1}{2}+  \norm{\mathcal{D}^{\beta} \partial_t^{j}\partial_2^2 u}_{H^2}^\frac{1}{2}\Big)\\
       &\qquad\times \Big( \norm{\mathcal{D}^{\beta} \partial_t^{j}\partial_2 u}_{H^2}^\frac{1}{2}+  \norm{\mathcal{D}^{\beta} \partial_t^{j}\partial_1\partial_2 u}_{H^2}^\frac{1}{2}   \Big)\norm{\mathcal{D}^{\beta} \partial_t^{j}\nabla p}_{H^2}\\
 	   & \leq C \Big(\norm{\mathcal{D}^{\beta} \partial_t^{j}\partial_2  u}_{H^2} + \norm{\mathcal{D}^{\beta} \partial_t^{j}\partial_1\partial_2  u}_{H^2} + \norm{\mathcal{D}^{\beta} \partial_t^{j}\partial_2^2  u}_{H^2}  \Big)\norm{\mathcal{D}^{\beta} \partial_t^{j}\nabla p}_{H^2}.
 \end{aligned}
 \end{equation*}
 As a result, combining the above estimates yields that  \begin{multline*}
 	 \sum_{\abs\gamma\leq 2} \bigg|\int_{\mathbb R^2}   (\partial^{\gamma}\mathcal{D}^{\beta} \partial_t^{j}\Delta_{\rm h} u_1)(\partial^\gamma \mathcal{D}^{\beta} \partial_t^{j}  p)\big|^{x_1=1}_{x_1=0} \  dx_2dx_3\bigg|\\
 	 \leq \Big(\frac12 \norm{\mathcal{D}^{\beta} \partial_t^{j}\partial_1^2  u}_{H^2}+C \norm{\mathcal{D}^{\beta} \partial_t^{j}\partial_1\partial_2  u}_{H^2}+C \norm{\mathcal{D}^{\beta} \partial_t^{j}\partial_1\partial_3  u}_{H^2}\\
     +C \norm{\mathcal{D}^{\beta} \partial_t^{j}\partial_2^2  u}_{H^2}+C   \norm{\mathcal{D}^{\beta} \partial_t^{j}\nabla  u}_{H^2}    \Big)\norm{\mathcal{D}^{\beta} \partial_t^{j}\nabla p}_{H^2}.
 \end{multline*}
 Substituting the estimate above and \eqref{eop} into \eqref{pres} gives the desired  conclusion of Lemma \ref{lem-pre}. The proof is thus completed.
   \end{proof}

\begin{proof}[Proof of Proposition \ref{prop:x1}]
	We  apply  induction on $\alpha_1$ to prove that  	for any $T\geq 1$ and for any $j\in\mathbb Z_+$ and any $\alpha=(\alpha_1,\alpha_2,\alpha_3) \in\mathbb Z_+^3$, it holds that
	\begin{multline}\label{fes}
\sup_{t\leq T} e^{\frac t4}  t^{j+\frac{\alpha_1+\alpha_2}{2}}\norm{ \partial^{\alpha} \partial_t^{j}  u}_{H^2}\\
\leq
    \frac{ \eps_0 A^{j+\alpha_2+\alpha_3+1}T^{\frac{\alpha_1}{2}}B^{\alpha_1} [(j+\alpha_1+\alpha_3)!]^{\sigma-\delta}[ (j+\abs\alpha)!]^{\delta}}{(j+\abs\alpha+1)^{6+2\sigma}}.
  \end{multline}
  The validity of \eqref{fes} for $\alpha_1=0$ just follows from Corollary  \ref{corpur}.  Moreover,
for all $T\geq 1,j\geq 0$ and any mult-index $\alpha\in\mathbb Z_+^3$ with $\alpha_1= 1$, we have
	\begin{align*}
		&\sup_{t\leq T} e^{\frac t4}  t^{j+\frac{\alpha_1+\alpha_2}{2}}\norm{ \partial^{\alpha} \partial_t^{j}  u}_{H^2}\leq
		T^{\frac{\alpha_1}{2}}\sup_{t\leq T} e^{\frac t4}  t^{j+\frac{\alpha_2}{2}}\norm{ \partial_2^{\alpha_2}\partial_3^{\alpha_3} \partial_t^{j}  u}_{H^3}\\
		& \leq T^{\frac{\alpha_1}{2}}
    \frac{ \eps_0A^{j+\alpha_2+\alpha_3+1} [(j+ \alpha_3)!]^{\sigma-\delta} [( j+\alpha_2+\alpha_3)!]^{\delta}}{(j+\alpha_2+\alpha_3+1)^{6+2\sigma}}\\
		& \leq 2^{6+2\sigma}T^{\frac{\alpha_1}{2}}
    \frac{ \eps_0 A^{j+\alpha_2+\alpha_3+1} [(j+\alpha_1+ \alpha_3)!]^{\sigma-\delta} [( j+\abs\alpha)!]^{\delta}}{(j+\abs\alpha+1)^{6+2\sigma}},
		\end{align*}	
		the second inequality following from Corollary \ref{corpur} and the last one holding because of the fact that
		\begin{equation*}
			\frac{1}{(j+\alpha_2+\alpha_3+1)^{6+2\sigma}}\leq \frac{2^{6+2\sigma}}{(j+\abs\alpha+1)^{6+2\sigma}} \ \textrm{ for }\  \alpha_1= 1.
		\end{equation*}
Thus we have the validity of \eqref{fes} for all $T\geq 1, j\geq 0$ and any mult-index $\alpha\in\mathbb Z_+^3$ with $\alpha_1\leq 1,$ provided  $B\geq 2^{6+2\sigma}.$

Let $N\geq 2$ be a fixed integer in what follows. We assume that for all non-negative integers $j\in\mathbb Z_+$ and any $\gamma=(\gamma_1,\gamma_2,\gamma_2)\in\mathbb Z_+^3$ with $\gamma_1\leq N-1$, the following estimate holds:
\begin{multline}\label{inductiveass+++}
\sup_{t\leq  T} e^{\frac t4}  t^{j+\frac{\gamma_1+\gamma_2}{2}}\norm{  \partial^{\gamma} \partial_t^{j} u}_{H^2}\\
 \leq
    \frac{\eps_0 A^{j+\gamma_2+\gamma_3+1}T^{\frac{\gamma_1}{2}}B^{\gamma_1} [(j+\gamma_1+\gamma_3)!]^{\sigma-\delta} [(j+\abs\gamma)!]^{\delta}}{(j+\abs\gamma+1)^{6+2\sigma}}.
  \end{multline}
We shall prove by induction that \eqref{inductiveass+++} remains valid for  any $j\in\mathbb Z_+$ and any $\gamma\in\mathbb Z_+^3$ with $\gamma_1=N.$

{\it Step 1 (Notations).}  Fix an integer $j\geq 0$ and a multi-index $\alpha=(\alpha_1,\alpha_2,\alpha_3)\in\mathbb Z_+^3$ with $\alpha_1=N.$ Denote
\begin{equation}\label{debeta}
	\beta=(\beta_1,\beta_2,\beta_3)=\alpha-(2,0,0)=(\alpha_1-2,\alpha_2,\alpha_3),
\end{equation}
so that
\begin{equation}\label{sm1}
	\beta_1= N-2.
\end{equation}
 Recalling the operator  $\mathcal D^\alpha=t^{\frac{\alpha_1+\alpha_2}{2}}\partial_1^{\alpha_1}\partial_2^{\alpha_2}\partial_3^{\alpha_3} \ \textrm{ for }\ \alpha\in\mathbb Z_+^3$, we obtain from \eqref{debeta} the identity
\begin{equation}\label{ab}
	\mathcal D^{\alpha}=t \mathcal D^{\beta}\partial_1^2.
\end{equation}
  Under the above notations,  the inductive assumption \eqref{inductiveass+++} says  that the following estimate
 \begin{equation}\label{inductiveass}
\sup_{t\leq T} e^{\frac t4}  t^{j}\norm{\mathcal D^{\gamma} \partial_t^{j} u}_{H^2} \leq
    \frac{\eps_0 A^{j+\gamma_2+\gamma_3+1}T^{\frac{\gamma_1}{2}}B^{\gamma_1} [(j+\gamma_1+\gamma_3)!]^{\sigma-\delta} [(j+\abs\gamma)!]^{\delta}}{(j+\abs\gamma+1)^{6+2\sigma}}
  \end{equation}
  holds true for any $j\geq 0$ and  any $\gamma\in\mathbb Z_+^3$ with $\gamma_1\leq N-1.$

{\it Step 2.}  We  recall $\alpha\in\mathbb Z_+^3$   is an arbitrary multi-index   with $\alpha_1=N\geq 2$ and $\beta=\alpha-(2,0,0)$ is defined as in   \eqref{debeta}. In this step we will show that if the inductive hypothesis \eqref{inductiveass} holds, then
\begin{multline}\label{linpart}
		\sup_{t\leq T} e^{\frac t4}  t^{j}\norm{\mathcal D^{\alpha} \partial_t^{j}   u}_{H^2}\leq C\sup_{t\leq T} e^{\frac t4}  t^{j+1}  \norm{\mathcal D^{\beta} \partial_t^{j}  \big((u\cdot\nabla) u\big)}_{H^2}\\
	+ \frac{CA^2}{B} \frac{\eps_0 A^{ j+\alpha_2+\alpha_3+1}T^{\frac{\alpha_1}{2}}B^{\alpha_1} [( j+ \alpha_1+\alpha_3)!]^{\sigma-\delta} [(j +\abs\alpha)!]^{\delta}}{(j+\abs\alpha+1)^{6+2\sigma}}.
\end{multline}
To do so, we use \eqref{degamma} as well as \eqref{ab}   and observe $\partial_1^2u=\partial_tu-\partial_2^2u+(u\cdot \nabla) u+\nabla p;$ this gives
\begin{equation}\label{splt}
\begin{aligned}
	& e^{\frac t4}  t^{j+\frac{\alpha_1+\alpha_2}{2}}\norm{\partial^{\alpha} \partial_t^{j}   u}_{H^2} = e^{\frac t4}  t^{j}\norm{\mathcal D^{\alpha} \partial_t^{j}   u}_{H^2} =e^{\frac t4}  t^{j+1}\norm{\mathcal D^{\beta} \partial_t^{j}  \partial_1^2 u}_{H^2}\\
	&\leq e^{\frac t4}  t^{j+1}  \norm{\mathcal D^{\beta} \partial_t^{j+1}  u}_{H^2}+e^{\frac t4}  t^{j+1} \norm{\mathcal D^{\beta}\partial_2^2 \partial_t^{j}   u}_{H^2} \\
	&\qquad+e^{\frac t4}  t^{j+1}  \norm{\mathcal D^{\beta} \partial_t^{j}  \big((u\cdot\nabla) u\big)}_{H^2}+e^{\frac t4}  t^{j+1}  \norm{\mathcal D^{\beta} \partial_t^{j}  \nabla p}_{H^2}.
	\end{aligned}
\end{equation}
Moreover, it follows from  Lemma \ref{lem-pre} that \begin{equation*}
 \begin{aligned}
 	&e^{\frac t4}  t^{j+1}\norm{\mathcal{D}^{\beta} \partial_t^{j}  \nabla p}_{H^2}\leq Ce^{\frac t4}  t^{j+1}\norm{\mathcal{D}^{\beta} \partial_t^{j}\big (u\cdot\nabla) u\big)}_{H^2}+\frac12e^{\frac t4}  t^{j+1}\norm{\mathcal{D}^{\beta} \partial_t^{j}\partial_1^2  u}_{H^2} \\
    &\quad+Ce^{\frac t4}  t^{j+1+\frac{1}{2}}\Big( \norm{\mathcal{D}^{\tilde \beta} \partial_t^{j+1}u}_{H^2}+ \norm{\mathcal{D}^{\tilde \beta} \partial_t^{j+1}\partial_2u}_{H^2}+\norm{\mathcal{D}^{\tilde \beta} \partial_t^{j+1}\partial
     _3u}_{H^2}\Big)\\
 	&\quad +Ce^{\frac t4}  t^{j+1}(\norm{\mathcal{D}^{\beta} \partial_t^{j}\partial_1\partial_2  u}_{H^2}+ \norm{\mathcal{D}^{\beta} \partial_t^{j}\partial_1\partial_3  u}_{H^2}
     + \norm{\mathcal{D}^{\beta} \partial_t^{j}\partial_2^2  u}_{H^2})\\
     &\quad +Ce^{\frac t4}  t^{j+1}\Big(\norm{\mathcal{D}^{\beta} \partial_t^{j+1}  u}_{H^2}+\norm{\mathcal{D}^{\beta} \partial_t^{j}\nabla  u}_{H^2}\Big),
 	\end{aligned}
 \end{equation*}
 recalling $\tilde\beta=\beta-(1,0,0).$
Combining the estimate above and \eqref{splt} and observing the fact that
\begin{equation*}
	\frac12 e^{\frac t4}  t^{j+1} \norm{\mathcal{D}^{\beta} \partial_t^{j}\partial_1^2  u}_{H^2}=\frac12 e^{\frac t4}  t^{j}\norm{\mathcal D^{\alpha} \partial_t^{j}   u}_{H^2}
\end{equation*}
in view of \eqref{debeta}, we obtain that
\begin{equation}\label{abeta}
\begin{aligned}
	&e^{\frac t4}  t^{j}\norm{\mathcal D^{\alpha} \partial_t^{j}   u}_{H^2} \leq Ce^{\frac t4}  t^{j+1}  \norm{\mathcal D^{\beta} \partial_t^{j}  \big((u\cdot\nabla) u\big)}_{H^2}\\
    &\quad+Ce^{\frac t4}  t^{j+1+\frac{1}{2}}\Big( \norm{\mathcal{D}^{\tilde \beta} \partial_t^{j+1}u}_{H^2}+\norm{\mathcal{D}^{\tilde \beta} \partial_t^{j+1}\partial_2u}_{H^2}+\norm{\mathcal{D}^{\tilde \beta} \partial_t^{j+1}\partial
     _3u}_{H^2}\Big)\\
 	&\quad +Ce^{\frac t4}  t^{j+1}(\norm{\mathcal{D}^{\beta} \partial_t^{j}\partial_1\partial_2  u}_{H^2}+ \norm{\mathcal{D}^{\beta} \partial_t^{j}\partial_1\partial_3  u}_{H^2}
     + \norm{\mathcal{D}^{\beta} \partial_t^{j}\partial_2^2  u}_{H^2})\\
     &\quad +Ce^{\frac t4}  t^{j+1}\Big(\norm{\mathcal{D}^{\beta} \partial_t^{j+1}  u}_{H^2}+\norm{\mathcal{D}^{\beta} \partial_t^{j}\nabla  u}_{H^2}\Big).
	\end{aligned}
\end{equation}
It remains to handle the terms on the right-hand side of \eqref{abeta}.
Condition \eqref{sm1} enables us to
use the inductive assumption \eqref{inductiveass} to conclude that, recalling \eqref{degamma} and $\tilde\beta=\beta-(1,0,0)$,
\begin{align*}
&\sup_{t\leq T}e^{\frac t4}  t^{j+1+\frac{1}{2}}\Big( \norm{\mathcal{D}^{\tilde \beta} \partial_t^{j+1}u}_{H^2}+\norm{\mathcal{D}^{\tilde \beta} \partial_t^{j+1}\partial_2u}_{H^2}+\norm{\mathcal{D}^{\tilde \beta} \partial_t^{j+1}\partial
     _3u}_{H^2}\Big)\\
&\leq T^\frac{1}{2}\sup_{t\leq T}e^{\frac t4}  t^{j+1}\norm{\mathcal{D}^{\tilde \beta} \partial_t^{j+1}u}_{H^2}+\sup_{t\leq T}e^{\frac t4}  t^{j+1}\norm{\mathcal{D}^{\tilde \beta+(0,1,0)} \partial_t^{j+1}u}_{H^2}\\
&\quad+T^\frac{1}{2}\sup_{t\leq T}e^{\frac t4}  t^{j+1}\norm{\mathcal{D}^{\tilde \beta+(0,0,1)} \partial_t^{j+1}u}_{H^2}\\
&\leq CT^\frac{1}{2}\frac{\eps_0 A^{(j+1)+\beta_2+\beta_3+2}T^{\frac{\beta_1-1}{2}}B^{\beta_1-1} [(j+\beta_1+\beta_3+1)!]^{\sigma-\delta} [(j+\abs\beta+1)!]^{\delta}}{(j+\abs\beta+2)^{6+2\sigma}}\\
&\leq \frac{CA^2}{TB^3} \frac{\eps_0 A^{ j+\alpha_2+\alpha_3+1}T^{\frac{\alpha_1}{2}}B^{\alpha_1} [( j+ \alpha_1+\alpha_3)!]^{\sigma-\delta} [(j +\abs\alpha)!]^{\delta}}{(j+\abs\alpha+1)^{6+2\sigma}},
\end{align*}
the last inequality following from the fact that $\beta=\alpha-(2,0,0)$  in view of \eqref{debeta}.  For the terms in the third line  of \eqref{abeta},  we use \eqref{debeta} and the inductive assumption \eqref{inductiveass} again to get that
\begin{align*}
&\sup_{t\leq T}  e^{\frac t4}  t^{j+1}(\norm{\mathcal{D}^{\beta} \partial_t^{j}\partial_1\partial_2  u}_{H^2}+ \norm{\mathcal{D}^{\beta} \partial_t^{j}\partial_1\partial_3  u}_{H^2}
     + \norm{\mathcal{D}^{\beta} \partial_t^{j}\partial_2^2  u}_{H^2})\\
&\leq \sup_{t\leq T}  e^{\frac t4}  t^{j}\norm{\mathcal{D}^{\beta+(1,1,0)} \partial_t^{j} u}_{H^2}+T^\frac{1}{2}\sup_{t\leq T}  e^{\frac t4}  t^{j}\norm{\mathcal{D}^{\beta+(1,0,1)} \partial_t^{j} u}_{H^2}\\
&\quad+\sup_{t\leq T}  e^{\frac t4}  t^{j}\norm{\mathcal{D}^{\beta+(0,2,0)} \partial_t^{j} u}_{H^2}\\
&\leq CT^\frac{1}{2}\frac{\eps_0 A^{j+\beta_2+\beta_3+3}T^{\frac{\beta_1+1}{2}}B^{\beta_1+1} [(j+\beta_1+\beta_3+2)!]^{\sigma-\delta} [(j+\abs\beta+2)!]^{\delta}}{(j+\abs\beta+3)^{6+2\sigma}}\\
&\leq \frac{CA^2}{B} \frac{\eps_0 A^{ j+\alpha_2+\alpha_3+1}T^{\frac{\alpha_1}{2}}B^{\alpha_1} [( j+ \alpha_1+\alpha_3)!]^{\sigma-\delta} [(j +\abs\alpha)!]^{\delta}}{(j+\abs\alpha+1)^{6+2\sigma}}.
\end{align*}
Similarly, for  the  terms in the last line  of \eqref{abeta}, we have
\begin{align*}
&\sup_{t\leq T}e^{\frac t4}  t^{j+1}\Big(\norm{\mathcal{D}^{\beta} \partial_t^{j+1}  u}_{H^2}+\norm{\mathcal{D}^{\beta} \partial_t^{j}\nabla  u}_{H^2}\Big)\\
&\leq C\frac{\eps_0 A^{j+\beta_2+\beta_3+2}T^{\frac{\beta_1+2}{2}}B^{\beta_1+1} [(j+\beta_1+\beta_3+1)!]^{\sigma-\delta} [(j+\abs\beta+1)!]^{\delta}}{(j+1+\abs\beta+1)^{6+2\sigma}}\\
&\leq \frac{CA}{B} \frac{\eps_0 A^{ j+\alpha_2+\alpha_3+1}T^{\frac{\alpha_1}{2}}B^{\alpha_1} [( j+ \alpha_1+\alpha_3)!]^{\sigma-\delta} [(j +\abs\alpha)!]^{\delta}}{(j+\abs\alpha+1)^{6+2\sigma}}.
\end{align*}
As a result, observing $A,B,T\geq 1$ and thus  substituting these estimates above into \eqref{abeta}, we get the desired estimate \eqref{linpart}.

{\it Step 3.}
In this step we deal with the first term on the right-hand side of \eqref{linpart}. Precisely, we will prove that
\begin{multline}\label{nonpart}
	\sup_{t\leq T} e^{\frac t4}  t^{j+1}  \norm{\mathcal D^{\beta} \partial_t^{j}  \big((u\cdot\nabla) u\big)}_{H^2}\\
	\leq  \frac{CA^2}{B} \frac{\eps_0 A^{ j+\alpha_2+\alpha_3+1}T^{\frac{\alpha_1}{2}}B^{\alpha_1} [( j+ \alpha_1+\alpha_3)!]^{\sigma-\delta} [(j +\abs\alpha)!]^{\delta}}{(j+\abs\alpha+1)^{6+2\sigma}}.
\end{multline}
In fact,  using Leibniz's formula the fact that  $H^2([0,1]\times\mathbb R^2)$ is an algebra under pointwise multiplication, we obtain
\begin{equation}\label{halpha++}
\begin{aligned}
		& \sup_{t\leq T} e^{\frac{t}{4}} t^{j+1}\norm{ \mathcal D^{\beta} \partial_t^{j} \big ((u\cdot\nabla)u\big)  }_{H^2} \\
		&\leq C \sup_{t\leq T} \,  \sum_{ (\gamma,k)\leq (\beta,j)}  \binom{\beta}{\gamma}\binom{j}{k} \Big(e^{\frac{t}{4}}t^{k}\norm{ \mathcal D^{\gamma} \partial_t^{k}u }_{H^2}\Big) \Big(   t^{j-k} t  \norm{ \mathcal D^{\beta-\gamma} \partial_t^{j-k}\nabla u }_{H^2}\Big)\\
		&\leq   C\sum_{ (\gamma,k)\leq (\beta,j)}    \frac{(\abs\beta+j)!}{(\abs\gamma+k)!(\abs\beta+j-\abs\gamma-k)!}\\
		&\qquad \times \frac{\eps_0 A^{k+\gamma_2+\gamma_3+1}T^{\frac{\gamma_1}{2}} B^{\gamma_1}  [(k+\gamma_1+\gamma_3)!]^{\sigma-\delta}[(k+\abs\gamma)!]^{ \delta}}{(k+\abs\gamma+1)^{6+2\sigma}}\\
		&\qquad\times \eps_0 A^{j-k+(\beta_2-\gamma_2)+(\beta_3-\gamma_3)+2}T^{\frac{\beta_1-\gamma_1+2}{2}} B^{\beta_1-\gamma_1+1}
		\\
	&\qquad  \times\frac{[(j-k+\beta_1-\gamma_1+\beta_3-\gamma_3+1)!]^{\sigma-\delta}[( j-k+\abs\beta-\abs\gamma+1)!]^{\delta}}{ ( j-k+\abs\beta-\abs\gamma+2)^{6+2\sigma}}\\
	&\leq \frac{C\eps_0 A^2}{B} \frac{\eps_0 A^{ j+\alpha_2+\alpha_3+1}T^{\frac{\alpha_1}{2}}B^{\alpha_1} [( j+ \alpha_1+\alpha_3)!]^{\sigma-\delta} [(j +\abs\alpha)!]^{\delta}}{(j+\abs\alpha+1)^{6+2\sigma}},
		\end{aligned}
	\end{equation}
	where  the second inequality holds because of the inductive assumption \eqref{inductiveass} as well as \eqref{factorial},
	and in
	the last inequality we used the fact that, observing $\beta=\alpha-(2,0,0),$
		\begin{multline*}
		\frac{1}{(k+\abs\gamma+1)^{6+2\sigma}( j-k+\abs\beta-\abs\gamma+2)^{6+2\sigma}} \\
		\leq  \frac{C}{(j+\abs\alpha+1)^{6+2\sigma}} \Big(\frac{1}{(k+\abs\gamma+1)^{6+2\sigma} }+\frac{1}{( j-k+\abs\beta-\abs\gamma+2)^{6+2\sigma}}\Big),
	\end{multline*}
	and
	\begin{align*}
		&\frac{(\abs\beta+j)!}{(\abs\gamma+k)!(\abs\beta+j-\abs\gamma-k)!} [(k+\gamma_1+\gamma_3)!]^{\sigma-\delta}[(k+\abs\gamma)!]^{ \delta}\\
	&\qquad\quad  \times [(j-k+\beta_1-\gamma_1+\beta_3-\gamma_3+1)!]^{\sigma-\delta}[( j-k+\abs\beta-\abs\gamma+1)!]^{\delta}\\
	&\leq   (\abs\beta+j)! [(k+\abs\gamma)!]^{ \delta-1}[( j-k+\abs\beta-\abs\gamma)!]^{\delta-1}\\
	&\qquad\quad  \times [(j+\beta_1+\beta_3+1)!]^{\sigma-\delta} ( j-k+\abs\beta-\abs\gamma+1)^\delta\\
	&\leq  C  [(j+\beta_1+\beta_3)!]^{\sigma-\delta}  [(\abs\beta+j+1)!]^\delta    \leq  C  [(j+\alpha_1+\alpha_3)!]^{\sigma-\delta}  [(j+\abs\alpha)!]^\delta,
	\end{align*}
	due to the fact  $p!q!\leq (p+q)!$ and $1\leq\delta\leq\sigma$. Thus  assertion \eqref{nonpart}  follows from \eqref{halpha++}.

{\it Step 4.} Substituting the estimate \eqref{nonpart} into \eqref{linpart} yields
\begin{multline*}
	\sup_{t\leq T} e^{\frac t4}  t^{j+\frac{\alpha_1+\alpha_2}{2}}\norm{ \partial^{\alpha} \partial_t^{j}  u}_{H^2}=\sup_{t\leq T} e^{\frac t4}  t^{j}\norm{\mathcal D^{\alpha} \partial_t^{j}   u}_{H^2}\\
	\leq  \frac{CA^2}{B} \frac{\eps_0 A^{ j+\alpha_2+\alpha_3+1}T^{\frac{\alpha_1}{2}}B^{\alpha_1} [( j+ \alpha_1+\alpha_3)!]^{\sigma-\delta} [(j +\abs\alpha)!]^{\delta}}{(j+\abs\alpha+1)^{6+2\sigma}}\\
	\leq  \frac{\eps_0 A^{ j+\alpha_2+\alpha_3+1}T^{\frac{\alpha_1}{2}}B^{\alpha_1} [( j+ \alpha_1+\alpha_3)!]^{\sigma-\delta} [(j +\abs\alpha)!]^{\delta}}{(j+\abs\alpha+1)^{6+2\sigma}},
\end{multline*}
provided that $B$ is chosen sufficiently large so that  $B\geq CA^2+1.$ Note this estimate holds  for all $j\geq 0$ and any multi-index $\alpha\in\mathbb Z_+^3$ with $\alpha_1=N$ under the inductive assumption \eqref{inductiveass}, and hence extends to all $(j,\alpha)\in\mathbb Z_+\times\mathbb Z_+^3$. The proof of Proposition \ref{prop:x1} is completed.
  \end{proof}

  \begin{proof}[Completing the proof of Theorem \ref{thm:smoothing}]
By Proposition \ref{prop:x1}, we have that for any $(j,\alpha)\in\mathbb Z_+\times\mathbb Z_+^3$,
 \begin{align*}
	&\sup_{t\leq T} e^{\frac t4}  t^{j+\frac{\alpha_1+\alpha_2}{2}}\norm{ \partial^{\alpha} \partial_t^{j}  u}_{H^2}\\
	&\leq  \frac{\eps_0 A^{ j+\alpha_2+\alpha_3+1}T^{\frac{\alpha_1}{2}}B^{\alpha_1} [( j+ \alpha_1+\alpha_3)!]^{\sigma-\delta} [(j +\abs\alpha)!]^{\delta}}{(j+\abs\alpha+1)^{6+2\sigma}}\\
	&\leq   \eps_0 (2^\delta A)^{ j+\alpha_2+\alpha_3+1}T^{\frac{\alpha_1}{2}}(2^\delta B)^{\alpha_1} [( j+ \alpha_1+\alpha_3)!]^{\sigma}  ( \alpha_2 !)^{\delta},
\end{align*}
the last line using the fact $(p+q)!\leq 2^{p+q}p!q!$, which yields
\begin{equation*}
	[(j +\abs\alpha)!]^{\delta}\leq 2^{(j+\abs\alpha)\delta} (\alpha_2!)^\delta [(j+\alpha_1+\alpha_3)!]^{\delta}.
\end{equation*}
As a result, assertion \eqref{result:2} follows by choosing $C_1=2^\delta A$ and $C_2=2^\delta B$, which completes the proof of Theorem \ref{thm:smoothing}.
  \end{proof}

\section{Refined Gevrey radius for the variable $x_2$}\label{sec:radius}

Let $u$ be the solution to system \eqref{ANS}, as constructed in Theorem \ref{thm:main1}.
In this section, we investigate the Gevrey radius $R_{x_2}^{{\delta}}(t)$ of the solution  $u$ near the initial time. Specifically, we  present the proof of Theorem \ref{thm:radius} by  establishing a refined estimate for $R_{x_2}^{{\delta} }(t)$ in the variable $x_2$.

\subsection{Decomposition of the solution and proof of Theorem \ref{thm:radius}}

Note  estimate \eqref{result:2} in Theorem \ref{thm:smoothing} implies $R_{x_2}^{{\delta} }(t)$ is bounded from below by
\begin{equation*}
\rho\lambda t^{\frac{1}{2}}
\end{equation*}
with $0<\lambda<1$ and $\frac{\rho_0}{2}\leq \rho\leq \rho_0.$   To analyze the behavior of the Gevrey radius near the initial time, we perform a Gevrey-type energy estimate for  $u$ on $[0,T]$, where $0<T\leq T_0\leq 1$ for some sufficiently small $T_0$ to be chosen later (see \eqref{dep}). For this purpose, it is natural to replace the small constant $0<\lambda<1$ in \eqref{dlambda}  by a time-dependent function that grows appropriately near $t=0$. Precisely, for any given time $T>0$  we define accordingly
  \begin{equation}\label{mut}
 	\mu(t)=\mu_{T}(t)\stackrel{\rm def}{=} \frac{1}{ 2^{5+2\sigma} 3\rho_0}\frac{\sqrt{\psi(T)}}{T}t,
 \end{equation}
 where $\psi(T)\geq 1$ will be specified later (see \eqref{psi}), and additionally, we introduce the function
\begin{equation}\label{taut}
\eta(t)=\eta_T(t)\stackrel{\rm def}{=}\frac{\rho_0}{4}+\frac{\rho_0}{4}e^{-\frac{t}{\sqrt{T}}},
\end{equation}
which satisfies
\begin{equation}\label{est:tau}
  \forall\ t\geq 0,\quad \frac{\rho_0}{4}\leq \eta_T(t)\leq \frac{\rho_0}{2}.
\end{equation}
 In the subsequent discussion, we will write $\mu$ and $\eta$ instead of $\mu_T$ and $\eta_T$, omitting the subscript  if no confusion occurs.

\begin{definition}
	\label{def:normx2+} Let $T>0$ be an arbitrary time,   and    let $(\mu(t),\psi(T))$  and $\eta(t)$  be defined as in \eqref{mut} and \eqref{taut}, respectively.   We define   two new norms $\abs{\cdot}_{X_{\eta,\mu,T}}$ and $\abs{\cdot}_{Y_{\eta,\mu,T}} $ as follows:
	\begin{equation*}
		\abs{g}_{X_{\eta,\mu,T}}\stackrel{\rm def}{=}e^{-\frac{\psi(T)}{T}t}\abs{g}_{X_{\eta, \mu}}\  \textrm{ and }\ 	 \abs{g}_{Y_{\eta,\mu,T}}\stackrel{\rm def}{=}e^{-\frac{\psi(T)}{T}t}\abs{g}_{Y_{\eta,\mu}},
	\end{equation*}
	where  $\abs{\cdot}_{X_{\eta,\mu}}$ and $\abs{\cdot}_{Y_{\eta,\mu}}$ given in Definition \ref{def:normx2}. Explicitly, these norms are defined by
 \begin{eqnarray*}
 \left\{
\begin{aligned}
	 &\abs{g}_{X_{\eta,\mu,T}}^2 =   e^{-\frac{2\psi(T)}{T}t}\sum_{\stackrel{(j,\alpha)\in\mathbb Z_+\times\mathbb Z_+^3}{ j+\abs \alpha\leq 3}}   \sum_{\beta\in\mathbb Z_+^2}   r^{ \alpha_1}  M_{\eta,\beta}^2  \norm{ D_\mu^{\beta}\partial_t^j\partial^{ \alpha } u}^2_{L^2},\\
	 &\abs{g}_{Y_{\eta,\mu,T}}^2=e^{-\frac{2\psi(T)}{T}t}  \sum_{\stackrel{(j,\alpha)\in\mathbb Z_+\times\mathbb Z_+^3}{ j+\abs \alpha\leq 3}} \, \sum_{ \beta\in\mathbb Z_+^2  } r^{\alpha_1} \inner{\abs\beta +1}    M_{\eta,\beta}^2  \norm{D_\mu^{\beta} \partial_t^j\partial^{ \alpha } u}^2_{L^2}.
	 \end{aligned}
	 \right.
\end{eqnarray*}
Recall $D_\mu^\beta$ and $M_{\eta,\beta}$ are defined as in \eqref{dlambda} and \eqref{mrho}, respectively, namely,
\begin{equation}\label{+dlambda}
	D_{\mu}^\beta= \mu^{\beta_1}t^{\frac{\beta_1}{2}}  \partial_2 ^{\beta_1}\partial_3^{\beta_2} \ \textrm{ for } \  \beta=(\beta_1,\beta_2)\in\mathbb Z_+^2,
\end{equation}
and
 \begin{equation*}
	M_{\eta,\beta}=\frac{\eta^{\abs\beta+1}(\abs\beta+1)^{6+2\sigma}}{(\beta_2!)^{\sigma-\delta}(\abs\beta!)^{\delta}} \ \textrm{ for }\  \beta=(\beta_1,\beta_2)\in\mathbb Z_+^2,
\end{equation*}
where $\delta=\delta(\sigma)$ is defined in \eqref{gamma1}.
 \end{definition}

 \begin{remark}
Note that in Section \ref{sec:x2},  the smallness assumption on $\lambda$  is essential for  estimating  $\abs{u}_{X_{\rho,\lambda}}$ (see Lemma \ref{lem:J5} for details).
However, if we replace the norm $\abs{\cdot}_{X_{\rho,\lambda}}$ with
 $\abs{\cdot}_{X_{\eta,\mu}}$, the argument in Lemma \ref{lem:J5} no longer applies due to the fact that $\mu\geq 1$ is large. To address this, we introduce the weight function $e^{-\frac{\psi(T) }{T}t}$ to control the analogue of the term $J_5$ from Lemma \ref{lem:J5}, seeing \eqref{demu} for details.
\end{remark}

Let $u$ be the solution constructed in Theorem \ref{thm:main1} which satisfies the Gevrey regularity estimate \eqref{result:2}. This implies that the radius $R_{x_2}^\delta$ defined in \eqref{+def:radius} is bounded from below by a constant multiple of $\sqrt{t}.$  In this section we derive  a refined estimate for this radius.
Following the approach in \cite{MR4816041}, we decompose the solution  $u$ as
\begin{equation}
	\label{decom}
	u=u_L+v,
\end{equation}
where the linear part $u_L$ satisfies
\begin{equation}\label{eq:uL}
    \left\{
    \begin{aligned}
&\partial_tu_L-\Delta_{\rm h}u_L=0,\quad(t,x)\in \mathbb{R}_+\times[0,1]\times\mathbb{R}^2,\\
&u_L|_{x_{1}=0,1}=0,\quad u_L|_{t=0}=u_0,
    \end{aligned}
    \right.
\end{equation}
with the initial datum $u_0$ being the same as that in \eqref{ANS}.
Note  $u_0|_{x_1=0,1}=\partial_1^2u_0|_{x_1=0,1}=\partial_1^4u_0|_{x_1=0,1}=0$ by the assumption of Theorem \ref{thm:radius}. Then letting $x_1=0$ or $1$ in equation \eqref{eq:uL} yields
\begin{equation}\label{bdc}
 u_{L}|_{x_1=0, 1}=\partial_1^2 u_L|_{x_1=0, 1}=\partial_1^4 u_L|_{x_1=0, 1}=0.
\end{equation}
Consequently, applying the divergence operator to \eqref{eq:uL} and performing a direct energy estimate yields $\norm{\divv u_L}_{L^2}\leq \norm{\divv u_0}_{L^2}$. This gives $\divv u_L=0.$ Then, since $\divv u=0,$  the
   nonlinear part $v$  in \eqref{decom} satisfies
\begin{equation}\label{eq:v}
    \left\{
    \begin{aligned}
&\partial_tv-\Delta_{\rm h}v+u\cdot\nabla u=-\nabla p,\quad (t,x)\in\mathbb{R}_+\times[0,1]\times\mathbb{R}^2,\\
&\divv v=0,\quad v|_{x_{1}=0,1}=0,\quad v|_{t=0}=0.
    \end{aligned}
    \right.
\end{equation}
We begin by stating the estimates on $u_L$ and $v,$ postponing the proof to later subsections.

  \begin{proposition}[Estimate on the linear part]\label{lem:uL}
Let $u_L$ satisfy the heat equation \eqref{eq:uL}. Then there exists a constant $C_{in}>0$, depending on the Gevrey norm of the initial datum $u_0$ as specified  in assumption \eqref{assume:small}, such that for any  $0<T\leq 1$,
\begin{equation*}
	\sup_{t\leq T} \abs{ u_L(t)}_{X_{\eta,\mu,T}}^2+\int_0^T \abs{\nabla_{\rm h}  u_L(t)}_{X_{\eta,\mu,T}}^2dt+\frac{1}{\sqrt{T}}\int_0^T \abs{ u_L(t)}_{Y_{\eta,\mu,T}}^2dt\leq C_{in},
\end{equation*}
and
\begin{equation*}
	\sup_{t\leq T} \abs{\partial_3 u_L(t)}_{X_{\eta,\mu,T}}^2+\int_0^T \abs{\nabla_{\rm h} \partial_3 u_L(t)}_{X_{\eta,\mu,T}}^2dt+\frac{1}{\sqrt{T}}\int_0^T \abs{\partial_3 u_L(t)}_{Y_{\eta,\mu,T}}^2dt\leq C_{in},
\end{equation*}
where the norms $\abs{\cdot}_{X_{\eta,\mu,T}}$ and $\abs{\cdot}_{Y_{\eta,\mu,T}}$ are given in Definition \ref{def:normx2+}.
\end{proposition}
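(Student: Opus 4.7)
The plan is to prove this proposition by direct energy estimates on the linear heat equation \eqref{eq:uL}, exploiting its constant coefficients and the crucial compatibility conditions \eqref{bdc} inherited from the hypothesis of Theorem \ref{thm:radius}. The key observation is that the differential operators $D_\mu^\beta \partial_t^j \partial^\alpha$ commute with $\Delta_{\rm h}$ except for a tame commutator coming from the time-dependence of the weight $\mu(t)$. For a multi-index $(j,\alpha)$ with $j+|\alpha|\leq 3$ and $\beta \in \mathbb{Z}_+^2$, set $v = D_\mu^\beta \partial_t^j \partial^\alpha u_L$; then $v$ satisfies
\begin{equation*}
	\partial_t v - \Delta_{\rm h} v = \frac{3\beta_1}{2t} v,
\end{equation*}
since $\mu(t) t^{1/2} \propto t^{3/2}$ and $D_\mu^\beta = c_T^{\beta_1} t^{3\beta_1/2} \partial_2^{\beta_1}\partial_3^{\beta_2}$.

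First I would do the standard $L^2$ energy estimate for $v$, where the boundary terms arising from integration by parts in $x_1$ all vanish: for $\alpha_1$ even, $v|_{x_1=0,1}=0$ directly from \eqref{bdc}, while for $\alpha_1$ odd, $\partial_1 v|_{x_1=0,1}=0$ from $\partial_1^{2k}u_L|_{x_1=0,1}=0$ ($k=1,2$), so $v \partial_1 v|_{x_1=0,1}=0$ in all cases with $\alpha_1 \leq 3$. Next I would multiply by $r^{\alpha_1} M_{\eta,\beta}^2 e^{-2\psi(T) t/T}$ and sum over $(j,\alpha)$ and $\beta$. The time derivatives of the weights produce the following contributions, all appearing with signs compatible with the left-hand side of the claimed bound: the derivative of $M_{\eta,\beta}^2$ yields $(|\beta|+1)\eta'/\eta \leq -c/\sqrt{T}$ (by \eqref{taut}), producing the $\frac{1}{\sqrt{T}}\int_0^T|u_L|_{Y_{\eta,\mu,T}}^2\,dt$ term on the LHS; the derivative of the exponential gives $-\psi(T)/T$ times the $X$-norm.

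The main obstacle is the only unfavorable term, coming from $\partial_t D_\mu^\beta$, namely $\sum_\beta \frac{3\beta_1}{2t} M_{\eta,\beta}^2 \|v\|^2$. I would control this by rewriting
\begin{equation*}
	\frac{1}{t}\|v\|^2 = \mu(t)^2 \|D_\mu^{\tilde\beta}\partial_2 w\|^2 \quad\text{with } \tilde\beta=\beta-(1,0),
\end{equation*}
where $w=\partial_t^j\partial^\alpha u_L$, using $\mu(t)^2 \leq \mu(T)^2 = \psi(T)/(2^{5+2\sigma}\cdot 3\rho_0)^2$, combined with the ratio estimate $M_{\eta,\beta}^2/M_{\eta,\tilde\beta}^2 \leq (\eta\cdot 2^{6+2\sigma})^2/|\beta|^{2\delta}$ from \eqref{betatilde} (noting $\eta \leq \rho_0/2$). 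The specific numerical choice of the constant $2^{5+2\sigma}\cdot 3\rho_0$ in the definition \eqref{mut} of $\mu$ is what makes the prefactor after cancellation small enough so that, after summing over $\beta_1 \geq 1$ and using $\beta_1/|\beta|^{2\delta} \leq 1$ (valid because $\delta \geq 1$), the bad term is absorbed by a small fraction of $|\nabla_{\rm h} u_L|_{X_{\eta,\mu,T}}^2$ that appears on the LHS as dissipation.

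Finally, at $t=0$ the only non-vanishing contribution to $|u_L(0)|_{X_{\eta,\mu,T}}$ comes from $\beta_1=0$ (since $D_\mu^\beta$ contains $t^{\beta_1/2}$), reducing to $\sum_{\beta_2}\eta(0)^{2(\beta_2+1)}(\beta_2+1)^{12+4\sigma}(\beta_2!)^{-2\sigma}\|\partial_3^{\beta_2}\partial_t^j\partial^\alpha u_L|_{t=0}\|_{H^0}^2$, which using the equation to convert $\partial_t$ into spatial derivatives is bounded by $\|u_0\|_{G_{\rho_0,\sigma,6}}^2$ thanks to $\eta(0)=\rho_0/2<\rho_0$ and $|\alpha|+j+\beta_2 \leq 3+3 = 6$. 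Integrating the resulting differential inequality from $0$ to $T$ yields the first estimate. For the second estimate on $\partial_3 u_L$, observe that $\partial_3$ commutes with \eqref{eq:uL} and the boundary conditions, so $\partial_3 u_L$ solves the same heat equation with data $\partial_3 u_0 \in G_{\rho_0,\sigma,5}$; the identical argument applies, shifting the $\partial_3$-count by one (which remains covered by the $G_{\rho_0,\sigma,6}$ regularity of $u_0$).
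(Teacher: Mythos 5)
Your overall architecture matches the paper's (energy estimate for the heat equation, boundary terms vanish by \eqref{bdc}, treat the commutator arising from $\partial_t D_\mu^\beta$, integrate with the weights and check the data at $t=0$), but the step where you absorb the commutator term has a real gap.

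You rewrite
\begin{equation*}
\frac{3\beta_1}{2t}\norm{D_\mu^\beta w}_{L^2}^2 \;=\; \frac{3\beta_1}{2}\,\mu(t)^2\norm{D_\mu^{\tilde\beta}\partial_2 w}_{L^2}^2,
\end{equation*}
and then try to absorb this entirely into the dissipation. But after using $\mu(t)^2 \leq \mu(T)^2 = \psi(T)/(2^{5+2\sigma}3\rho_0)^2$, the ratio $M_{\eta,\beta}^2/M_{\eta,\tilde\beta}^2 \leq (\eta\,2^{6+2\sigma})^2/\abs{\beta}^{2\delta}$ with $\eta\leq\rho_0/2$, and $\beta_1/\abs{\beta}^{2\delta}\leq 1$, the prefactor that survives is $\frac{3\beta_1}{2}\cdot\frac{\psi(T)}{9\abs{\beta}^{2\delta}} \leq \frac{\psi(T)}{6}$, \emph{not} a small absolute constant. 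The factor $\psi(T)\approx \abs{\ln T}$ (see \eqref{psi}) is unbounded as $T\to 0$, and the argument needs small $T$ in Theorem~\ref{thm:radius}; so the commutator term is bounded only by $\frac{\psi(T)}{6}\abs{\nabla_{\rm h}u_L}_{X_{\eta,\mu}}^2$, which cannot be absorbed by a dissipation term whose coefficient is $1$. Indeed for $\beta=(1,0)$ at $t=T$ the prefactor is at least $\psi(T)/24$, confirming the failure for small $T$. The numerical constant $2^{5+2\sigma}3\rho_0$ in \eqref{mut} is chosen to cancel the \emph{fixed} factors $\rho_0$, $2^{6+2\sigma}$, $3/2$; it cannot cancel $\psi(T)$, which is not a constant.

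What the paper does instead (see \eqref{demu}) is keep the factor $\mu(t)$ (rather than $\mu(t)^2$) on the gradient side: write
\begin{equation*}
\frac{\beta_1}{t}\norm{D_\mu^\beta w}_{L^2}^2 = \beta_1\big(\mu t^{1/2}\big)'\cdot\frac{M_{\eta,\beta}}{M_{\eta,\tilde\beta}}\,M_{\eta,\tilde\beta}M_{\eta,\beta}\norm{D_\mu^{\tilde\beta}\partial_2 w}\,\norm{D_\mu^\beta w},
\end{equation*}
then bound $\beta_1\abs{(\mu t^{1/2})'}\,\tfrac{M_{\eta,\beta}}{M_{\eta,\tilde\beta}}\leq \sqrt{\psi(T)/T}$, and \emph{only then} apply Young's inequality so that the bound splits as $\tfrac14 M_{\eta,\tilde\beta}^2\norm{D_\mu^{\tilde\beta}\partial_2 w}^2 + \tfrac{\psi(T)}{T}M_{\eta,\beta}^2\norm{D_\mu^\beta w}^2$. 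The gradient piece then carries only the fixed coefficient $\tfrac14$, while the \emph{undifferentiated} piece carries $\psi(T)/T$ and is cancelled by the exponential damping $e^{-2\psi(T)t/T}$ built into $\abs{\cdot}_{X_{\eta,\mu,T}}$. Your proposal mentions the $-\psi(T)/T$ coming from that exponential but never uses it to soak up the commutator; with your split, the $\psi(T)$ sits on the gradient term where the exponential cannot help. This Cauchy--Schwarz/Young split is the essential point, and your proposal misses it.
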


\begin{proposition}[Estimate on the nonlinear part]\label{prop:v}
Let $v$ satisfy the initial-boundary problem \eqref{eq:v}.  Then there exists  a constant  $\tilde C\ge 1$  such that  for any  $0<T\leq 1$,
	\begin{equation}\label{+est:v}
	\begin{aligned}
	&	\sup_{t\leq T}\abs{v(t)}_{X_{\eta,\mu,T}}^2+\int_0^T \abs{ \nabla_{\rm h}v}_{X_{\eta,\mu,T}}^2dt +\frac{1}{\sqrt{T}}\int_0^T \abs{v}_{Y_{\eta,\mu,T}}^2dt \leq
\tilde Cr^{-9} e^{3\psi(T)}\sqrt{T}\\
&\qquad+\tilde Cr^{-9}e^{ 3\psi(T)} \sqrt{T}\Big(1+\sup_{ t\leq T}\abs{v}_{X_{\eta,\mu,T}}^2\Big)\Big(\sup_{ t\leq T}\abs{v}_{X_{\eta,\mu,T}}^2+\int^T_0\abs{\nabla_{\rm h}v}_{X_{\eta,\mu,T}}^2dt\Big)\\
	&\qquad+\tilde C r^{-9}e^{3\psi(T)}\sqrt T \Big(1 +  \sup_{ t\leq T} \abs{v}_{X_{\eta,\mu,T}}^2 \Big) \frac{1}{\sqrt T}\int_0^T    \abs{v}_{Y_{\eta,\mu,T}}^2dt,
	\end{aligned}
\end{equation}
recalling $\psi(T)$ is given in \eqref{mut} and  $\abs{\cdot}_{X_{\eta,\mu,T}}$ and $\abs{\cdot}_{Y_{\eta,\mu,T}}$ are given in Definition \ref{def:normx2+}.
\end{proposition}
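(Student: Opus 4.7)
The plan is to run a weighted Gevrey energy estimate for $v$ along the lines of Section \ref{sec:x2}, with the time-dependent Gevrey parameter $\mu(t)$ in place of the small constant $\lambda$ and the exponential weight $e^{-\psi(T)t/T}$ built into the norm. Concretely, I apply $D_\mu^\beta\partial_t^j\partial^\alpha$ to \eqref{eq:v}, pair with $D_\mu^\beta\partial_t^j\partial^\alpha v$, multiply by $r^{\alpha_1}M_{\eta,\beta}^2 e^{-2\psi(T)t/T}$, and sum over $j+|\alpha|\leq 3$ and $\beta\in\mathbb{Z}_+^2$. Exploiting $\partial_t D_\mu^\beta = D_\mu^\beta\partial_t + \tfrac{3\beta_1}{2t}D_\mu^\beta$ (which follows from $\mu(t)t^{1/2}\propto t^{3/2}$) together with $\tfrac{1}{2}\tfrac{d}{dt}M_{\eta,\beta}^2=\tfrac{\eta'}{\eta}(|\beta|+1)M_{\eta,\beta}^2$, this leads to the weighted energy identity
\begin{equation*}
\tfrac12\tfrac{d}{dt}\abs{v}_{X_{\eta,\mu,T}}^2+\abs{\nabla_{\rm h}v}_{X_{\eta,\mu,T}}^2-\tfrac{\eta'}{\eta}\abs{v}_{Y_{\eta,\mu,T}}^2+\tfrac{\psi(T)}{T}\abs{v}_{X_{\eta,\mu,T}}^2=\sum_{\ell=1}^{5}\widetilde J_\ell,
\end{equation*}
where $\widetilde J_1,\dots,\widetilde J_4$ are the analogs of $J_1,\dots,J_4$ in \eqref{J1-J5} with nonlinear input $u\cdot\nabla u=(u_L+v)\cdot\nabla(u_L+v)$, and $\widetilde J_5$ is the commutator term $\sum_{\beta_1\geq 1}\tfrac{3\beta_1}{2t}r^{\alpha_1}M_{\eta,\beta}^2\|D_\mu^\beta\partial_t^j\partial^\alpha v\|_{L^2}^2$ (with the $e^{-2\psi(T)t/T}$ weight understood).

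The next step treats $\widetilde J_1,\dots,\widetilde J_4$ by splitting each input as $u=u_L+v$. The pure-$v$ pieces reproduce the proofs of Lemmas \ref{lem:J1}--\ref{lem:J4} verbatim and produce cubic terms of the form $Cr^{-9/2}\abs{v}_{X_{\eta,\mu,T}}(\abs{v}_{Y_{\eta,\mu,T}}^2+\abs{\nabla_{\rm h}v}_{X_{\eta,\mu,T}}^2)$ plus an absorbable $Cr^{1/4}\abs{\nabla_{\rm h}v}_{X_{\eta,\mu,T}}^2$. Mixed $u_L$-$v$ terms and the $u_L$-involving pressure/boundary contributions are handled by H\"older in $t$ combined with Proposition \ref{lem:uL}, whose $C_{in}$-bounds on $u_L$ and $\partial_3u_L$ in the $X_{\eta,\mu,T}$- and $L^2_t X_{\eta,\mu,T}$-norms, via Cauchy--Schwarz, extract an explicit $\sqrt T$. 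The purely $u_L$-quadratic contribution forms a true forcing of integrated size $\tilde Cr^{-9}\sqrt T$; the $e^{3\psi(T)}$ prefactor in \eqref{+est:v} enters through the shift of the Gevrey radius from $\rho$ (the natural radius for $u_L$ supplied by Theorem \ref{prop:wellposedness}) down to $\eta\in[\rho_0/4,\rho_0/2]$ and through the exchange of exponential weights when invoking Proposition \ref{lem:uL}.

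The main obstacle is controlling $\widetilde J_5$, the analog of $J_5$ without the smallness of $\lambda$ available in Lemma \ref{lem:J5}. Two complementary bounds are available: writing $D_\mu^\beta=\mu\sqrt t\,D_\mu^{\tilde\beta}\partial_2$ and using \eqref{betatilde} yields $\widetilde J_5\leq C\mu(t)^2\abs{\nabla_{\rm h}v}_{X_{\eta,\mu,T}}^2$, which is effective in the regime $t\lesssim\sqrt T$ where $\mu(t)^2=c_*^2\psi(T)(t/T)^2$ is small; the cruder $\beta_1\leq|\beta|+1$ yields $\widetilde J_5\leq\tfrac{C}{t}\abs{v}_{Y_{\eta,\mu,T}}^2$, which in the regime $t\gtrsim\sqrt T$ is absorbed into the coercive lower bound $-\tfrac{\eta'}{\eta}\geq c/\sqrt T$ on the left-hand side of the identity. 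Splitting the time integral at $t=\sqrt T$ and combining these two bounds—together with the initial condition $v(0)=0$ which kills any otherwise singular boundary contribution at $t=0$—places the net contribution of $\widetilde J_5$ either into a controllable fraction of the dissipation or into the $\sqrt T e^{3\psi(T)}$ forcing on the right-hand side. Integrating the energy identity from $0$ to $T$ using $v(0)=0$ and carrying out the usual rearrangement to absorb the small cubic pieces into the left-hand side then yields \eqref{+est:v}.
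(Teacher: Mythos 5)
Your roadmap — the weighted energy estimate with the $e^{-\psi(T)t/T}$ factor built into the norm, decomposing $u=u_L+v$, invoking Proposition \ref{lem:uL} for the linear part, and isolating the commutator term $\widetilde J_5$ as the main obstacle — aligns with the paper's proof (which runs through Lemmas \ref{st1} and \ref{cor-tildej}). However, your handling of $\widetilde J_5$ does not close, and the time-split at $t=\sqrt T$ cannot be repaired. Your first bound $\widetilde J_5\leq C\mu(t)^2\abs{\nabla_{\rm h}v}_{X_{\eta,\mu}}^2$ is only absorbable into the dissipation when $\mu(t)\lesssim 1$, i.e.\ for $t\lesssim T/\sqrt{\psi(T)}$; the claim that $\mu(t)^2$ is small for $t\lesssim\sqrt T$ is false, since $\mu(\sqrt T)^2 = c_*^2\psi(T)/T$, which blows up as $T\to 0$ (recall $\psi(T)\approx\abs{\ln T}$). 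Your second bound $\widetilde J_5\leq \tfrac{C}{t}\abs{v}_{Y_{\eta,\mu}}^2$ with $C\geq 3/2$ (from $(\mu t^{1/2})'/(\mu t^{1/2})=\tfrac{3}{2t}$) requires $t\gtrsim 3e\sqrt T$ to be absorbed by $-\eta'/\eta\geq\tfrac{1}{2e\sqrt T}$, and the threshold $T/\sqrt{\psi(T)}$ for the first bound satisfies $T/\sqrt{\psi(T)}\ll\sqrt T$ for small $T$. The two regimes therefore do not overlap, leaving $\widetilde J_5$ uncontrolled on a nontrivial time interval regardless of where you split.

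The missing idea is a bilinear Cauchy--Schwarz rather than a time-split, exactly as in estimate \eqref{demu}. Write $D_\mu^\beta = \mu t^{1/2}D_\mu^{\tilde\beta}\partial_2$, but spend only \emph{one} factor of $\mu t^{1/2}$ (leaving the derivative $(\mu t^{1/2})'$, which satisfies $\abs{(\mu t^{1/2})'}\leq \tfrac{1}{2^{6+2\sigma}\rho_0}\big(\psi(T)/T\big)^{1/2}$ uniformly on $[0,T]$) and the gain $\beta_1/\abs{\beta}^\delta\leq 1$ from \eqref{betatilde}. This produces the bilinear estimate $\widetilde J_5\lesssim(\psi(T)/T)^{1/2}\abs{\nabla_{\rm h}v}_{X_{\eta,\mu}}\abs{v}_{X_{\eta,\mu}}$, and Young's inequality then gives $\widetilde J_5\leq\tfrac14\abs{\nabla_{\rm h}v}_{X_{\eta,\mu}}^2+\tfrac{\psi(T)}{T}\abs{v}_{X_{\eta,\mu}}^2$. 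The decisive step, which your proposal never uses, is that the $\tfrac{\psi(T)}{T}\abs{v}_{X_{\eta,\mu,T}}^2$ term is \emph{exactly cancelled} by the derivative of the weight $e^{-2\psi(T)t/T}$ — precisely the $+\tfrac{\psi(T)}{T}\abs{v}_{X_{\eta,\mu,T}}^2$ that appears on the left-hand side of your own energy identity. Without exploiting this cancellation, the argument does not produce a differential inequality valid on all of $[0,T]$, and \eqref{+est:v} does not follow.
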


The proofs of Propositions \ref{lem:uL} and \ref{prop:v} are given in  Subsections \ref{subsec:linear} and  \ref{subsec:nonlinear}, respectively. Using  these two propositions, we can now complete the proof of Theorem \ref{thm:radius}.

\begin{proof}[Proof of Theorem \ref{thm:radius}]
Let $\tilde C$ be the constant constructed in Proposition \ref{prop:v}. We define $T_0$ and $\psi(T)$ by setting
\begin{equation}\label{dep}
	\tilde Cr^{-9}e^{ 3\psi(T)}\sqrt{T}=\frac{1}{4} \ \textrm{ for }\  T\leq T_0\stackrel{\rm def}{=}\min\Big\{\frac{r^{18}}{16e^6\tilde C^2},\ \frac{1}{4}\Big\}.
\end{equation}
Then for any $0<T\leq T_0$, it holds that
 \begin{equation} \label{psi}
  \psi(T)= \frac13\ln \frac{r^9}{4\tilde C\sqrt{T}}\approx |\ln T|.
 \end{equation}
 Let $T_0$ and $\psi(T)$ be defined as above.
We will use the standard bootstrap argument to deduce the estimate on $v.$
Suppose that for any  $0<T\leq T_0$,
\begin{equation}\label{apes}
\sup_{t\leq T}\abs{v}_{X_{\eta,\mu,T}}^2+\int_0^T \abs{ \nabla_{\rm h}v}_{X_{\eta,\mu,T}}^2dt +\int_0^T \abs{v}_{Y_{\eta,\mu,T}}^2dt\leq 1.
\end{equation}
 Combining \eqref{apes} and  \eqref{dep} with \eqref{+est:v}, we obtain,  for any $0<T\leq T_0$,
 \begin{equation*}
 	\begin{aligned}
 \frac{1}{2}\sup_{t\leq T}\abs{v}_{X_{\eta,\mu,T}}^2+\frac{1}{2}\int_0^T \abs{ \nabla_{\rm h}v}_{X_{\eta,\mu,T}}^2dt + \frac{1}{2\sqrt{T}} \int_0^T \abs{v}_{Y_{\eta,\mu,T}}^2dt\leq \frac14. 	
 \end{aligned}
 \end{equation*}
 Since $T\leq T_0\leq\frac{1}{4}$, it follows that
  \begin{equation*}
 	\begin{aligned}
\sup_{t\leq T}\abs{v}_{X_{\eta,\mu,T}}^2+\int_0^T \abs{ \nabla_{\rm h}v}_{X_{\eta,\mu,T}}^2dt +\int_0^T \abs{v}_{Y_{\eta,\mu,T}}^2dt\leq \frac12.	
 \end{aligned}
 \end{equation*}
 By a bootstrap argument, we conclude that
 \begin{eqnarray*}
 	 \sup_{t\leq T}\abs{v}_{X_{\eta,\mu,T}}<+\infty,
 \end{eqnarray*}
 which with Proposition \ref{lem:uL} yields that
\begin{equation}\label{finalest}
 \sup_{ t\leq T}\abs{u(t)}_{X_{\eta,\mu,T}}\leq  \sup_{  t\leq T}\abs{ u_{L}(t)}_{X_{\eta,\mu,T}}+ \sup_{ t\leq T}\abs{v(t)}_{X_{\eta,\mu,T}} <+\infty.
\end{equation}
In particular, considering the norm $\abs{u(t)}_{X_{\eta,\mu,T}}$ at time $T,$  we have
\begin{align*}
e^{-\frac{\psi(T) t}{T}}\abs{u(t)}_{X_{\eta,\mu}}\big|_{t=T}=e^{-\psi(T)}\abs{u(T)}_{X_{\eta(T),\mu(T)}}.
\end{align*}
Combining this with \eqref{finalest}, we conclude that for any $T \leq T_0$ with $T_0$  defined in \eqref{dep},
\begin{align*}
    \abs{u(T)}_{X_{\eta(T),\mu(T)}}<+\infty.
\end{align*}
Hence for all $T\leq T_0$,  we recall  $\mu(T)=\frac{1}{   2^{5+2\sigma}3\rho_0} \sqrt{\psi(T)}$ and $\eta(T)=\frac{\rho_0}{4}+\frac{\rho_0}{4}e^{-\sqrt{T}}$ and obtain
 \begin{equation*}
	R_{x_2}^\delta(T) \geq C\eta(T)\mu(T)\sqrt{T}\geq C \sqrt{\psi(T)T}\geq C\sqrt{T|\ln T|},
\end{equation*}
the last inequality following from \eqref{psi}.
This completes the proof of Theorem \ref{thm:radius}.
\end{proof}

 \subsection{Proof of Proposition \ref{lem:uL}: estimate on the linear part}\label{subsec:linear}

This subsection is devoted to proving Proposition \ref{lem:uL}.  Observe $D_\mu^\beta=\mu^{\beta_1}t^{\frac{\beta_1}{2}}\partial_2^{\beta_1}\partial_3^{\beta_2}$ for $\beta=(\beta_1,\beta_2)\in\mathbb Z_+^2.$ Then
	 the boundary condition \eqref{bdc} enables us to   perform a direct energy estimate for $u_L$ to conclude that,  for any $(j,\alpha)\in\mathbb Z_+\times \mathbb Z_+^3$  with $j+\abs\alpha\leq 3$,
	  \begin{equation}\label{lipart}
	\begin{aligned}
& \frac{1}{2}\frac{d}{dt} \sum_{ \beta \in \mathbb Z_+^2} r^{\alpha_1}M_{\eta,\beta}^2\norm{D_\mu^\beta \partial_t^j\partial^{\alpha} u_L}_{L^2}^2+ \sum_{ \beta \in \mathbb Z_+^2}r^{\alpha_1}M_{\eta,\beta}^2\norm{ D_\mu^\beta \partial_t^j\partial^{\alpha} \nabla_{\rm h} u_L}_{L^2}^2\\
 &\qquad -\frac{\eta^{'}}{\eta}  \sum_{\beta \in \mathbb Z_+^2}r^{\alpha_1}(\abs{\beta }+1)M_{\eta,\beta}^2\norm{ D_\mu^\beta \partial_t^j\partial^{\alpha}  u_L}_{L^2}^2 \\
&\leq   \sum_{\stackrel{ \beta \in \mathbb Z_+^2}{\beta_1\geq 1}} r^{\alpha_1}   \beta_1\frac{\big(\mu (t)t^{\frac{1}{2}}\big)'}{\mu(t) t^{\frac{1}{2}}}   M_{\eta,\beta}^2\norm{D_\mu^\beta \partial_t^j\partial^{\alpha} u_L}_{L^2}^2.
\end{aligned}
\end{equation}
Moreover, we recall $\mu(t)t^{\frac12}=\frac{1}{  2^{5+2\sigma}3\rho_0 }\frac{\sqrt{\psi(T)}}{T}t^{\frac32}$
 in view of the definition \eqref{mut} of $\mu(t)$, to get  \begin{equation*}
 \forall\ 0\leq t \leq T,\quad   \big|\big(\mu (t)t^{\frac{1}{2}}\big)'  \big|  \leq   \frac{1}{2^{6+2\sigma}\rho_0} \Big(\frac{\psi(T)}{T}\Big)^{\frac{1}{2}}.
 \end{equation*}
On the other hand,
  for any $\beta=(\beta_1,\beta_2)\in\mathbb Z_+^2$ with $\beta_1\geq 1,$ we use \eqref{+dlambda} to write
 \begin{eqnarray*}
 	D_\mu^\beta=\mu(t) t^{\frac12}D_\mu^{\tilde \beta}\partial_1\  \textrm{ for } \ \tilde\beta=\beta-(1,0),
 \end{eqnarray*}
 and thus
\begin{multline*}
 	   \frac{M_{\eta,\beta}^2\norm{D_\mu^\beta \partial_t^j\partial^{\alpha} u_L}_{L^2}^2}{\mu(t)t^{\frac12}}=
 	        \frac{M_{\eta,\beta}}{M_{\eta,\tilde\beta}}M_{\eta,\beta}M_{\eta,\tilde\beta} \inner{D_\mu^{\tilde\beta} \partial_t^j\partial^{\alpha} \partial_1 u_L,\ D_\mu^\beta \partial_t^j \partial^{\alpha} u_L}_{L^2} \\
 	        \leq
 	        \frac{2^{6+2\sigma}\rho_0}{\abs\beta^{\delta}}M_{\eta,\beta}M_{\eta,\tilde\beta} \norm{D_\mu^{\tilde\beta} \partial_t^j \partial^{\alpha} \partial_1 u_L}_{L^2}\norm{D_\mu^\beta \partial_t^j\partial^{\alpha} u_L}_{L^2},
\end{multline*}
the last inequality following from the same argument as that in \eqref{betatilde}.
Combining the two inequalities above,
we have
\begin{equation}\label{demu}
\begin{aligned}
 	  &\sum_{\stackrel{(j,\alpha)\in\mathbb Z_+\times\mathbb Z_+^3}{ j+\abs \alpha\leq 3}}  \sum_{\stackrel{\beta \in \mathbb Z_+^2}{\beta_1\ge 1}} r^{\alpha_1}   \beta_1\frac{\big(\mu (t)t^{\frac{1}{2}}\big)'}{\mu(t) t^{\frac{1}{2}}}   M_{\eta,\beta}^2\norm{D_\mu^\beta\partial_t^j \partial^{\alpha} u_L}_{L^2}^2\\
 	   &\leq  \Big(\frac{\psi(T)}{T}\Big)^{\frac{1}{2}}   \sum_{\stackrel{(j,\alpha)\in\mathbb Z_+\times\mathbb Z_+^3}{ j+\abs \alpha\leq 3}} \sum_{\stackrel{\beta \in \mathbb Z_+^2}{\beta_1\ge 1}} r^{\alpha_1}      M_{\eta,\tilde\beta}  M_{\eta,\beta} \norm{D_\mu^{\tilde\beta} \partial_t^j\partial^{\alpha} \partial_1 u_L}_{L^2}    \norm{D_\mu^\beta \partial_t^j \partial^{\alpha} u_L}_{L^2}\\
       &\leq  \Big(\frac{\psi(T)}{T}\Big)^{\frac{1}{2}}\abs{\nabla_{\rm h} u_L}_{X_{\eta,\mu}}    \abs{u_L }_{X_{\eta,\mu}}  \leq \frac{1}{4}\abs{\nabla_{\rm h} u_L}_{X_{\eta,\mu}}^2 +  \frac{\psi(T)}{T}  \abs{u_L}_{X_{\eta,\mu}}^2,
       \end{aligned}
\end{equation}
which with \eqref{lipart} yields, recalling $ \abs{\cdot}_{X_{\eta,\mu}}$ and $ \abs{\cdot}_{Y_{\eta,\mu}}$ are given in Definition \ref{def:normx2},
\begin{equation*}
	\frac{1}{2}\frac{d}{dt}\abs{  u_L}_{X_{\eta,\mu}}^2-  \frac{\psi(T)}{T}\abs{  u_L}_{X_{\eta,\mu}}^2+\frac{3}{4}\abs{\nabla_{\rm h} u_L}_{X_{\eta,\mu}}^2-\frac{\eta^{'}}{\eta}\abs{ u_L}_{Y_{\eta,\mu}}^2\leq 0.
\end{equation*}
  Multiplying both sides by    $e^{-\frac{2\psi(T)}{T}t}$  and recalling the definitions of $ \abs{\cdot}_{X_{\eta,\mu,T}}$ and $ \abs{\cdot}_{Y_{\eta,\mu,T}}$ from Definition \ref{def:normx2+},  we obtain
\begin{equation}\label{ulde}
	\frac{1}{2}\frac{d}{dt}  \abs{  u_L}_{X_{\eta,\mu,T}}^2+\frac{3}{4} \abs{\nabla_{\rm h} u_L}_{X_{\eta,\mu,T}}^2-\frac{\eta^{'}}{\eta}  \abs{ u_L}_{Y_{\eta,\mu,T}}^2\leq 0.
\end{equation}
 Moreover, from \eqref{taut} and \eqref{est:tau}, it follows that
\begin{equation}\label{bdrho}
	\forall\ t\leq T\leq 1, \quad -\frac{\eta'(t)}{\eta(t)}=\frac{\rho_0e^{-\frac{t}{\sqrt{T}}}}{4\sqrt{T}\eta(t)}\geq \frac{e^{-\sqrt{T}}}{2\sqrt{T}}\ge\frac{1}{2e\sqrt{T}}.
\end{equation}
As a result, integrating \eqref{ulde} over $[0,T]$ and noting that by  \eqref{xandtx}, \eqref{shotbeha} and \eqref{inda},
\begin{multline*}
	\lim_{t\rightarrow0} \abs{  u_L(t)}_{X_{\eta,\mu,T}}^2=\lim_{t\rightarrow0} e^{-\frac{2\psi(T)}{T}t}\abs{  u_L(t)}_{X_{\eta,\mu}}^2 =\lim_{t\rightarrow0} \abs{  u_L(t)}_{X_{\eta}}^2\\
    \leq C\norm{u_0}_{G_{\frac{\rho_0}{2},\sigma,6}}^2+C\norm{u_0}_{G_{\frac{\rho_0}{2},\sigma,6}}^8,
\end{multline*}
we conclude that for any $0<T\leq 1,$
\begin{equation*}
	\sup_{t\leq T} \abs{  u_L(t)}_{X_{\eta,\mu,T}}^2+\int_0^T \abs{\nabla_{\rm h} u_L(t)}_{X_{\eta,\mu,T}}^2dt+\frac{1}{\sqrt{T}}\int_0^T \abs{ u_L(t)}_{Y_{\eta,\mu,T}}^2dt\leq C_{in},
\end{equation*}
where $C_{in}$ is a constant depending  on the Gevrey norm of $u_0$ specified in assumption \eqref{assume:small}.
A similar estimate holds for  $\partial_3 u_L$ since
$\norm{\partial_3u_0}_{G_{\frac{\rho_0}{2},\sigma,6}}  \leq C\norm{u_0}_{G_{\rho_0,\sigma,6}}.$
 This completes the proof of Proposition \ref{lem:uL}.

 \subsection{Proof of Proposition \ref{prop:v}: estimate on the nonlinear part}\label{subsec:nonlinear}

This subsection is devoted to the proof of Proposition \ref{prop:v} by establishing estimates for the nonlinear term $v=u-u_L$, which satisfies the initial-boundary problem \eqref{eq:v}.  It suffices to prove the following  two lemmas and then Proposition \ref{prop:v} will follow.

\begin{lemma}\label{st1}
Let $u_L$ and $v$ satisfy the heat equation \eqref{eq:uL} and  the initial-boundary problem \eqref{eq:v}, respectively.  Then, by possibly shrinking  the parameter $r$ from Theorem \ref{prop:wellposedness}, we obtain that,
for any $0<T\leq 1$,
\begin{multline*}
	\sup_{t\leq T}\abs{v(t)}_{X_{\eta,\mu,T}}^2+\int_0^T \abs{ \nabla_{\rm h}v(t)}_{X_{\eta,\mu,T}}^2dt +\frac{1}{\sqrt{T}}\int_0^T \abs{v(t)}_{Y_{\eta,\mu,T}}^2dt\\
	\leq  C \int_0^T e^{-\frac{2\psi(T)}{T}t} \mathcal R(t)dt,
\end{multline*}
where
\begin{equation}\label{defrt}
	\begin{aligned}
	\mathcal R(t)\stackrel{\rm def}{=}&   r^{-\frac{9}{2}}\abs{u}_{X_{\eta,\mu}}\Big( \abs{\nabla_{\rm h}u}_{X_{\eta,\mu}}+  \abs{v}_{X_{\eta,\mu}}+  \abs{\partial_3u_L}_{X_{\eta,\mu}}  \Big)\abs{v}_{X_{\eta,\mu}}\\
		& +     r^{-\frac{9}{2}}\Big(  \mu t^{\frac12}   \abs{u}_{X_{\eta,\mu}} \abs{u}_{Y_{\eta,\mu}} \abs{\nabla_{\rm h}v}_{X_{\eta,\mu}}+\mu^2t  \abs{u}_{X_{\eta,\mu}} \abs{\nabla_{\rm h}u}_{X_{\eta,\mu}}\abs{\nabla_{\rm h}v}_{X_{\eta,\mu}}\Big)   \\
		&+       r^{-\frac{9}{2}}   \abs{u}_{X_{\eta,\mu}}^2 \abs{\nabla_{\rm h}v}_{X_{\eta,\mu}}+   r^{-\frac{9}{2}}   \abs{u}_{X_{\eta,\mu}}\abs{u}_{Y_{\eta,\mu}}\abs{v}_{Y_{\eta,\mu}}.	
	\end{aligned}
\end{equation}
Recall that $\abs{\cdot}_{X_{\eta,\mu}}$ and $\abs{\cdot}_{Y_{\eta,\mu}}$ are given in Definition \ref{def:normx2} and  $\abs{\cdot}_{X_{\eta,\mu,T}}$ and $\abs{\cdot}_{Y_{\eta,\mu,T}}$ are given in Definition \ref{def:normx2+}.
\end{lemma}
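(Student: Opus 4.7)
The plan is to perform a weighted Gevrey energy estimate directly on the equation \eqref{eq:v} for $v=u-u_L$, following the scheme of Lemmas~\ref{lem:J1}--\ref{lem:J5} but with two essential modifications: (i)~the outer slot of the inner product is occupied by $v$ while the full nonlinearity $u\cdot\nabla u$ stays inside, and (ii)~the factor $\mu(t)$ is now time-dependent, so the extra growth produced by differentiating $D_\mu^\beta$ in $t$ must be absorbed by the weight $e^{-\psi(T)t/T}$ built into Definition~\ref{def:normx2+}.

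First, I apply $D_\mu^\beta\partial_t^j\partial^\alpha$ to the $v$-equation, take the $L^2$ inner product against itself, multiply by $r^{\alpha_1}M_{\eta,\beta}^2$, and sum over the admissible $(j,\alpha,\beta)$. Since $v|_{x_1=0,1}=0$, only the $\partial_1$-generated boundary terms survive, so in parallel with \eqref{est:x2} one obtains
\begin{equation*}
\frac{1}{2}\frac{d}{dt}\abs{v}_{X_{\eta,\mu}}^2+\abs{\nabla_{\rm h}v}_{X_{\eta,\mu}}^2-\frac{\eta'}{\eta}\abs{v}_{Y_{\eta,\mu}}^2\leq \tilde J_1+\tilde J_2+\tilde J_3+\tilde J_4+K,
\end{equation*}
where $\tilde J_1,\dots,\tilde J_4$ are the $v$-tested analogues of $J_1,\dots,J_4$ from \eqref{J1-J5}, and $K$ is the counterpart of $J_5$ with the factor $\beta_1/t$ replaced by $\beta_1(\mu t^{1/2})'/(\mu t^{1/2})$ arising from the $t$-dependence of $\mu$.

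The second step is to treat the convection $\tilde J_1+\tilde J_2$ by splitting
\begin{equation*}
u\cdot\nabla u=u_{\rm h}\cdot\nabla_{\rm h} u+u_3\partial_3 v+u_3\partial_3 u_L.
\end{equation*}
The horizontal piece is controlled exactly as in Lemma~\ref{lem:J1} via \eqref{ineq5}--\eqref{ineq6}, yielding $Cr^{-9/2}\abs{u}_{X_{\eta,\mu}}\abs{\nabla_{\rm h}u}_{X_{\eta,\mu}}\abs{v}_{X_{\eta,\mu}}$. The piece $u_3\partial_3 v$ is handled as in Lemma~\ref{lem:J2} together with the anisotropic bounds of Lemmas~\ref{cr1} and~\ref{cr2}: each time a $\partial_3$ is converted into $\mu t^{1/2}\partial_2$ via \eqref{+dlambda}, an extra factor of $\mu t^{1/2}$ or $\mu^2 t$ is generated, producing precisely the mixed terms $\mu t^{1/2}\abs{u}_{X_{\eta,\mu}}\abs{u}_{Y_{\eta,\mu}}\abs{\nabla_{\rm h}v}_{X_{\eta,\mu}}$ and $\mu^2 t\abs{u}_{X_{\eta,\mu}}\abs{\nabla_{\rm h}u}_{X_{\eta,\mu}}\abs{\nabla_{\rm h}v}_{X_{\eta,\mu}}$ in $\mathcal R(t)$, along with the $\abs{u}_{X_{\eta,\mu}}\abs{u}_{Y_{\eta,\mu}}\abs{v}_{Y_{\eta,\mu}}$ term. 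The remaining piece $u_3\partial_3 u_L$ suffers no $x_2$-derivative loss since $\partial_3 u_L$ is already Gevrey-$\sigma$ in $x_3$; arguing exactly as for \eqref{est:S21} it is bounded by $Cr^{-9/2}\abs{u}_{X_{\eta,\mu}}\abs{\partial_3 u_L}_{X_{\eta,\mu}}\abs{v}_{X_{\eta,\mu}}$.

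The third step handles $\tilde J_3,\tilde J_4$ and $K$. For $\tilde J_3,\tilde J_4$ the proof of Lemma~\ref{lem:J34} carries over verbatim with $v$ in the outer slot, producing $Cr^{-9/2}\abs{u}_{X_{\eta,\mu}}^2\abs{\nabla_{\rm h}v}_{X_{\eta,\mu}}+Cr^{1/4}\abs{\nabla_{\rm h}v}_{X_{\eta,\mu}}^2$, and the second summand is absorbed on the left by shrinking $r$ further. The growth term $K$ is the genuinely new piece: copying the computation of \eqref{demu} with $u_L$ replaced by $v$, and using $\abs{(\mu t^{1/2})'}\leq(2^{6+2\sigma}\rho_0)^{-1}(\psi(T)/T)^{1/2}$ together with the ratio bound $M_{\eta,\beta}/M_{\eta,\tilde\beta}\leq C\abs{\beta}^{-\delta}$ from \eqref{betatilde}, one obtains
\begin{equation*}
K\leq\frac{1}{4}\abs{\nabla_{\rm h}v}_{X_{\eta,\mu}}^2+\frac{\psi(T)}{T}\abs{v}_{X_{\eta,\mu}}^2.
\end{equation*}
Doubling the resulting inequality and multiplying by $e^{-2\psi(T)t/T}$ cancels the $\frac{2\psi(T)}{T}\abs{v}_{X_{\eta,\mu}}^2$ term against the derivative of the weight; finally the lower bound $-\eta'/\eta\geq(2e\sqrt T)^{-1}$ from \eqref{bdrho} promotes the dissipation in $v_Y$ into $(2e\sqrt T)^{-1}\abs{v}_{Y_{\eta,\mu,T}}^2$. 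Integrating over $[0,T]$ and using the initial condition $v|_{t=0}=0$ yields the claim. The main obstacle is the bookkeeping of the $u_3\partial_3 v$ estimate: one has to verify that every cross term created when distributing the $\partial_3$-to-$\partial_2$ conversion among the factors $u$, $\nabla_{\rm h}u$, $v$, $\nabla_{\rm h}v$ lands inside one of the five ingredients of $\mathcal R(t)$ in \eqref{defrt}, with no stray term that would obstruct the bootstrap used in the proof of Theorem~\ref{thm:radius}.
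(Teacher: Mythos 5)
Your proposal follows the paper's argument essentially step for step: the weighted energy estimate on the $v$-equation tested against $v$, the split $u\cdot\nabla u=u_{\rm h}\cdot\nabla_{\rm h}u+u_3\partial_3 u_L+u_3\partial_3 v$ (the paper's splitting in the proof of Lemma~\ref{st1}), the reuse of Lemmas~\ref{lem:J1}--\ref{lem:J34} and Lemmas~\ref{cr1}--\ref{cr2} for the convection and pressure, the absorption of the $(\mu t^{1/2})'$ growth via the computation \eqref{demu}, the shrinking of $r$ to absorb the $Cr^{1/4}\abs{\nabla_{\rm h}v}^2$ term, and the cancellation of $\frac{\psi(T)}{T}\abs{v}_{X_{\eta,\mu}}^2$ by multiplying through with $e^{-2\psi(T)t/T}$ before integrating from $v(0)=0$. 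The only slight looseness is that you do not explicitly separate the $\beta_1=0$ contribution, which the paper handles as in Proposition~\ref{purespatial} and which is responsible for the $r^{-9/2}\abs{u}_{X_{\eta,\mu}}\abs{u}_{Y_{\eta,\mu}}\abs{v}_{Y_{\eta,\mu}}$ term in $\mathcal R(t)$; but since you sum over all admissible $\beta$ and the $K$-term trivially vanishes when $\beta_1=0$, this is a presentational rather than a mathematical gap.
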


\begin{proof}
Analogous  to \eqref{est:x2}, we derive from  \eqref{eq:v}
the following energy estimate for $v$: for any $\alpha\in\mathbb Z_+^3$ with $\abs\alpha\leq 3,$
\begin{equation}\label{nolipart}
	\begin{aligned}
& \frac{1}{2}\frac{d}{dt} \sum_{\stackrel{\beta \in \mathbb Z_+^2}{\beta_1\ge 1}} r^{\alpha_1}M_{\eta,\beta}^2\norm{D_\mu^\beta\partial^{\alpha} v}_{L^2}^2+ \sum_{\stackrel{\beta \in \mathbb Z_+^2}{\beta_1\ge 1}}r^{\alpha_1}M_{\eta,\beta}^2\norm{D_\mu^\beta\partial^{\alpha}\nabla_{\rm h} v}_{L^2}^2\\
&\qquad\qquad -\frac{\eta^{'}}{\eta}  \sum_{\stackrel{\beta \in \mathbb Z_+^2}{\beta_1\ge 1}}r^{\alpha_1}(\abs{\beta }+1)M_{\eta,\beta}^2\norm{D_\mu^\beta\partial^{\alpha} v}_{L^2}^2\\
&\leq   \sum_{\stackrel{\beta \in \mathbb Z_+^2}{\beta_1\ge 1}} r^{\alpha_1}   \beta_1\frac{\big(\mu (t)t^{\frac{1}{2}}\big)'}{\mu(t) t^{\frac{1}{2}}}   M_{\eta,\beta}^2\norm{D_\mu^\beta \partial^{\alpha} v}_{L^2}^2+  \sum_{\ell=1}^4 \tilde J_{\ell},
\end{aligned}
\end{equation}
where
\begin{equation}
	\label{H1H4}
	  \left\{
    \begin{aligned}
&\tilde J_1= - \sum_{\stackrel{\beta \in \mathbb Z_+^2}{\beta_1\ge 1}}r^{\alpha_1}M_{\eta,\beta}^2  \inner{  D_\mu^\beta\partial^{\alpha}\big ((u_{\rm h}\cdot\nabla_{\rm h}) u\big ),\   D_\mu^\beta\partial^{\alpha} v}_{L^2},\\
&\tilde J_2=- \sum_{\stackrel{\beta \in \mathbb Z_+^2}{\beta_1\ge 1}}r^{\alpha_1}M_{\eta,\beta}^2\inner{ D_\mu^\beta\partial^{\alpha} (u_3\partial_3u),\   D_\mu^\beta\partial^{\alpha} v}_{L^2},\\
&\tilde  J_3=  \sum_{\stackrel{\beta \in \mathbb Z_+^2}{\beta_1\ge 1}}r^{\alpha_1}M_{\eta,\beta}^2\inner{  D_\mu^\beta\partial^{\tilde \alpha}\nabla p, \   D_\mu^\beta\partial^{\alpha}\partial_1 v}_{L^2},\\
&\tilde  J_4=  \sum_{\stackrel{\beta \in \mathbb Z_+^2}{\beta_1\ge 1}}r^{\alpha_1}M_{\eta,\beta}^2\int_{\mathbb R^2} \big[ (  D_\mu^\beta\partial^{\alpha}v)  D_\mu^\beta\partial^{\tilde \alpha} \big(\partial_tv+u\cdot\nabla u-\partial_2^2v\big)\big]\big|^{x_1=1}_{x_1=0}\ dx_2dx_3,
 \end{aligned}
    \right.
\end{equation}
 recalling $\tilde\alpha=\alpha-(1,0,0).$ We  will estimate each    $\tilde J_\ell,$   by applying the argument used in the proofs of  Lemmas \ref{lem:J1}-\ref{lem:J34}, with minor modifications  where necessary.

 The treatment of $\tilde J_1$ is similar to those of $S_1$ and $J_1$ in Lemmas \ref{lem:S1} and \ref{lem:J1}, and we have
\begin{equation*}
	\tilde J_1\leq Cr^{-\frac{9}{2}}\abs{u}_{X_{\eta,\mu}} \abs{\nabla_{\rm h}u}_{X_{\eta,\mu}}\abs{v}_{X_{\eta,\mu}}.
\end{equation*}
To estimate $\tilde J_2$ in \eqref{H1H4}, we use the fact that $u=u_L+v$ to rewrite
\begin{multline*}
  \tilde J_2=  - \sum_{\stackrel{\beta \in \mathbb Z_+^2}{\beta_1\ge 1}}r^{\alpha_1}M_{\eta,\beta}^2\inner{ D_\mu^\beta\partial^{\alpha} (u_3\partial_3u_L),\   D_\mu^\beta\partial^{\alpha} v}_{L^2}\\
  - \sum_{\stackrel{\beta \in \mathbb Z_+^2}{\beta_1\ge 1}}r^{\alpha_1}M_{\eta,\beta}^2\inner{ D_\mu^\beta\partial^{\alpha} (u_3\partial_3v),\   D_\mu^\beta\partial^{\alpha} v}_{L^2}.
\end{multline*}
Consequently,  using a similar split as that in \eqref{+J2} and then
repeating   the proofs of  estimates \eqref{est:J21}, \eqref{est:J22} and \eqref{est:J23} with minor  modifications,  we obtain
\begin{equation*}
\begin{aligned}
		&\tilde J_2\leq   Cr^{-\frac{9}{2}}  \abs{u}_{X_{\eta,\mu}}\big(\abs{\partial_3u_L}_{X_{\eta,\mu}}+\abs{v}_{X_{\eta,\mu}}\big) \abs{v}_{X_{\eta,\mu}}\\
		 &\quad\ + Cr^{-\frac{9}{2}} \mu t^{\frac12}   \abs{u}_{X_{\eta,\mu}} \abs{u}_{Y_{\eta,\mu}} \abs{\nabla_{\rm h}v}_{X_{\eta,\mu}}+Cr^{-\frac92} \mu^2t  \abs{u}_{X_{\eta,\mu}} \abs{\nabla_{\rm h}u}_{X_{\eta,\mu}}\abs{\nabla_{\rm h}v}_{X_{\eta,\mu}}.
	\end{aligned}
	\end{equation*}
For  $\tilde  J_3$, we first write
\begin{equation}\label{tij3}
		\tilde  J_3
		\leq C   r^{\frac{ \alpha_1}{2}} \abs{\nabla_{\rm h} v}_{X_{\eta,\mu}}\Big(\sum_{\stackrel{\beta \in \mathbb Z_+^2}{\beta_1\ge 1}}M^2_{\eta,\beta}\norm{D_\mu^\beta\partial^{ \tilde \alpha}  \nabla p}_{L^2}^2\Big)^{\frac12}.
	\end{equation}
On the other hand,  similar to \eqref{p+},
\begin{equation}\label{+p+}
	\begin{aligned}
		& \sum_{\stackrel{\beta \in \mathbb Z_+^2}{\beta_1\ge 1}}M^2_{\eta,\beta}\norm{D_\mu^\beta\partial^{ \tilde \alpha}  \nabla p}_{L^2}^2 \leq  \sum_{\stackrel{\beta \in \mathbb Z_+^2}{\beta_1\ge 1}}M^2_{\eta,\beta}\norm{D_\mu^\beta\partial^{\tilde{\alpha}} \big (u\cdot\nabla) u\big)}_{L^2}\norm{D_\mu^\beta\partial^{\tilde{\alpha}} \nabla p}_{L^2}\\
		&\qquad\qquad\qquad+\sum_{\stackrel{\beta \in \mathbb Z_+^2}{\beta_1\ge 1}}M^2_{\eta,\beta}\bigg|\int_{\mathbb R^2}   (D_\mu^\beta\partial^{\tilde{\alpha}} \partial_tv_1)(D_\mu^\beta\partial^{\tilde  \alpha} p)\big|^{x_1=1}_{x_1=0} \  dx_2dx_3\bigg|\\
		&\qquad\qquad\qquad+\sum_{\stackrel{\beta \in \mathbb Z_+^2}{\beta_1\ge 1}}M^2_{\eta,\beta}\bigg|\int_{\mathbb R^2}   (D_\mu^\beta\partial^{\tilde{\alpha}} \Delta_{\rm h} v_1)(D_\mu^\beta\partial^{\tilde  \alpha} p)\big|^{x_1=1}_{x_1=0} \  dx_2dx_3\bigg|.
	\end{aligned}
\end{equation}
By repeating the proofs of estimates \eqref{est:tildeS1}, \eqref{df} and \eqref{est:tildeS3}, we can verify the terms on the right-hand side of \eqref{+p+} are bounded from above by
\begin{equation*}	
 C\Big(r^{-\frac92}\abs{u}_{X_{\eta,\mu}}^2+r^{-\frac{2\alpha_1-1}{4}} \abs{\nabla_{\rm h} v}_{X_{\eta,\mu}} \Big) \Big(\sum_{\stackrel{\beta \in \mathbb Z_+^2}{\beta_1\ge 1}}M^2_{\eta,\beta}\norm{D_\mu^\beta\partial^{ \tilde \alpha}  \nabla p}_{L^2}^2\Big)^{\frac12}.
\end{equation*}
As a result,
\begin{equation*}
	 \Big(\sum_{\stackrel{\beta \in \mathbb Z_+^2}{\beta_1\ge 1}}M^2_{\eta,\beta}\norm{D_\mu^\beta\partial^{ \tilde \alpha}  \nabla p}_{L^2}^2\Big)^{\frac12}\leq  Cr^{-\frac92}\abs{u}_{X_{\eta,\mu}}^2  +C r^{-\frac{2\alpha_1-1}{4}} \abs{\nabla_{\rm h} v}_{X_{\eta,\mu}},
	 \end{equation*}
	 which with \eqref{tij3} yields
	 \begin{equation*}
	 	\begin{aligned}
	 		\tilde J_3 &\leq  Cr^{-\frac92}  \abs{u}_{X_{\eta,\mu}}^2 \abs{\nabla_{\rm h}v}_{X_{\eta,\mu}}+ C r^{\frac{1}{4}} \abs{\nabla_{\rm h}v}_{X_{\eta,\mu}}^2.
	 	\end{aligned}
	 \end{equation*}
	 Repeating  the proof of Lemma \ref{lem:S4} with minor modifications,  we obtain
	 \begin{equation*}
	 	\tilde J_4  \leq  Cr^{-\frac92}   \abs{u}_{X_{\eta,\mu}}^2 \abs{\nabla_{\rm h}v}_{X_{\eta,\mu}}+ C  r^{\frac{1}{4}} \abs{\nabla_{\rm h}v}_{X_{\eta,\mu}}^2.
	 \end{equation*}
	 In summary,  combining these estimates above gives
	 \begin{equation*}
	\begin{aligned}
	 &\sum_{\ell=1}^4\tilde J_\ell  \leq  Cr^{-\frac{9}{2}}\abs{u}_{X_{\eta,\mu}} \Big(\abs{\nabla_{\rm h}u}_{X_{\eta,\mu}}+ \abs{v}_{X_{\eta,\mu}} +  \abs{\partial_3u_L}_{X_{\eta,\mu}} \Big)\abs{v}_{X_{\eta,\mu}}\\
		&\qquad + Cr^{-\frac{9}{2}} \Big(\mu t^{\frac12}   \abs{u}_{X_{\eta,\mu}} \abs{u}_{Y_{\eta,\mu}} \abs{\nabla_{\rm h}v}_{X_{\eta,\mu}}+\mu^2t  \abs{u}_{X_{\eta,\mu}} \abs{\nabla_{\rm h}u}_{X_{\eta,\mu}}\abs{\nabla_{\rm h}v}_{X_{\eta,\mu}} \Big) \\
		&\qquad+  Cr^{-\frac{9}{2}} \abs{u}_{X_{\eta,\mu}}^2 \abs{\nabla_{\rm h}v}_{X_{\eta,\mu}}  + C  r^{\frac{1}{4}} \abs{\nabla_{\rm h}v}_{X_{\eta,\mu}}^2\\
		&\leq  C  \mathcal R(t)+ C  r^{\frac{1}{4}} \abs{\nabla_{\rm h}v}_{X_{\eta,\mu}}^2,
		 \end{aligned}
	\end{equation*}
	where $\mathcal R(t)$ is defined as in \eqref{defrt}.
	Substituting the above estimate into \eqref{nolipart}, we obtain
 \begin{equation*}
	\begin{aligned}
& \frac{1}{2}\frac{d}{dt} \sum_{\stackrel{\beta \in \mathbb Z_+^2}{\beta_1\ge 1}} r^{\alpha_1}M_{\eta,\beta}^2\norm{D_\mu^\beta\partial^{\alpha} v}_{L^2}^2+ \sum_{\stackrel{\beta \in \mathbb Z_+^2}{\beta_1\ge 1}}r^{\alpha_1}M_{\eta,\beta}^2\norm{D_\mu^\beta\partial^{\alpha}\nabla_{\rm h} v}_{L^2}^2\\
 &\quad-\frac{\eta^{'}}{\eta}  \sum_{\stackrel{\beta \in \mathbb Z_+^2}{\beta_1\ge 1}}r^{\alpha_1}(\abs{\beta }+1)M_{\eta,\beta}^2\norm{D_\mu^\beta\partial^{\alpha} v}_{L^2}^2 \\
&\leq     \sum_{\stackrel{\beta \in \mathbb Z_+^2}{\beta_1\ge 1}} r^{\alpha_1}   \beta_1\frac{\big(\mu (t)t^{\frac{1}{2}}\big)'}{\mu(t) t^{\frac{1}{2}}}   M_{\eta,\beta}^2\norm{D_\mu^\beta \partial^{\alpha} v}_{L^2}^2+C \mathcal R(t)+   C  r^{\frac{1}{4}} \abs{\nabla_{\rm h}v}_{X_{\eta,\mu}}^2\\
&\leq   C \mathcal R(t)+  \Big(\frac14+C  r^{\frac{1}{4}} \Big)\abs{\nabla_{\rm h}v}_{X_{\eta,\mu}}^2+\frac{\psi(T)}{T}\abs{v}_{X_{\eta,\mu}}^2,
\end{aligned}
\end{equation*}
where the last inequality follows from \eqref{demu}.
Moreover, to derive the estimate for the case $\beta_1=0,$ we adapt the proof of  Proposition \ref{purespatial}, with the key modification being the replacement of the term
$ r^{-\frac92}\abs{u}_{X_{\eta,\mu}}\abs{u}_{Y_{\eta,\mu}}^2$ by
\begin{equation*}
r^{-\frac92}\abs{u}_{X_{\eta,\mu}}\abs{u}_{Y_{\eta,\mu}}\abs{v}_{Y_{\eta,\mu}}.
\end{equation*}
Then, from the definition \eqref{defrt} of $\mathcal{R}(t)$, we observe that
\begin{equation*}
r^{-\frac92}\abs{u}_{X_{\eta,\mu}}\abs{u}_{Y_{\eta,\mu}}\abs{v}_{Y_{\eta,\mu}}\leq \mathcal{ R}(t),
\end{equation*}
 which implies  that, for any $\alpha\in\mathbb Z_+^3$ with $\abs\alpha\leq 3,$
 \begin{multline*}
 \frac{1}{2}\frac{d}{dt} \sum_{\stackrel{\beta \in \mathbb Z_+^2}{\beta_1=0}} r^{\alpha_1}M_{\eta,\beta}^2\norm{D_\mu^\beta\partial^{\alpha} v}_{L^2}^2+ \sum_{\stackrel{\beta \in \mathbb Z_+^2}{\beta_1=0}}r^{\alpha_1}M_{\eta,\beta}^2\norm{D_\mu^\beta\partial^{\alpha}\nabla_{\rm h} v}_{L^2}^2\\
 -\frac{\eta^{'}}{\eta}  \sum_{\stackrel{\beta \in \mathbb Z_+^2}{\beta_1=0}}r^{\alpha_1}(\abs{\beta }+1)M_{\eta,\beta}^2\norm{D_\mu^\beta\partial^{\alpha} v}_{L^2}^2  \leq   C \mathcal R(t)+   C  r^{\frac{1}{4}}  \abs{\nabla_{\rm h}v}_{X_{\eta,\mu}}^2.
\end{multline*}
Hence, for any $\alpha\in\mathbb Z_+^3$ with $\abs{\alpha}\ge 3$, we have
 \begin{equation*}
	\begin{aligned}
& \frac{1}{2}\frac{d}{dt} \sum_{ \beta \in \mathbb Z_+^2 } r^{\alpha_1}M_{\eta,\beta}^2\norm{D_\mu^\beta\partial^{\alpha} v}_{L^2}^2+ \sum_{ \beta \in \mathbb Z_+^2 }r^{\alpha_1}M_{\eta,\beta}^2\norm{D_\mu^\beta\partial^{\alpha}\nabla_{\rm h} v}_{L^2}^2\\
 &\quad-\frac{\eta^{'}}{\eta}  \sum_{ \beta \in \mathbb Z_+^2 }r^{\alpha_1}(\abs{\beta }+1)M_{\eta,\beta}^2\norm{D_\mu^\beta\partial^{\alpha} v}_{L^2}^2 \\
&\leq  C  \mathcal R(t)+  \Big(\frac14+C  r^{\frac{1}{4}} \Big)\abs{\nabla_{\rm h}v}_{X_{\eta,\mu}}^2+\frac{\psi(T)}{T}\abs{v}_{X_{\eta,\mu}}^2.
\end{aligned}
\end{equation*}
Using \eqref{prpalge}, we can extend the validity of the above estimate  to general indices $(j,\alpha)\in\mathbb{Z}_+\times\mathbb{Z}_+^3$ satisfying $j+|\alpha|\leq 3$.  As a result,
 recalling the norms  $ \abs{\cdot}_{X_{\eta,\mu}}$ and $ \abs{\cdot}_{Y_{\eta,\mu}}$ are given in Definition \ref{def:normx2}, we obtain
\begin{multline*}
	\frac{1}{2}\frac{d}{dt} \abs{v}_{X_{\eta,\mu}}^2+\abs{ \nabla_{\rm h}v}_{X_{\eta,\mu}}^2 -\frac{\eta'}{\eta}\abs{v}_{Y_{\eta,\mu}}^2\\
	\leq C  \mathcal R(t)+  \Big(\frac14+C  r^{\frac{1}{4}} \Big)\abs{\nabla_{\rm h}v}_{X_{\eta,\mu}}^2+\frac{\psi(T)}{T}\abs{v}_{X_{\eta,\mu}}^2,
\end{multline*}
and  thus, shrinking $r$ if necessary such that $Cr^{\frac14}\leq \frac14,$
\begin{equation*}
	\frac{1}{2} \frac{d}{dt} \abs{v}_{X_{\eta,\mu}}^2+\frac12 \abs{ \nabla_{\rm h}v}_{X_{\eta,\mu}}^2 -\frac{\eta'}{\eta}\abs{v}_{Y_{\eta,\mu}}^2
	\leq  \frac{\psi(T)}{T}  \abs{v}_{X_{\eta,\mu}}^2+ C \mathcal R(t).
\end{equation*}
This, with the relation $\abs{v}_{X_{\eta,\mu,T}}=e^{-\frac{\psi(T)}{T}t}\abs{v}_{X_{\eta,\mu}},$ implies
\begin{equation*}
	\frac{1}{2} \frac{d}{dt} \abs{v}_{X_{\eta,\mu,T}}^2+\frac12  \abs{ \nabla_{\rm h}v}_{X_{\eta,\mu,T}}^2 -\frac{\eta'}{\eta}  \abs{v}_{Y_{\eta,\mu,T}}^2
	\leq  C  e^{-\frac{2\psi(T)}{T}t} \mathcal R(t).
\end{equation*}
We integrate the above inequality over $[0,T]$ with $T\leq 1$ and
use   \eqref{bdrho} along with the fact that
\begin{equation*}
\lim_{t\rightarrow 0}\abs{v}_{X_{\eta,\mu,T}}^2=	\lim_{t\rightarrow 0} e^{-\frac{2\psi(T)}{T}t}\abs{v}_{X_{\eta,\mu}}^2=\lim_{t\rightarrow 0}  \abs{v}_{X_{\eta,\mu}}^2=0,
\end{equation*}
which follows from  \eqref{shotbeha} and  the initial condition $v|_{t=0}=0$; this gives
\begin{equation*}
	\begin{aligned}
	&	\sup_{t\leq T}\abs{v}_{X_{\eta,\mu,T}}^2+\int_0^T \abs{ \nabla_{\rm h}v}_{X_{\eta,\mu,T}}^2dt +\frac{1}{\sqrt{T}}\int_0^T \abs{v}_{Y_{\eta,\mu,T}}^2dt
	\leq  C\int_0^T  e^{-\frac{2\psi(T)}{T}t} \mathcal Rdt.
	\end{aligned}
\end{equation*}
The proof of Lemma \ref{st1} is thus completed.
\end{proof}

 \begin{lemma}\label{cor-tildej}
 	Let $\mathcal R(t)$ be given in \eqref{defrt}.
 Then  for any $T\leq 1$ and any $0<\epsilon<1$ we  have
 	\begin{equation*}
	\begin{aligned}
&\int_0^T e^{-\frac{2\psi(T)}{T}t}   \mathcal R(t) dt 	\\
&\leq \epsilon    \bigg[ \int_0^T\abs{\nabla_{\rm h}v}_{X_{\eta,\mu,T}}^2dt+\frac{1}{\sqrt{T}}\int^T_0\abs{v}_{Y_{\eta,\mu,T}}^2dt\bigg]+C\epsilon^{-1}r^{-9}e^{3\psi(T)}\sqrt{T}\\
&\quad +  C\epsilon^{-1}r^{-9}e^{3 \psi(T)} \sqrt{T}\Big( 1+\sup_{ t\leq T}\abs{v}_{X_{\eta,\mu,T}}^2\Big)\bigg[\sup_{ t\leq T}\abs{v}_{X_{\eta,\mu,T}}^2+ \int_0^T   \abs{\nabla_{\rm h}v}_{X_{\eta,\mu,T}}^2  dt\bigg]\\
  &\quad+ C\epsilon^{-1}r^{-9} e^{3\psi(T)} \sqrt{T}\Big(1 +  \sup_{ t\leq T} \abs{v}_{X_{\eta,\mu,T}}^2 \Big) \frac{1}{\sqrt{T}} \int_0^T       \abs{v}_{Y_{\eta,\mu,T}}^2  dt.
	\end{aligned}
\end{equation*}
Recall that the norms $\abs{\cdot}_{X_{\eta,\mu,T}}$ and $\abs{\cdot}_{Y_{\eta,\mu,T}}$ are given in Definition \ref{def:normx2+}.
 \end{lemma}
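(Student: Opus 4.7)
The proof is a direct, if laborious, bookkeeping argument. I would begin by expanding every occurrence of $u$ in each of the five monomials of $\mathcal{R}(t)$ via $u=u_L+v$, so that $\abs{u}_{X_{\eta,\mu}}\le \abs{u_L}_{X_{\eta,\mu}}+\abs{v}_{X_{\eta,\mu}}$ and similarly for the horizontal gradient and for the $Y$-norm. Then, in each resulting monomial, I would rewrite every unsubscripted norm via the identity $\abs{g}_{X_{\eta,\mu}}=e^{\psi(T)t/T}\abs{g}_{X_{\eta,\mu,T}}$ (and analogously for $Y$). Since every term of $\mathcal{R}(t)$ is cubic in the norms while the integrand carries the damping $e^{-2\psi(T)t/T}$, each product picks up a residual factor $e^{\psi(T)t/T}\le e^{\psi(T)}\le e^{3\psi(T)}$ for $t\le T$. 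The factors $\mu(t)t^{1/2}$ and $\mu(t)^2 t$ are bounded by $C\sqrt{\psi(T)T}$ and $C\psi(T)T$ respectively, using $\mu(t)\le C\sqrt{\psi(T)}\,t/T$ and $t\le T$; both can be absorbed into $Ce^{3\psi(T)}\sqrt{T}$ for $T\le 1$ since $\sqrt{\psi(T)}\le e^{\psi(T)/2}$ and $\psi(T)\sqrt{T}\le e^{\psi(T)}$.

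After this normalization, each monomial integrand is bounded by $Cr^{-9/2}e^{3\psi(T)}A(t)B(t)C(t)$ where $A,B,C$ are $T$-subscripted norms of either $u_L$, $v$, $\nabla_{\rm h}u_L$, $\nabla_{\rm h}v$, $\partial_3 u_L$, or the corresponding $Y$-norms. I would then apply Cauchy--Schwarz in time, routinely extracting $\sup_{t\le T}$ from whichever $X$-factor is available and invoking Proposition \ref{lem:uL} to replace any $u_L$-factor by a constant (for $\sup$-type bounds) or by $\sqrt{C_{in}}$ via $\int_0^T\abs{\nabla_{\rm h}u_L}_{X_{\eta,\mu,T}}^2 dt\le C_{in}$ or $\frac{1}{\sqrt{T}}\int_0^T\abs{u_L}_{Y_{\eta,\mu,T}}^2 dt\le C_{in}$. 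Pure-$u_L$ contributions (all three factors coming from $u_L$ or $\partial_3 u_L$) produce, after time integration, constants times $\sqrt{T}$, fitting into the term $C\epsilon^{-1}r^{-9}e^{3\psi(T)}\sqrt{T}$. Mixed contributions with at least one $v$-factor produce $\sqrt{T}$ from a residual integration of a bounded $u_L$-quantity and yield the structures $(1+\sup\abs{v}_{X_{\eta,\mu,T}}^2)(\sup\abs{v}_{X_{\eta,\mu,T}}^2+\int\abs{\nabla_{\rm h}v}_{X_{\eta,\mu,T}}^2 dt)$ or $(1+\sup\abs{v}_{X_{\eta,\mu,T}}^2)\frac{1}{\sqrt T}\int\abs{v}_{Y_{\eta,\mu,T}}^2 dt$. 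Finally, wherever a dissipative factor $\abs{\nabla_{\rm h}v}_{X_{\eta,\mu,T}}^2$ or $\abs{v}_{Y_{\eta,\mu,T}}^2$ appears linearly, I would apply Young's inequality with parameter $\epsilon$ so that a multiple of these quantities lands on the left-hand side of Lemma \ref{st1} and the remaining factor joins the $\epsilon^{-1}$ prefactor.

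The main obstacle is the delicate handling of the two cubic terms containing a $Y$-norm, namely $r^{-9/2}\abs{u}_{X_{\eta,\mu}}\abs{u}_{Y_{\eta,\mu}}\abs{v}_{Y_{\eta,\mu}}$ and $r^{-9/2}\mu t^{1/2}\abs{u}_{X_{\eta,\mu}}\abs{u}_{Y_{\eta,\mu}}\abs{\nabla_{\rm h}v}_{X_{\eta,\mu}}$: since Proposition \ref{lem:uL} only controls $\abs{u_L}_{Y_{\eta,\mu,T}}$ in integrated form as $\tfrac{1}{\sqrt T}\int_0^T\abs{u_L}_{Y_{\eta,\mu,T}}^2 dt\le C_{in}$, one cannot use a pointwise $\sup$-bound on the $Y$-factor. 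The clean treatment is to pull $\sup_{t\le T}\abs{u}_{X_{\eta,\mu,T}}\le \sqrt{C_{in}}+\sup\abs{v}_{X_{\eta,\mu,T}}$ out of the time integral, and then write $\int_0^T\abs{u}_{Y_{\eta,\mu,T}}\abs{v}_{Y_{\eta,\mu,T}}\,dt\le\bigl(\int_0^T\abs{u}_{Y_{\eta,\mu,T}}^2 dt\bigr)^{1/2}\bigl(\int_0^T\abs{v}_{Y_{\eta,\mu,T}}^2 dt\bigr)^{1/2}$, with each $Y$-square-integral split into a $u_L$-part (bounded by $C_{in}\sqrt T$ via Proposition \ref{lem:uL}) and a $v$-part (kept, producing the $\tfrac{1}{\sqrt T}\int\abs{v}_Y^2 dt$ factor on the right-hand side); the $\sqrt T$ from the $u_L$-part is precisely what supplies the $\sqrt T$ in the final estimate. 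Once these two terms are handled this way, summing all monomial contributions and collecting coefficients produces the stated bound.
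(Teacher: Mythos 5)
Your proposal matches the paper's proof in all essentials: the paper likewise replaces each unsubscripted norm by $e^{\psi(T)t/T}$ times its $T$-subscripted counterpart, absorbs the residual into $e^{\psi(T)}$, bounds $\mu(t)t^{1/2}\le C\sqrt{\psi(T)T}$ and $\mu(t)^2 t\le C\psi(T)T$ and hides the $\psi(T)$ factors in the exponentials, uses $u=u_L+v$ together with Proposition~\ref{lem:uL} (packaged there as the set of inequalities \eqref{uuv}, which is your monomial-by-monomial expansion aggregated), and closes by Young's inequality on the dissipative factors. The only organizational divergence is that the paper first groups the integral into three pieces $I_1,I_2,I_3$, carries a free Young parameter through all of them, and only at the very end substitutes $\epsilon=\tilde\epsilon r^{9/2}e^{-\psi(T)}\sqrt T$ to produce the precise coefficient $\epsilon^{-1}r^{-9}e^{3\psi(T)}\sqrt T$; your plan of applying Cauchy--Schwarz and then Young term by term, with the $\sqrt T$ coming from the $u_L$-contribution of Proposition~\ref{lem:uL}, reaches the same bound (indeed your intermediate $r^{-9/2}$ is sharper than the stated $r^{-9}$, and since $r<1$ it implies it). Your treatment of the two $Y$-norm monomials, with the square-integral split into $u_L$- and $v$-parts and the $\sqrt T$ supplied by $\tfrac{1}{\sqrt T}\int\abs{u_L}_{Y_{\eta,\mu,T}}^2dt\le C_{in}$, is exactly the paper's mechanism in Step~3.
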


 \begin{proof} We recall the relation  $\abs{\cdot}_{X_{\eta,\mu,T}}=e^{-\frac{\psi(T)}{T}t}\abs{\cdot}_{X_{\eta,\mu}}$ and
 \begin{equation*}
     \begin{aligned}
	\mathcal R(t)= &  r^{-\frac{9}{2}}\abs{u}_{X_{\eta,\mu}} \Big(\abs{\nabla_{\rm h}u}_{X_{\eta,\mu}}+  \abs{v}_{X_{\eta,\mu}}+  \abs{\partial_3u_L}_{X_{\eta,\mu}}  \Big)\abs{v}_{X_{\eta,\mu}}\\
	& +     r^{-\frac{9}{2}}\Big(  \mu t^{\frac12}   \abs{u}_{X_{\eta,\mu}} \abs{u}_{Y_{\eta,\mu}} \abs{\nabla_{\rm h}v}_{X_{\eta,\mu}}+\mu^2t  \abs{u}_{X_{\eta,\mu}} \abs{\nabla_{\rm h}u}_{X_{\eta,\mu}}\abs{\nabla_{\rm h}v}_{X_{\eta,\mu}}\Big)   \\
		&+       r^{-\frac{9}{2}}   \abs{u}_{X_{\eta,\mu}}^2 \abs{\nabla_{\rm h}v}_{X_{\eta,\mu}}+r^{-\frac92}\abs{u}_{X_{\eta,\mu}}\abs{u}_{Y_{\eta,\mu}}\abs{v}_{Y_{\eta,\mu}}.	
	\end{aligned}
 \end{equation*}
 Then for  $t\leq T$, we have
 	\begin{equation*}
	\begin{aligned}
 &e^{-\frac{2\psi(T)}{T}t}\mathcal R(t)\\
&\leq  r^{-\frac{9}{2}}e^{ \psi(T)}\abs{u}_{X_{\eta,\mu,T}} \Big(\abs{\nabla_{\rm h}u}_{X_{\eta,\mu,T}}+\abs{v}_{X_{\eta,\mu,T}}+ \abs{\partial_3u_L}_{X_{\eta,\mu,T}}\Big)\abs{v}_{X_{\eta,\mu,T}}\\
&\quad +   r^{-\frac{9}{2}}e^{ \psi(T)}\abs{u}_{X_{\eta,\mu,T}}\Big(  \mu  t^{\frac12}    \abs{u}_{Y_{\eta,\mu,T}}   +\mu^2t   \abs{\nabla_{\rm h}u}_{X_{\eta,\mu,T}}\Big)\abs{\nabla_{\rm h}v}_{X_{\eta,\mu,T}} \\
&\quad+   r^{-\frac{9}{2}} e^{ \psi(T)} \Big(   \abs{u}_{X_{\eta,\mu,T}}^2 \abs{\nabla_{\rm h}v}_{X_{\eta,\mu,T}}+\abs{u}_{X_{\eta,\mu,T}} \abs{u}_{Y_{\eta,\mu,T}}\abs{v}_{Y_{\eta,\mu,T}}\Big).
		 \end{aligned}
	\end{equation*}
	So that
	\begin{equation}\label{i1i2i3}
		\begin{aligned}
			  \int_0^T  e^{-\frac{2\psi(T)}{T}t} \mathcal R(t)dt \leq  Cr^{-\frac{9}{2}}e^{ \psi(T)}\Big(I_1+I_2+I_3\big).
		\end{aligned}
	\end{equation}
Here
	\begin{equation}\label{i1i3}
	\left\{
		\begin{aligned}
			I_1&=  \int_0^T\abs{u}_{X_{\eta,\mu,T}} \Big(\abs{\nabla_{\rm h}u}_{X_{\eta,\mu,T}}+\abs{v}_{X_{\eta,\mu,T}}+ \abs{\partial_3u_L}_{X_{\eta,\mu,T}}\Big)\abs{v}_{X_{\eta,\mu,T}}dt,\\
				I_2&=  \int_0^T \abs{u}_{X_{\eta,\mu,T}}\Big(  \mu  t^{\frac12}    \abs{u}_{Y_{\eta,\mu,T}}   +\mu^2t   \abs{\nabla_{\rm h}u}_{X_{\eta,\mu,T}}\Big)\abs{\nabla_{\rm h}v}_{X_{\eta,\mu,T}} dt,\\
	I_3&= \int_0^T \Big(   \abs{u}_{X_{\eta,\mu,T}}^2 \abs{\nabla_{\rm h}v}_{X_{\eta,\mu,T}}+\abs{u}_{X_{\eta,\mu,T}} \abs{u}_{Y_{\eta,\mu,T}}\abs{v}_{Y_{\eta,\mu,T}}\Big)  dt.
		\end{aligned}
		\right.
	\end{equation}
 Next, we proceed to estimate $I_k$ for $1\leq k\leq 3$ in the following three steps. To this end, we first list some estimates that will be frequently used later.    Recalling the decomposition  $u=u_L+v,$ we deduce from Proposition \ref{lem:uL} that
\begin{equation}
	\label{uuv}
	\left\{
	\begin{aligned}
	&\forall\ 0\leq t\leq T,\quad\abs{ u}_{X_{\eta,\mu,T}}\leq C+\abs{ v}_{X_{\eta,\mu,T}},\\
	&\int^T_0\abs{ \nabla_{\rm h} u}_{X_{\eta,\mu,T}}^2dt\leq C+2\int^T_0\abs{ \nabla_{\rm h} v}_{X_{\eta,\mu,T}}^2dt,\\
	&\int^T_0\abs{ u}_{Y_{\eta,\mu,T}}^2dt\leq C\sqrt{T}+2\int^T_0\abs{ v}_{Y_{\eta,\mu,T}}^2dt.
		\end{aligned}
\right.
\end{equation}

 {\it Step 1.}  We first estimate $I_1$ in \eqref{i1i3} and prove  that for any $0<\epsilon<1$,
  \begin{equation}\label{esoni1}
  I_1	\leq \epsilon \int^T_0\abs{\nabla_{\rm h}v}_{X_{\eta,\mu,T}}^2dt+C\epsilon+C\epsilon^{-1}T+C\epsilon^{-1}T\Big( 1+\sup_{ t\leq T}\abs{v}_{X_{\eta,\mu,T}}^2\Big)\sup_{ t\leq T}\abs{v}_{X_{\eta,\mu,T}}^2.
 \end{equation}
 In fact, applying \eqref{uuv} gives, for any $0<\epsilon<1,$
\begin{equation*}
    \begin{aligned}
 &  \int^T_0\abs{u}_{X_{\eta,\mu,T}} \abs{\nabla_{\rm h}u}_{X_{\eta,\mu,T}}\abs{v}_{X_{\eta,\mu,T}}dt\\
 &\leq \epsilon \int^T_0\abs{\nabla_{\rm h}u}_{X_{\eta,\mu,T}}^2dt+   \epsilon^{-1}\int^T_0\abs{u}_{X_{\eta,\mu,T}}^2 \abs{v}_{X_{\eta,\mu,T}}^2 dt\\
 &\leq \epsilon \int^T_0\abs{\nabla_{\rm h}v}_{X_{\eta,\mu,T}}^2dt+C\epsilon +C\epsilon^{-1}T+C\epsilon^{-1}T\sup_{ t\leq T} \abs{v}_{X_{\eta,\mu,T}}^4.
    \end{aligned}
\end{equation*}
On the other hand, using  \eqref{uuv} and Proposition \ref{lem:uL}, we obtain
\begin{multline*}
\int_0^T\abs{u}_{X_{\eta,\mu,T}} \Big(\abs{v}_{X_{\eta,\mu,T}}+ \abs{\partial_3u_L}_{X_{\eta,\mu,T}}\Big)\abs{v}_{X_{\eta,\mu,T}}dt\\
\leq C \int_0^T\big(1+\abs{v}_{X_{\eta,\mu,T}}^4\big)dt
		 \leq    C  T+C T \sup_{ t\leq T} \abs{v}_{X_{\eta,\mu,T}}^4.
\end{multline*}
Consequently, combining the two estimates above yields   assertion \eqref{esoni1}.

 {\it Step 2.}  In this step we will estimate $I_2$ in \eqref{i1i3}  and prove that for any $0<\epsilon<1$,
 \begin{equation}\label{eon2}
 	\begin{aligned}
 	I_2
    & \leq    \epsilon  \int_0^T\abs{\nabla_{\rm h}v}_{X_{\eta,\mu,T}}^2dt+C\epsilon^{-1}e^{\psi(T)} T+C\epsilon^{-1}e^{\psi(T)} T  \sup_{ t\leq T}\abs{v}_{X_{\eta,\mu,T}}^2\\
    &\quad+ C\epsilon^{-1} e^{\psi(T)} T\Big(1 +  \sup_{ t\leq T} \abs{v}_{X_{\eta,\mu,T}}^2 \Big) \int_0^T  \bigg(  \abs{\nabla_{\rm h}v}_{X_{\eta,\mu,T}}^2 + \frac{ \abs{v}_{Y_{\eta,\mu,T}}^2}{\sqrt{T}}    \bigg)dt.
 	\end{aligned}
 \end{equation}
Observe $|\mu(t)|\leq C \sqrt{\psi(T)}$ for $t\in[0,T].$ Thus
\begin{equation*}
	\begin{aligned}
	&   \int_0^T \mu(t) t^{\frac12}   \abs{u}_{X_{\eta,\mu,T}} \abs{u}_{Y_{\eta,\mu,T}} \abs{\nabla_{\rm h}v}_{X_{\eta,\mu,T}}  dt	\\
	&\leq \epsilon  \int_0^T\abs{\nabla_{\rm h}v}_{X_{\eta,\mu,T}}^2dt  +  C\epsilon^{-1} \psi(T) T\int_0^T   \abs{u}_{X_{\eta,\mu,T}}^2 \abs{u}_{Y_{\eta,\mu,T}}^2  dt.	
	\end{aligned}
\end{equation*}
For the last term on the right-hand side, we use   \eqref{uuv} and $0<T\leq 1$ to obtain that
\begin{equation*}
	\begin{aligned}
	  &\int_0^T   \abs{u}_{X_{\eta,\mu,T}}^2 \abs{u}_{Y_{\eta,\mu,T}}^2 dt \leq C \Big(1 +  \sup_{ t\leq T} \abs{v}_{X_{\eta,\mu,T}}^2 \Big)\int_0^T     \abs{u}_{Y_{\eta,\mu,T}}^2 dt \\
     &\leq C\Big(1 +  \sup_{ t\leq T} \abs{v}_{X_{\eta,\mu,T}}^2 \Big) \Big(\sqrt{T}+\int_0^T    \abs{v}_{Y_{\eta,\mu,T}}^2dt\Big)\\
     &\leq C \Big(1 +  \sup_{ t\leq T} \abs{v}_{X_{\eta,\mu,T}}^2 \Big)+ C\sqrt{T}\Big(1 +  \sup_{ t\leq T} \abs{v}_{X_{\eta,\mu,T}}^2 \Big)\frac{1}{\sqrt{T}}  \int_0^T    \abs{v}_{Y_{\eta,\mu,T}}^2dt.
	\end{aligned}
\end{equation*}
As the result, combining the two estimates above gives
 \begin{align*}
	  & \int_0^T \mu(t) t^{\frac12}   \abs{u}_{X_{\eta,\mu,T}} \abs{u}_{Y_{\eta,\mu,T}} \abs{\nabla_{\rm h}v}_{X_{\eta,\mu,T}}  dt	\\
	& \leq   \epsilon  \int_0^T\abs{\nabla_{\rm h}v}_{X_{\eta,\mu,T}}^2dt + C\epsilon^{-1}\psi(T)T+C\epsilon^{-1} \psi(T)T \sup_{ t\leq T}\abs{v}_{X_{\eta,\mu,T}}^2\\
   &\quad + C\epsilon^{-1} \psi(T) T\Big(1 +  \sup_{ t\leq T} \abs{v}_{X_{\eta,\mu,T}}^2 \Big) \frac{1}{\sqrt{T}} \int_0^T    \abs{v}_{Y_{\eta,\mu,T}}^2dt.
	\end{align*}
Using   \eqref{uuv}  and the fact $|\mu(t)|^2 \leq C \psi(T)$ for $t\in[0,T],$ we have
\begin{equation*}
	\begin{aligned}
	& \int_0^T \mu(t)^2t  \abs{u}_{X_{\eta,\mu,T}} \abs{\nabla_{\rm h}u}_{X_{\eta,\mu,T}}\abs{\nabla_{\rm h}v}_{X_{\eta,\mu,T}} dt \\
	&\leq C\psi(T)T\Big (1+\sup_{t\leq T} \abs{v}_{X_{\eta,\mu,T}}\Big ) \int_0^T \Big( \abs{\nabla_{\rm h}u}_{X_{\eta,\mu,T}}^2+\abs{\nabla_{\rm h}v}_{X_{\eta,\mu,T}}^2\Big)dt	\\
	&\leq C\psi(T)T\Big (1+\sup_{t\leq T} \abs{v}_{X_{\eta,\mu,T}}^2\Big ) +C\psi(T)T\Big (1+\sup_{t\leq T} \abs{v}_{X_{\eta,\mu,T}}^2\Big )     \int_0^T  \abs{\nabla_{\rm h}v}_{X_{\eta,\mu,T}}^2 dt .
	\end{aligned}
\end{equation*}
Combining the two estimates above and using the inequality  $\psi(T)\leq Ce^{\psi(T)}$, we obtain assertion \eqref{eon2}.

{\it Step 3.} It remains to estimate $I_3$ in \eqref{i1i3} and we will derive that, for any $0<\epsilon<1,$
 \begin{equation}\label{eon3}
 	\begin{aligned}
 	I_3
    &\leq  \epsilon\bigg[ \int_0^T\abs{\nabla_{\rm h}v}_{X_{\eta,\mu,T}}^2dt+\frac{1}{\sqrt{T}}\int^T_0\abs{v}_{Y_{\eta,\mu,T}}^2dt\bigg]+C\epsilon^{-1}T\\
   &\quad+C\epsilon^{-1}T \sup_{ t\leq T}\abs{v}_{X_{\eta,\mu,T}}^4  + C\epsilon^{-1}T\Big(1 + \sup_{ t\leq T} \abs{v}_{X_{\eta,\mu,T}}^2\Big) \frac{1}{\sqrt{T}}\int^T_0\abs{v}_{Y_{\eta,\mu,T}}^2dt.
 	\end{aligned}
 \end{equation}
 Using   \eqref{uuv}, we obtain, for any $0<\epsilon<1,$
 \begin{multline*}
    \int_0^T   \abs{u}_{X_{\eta,\mu,T}}^2 \abs{\nabla_{\rm h}v}_{X_{\eta,\mu,T}} dt \leq  \epsilon \int_0^T\abs{\nabla_{\rm h}v}_{X_{\eta,\mu,T}}^2dt+  \epsilon^{-1}\int_0^T   \abs{u}_{X_{\eta,\mu,T}}^4dt \\
		 \leq \epsilon   \int_0^T\abs{\nabla_{\rm h}v}_{X_{\eta,\mu,T}}^2dt+ C\epsilon^{-1} T +C\epsilon^{-1}  T  \sup_{ t\leq T}\abs{v}_{X_{\eta,\mu,T}}^4,
 	\end{multline*}
and
\begin{equation*}
    \begin{aligned}
&\int_0^T \abs{u}_{X_{\eta,\mu,T}} \abs{u}_{Y_{\eta,\mu,T}}\abs{v}_{Y_{\eta,\mu,T}} dt\\
&\leq \frac{\epsilon}{\sqrt{T}}\int^T_0\abs{v}_{Y_{\eta,\mu,T}}^2dt+C\epsilon^{-1}\sqrt{T}\Big(1 +  \sup_{ t\leq T} \abs{v}_{X_{\eta,\mu,T}}^2 \Big) \int^T_0\abs{u}_{Y_{\eta,\mu,T}}^2dt\\
&\leq \frac{\epsilon}{\sqrt{T}}\int^T_0\abs{v}_{Y_{\eta,\mu,T}}^2dt+ C\epsilon^{-1}T\Big(1 +  \sup_{ t\leq T} \abs{v}_{X_{\eta,\mu,T}}^2\Big)\\
&\quad + C\epsilon^{-1}T\Big(1 + \sup_{ t\leq T} \abs{v}_{X_{\eta,\mu,T}}^2\Big) \frac{1}{\sqrt{T}}\int^T_0\abs{v}_{Y_{\eta,\mu,T}}^2dt.
    \end{aligned}
\end{equation*}
Then the desired estimate \eqref{eon3} follows by combining the two estimates above.

Finally, by substituting estimates \eqref{esoni1}, \eqref{eon2} and \eqref{eon3} into \eqref{i1i2i3}, we  conclude that  for any $0<\epsilon<1,$
\begin{equation*}
    \begin{aligned}
&\int_0^T e^{-\frac{2\psi(T)}{T}t}\mathcal R(t)dt \leq  Cr^{-\frac{9}{2}}e^{ \psi(T)}\Big(I_1+I_2+I_3\big)\\
&\leq \epsilon r^{-\frac92} e^{\psi(T)}   \bigg[ \int_0^T\abs{\nabla_{\rm h}v}_{X_{\eta,\mu,T}}^2dt+\frac{1}{\sqrt{T}}\int^T_0\abs{v}_{Y_{\eta,\mu,T}}^2dt\bigg]+C\epsilon^{-1}r^{-\frac{9}{2}}e^{2 \psi(T)}T\\
&\quad+C \epsilon r^{-\frac92} e^{\psi(T)}+  C\epsilon^{-1}r^{-\frac{9}{2}}e^{2 \psi(T)} T\Big( 1+\sup_{ t\leq T}\abs{v}_{X_{\eta,\mu,T}}^2\Big)\sup_{ t\leq T}\abs{v}_{X_{\eta,\mu,T}}^2\\
  &\quad+ C\epsilon^{-1}r^{-\frac{9}{2}} e^{2\psi(T)} T\Big(1 +  \sup_{ t\leq T} \abs{v}_{X_{\eta,\mu,T}}^2 \Big) \int_0^T \bigg(   \abs{\nabla_{\rm h}v}_{X_{\eta,\mu,T}}^2+ \frac{ \abs{v}_{Y_{\eta,\mu,T}}^2}{\sqrt{T}}     \bigg)dt.
    \end{aligned}
\end{equation*}
The assertion of Lemma \ref{cor-tildej} now follows by choosing $\epsilon=\tilde\epsilon r^{\frac92}e^{-\psi(T)}\sqrt{T}$ with  $\tilde\epsilon$  being an arbitrarily small positive number.
 \end{proof}

\appendix
\section{Proof of some  inequalities}\label{sec:appendix}
In this part we verify several  inequalities used in the text; the arguments are  straightforward.
\begin{proof}[Proof of inequalities \eqref{ineq1} and \eqref{ineq2}]
Recall $L_{\rho,m}$ is defined by \eqref{def:L}. For $0\leq k\leq [\frac{m}{2}]$, we have $\frac{m}{2}\leq m-k\leq m$ and thus a direct computation gives that
   \begin{align*}
       &\binom{m}{k}\frac{L_{\rho,m}}{L_{\rho,k}L_{\rho,m-k}}\\
       &=\frac{m!}{k!(m-k)!}\frac{\rho^{m+1}(m+1)^{6+2\sigma}}{(m!)^\sigma}\frac{(k!)^\sigma}{\rho^{k+1}(k+1)^{6+2\sigma}}\frac{[(m-k)!]^\sigma}{\rho^{m-k+1}(m-k+1)^{6+2\sigma}}\\
       &\leq \frac{C}{(k+1)^{6+2\sigma}}\left[\frac{k!(m-k)!}{m!}\right]^{\sigma-1}\leq \frac{C}{(k+1)^{6+2\sigma}},
   \end{align*}
   the last line using $\sigma\ge 1$ and $\frac{\rho_0}{2}\leq\rho\leq \rho_0$. This completes the proof of \eqref{ineq1}.

For $[\frac{m}{2}]+1\leq k\leq m$, a direct computation gives
   \begin{align*}
       &\binom{m}{k}\frac{L_{\rho,m}}{L_{\rho,k}L_{\rho,m-k}}\\
       &=\frac{m!}{k!(m-k)!}\frac{\rho^{m+1}(m+1)^{6+2\sigma}}{(m!)^\sigma}\frac{(k!)^\sigma}{\rho^{k+1}(k+1)^{6+2\sigma}}\frac{[(m-k)!]^\sigma}{\rho^{m-k+1}(m-k+1)^{6+2\sigma}}\\
       &\leq \frac{C}{(m-k+1)^{6+2\sigma}}\left[\frac{k!(m-k)!}{m!}\right]^{\sigma-1}\leq \frac{C}{(m-k+1)^{6+2\sigma}},
   \end{align*}
 which completes the proof of \eqref{ineq2}.
\end{proof}

\begin{proof}[Proof of inequalities \eqref{ineq3} and \eqref{ineq4}]
For $1\leq k\leq [\frac{m}{2}]$, we use $\sigma\ge 1$ compute
    \begin{align*}
    &\binom{m}{k}\frac{L_{\rho,m}}{L_{\rho,k}L_{\rho,m-k+1}(m-k+2)^\frac{1}{2}(m+1)^\frac{1}{2}}\\
    &=\frac{m!}{k!(m-k)!}\frac{\rho^{m+1}(m+1)^{6+2\sigma}}{(m!)^\sigma}\frac{(k!)^\sigma}{\rho^{k+1}(k+1)^{6+2\sigma}}\\
    &\qquad\times\frac{[(m-k+1)!]^\sigma}{\rho^{m-k+2}(m-k+2)^{6+2\sigma}}\frac{1}{(m-k+2)^\frac{1}{2}(m+1)^\frac{1}{2}}\\
    &\leq C\frac{m!}{k(k-1)!(m-k+1)!}\frac{[(k-1)!]^\sigma[(m-k+1)!]^\sigma}{(m!)^\sigma}\frac{1}{(k+1)^{6+\sigma}}\\
    &\leq \frac{C}{(k+1)^{6+\sigma}}\left[\frac{(k-1)!(m-k+1)!}{m!}\right]^{\sigma-1}\leq \frac{C}{(k+1)^{6+\sigma}}.
    \end{align*}
This gives \eqref{ineq3}.

For $[\frac{m}{2}]+1\leq k\leq m$, a direct computation gives
\begin{align*}
&\binom{m}{k}\frac{L_{\rho,m}}{L_{\rho,k}L_{\rho,m-k+1}}\\
&=\frac{m!}{k!(m-k)!}\frac{\rho^{m+1}(m+1)^{6+2\sigma}}{(m!)^\sigma}\frac{(k!)^\sigma}{\rho^{k+1}(k+1)^{6+2\sigma}}\frac{[(m-k+1)!]^\sigma}{\rho^{m-k+2}(m-k+2)^{6+2\sigma}}\\
&\leq \frac{C}{(m-k+1)^{6+\sigma}}\left[\frac{k!(m-k)!}{m!}\right]^{\sigma-1}\leq \frac{C}{(m-k+1)^{6+\sigma}}.
\end{align*}
The proof of \eqref{ineq4} is thus completed.
\end{proof}

\begin{proof}[Proof of inequalities \eqref{ineq5} and \eqref{ineq6}]
Recall $M_{\rho,\beta}$ is given in \eqref{mrho}. For $0\leq \abs{\gamma}\leq [\frac{\abs{\beta}}{2}]$, we have $\frac{\abs{\beta}}{2}\leq \abs{\beta}-\abs{\gamma}\leq \abs{\beta}$. Then a direct computation with the fact
\begin{align*}
    \binom{\beta}{\gamma}\leq \binom{\abs{\beta}}{\abs{\gamma}}
\end{align*}
gives
   \begin{align*}
       &\binom{\beta }{\gamma }\frac{M_{\rho,\beta}}{M_{\rho,\gamma}M_{\rho,\beta -\gamma}}\leq \binom{\abs{\beta}}{\abs{\gamma}}\frac{M_{\rho,\beta}}{M_{\rho,\gamma}M_{\rho,\beta -\gamma}}\\
       &\leq\frac{\abs{\beta}!}{\abs{\gamma}!(\abs{\beta}-\abs{\gamma})!}\frac{ \rho^{\abs\beta+1}(\abs\beta+1)^{6+2\sigma}} {(\beta_2!)^{\sigma-{\delta} } (\abs\beta !) ^{{\delta} }} \frac{(\gamma_2!)^{\sigma-{\delta} } (\abs\gamma !) ^{{\delta} }}{ \rho^{\abs\gamma+1}(\abs\gamma+1)^{6+2\sigma}}\\
       &\qquad\times\frac{[(\beta_2-\gamma_2)!]^{\sigma-\gamma} [(\abs{\beta}-\abs{\gamma})!]^\gamma }{\rho^{\abs\beta-\abs\gamma+1}(\abs \beta-\abs\gamma+1)^{6+2\sigma } }\\
       &\leq \frac{C}{(\abs{\gamma}+1)^{6+2\sigma}}\left[\frac{\gamma_2!(\beta_2-\gamma_2)!}{\beta_2!}\right]^{\sigma-{\delta}}\left[\frac{\abs{\gamma}!(\abs{\beta}-\abs{\gamma})!}{\abs{\beta}!}\right]^{{\delta}-1}\leq \frac{C}{(\abs{\gamma}+1)^{6+2\sigma}},
   \end{align*}
   the last line using $1\leq {\delta}\leq\sigma$. This completes the proof of \eqref{ineq5}.

For $[\frac{\abs{\beta}}{2}]+1\leq \abs{\gamma}\leq \abs{\beta}$, a direct computation with $1\leq\delta\leq\sigma$ gives
   \begin{align*}
       \binom{\beta }{\gamma }\frac{M_{\rho,\beta}}{M_{\rho,\gamma}M_{\rho,\beta -\gamma}}
       &\leq \frac{C}{(\abs{\beta}-\abs{\gamma}+1)^{6+2\sigma}}\left[\frac{\gamma_2!(\beta_2-\gamma_2)!}{\beta_2!}\right]^{\sigma-{\delta}}\left[\frac{\abs{\gamma}!(\abs{\beta}-\abs{\gamma})!}{\abs{\beta}!}\right]^{{\delta}-1}\\
       &\leq \frac{C}{(\abs{\beta}-\abs{\gamma}+1)^{6+2\sigma}}.
   \end{align*}
This completes the proof of  \eqref{ineq6}.
\end{proof}

\begin{proof}[Proof of inequality \eqref{ineq9}]
Recall $M_{\rho,\beta}$ is given in \eqref{mrho}. For $[\frac{\abs{\beta}}{2}]+1\leq \abs{\gamma}\leq \abs{\beta}$, we compute
   \begin{align*}
       &\binom{\beta }{\gamma }\frac{M_{\rho,\beta}}{M_{\rho,\gamma}M_{\rho,\beta -\gamma +(0,1)}}\\
&\leq\frac{\abs{\beta}!}{\abs{\gamma}!(\abs{\beta}-\abs{\gamma})!}\frac{ \rho^{\abs\beta+1}(\abs\beta+1)^{6+2\sigma}} {(\beta_2!)^{\sigma-{\delta} } (\abs\beta !) ^{{\delta} }} \frac{(\gamma_2!)^{\sigma-{\delta} } (\abs\gamma !) ^{{\delta} }}{ \rho^{\abs\gamma+1}(\abs\gamma+1)^{6+2\sigma}}\\
&\qquad\times\frac{[(\beta_2-\gamma_2+1)!]^{\sigma-\gamma} [(\abs{\beta}-\abs{\gamma}+1)!]^\gamma }{\rho^{\abs\beta-\abs\gamma+2}(\abs \beta-\abs\gamma+2)^{6+2\sigma } }\\
 &\leq \frac{C(\beta_2-\gamma_2+1)^{\sigma-\delta}}{(\abs{\beta}-\abs{\gamma}+1)^{6+2\sigma-\delta}}\left[\frac{\gamma_2!(\beta_2-\gamma_2)!}{\beta_2!}\right]^{\sigma-{\delta}}\left[\frac{\abs{\gamma}!(\abs{\beta}-\abs{\gamma})!}{\abs{\beta}!}\right]^{{\delta}-1}\\
      &\leq \frac{C}{(\abs{\beta}-\abs{\gamma}+1)^{6+\sigma}},
   \end{align*}
   the last inequality using $1\leq {\delta}\leq\sigma$. This completes the proof of  \eqref{ineq9}.
\end{proof}

\section{The algebra property of the space $\mathcal{H}^3$}\label{sec:algebra}

In this part we will show the algebra property of the space $\mathcal{H}^3$ defined in \eqref{def:mathcalH3}-namely, proving  assertion \eqref{prpalge} in Remark \ref{algebra}.
 Recalling the norm $\norm{\cdot}_{\mathcal{H}^3}$ is given in \eqref{def:mathcalH3}, we have for any $f,g\in\mathcal{H}^3$,
\begin{equation}\label{est:fgproduct}
    \begin{aligned}
\norm{fg}_{\mathcal{H}^3}&=\sum_{\stackrel{(j,   \alpha) \in\mathbb Z_+\times \mathbb{Z}_+^3}{j+\abs{ \alpha }\leq 3}}\   \norm{ \partial_t^j\partial^{ \alpha }(fg)}_{L^2}\\
&\leq \sum_{\stackrel{(j,   \alpha) \in\mathbb Z_+\times \mathbb{Z}_+^3}{j+\abs{ \alpha }\leq 3}}\sum_{k\leq j,\ \beta\leq\alpha}\  \binom{j}{k}\binom{\alpha}{\beta} \norm{ (\partial_t^k\partial^{ \beta }f)\partial_t^{j-k}\partial^{\alpha-\beta}g}_{L^2}\\
&\leq C\sum_{\stackrel{(j,   \alpha) \in\mathbb Z_+\times \mathbb{Z}_+^3}{j+\abs{ \alpha }\leq 3}}\sum_{k\leq j,\ \beta\leq\alpha}\  \norm{ (\partial_t^k\partial^{ \beta }f)\partial_t^{j-k}\partial^{\alpha-\beta}g}_{L^2}.
    \end{aligned}
\end{equation}
When $k+\abs{\beta}=0,$ we use the Sobolev inequality to conclude that for any $(j,   \alpha) \in\mathbb Z_+\times \mathbb{Z}_+^3$ with $j+\abs{ \alpha }\leq 3$,
\begin{multline*}
\norm{ (\partial_t^k\partial^{ \beta }f)\partial_t^{j-k}\partial^{\alpha-\beta}g}_{L^2}\leq \norm{f}_{L^\infty}\norm{\partial_t^j\partial^\alpha g}_{L^2}\\
\leq C\norm{f}_{H^3}\norm{\partial_t^j\partial^\alpha g}_{L^2}\leq C  \norm{f}_{\mathcal{H}^3}\norm{g}_{\mathcal{H}^3}.
\end{multline*}
When $k+\abs{\beta}=1,$ we have $j-k+\abs{\alpha-\beta}\leq 2$ and thus use the Sobolev inequality again to get that for any $(j,   \alpha) \in\mathbb Z_+\times \mathbb{Z}_+^3$ with $j+\abs{ \alpha }\leq 3$,
\begin{multline*}
\norm{ (\partial_t^k\partial^{ \beta }f)\partial_t^{j-k}\partial^{\alpha-\beta}g}_{L^2}\leq \norm{\partial_t^k\partial^{ \beta }f}_{L_{x_1}^2L_{x_2,x_3}^\infty}\norm{\partial_t^{j-k}\partial^{\alpha-\beta}g}_{L_{x_1}^\infty L_{x_2,x_3}^2}\\
\leq C\norm{\partial_t^k\partial^{ \beta }f}_{H^2}\norm{\partial_t^{j-k}\partial^{\alpha-\beta}g}_{H^1}\leq C  \norm{f}_{\mathcal{H}^3}\norm{g}_{\mathcal{H}^3}.
\end{multline*}
The case of $k+\abs{\beta}=2,3$ is similar. As a result, we have for any $(j,   \alpha) \in\mathbb Z_+\times \mathbb{Z}_+^3$ with $j+\abs{ \alpha }\leq 3$ and $k\leq j,\ \beta\leq\alpha$,
\begin{align*}
    \norm{ (\partial_t^k\partial^{ \beta }f)\partial_t^{j-k}\partial^{\alpha-\beta}g}_{L^2}\leq C  \norm{f}_{\mathcal{H}^3}\norm{g}_{\mathcal{H}^3}.
\end{align*}
Substituting the above estimate into \eqref{est:fgproduct} yields that for any $f,g\in\mathcal{H}^3$,
\begin{align*}
    \norm{fg}_{\mathcal{H}^3}\leq C  \norm{f}_{\mathcal{H}^3}\norm{g}_{\mathcal{H}^3}.
\end{align*}
This completes the proof of \eqref{prpalge}.

  \subsection*{\bf Acknowledgements}
The research of W.-X.Li was supported by Natural Science Foundation of China
(Nos. 12325108, 12131017, 12221001), and the Natural Science Foundation of Hubei
Province (No. 2019CFA007).   The research of P.Zhang was partially
supported by National Key R\&D Program of China under grant 2021YFA1000800
and by   National Natural Science Foundation of China (Nos. 12421001, 12494542, and 12288201).
The authors would like to thank the support from the Research Centre for
Nonlinear Analysis in The Hong Kong Polytechnic University.


\end{document}